\numberwithin{equation}{section}
\newtheoremstyle{my}{1.5em}{0.5em}{\em}{}{\sc}{.}{0.5em}{}
\theoremstyle{my}
\theoremstyle{my}
\newtheorem{thm}{Theorem}[section]
\newtheorem{Theorem}[thm]{Theorem}
\newtheorem*{Theorem*}{Theorem}
\newtheorem{Corollary}[thm]{Corollary}
\newtheorem*{corollary*}{Corollary}
\newtheorem{Lemma}[thm]{Lemma}
\newtheorem{lem}[thm]{Lemma}
\newtheorem{Proposition}[thm]{Proposition}
\newtheorem*{conjecture*}{Conjecture}
\newtheorem*{question*}{Question}
\newtheorem{defn}[thm]{Definition}
\newtheorem{Definition}[thm]{Definition}
\newtheorem{Hypothesis}[thm]{Hypothesis}
\newtheorem*{definitions*}{Definitions}
\newtheorem{rem}[thm]{Remark}
\newtheorem*{rem*}{Remark}
\newtheorem{Remark}[thm]{Remark}
\newtheorem*{remark*}{Remark}
\newtheorem*{remarks*}{Remarks}
\newtheorem*{example*}{Example}
\newtheorem*{examples*}{Examples}
\newtheorem*{convention*}{Convention}
\newtheorem*{conventions*}{Conventions}
\newtheorem*{Note*}{Note}
\newtheorem*{exercise*}{Exercise}
\newtheorem*{bibliographical-note*}{Bibliographical note}
\newcommand{\Acknowledgements}{{\em Acknowledgements.} }
\def\co{\colon\thinspace}
\newcommand{\nc}{{\bf nc}}
\newcommand{\bR}{\mathbb{R}}
\newcommand{\bZ}{\mathbb{Z}}
\newcommand{\bQ}{\mathbb{Q}}
\newcommand{\bC}{\mathbb{C}}
\newcommand{\bP}{\mathbb{P}}
\newcommand{\bk}{\mathbf{k}}
\newcommand{\iso}{\cong}           
\newcommand{\cdbar}{\mathrm{\overline{\partial}}}
\newcommand{\Sym}{\mathrm{Sym}}
\newcommand{\Symp}{\mathrm{Symp}}
\newcommand{\id}{\mathrm{id}}
\newcommand{\im}{\mathrm{im}}
\renewcommand{\ker}{\mathrm{ker}}
\newcommand{\Slice}{\mathcal{S}}
\newcommand{\Hom}{\mathrm{Hom}}
\newcommand{\Aut}{\mathrm{Aut}}
\newcommand{\Kh}{\mathrm{Kh}}
\newcommand{\Br}{\mathrm{Br}}
\newcommand{\Id}{\mathrm{Id}}
\newcommand{\wt}{\mathrm{wt}}
\newcommand{\Ob}{\mathrm{Ob} \,}
\newcommand{\Diff}{\mathrm{Diff}}
\newcommand{\Hilb}{\mathrm{Hilb}}
\newcommand{\Conf}{\textrm{Conf}}
\newcommand{\scrA}{\EuScript{A}}
\newcommand{\scrB}{\EuScript{B}}
\newcommand{\scrY}{\EuScript{Y}}
\newcommand{\scrR}{\EuScript{R}}
\newcommand{\scrX}{\EuScript{X}}
\newcommand{\scrM}{\EuScript{M}}
\newcommand{\scrF}{\EuScript{F}}
\newcommand{\wpb}{\wp_{\bullet}}
\numberwithin{equation}{section}
\newcommand{\superscript}[1]{\ensuremath{^{\textrm{#1}}} }
\renewcommand{\th}[0]{\superscript{th}}
\newcommand{\st}[0]{\superscript{st}}
\newcommand{\comment}[1]{}
\title{The symplectic arc algebra is formal}
\author{Mohammed Abouzaid}
\thanks{This research was partially carried out during the time M.A. served as a Clay Research Fellow. M.A. was also partially supported by NSF grant DMS-1308179}
\address{Mohammed Abouzaid, Columbia University, 2960 Broadway Ave, New York, NY 10027,  U.S.A.}
\author{Ivan Smith}
\thanks{I.S. was partially supported by grant ERC-2007-StG-205349 from the European Research Council.}
\address{Ivan Smith, Centre for Mathematical Sciences, University of Cambridge, Wilberforce Road, CB3 0WB, England.}
\date{v1 - November 2013.  v2 - April 2015.  v3 - June 2015. v4 - October 2015.}
\begin{document}

\begin{abstract}

We prove a formality theorem for the Fukaya categories of the symplectic manifolds   underlying symplectic Khovanov cohomology, over fields of characteristic zero. The key ingredient is the construction of a degree one Hochschild cohomology class on a Floer $A_{\infty}$-algebra associated to the $(k,k)$-nilpotent slice $\scrY_k$, obtained by counting holomorphic discs which satisfy a suitable conormal  condition at infinity in a partial compactification $\bar{\scrY}_k$. The space $\bar{\scrY}_k$ is obtained as the Hilbert scheme of a partial compactification of the $A_{2k-1}$-Milnor fibre.  A sequel to this paper will prove formality of the symplectic cup and cap bimodules, and infer that  symplectic Khovanov cohomology and Khovanov cohomology have the same total rank over characteristic zero fields. 
\end{abstract}
\maketitle

\begin{small}
\tableofcontents
\end{small}
\parindent0em
\parskip1em
\section{Introduction}\label{Sec:Introduction}

Khovanov cohomology associates to an oriented link  $\kappa \subset S^3$ a bigraded group $\Kh^{\ast,\ast}(\kappa)$, whose Euler characteristic $\sum_{i,j} (-1)^i q^j rk_{\bQ} \Kh^{i,j}(\kappa)$, suitably normalised,  is the Jones polynomial of $\kappa$.  The invariant, which is defined via diagrammatic combinatorics \cite{Khovanov} or representation theory \cite{Khovanov:functor, Stroppel}, is interesting for several reasons. First, it is effective, in particular it distinguishes the unknot \cite{KrMr}; second, it is functorial under surface cobordisms in $\bR^4$, which gives rise to applications to four-dimensional topology \cite{Rasmussen}; and third, it has a number of formal features in common with gauge theoretic and Floer theoretic invariants in low-dimensional topology, which lead  to comparison spectral sequences relating Khovanov cohomology and Heegaard Floer theory or instanton Floer theory \cite{OzSz, KrMr}.  In these papers Khovanov cohomology appears as an algebraic approximation of a geometric invariant, rather than being given a geometric interpretation in its own right. 

Nonetheless, several geometric models of Khovanov cohomology have been proposed, and in some cases proved \cite{CK}.  An early such -- symplectic Khovanov cohomology, denoted henceforth $\Kh^{\ast}_{symp}(\kappa)$ -- gave rise to a singly graded link invariant defined using  the symplectic topology of certain spaces of matrices arising  in Lie theory \cite{SS}.  The purpose of this paper and its sequel \cite{AbSm:Kh=Floer} is to revisit that construction, and to establish a proof over $\bQ$ of the conjectured relationship \cite[Conjecture 2]{SS} of $\Kh^{\ast}_{symp}(\kappa)$ to Khovanov cohomology.

To state the results, we introduce some notation.  Let $\scrY_k$ denote a transverse slice to the adjoint quotient map $\chi: \frak{sl}_{2k}(\bC) \rightarrow \bC^{2k-1}$ at a nilpotent matrix with two equal Jordan blocks. Typical such slices are provided by the Jacobson-Morozov theorem; an explicit slice better tailored to our needs, following \cite{SS}, is given in Equation \eqref{eq:y-matrix} below.  Being an affine variety, $\scrY_k$ inherits an exact K\"ahler structure.  Any crossingless matching $\wp$ of $2k$ points  defines a Lagrangian submanifold $(S^2)^k \cong L_{\wp} \subset \scrY_k$, depending up to Hamiltonian isotopy only on the isotopy class of the matching,  by an iterated vanishing cycle construction recalled in Section \ref{Sec:Slices}.  Considering the finitely many isotopy types of crossingless matchings contained in the upper half-plane, one obtains a distinguished finite collection $\bigcup_{\wp} L_{\wp}$ of Lagrangian submanifolds of $\scrY_k$.  

Each $L_{\wp}$ admits a $Spin$ structure and  grading in the sense of \cite{Seidel:graded}, hence defines an object in the Fukaya category of $\scrY_k$ (whose objects are compact exact Lagrangian submanifolds equipped with suitable brane data).   Let $\scrF(\scrY_k)$ denote the subcategory of the Fukaya category  of $\scrY_k$ with objects $\{L_{\wp}\}$. Essentially by definition, this is quasi-equivalent to the $A_{\infty}$-algebra $\oplus_{\wp,\wp'} HF^*(L_\wp, L_{\wp'})$, which we call the \emph{symplectic arc algebra}.

\begin{Theorem} \label{thm:Formal}
Fix a coefficient field $\bk$ of characteristic zero.  The $A_{\infty}$-category $\scrF(\scrY_k)$ defined over $\bk$ is formal, i.e. it is quasi-equivalent to its cohomological category equipped with the $A_{\infty}$-structure in which all $\{\mu^j\}_{j \neq 2}$ vanish identically. 
\end{Theorem}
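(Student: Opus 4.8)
The plan is to deduce Theorem~\ref{thm:Formal} from the existence of a single distinguished Hochschild cocycle, thereby reducing the problem to a geometric construction. First I would isolate the algebraic criterion that powers the argument. Let $A=(H,\{\mu^j\}_{j\ge 2})$ be a minimal model for $\scrF(\scrY_k)$ over $\bk$ with $\mathrm{char}\,\bk=0$, so $H=\bigoplus_{\wp,\wp'}HF^*(L_\wp,L_{\wp'})$, and write $\mu=\sum_j\mu^j$ for the Maurer--Cartan element in $CC^\bullet(A)$, whose Gerstenhaber bracket $[\mu,-]$ is the Hochschild differential. Let $\mathbb{D}\in CC^1(A)$ be the grading operator acting on $H^i$ by multiplication by $i$; a one-line computation gives $[\mathbb{D},\mu^j]=(2-j)\,\mu^j$. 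I claim that if $A$ admits a cocycle $\eta\in CC^1(A)$ with $[\mu,\eta]=0$ whose length-one component $\eta^1$ induces a nonzero multiple of $\mathbb{D}$ on cohomology, then $A$ is formal. Indeed, writing $\eta=c\,\mathbb{D}+\eta^{\ge 2}$ and expanding $0=[\mu,\eta]$ by tensor length, the lowest surviving higher product $\mu^{j_0}$ with $j_0\ge 3$ satisfies $c(j_0-2)\,\mu^{j_0}=-[\mu^2,\eta^{j_0-1}]$, a bar coboundary for the associative algebra $(H,\mu^2)$. Since $c(j_0-2)\neq 0$, the class of $\mu^{j_0}$ in $HH^2(H,H)$ vanishes, and the $A_\infty$-isomorphism generated by $\tfrac{1}{c(j_0-2)}\eta^{j_0-1}$ deletes $\mu^{j_0}$ without disturbing lower products; inducting on $j_0$ (transporting $\eta$ along the successive isomorphisms) kills all $\mu^{j\ge 3}$. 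The division by $j_0-2$ is precisely where characteristic zero enters, matching the hypothesis on $\bk$.

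The bulk of the work is then to build such an $\eta$ by symplectic geometry. On the (non-minimal) Floer $A_\infty$-algebra I would define a Hochschild cochain $\eta=\{\eta^s\}$ whose length-$s$ component counts rigid holomorphic discs with boundary on $\bigcup_\wp L_\wp$, with $s$ ordered boundary inputs and one output, carrying one extra interior marked point constrained by the conormal condition at infinity along the boundary divisor $D=\bar{\scrY}_k\setminus\scrY_k$. The degree shift built into that condition is what makes $\eta$ have Hochschild degree one. That $\eta$ is a cocycle, $[\mu,\eta]=0$, should follow from the usual codimension-one analysis of the associated one-dimensional moduli spaces: the strip-breaking strata reproduce the two sides of $[\mu,\eta]$, and one must verify that no other degenerations contribute. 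Finally, an index computation---identifying the algebraic intersection number of a disc with $D$ with its Maslov index, hence with the cohomological degree---should show that $\eta^1$ induces the grading operator $\mathbb{D}$ up to a nonzero scalar, which is exactly what the criterion requires.

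To conclude, I would transport $\eta$ to the minimal model by homological perturbation, preserving both the cocycle property and the leading term, and then apply the criterion of the first paragraph. The hard part will be the construction of $\eta$ and the proof of its cocycle property in the partial compactification: because $\bar{\scrY}_k$ is only partially compactified, with delicate geometry along $D$, one must establish the appropriate transversality and Gromov compactness for the conormal moduli problem, exploit positivity of intersection with the (almost) complex divisor $D$ to constrain bubbling, and rule out the unwanted codimension-one degenerations (discs slipping into $D$, or the conormal constraint degenerating) so that the naive signed count is genuinely closed and its class in $HH^1$ is independent of all choices. Realizing $\bar{\scrY}_k$ as the Hilbert scheme of a compactified $A_{2k-1}$-Milnor fibre is what should make this tractable, since it affords explicit control of $D$ and of the holomorphic discs near infinity.
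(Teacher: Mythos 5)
Your first paragraph is essentially correct and reproduces the paper's own algebraic engine: your criterion is Seidel's purity criterion (Theorem~\ref{thm:Pure}, Corollary~\ref{cor:Pure2}), proved there by the same order-by-order induction, with $\Phi^{k}=b^{k}/(1-k)$ playing the role of your $\eta^{j_0-1}/c(j_0-2)$; and your disc-counting cochain is essentially the construction of Section~\ref{Sec:Generalities} (your conormal condition is the variant of Remark~\ref{Rem:Conormal}; the paper instead requires the disc to pass through $D_0$ at $z_0=0$ and through a push-off $D'_0$ at $z_1\in(0,1)$). The genuine gap is your final claim that ``an index computation'' shows $\eta^1$ induces a multiple of the Euler operator. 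Every disc contributing to the cochain has intersection number exactly $1$ with $D_0$, equivalently Maslov index $2$ (cf.\ Lemma~\ref{lem:vir_dim_Mod}: $D_0$ represents twice the Maslov class), \emph{independently of the degree of the Floer generators}; so the intersection number with $D$ cannot be identified with the cohomological degree. Index theory yields only that $\eta^1$ preserves the grading. The eigenvalue of $\eta^1$ on each graded piece is a signed count of honest Maslov-two discs, a priori an arbitrary element of $\bar{\bk}$, and the paper flags precisely this as the hard point (``the crucial purity condition seems hard to establish without appeal to specific geometric features''). Establishing it consumes Sections~\ref{Sec:Milnor}--\ref{Sec:HilbScheme}: anti-holomorphic involutions together with Solomon's orientation formula (Lemmas~\ref{lem:Solomon} and \ref{lem:cancel}) show that the two discs over each point of a matching path contribute with cancelling signs to $b^0$ but equal signs to $b^1$ on $HF^2(L,L)$, giving purity for a single real sphere and then for the product Lagrangians $L_\wp$ (Lemma~\ref{lem:SinglePure}); purity for pairs $(\wp,\wp')$ then requires the Floer-theoretic computations of Section~\ref{Sec:Slice} --- simplicity of $HF^*(L_\wp,L_{\wp'})$ as a module over the self-Floer algebras (Corollary~\ref{Lem:TopClassFor2}) and the constant-triangle argument showing the product through the plait matching hits the top class (Corollary~\ref{cor:hittopclass}).

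Two further omissions compound this. First, your cochain is not closed as a naive count: in the codimension-one boundary, besides strip breaking, there are the $z_1 \to 0$ strata of Lemma~\ref{lem:description_boundary_moduli}, contributing $\CO(GW_1)$ and $\sCO([B_0])$; these do not vanish at the level of moduli spaces, so the paper must prove $GW_1=0 \in H^2(\scrY_k)$ (Lemmas~\ref{GWAkvanishes} and \ref{Cor:GWZeroHilb}) and correct the count by bounding cochains, $b_D=\tilde{b}_D+\CO(gw_1)+\sCO(\beta_0)$ (Proposition~\ref{prop:ConstructDilation}), rather than merely ``rule out'' the degenerations. Second, $b^0|_L$ need not vanish at chain level, so each $L_\wp$ must be equipped with an equivariant structure $c_{L_\wp}$, and by Lemma~\ref{lem:WeightGradings} the weights on $HF^*(L_\wp,L_{\wp'})$ for $\wp\neq\wp'$ are only defined up to a shift depending on $(c_{L_\wp},c_{L_{\wp'}})$; your assertion ``weight equals degree up to a scalar'' is therefore not even well-posed until these structures are chosen coherently, which the paper does by an induction anchored at the plait matching (Lemma~\ref{lem:choose_equivariant_1}). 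Without these ingredients your argument stalls exactly where the real work of the paper begins.
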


The categories $\scrF(\scrY_k)$ for different $k$ are related by various canonical bimodules $\cup_i$ and $\cap_i$, $1\leq i\leq 2k-1$, defined by symplectic analogues of the cup and cap bimodules of \cite{Khovanov:functor}.  More precisely, there are  Lagrangian correspondences $\Gamma_i \subset \scrY_k \times \scrY_{k-1}$, the graphs of co-isotropic vanishing cycles, which define bimodules via the quilt formalism of Mau, Wehrheim and Woodward \cite{WW,MWW}. Such correspondences play an implicit role in the construction of the link invariant $\Kh_{symp}(\kappa)$ in \cite{SS}, and were further considered in the work of Rezazadegan  \cite{Reza}.  The sequel to this paper \cite{AbSm:Kh=Floer} proves that the bimodules $\cup_i$ and $\cap_i$ are themselves formal over any $\bk$ of characteristic zero, and proves that the symplectic arc algebra is isomorphic over $\bZ$ to Khovanov's diagrammatically defined arc algebra \cite{Khovanov} (the latter isomorphism over $\bZ_2$ is the main result of \cite{Reza2}).  These results together imply that symplectic and combinatorial Khovanov cohomologies have the same total rank over characteristic zero fields. 

Theorem \ref{thm:Formal} is inferred from a general formality criterion for $A_{\infty}$-categories, Theorem \ref{thm:Pure},  which we learned from Paul Seidel. The input for that criterion is a degree one Hochschild cohomology class $b \in HH^1(\scrF(\scrY_k), \scrF(\scrY_k))$ satisfying a certain purity condition; this class plays the role in our story of a ``dilation" in symplectic cohomology, as studied by Seidel and Solomon in \cite{Seidel-Solomon}.  The construction of such a  Hochschild class, which arises by counting holomorphic curves in a partial compactification of $\scrY_k$ satisfying a conormal-type condition at infinity, is first carried out in some generality, see Section \ref{Sec:Generalities}, although the crucial purity condition seems hard to establish without appeal to specific geometric features of our situation.

\noindent \textbf{Outline of the paper.}  Section \ref{Sec:Formality} contains  algebraic background and the abstract formality results.  Section \ref{Sec:Generalities} explains a general mechanism for building degree one Hochschild cocycles from partial compactifications, and Section \ref{Sec:Milnor} applies this machinery in the model case of the Milnor fibre of the $A_n$-singularity. Sections \ref{Sec:Slice} recalls the spaces $\scrY_k$ and their embeddings into Hilbert schemes, introduces the Lagrangians $L_\wp$, and makes a preliminary investigation of the Floer product in the symplectic arc algebra. Section \ref{Sec:HilbScheme} studies the holomorphic curve theory of the Hilbert scheme of the Milnor fibre, constructs the required Hochschild cocycle, and establishes Theorem \ref{thm:Formal}. 

\subsection*{Conventions}
 When discussing categorical constructions (for instance the Fukaya category, its Hochschild cohomology, etc) we work over a coefficient field $\bk$. At certain points, it will be essential to specialise to the case in which $\bk$ has characteristic zero, but for clarity we impose that hypothesis only when required.

\Acknowledgements We are grateful to Paul Seidel for helpful suggestions concerning this project, over many years and related to numerous different strategies.   Conversations with Sabin Cautis, Ciprian Manolescu, and Catharina Stroppel were also influential. We would finally like to thank the referee for their useful comments.   


\section{Formality results} \label{Sec:Formality}

Let $\scrA$ be a $\bZ$-graded $A_{\infty}$-algebra over $\bk$, equipped with $A_{\infty}$ products
\begin{equation}
  \mu_{\scrA}^{d} \co \scrA^{\otimes d} \to \scrA, \, \, 1 \leq d
\end{equation}
of degree $2-d$. The first two operations satisfy the Leibniz equation\footnote{Our sign conventions follow those of Seidel in \cite{FCPLT}: elements of $\scrA$ are equipped with the reduced degree $||a|| = |a| -1$, and operators act on the right.}:
\begin{equation} \label{eq:Leibnitz}
  \mu_{\scrA}^{1}(  \mu_{\scrA}^{2}(a_2,a_1)) +   \mu_{\scrA}^{2}( a_2,\mu_{\scrA}^{1}( a_1)) + (-1)^{|a_1|-1} \mu_{\scrA}^{2}( \mu_{\scrA}^{1}(a_2), a_1)  = 0.
\end{equation}

The cohomology groups with respect to $\mu_{\scrA}^{1}$, denoted $A = H(\scrA)$, naturally form an $A_{\infty}$-algebra for which all operations vanish except the product, which is induced by $\mu_{\scrA}^{2} $. 
\begin{defn}
$\scrA$ is formal if it is quasi-isomorphic to $A$.
\end{defn}

This section formulates and proves a necessary and sufficient condition for the formality of an $A_{\infty}$-algebra, due to Paul Seidel, in terms of the existence of a particular kind of degree one Hochschild cohomology class.

\begin{rem}
The quintessential result about formality is \cite{DGMS} which proves that the de Rham cochains of a K\"ahler manifold are formal as a \emph{commutative dg-algebra}. While we use a more abstract language, the notion of bigrading plays an essential role both in the formality criterion that we prove and in the classical result of \cite{DGMS}.
\end{rem}

\subsection{Formality for algebras}\label{Sec:FormalityAlgebras}

Recall the Hochschild cochain complex has chain groups
\begin{equation} \label{eq:Hoch_cochain_complex}
CC^d(\scrA,\scrA) =  \prod_{s \geq 0} \Hom_{d}( \scrA[1]^{\otimes s}, \scrA), 
\end{equation}
where $\scrA[1]$ is the graded vector space obtained by shifting the degree of all elements of $\scrA$ \emph{down} by $1$, i.e. equipping them with the reduced degree, and $\Hom_{d}$ is the space of $\bk$-linear maps of degree $d$. 

There is a convolution operation
\begin{equation}
(\sigma \circ \tau)^d(a_d,\ldots,a_1) = \sum_{i,j} (-1)^{(|\tau|-1)\dagger_i} \sigma^{d-j+1}(a_d,\ldots, \tau^j(a_{i+j},\ldots, a_{i+1}), a_i,\ldots, a_1)
\end{equation}
where we use the standing notation
\begin{equation}
  \dagger_i = \sum_{k=1}^i (|a_k|-1).
\end{equation}
 The $A_{\infty}$-structure operations $\mu_{\scrA} = \{\mu_{\scrA}^k\}_{k \geq 1}$  define an element
\begin{equation}
\mu_{\scrA} \in CC^2(\scrA,\scrA),
\end{equation}
and the $A_{\infty}$ equation which specialises to Equation \eqref{eq:Leibnitz} when all higher products $\{\mu_{\scrA}^k\}_{k \geq 3} $  vanish is
\begin{equation}
  \mu_{\scrA} \circ \mu_{\scrA} = 0.
\end{equation}
The \emph{Hochschild cohomology} of $\scrA$, denoted $HH^{*}(\scrA, \scrA)$ is the cohomology of the Hochschild cochain complex \eqref{eq:Hoch_cochain_complex} with respect to the differential:

\begin{align} \label{eq:CC_differential}
\delta: CC^{d-1}(\scrA,\scrA) & \rightarrow CC^{d}(\scrA,\scrA) \\
(\delta \sigma)^d(a_d,\dots,a_1) & = \sum_{i,j} (-1)^{(|\sigma|-1)\dagger_{i}}
\mu^{d-j+1}_\scrA(a_d,\dots,\sigma^j(a_{i+j},\dots,a_{i+1}),\dots,a_1) \\ \notag
& + \sum_{i,j} (-1)^{|\sigma|+ \dagger_{i}} \sigma^{d-j+1}(a_d,\dots,\mu_\scrA^j(a_{i+j},\dots,a_{i+1}),\dots,a_1). 
\end{align}

Specialising further to the case when $\scrA$ is a graded algebra, i.e. when all operations vanish except for $\mu_{\scrA}^{2}$, and denoting this product by concatenation, we obtain, up to a change in sign conventions,  the usual definition of the Hochschild differential for graded algebras:
\begin{equation}\label{Eqn:DiffInAlgCase}
\begin{aligned}
 (\delta\phi)^d(a_d,\dots,a_1) &  =  a_d \phi^{d-1}(a_{d-1},\dots,a_1) + (-1)^{(|\phi|-1)\dagger_{1}} \phi^{d-1}(a_j,\dots,a_2)a_1 \\
& \quad + \sum_i (-1)^{|\phi| + \dagger_{i}} \phi^{d-1}(a_d,\dots,a_{i+2}a_{i+1},\dots,a_1).
\end{aligned}
\end{equation}

\begin{Definition} An \nc-vector field  is a cocycle $b\in CC^1(\scrA,\scrA)$.
\end{Definition}
\begin{rem} \label{Rem:justify-nc}
 In the definition, \nc \   stands for non-commutative. The terminology is motivated by the following example: if $\scrA$ is an $A_{\infty}$-refinement of the category of coherent sheaves on a smooth algebraic variety, then algebraic vector fields on the underlying space (i.e. sections of the tangent bundle) give rise to elements of $ HH^1(\scrA,\scrA) $. 
\end{rem}

On a graded algebra, we have a canonical \nc-vector field called the \emph{Euler vector field}, which multiplies the graded piece $A^i \subset A$ of $A$ by $i$:
\begin{equation}
\label{Eqn:EulerField}
e: A^{i} \rightarrow A^{i}, \quad a \mapsto i \cdot a.
\end{equation}
The fact that multiplication preserves the grading
\begin{equation}
|a_2 a_1| = |a_2| + |a_1|
\end{equation}
implies via Equation \ref{Eqn:DiffInAlgCase} that $e \in CC^1(A,A)$ is a cocycle, hence defines a class in $HH^1(A,A)$ (which has no constant or higher order\footnote{We use ``order" to refer to the arity, i.e. number of inputs, to a multilinear map which is part of a Hochschild cochain.} terms).  

We shall presently see that the presence of an \nc-vector field that induces the Euler vector field on cohomology characterises formal algebras. To state the result precisely, note that there is a natural projection of cochain complexes
\begin{align} \label{eq:image_b_0}
 CC^*(\scrA,\scrA) & \rightarrow \scrA \\
b & \mapsto b^{0}
\end{align}
induced by taking the order-$0$ part of a Hochschild cochain.  Given an element of the kernel of this map, the first order part 
\begin{equation}
  b^{1} \co \scrA \to \scrA
\end{equation}
is a chain map, and hence defines an endomorphism of $A$.

\begin{Definition} \label{def:purity_algebra}
An \nc-vector field $b \in CC^1(\scrA,\scrA)$ is \emph{pure} if $b^0 = 0$, and the induced endomorphism of $A$ agrees with the Euler vector field. \end{Definition}

 If $\scrA$ admits a pure vector field, in a minor abuse of notation we say that $\scrA$ itself is pure.  We learned the following from Paul Seidel; this is the key result which requires that the field have characteristic zero.

\begin{Theorem}[Seidel] \label{thm:Pure} 
Suppose $\bk$ has characteristic zero. 
An $A_{\infty}$-algebra $\scrA$ over $\bk$ is pure if and only if it is formal.
\end{Theorem}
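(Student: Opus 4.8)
The plan is to prove the two implications separately, with the substance concentrated in ``pure $\Rightarrow$ formal''. For the implication formal $\Rightarrow$ pure, I would observe that on the cohomology algebra $A$ the Euler vector field $e$ of \eqref{Eqn:EulerField} is already a pure \nc-vector field, since $e^0=0$ and $e^1=e$ is by construction the Euler field. As the Hochschild cohomology $HH^1$, the projection $b\mapsto b^0$, and the endomorphism of $A$ induced by $b^1$ are all preserved under quasi-isomorphism, any quasi-isomorphism $\scrA\simeq A$ transports $e$ back to a pure vector field on $\scrA$. For the converse I would first reduce to the case where $\scrA$ is minimal, i.e.\ $\mu_{\scrA}^1=0$: both formality and the existence of a pure vector field pass to a minimal model $A=H(\scrA)$, so I may assume $\scrA=A$. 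In that case $H(\scrA)=\scrA$ as graded vector spaces, the chain map $b^1$ coincides with its induced endomorphism, and purity forces $b^1=e$ on the nose.

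The core of the argument is an obstruction-theoretic induction on the smallest arity $d\geq 3$ for which $\mu_{\scrA}^d\neq 0$; the aim at each stage is to gauge this term away while leaving $\mu_{\scrA}^2$ untouched. Assume then that $\mu_{\scrA}^3=\cdots=\mu_{\scrA}^{d-1}=0$, and read off the arity-$d$ component of the cocycle identity $\delta b=0$. Using $\mu_{\scrA}^1=0$, the vanishing of $\mu_{\scrA}^3,\dots,\mu_{\scrA}^{d-1}$, the condition $b^0=0$, and $b^1=e$, only four convolution terms survive in this arity, and they assemble (up to the signs of \eqref{eq:Leibnitz}) into
\[
  D(b^{d-1}) + \big(\mu_{\scrA}^d\circ e \pm e\circ\mu_{\scrA}^d\big)=0,
\]
where $D$ is the graded-algebra Hochschild differential \eqref{Eqn:DiffInAlgCase} of $(A,\mu_{\scrA}^2)$. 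A one-line computation identifies the parenthesised term with the Lie derivative of the degree-$(2-d)$ operation $\mu_{\scrA}^d$ along the Euler field, namely $(2-d)\,\mu_{\scrA}^d$.

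Because $\bk$ has characteristic zero and $d\neq 2$, I may now divide by $2-d$ to obtain $\mu_{\scrA}^d = D(g)$ with $g$ a nonzero scalar multiple of $b^{d-1}$; in particular the first obstruction class $[\mu_{\scrA}^d]\in HH^2(A,A)$ vanishes. By the standard gauge-fixing lemma, the formal diffeomorphism with trivial linear part generated by $g$ then carries $\mu_{\scrA}$ to an $A_{\infty}$-structure with the same $\mu_{\scrA}^2$, with $\mu_{\scrA}^d$ eliminated, and with the remaining products changed only in arities $>d$. This diffeomorphism pushes $b$ forward to a cocycle for the new structure whose linear part is still $e$, so the transported field is again pure and the inductive hypothesis is restored with $d$ advanced to the next nonvanishing arity. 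Iterating, and passing to the inverse limit --- which is well defined since the products in each fixed arity stabilise after finitely many steps --- produces a formal diffeomorphism from $\scrA$ to $(A,\mu_{\scrA}^2)$, which is the desired formality.

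The single step carrying all the weight is the extraction of $(2-d)\,\mu_{\scrA}^d=-D(b^{d-1})$ from $\delta b=0$: this is exactly where the purity hypothesis $b^1=e$ converts the mere \emph{existence} of a Hochschild class into the \emph{exactness} of every higher obstruction, and exactly where invertibility of $2-d$, hence $\mathrm{char}\,\bk=0$, is indispensable. I expect the main technical obstacle to lie not in this identity but in the bookkeeping that keeps the induction self-sustaining --- verifying that each gauge transformation simultaneously fixes $\mu_{\scrA}^2$, preserves minimality, and returns a pure vector field, so that the same arity-$d$ relation is available at every stage --- together with the routine check that the successive diffeomorphisms converge in the inverse limit.
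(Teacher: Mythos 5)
Your proposal is correct and follows essentially the same route as the paper: the arity-$d$ component of $\delta b=0$, with $b^0=0$ and $b^1=e$, yields $(2-d)\,\mu^d_{\scrA}=\pm D(b^{d-1})$, and the gauge transformation $\Phi^{d-1}=b^{d-1}/(2-d)$ (the paper's $\Phi^k=b^k/(1-k)$ with $k=d-1$) kills $\mu^d$ while preserving $\mu^2$, with the infinite composition stabilising arity by arity exactly as in Lemma \ref{lem:formality_induction_step_algebra} and the proof of Theorem \ref{thm:Pure}. The only divergence is that you drop the paper's auxiliary hypothesis that the vector field be linear to order $k-1$ (maintained there via Lemma \ref{lem:new_b_linear_order_k+1} and the identity $\Psi^k=-\Phi^k$), which is harmless: since $\mu^3_{\scrA}=\dots=\mu^{d-1}_{\scrA}=0$, any intermediate components $b^j$ with $1<j<d-1$ pair only with vanishing products in the arity-$d$ component of $\delta b$, so your leaner induction hypothesis suffices.
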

One direction holds trivially, since the Euler vector field itself defines a pure vector field on an ordinary algebra. To show that purity implies formality, we begin by noting that both properties are invariant under quasi-isomorphisms. In particular, it suffices to prove the result for a minimal $A_{\infty}$-algebra, i.e. one for which $\mu^{1}_{\scrA} $ vanishes. In this case, we have an isomorphism $\scrA \simeq A$ of graded vector spaces, but we view $\scrA$ as carrying its full $A_{\infty}$-structure and $A$ as carrying only the multiplication $\mu^2_{\scrA}$.

It is useful at this stage to recall that a formal diffeomorphism is a (collection of) map(s)
  \begin{equation}
\Phi = \{\Phi^d\} \co  \bigoplus_{1 \leq d} A^{d} \to A
  \end{equation}
  which is arbitrary subject to the constraint that $\Phi^1: A \to A$ be an isomorphism. 
As discussed in \cite[Section 1c]{FCPLT}, there is a unique $A_{\infty}$-structure $\scrA_{\Phi}$ on the vector space $A$ such that $\Phi$ defines an $A_{\infty}$-homomorphism from $\scrA$ to $\scrA_{\Phi} $. The higher products which comprise $\scrA_{\Phi}$  are obtained by recursively solving the $A_{\infty}$-equation for a functor.  For a minimal $A_{\infty}$ algebra, formality is equivalent to the existence of a formal diffeomorphism, whose linear term is the identity, such that all higher products on $\scrA_{\Phi}$ vanish.

The construction of the required formal diffeomorphism will be done by induction on the order of vanishing of the higher products on $\scrA$.  To this end, we introduce the notion of a minimal algebra which is formal to order $k$, i.e. such that
\begin{equation}
  \mu^d_{\scrA} = 0 \textrm{ for } 2 <  d \leq k,
\end{equation}
and a pure \nc-vector field $b \in CC^1(\scrA, \scrA)$ which is linear to order $k -1$, meaning that
\begin{equation}
b^i = 0  \textrm{ for } 1 < i \leq k-1.
\end{equation}
\begin{Lemma} \label{lem:formality_induction_step_algebra} 
Suppose $\mathrm{char}(\bk)=0$. 
If $(\scrA,b)$ is a pair consisting of a minimal algebra and a pure \nc-vector field which are respectively  formal to order $k$ and linear to order $k-1$,  there is a formal diffeomorphism $\Phi$ which agrees with the identity to order $k-1$ such that  $\scrA_{\Phi} $  is formal to order $k+1$.

\end{Lemma}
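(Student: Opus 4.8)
The plan is to isolate the first potentially nonzero higher product $\mu^{k+1}_{\scrA}$, show that purity of $b$ forces it to be a Hochschild coboundary, and then absorb it into a formal diffeomorphism whose only nontrivial higher term is built from $b^k$. Since $\scrA$ is minimal and formal to order $k$, we have $\mu^1_{\scrA}=0$, $\mu^d_{\scrA}=0$ for $2<d\le k$, and $A=H(\scrA)=\scrA$ as graded vector spaces, with $\delta=[\mu^2_{\scrA},-]$ the Hochschild differential of the graded algebra $(A,\mu^2_{\scrA})$. Extracting the length-$(k+1)$ component of the $A_{\infty}$-relation $\mu_{\scrA}\circ\mu_{\scrA}=0$ and using the vanishing of the intermediate products shows at once that $\delta\mu^{k+1}_{\scrA}=[\mu^2_{\scrA},\mu^{k+1}_{\scrA}]=0$, so $\mu^{k+1}_{\scrA}$ is a cocycle representing a class in $HH^2(A,A)$ of length $k+1$.

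The heart of the argument is to use purity to show this class vanishes. First I would record the weight computation for the Euler field: expanding the Gerstenhaber bracket of $\mu^{k+1}_{\scrA}$ against $e=b^1$ directly from the definition of the Euler field gives
\[
[\mu^{k+1}_{\scrA},e]=(k-1)\,\mu^{k+1}_{\scrA},
\]
the scalar $k-1$ being $-(2-(k+1))$, the negative of the internal degree by which $\mu^{k+1}_{\scrA}$ fails to preserve the grading. Now I would exploit that $b$ is an \nc-vector field, i.e. a Hochschild cocycle $[\mu_{\scrA},b]=0$. Since $b^i=0$ for $1<i\le k-1$, the only contributions to the length-$(k+1)$ part of this identity come from $[\mu^2_{\scrA},b^k]$ and $[\mu^{k+1}_{\scrA},b^1]$, whence
\[
\delta b^k+(k-1)\,\mu^{k+1}_{\scrA}=0.
\]
Because $\mathrm{char}(\bk)=0$ and $k\ge 2$, the integer $k-1$ is invertible, so $\mu^{k+1}_{\scrA}=-(k-1)^{-1}\delta b^k$ is a coboundary. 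This is precisely the step where both the purity hypothesis and the characteristic-zero hypothesis are indispensable.

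Finally I would take the formal diffeomorphism $\Phi$ with $\Phi^1=\id$, $\Phi^i=0$ for $1<i<k$, and $\Phi^k$ a suitable scalar multiple of $b^k$ (so that $\Phi$ agrees with the identity to order $k-1$, as required). The $A_{\infty}$-homomorphism equation relating $\mu_{\scrA}$ and $\mu_{\scrA_{\Phi}}$ shows that $\mu^d_{\scrA_{\Phi}}=\mu^d_{\scrA}$ for all $d\le k$, so $\scrA_{\Phi}$ is still minimal and formal to order $k$; and at length $k+1$ all terms involving the now-trivial intermediate data collapse, leaving $\mu^{k+1}_{\scrA_{\Phi}}=\mu^{k+1}_{\scrA}\pm\delta\Phi^k$. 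Choosing the scalar in $\Phi^k=c\,b^k$ so that $\pm\delta\Phi^k=-\mu^{k+1}_{\scrA}$ — which is possible by the exactness just established — yields $\mu^{k+1}_{\scrA_{\Phi}}=0$, i.e. $\scrA_{\Phi}$ is formal to order $k+1$.

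I expect the main obstacle to be bookkeeping rather than conceptual: verifying the weight identity $[\mu^{k+1}_{\scrA},e]=(k-1)\mu^{k+1}_{\scrA}$ and the length-$(k+1)$ reduction of the functor equation with the correct Koszul signs in Seidel's conventions, and confirming that no further lower-order corrections survive to contaminate the clean coboundary formula. The genuinely delicate point, around which the whole induction turns, is the invertibility of $k-1$: this is what converts the purely formal cocycle relation coming from $b$ into an explicit primitive for $\mu^{k+1}_{\scrA}$.
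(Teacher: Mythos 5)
Your proposal is correct and follows essentially the same route as the paper: you extract the length-$(k+1)$ component of the cocycle equation for $b$, use $b^1=e$ and the degree of $\mu^{k+1}_{\scrA}$ to obtain $\delta b^k+(k-1)\mu^{k+1}_{\scrA}=0$, and then absorb $\mu^{k+1}_{\scrA}$ via the formal diffeomorphism with $\Phi^k$ a multiple of $b^k$ (the paper fixes the scalar as $\Phi^k=b^k/(1-k)$ at the outset, which is the value your final step determines). The only discrepancy is cosmetic: your preliminary observation that $\mu^{k+1}_{\scrA}$ is a Hochschild cocycle comes from the length-$(k+2)$ (not length-$(k+1)$) component of $\mu_{\scrA}\circ\mu_{\scrA}=0$, but this observation is not needed anyway, since your coboundary formula implies it.
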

\begin{proof} We define $\Phi$ by the formula
\begin{equation}\label{eq:construct_formal_diff}
\Phi^1 = \Id; \quad \Phi^k = \frac{b^k}{1-k}; \quad\Phi^d = 0 \ \textrm{for} \ d \neq 1, k.
\end{equation}
We shall now prove that this choice ensures that $\scrA_{\Phi} $ is formal to order $k+1$.  Consider the equation $\delta b = 0 \in CC^2(\scrA, \scrA)$, and recall that the cocycle $b$ has graded degree $|b|=1$.  The condition that $(\delta b)^{k+1}(a_{k+1},\ldots, a_1) = 0$ reads as follows (for simplicity we write multiplication by $\cdot$): 
\begin{multline}\label{db=0}
a_{k+1} \cdot b^k(a_k, \ldots, a_1) + b^k(a_{k+1},\ldots,a_2)\cdot a_1 + \sum_i \mu_{\scrA}^{k+1} (a_{k+1},\ldots, b^1(a_i),a_{i-1},\ldots,a_1)  \\
-\sum (-1)^{\dagger_i} b^k(a_{k+1},\ldots, a_{i+2}\cdot a_{i+1},\ldots,a_1) - b^1(\mu^{k+1}_{\scrA}(a_{k+1},\ldots,a_1)) = 0.
\end{multline}
Moreover, we know that $b^1$ co-incides with the Euler vector field, and 
\begin{equation}
|\mu^{k+1}_{\scrA}(a_{k+1},\ldots,a_1)| \ = \  1- k +\sum |a_i| 
\end{equation}  
Therefore, the total coefficient of $\mu^{k+1}_{\scrA}(a_{k+1},\ldots,a_1)$ in Equation \eqref{db=0} is equal to $k-1$, and we conclude that
\begin{multline} \label{eq:vanishing_expression_db_k}
  a_{k+1} \cdot b^k(a_k, \ldots, a_1) + b^k(a_{k+1},\ldots,a_2)\cdot a_1  \\
-\sum (-1)^{\dagger_i} b^k(a_{k+1},\ldots, a_{i+2}\cdot a_{i+1}, \ldots,a_1) + (k-1) \mu_{\scrA}^{k+1} (a_{k+1},\ldots, a_i,a_{i-1},\ldots,a_1)   = 0.
\end{multline}

We shall use this relation to prove that the $(k+1)$\st higher product  on $\scrA_{\Phi} $ vanishes. The lowest order non-trivial condition that $\Phi$ defines an $A_{\infty}$-functor $\scrA \rightarrow \scrA_{\Phi}$ gives the equation
\begin{equation}\label{functorconstraint}
\begin{aligned}
a_{k+1} \cdot \Phi^k(a_k, \ldots, a_1) + \Phi^k(a_{k+1},\ldots, a_2)\cdot a_1 + \mu_{\scrA_{\Phi}}^{k+1}(a_{k+1},\ldots, a_1)\  \\
 -  \sum (-1)^{\dagger_i} \Phi^k (a_{k+1},\ldots, a_{i+2}\cdot a_{i+1}, \ldots, a_1) - \mu^{k+1}_{\scrA}(a_{k+1},\ldots,a_1) =0 .
\end{aligned}
\end{equation}
We  have used the fact that the multiplication is the same $\cdot$ in both $\scrA$ and $\scrA_{\Phi}$ and that $\Phi^1 = \id$.  In particular,  $\mu^{k+1}_{\scrA_{\Phi}}(a_{k+1},\ldots,a_1)$ vanishes if and only if
\begin{equation}
\begin{aligned}
a_{k+1} \cdot \Phi^k(a_k, \ldots, a_1) + \Phi^k(a_{k+1},\ldots, a_2)\cdot a_1  \\
 -  \sum (-1)^{\dagger_i} \Phi^k (a_{k+1},\ldots, a_{i+1}\cdot a_i, \ldots, a_1) - \mu^{k+1}_{\scrA}(a_{k+1},\ldots,a_1) =0 .
\end{aligned}
\end{equation}
The choice $\Phi^k = b^k/(1-k)$ exactly guarantees that this is true, as a consequence of Equation \eqref{eq:vanishing_expression_db_k}.

Finally, note that the choice of $\Phi^i$ for $i\leq k$ in \eqref{eq:construct_formal_diff}  ensures that the $A_{\infty}$-structure $\scrA_{\Phi}$ agrees with the given structure $\scrA$ up to order $k$.  We deduce that the products $\mu^j_{\scrA_{\Phi}}$ vanish for $j< k+1$ as well as for $j=k+1$, hence $\scrA_{\Phi}$ is indeed formal to order $k+1$. 
\end{proof}

The next step in the induction procedure is to show that $\scrA_{\Phi} $ is naturally equipped with an \nc-vector field which is linear to order $k$. To do this, we use the fact that an $A_{\infty}$-quasi-isomorphism is always invertible \cite[Corollary 1.14]{FCPLT}.  Applying this to $\Phi: \scrA \rightarrow \scrA_{\Phi}$ gives a functor
\begin{equation}
\Psi: \scrA_{\Phi} \rightarrow \scrA
\end{equation} 
which for general reasons has the feature that
\begin{equation}
\Psi^1 = \Id; \quad \Psi^i = 0 \ \textrm{for} \ 2 \leq i < k; \quad \Psi^k = - \Phi^k.
\end{equation}
There are now maps
\begin{equation}
CC^*(\scrA,\scrA) \stackrel{\Phi}{\longrightarrow} CC^*(\scrA,\scrA_{\Phi}) \stackrel{\Psi}{\longrightarrow} CC^*(\scrA_{\Phi},\scrA_{\Phi}),
\end{equation}
where the middle term is the Hochschild complex of $\scrA$ with coefficients in the bimodule $\scrA_{\Phi}$ induced by the functor $\Phi$ (see \cite[Section 2.9]{Ganatra} for background on the Hochschild complex with coefficients in a bimodule).  Explicitly, the maps are given by
\begin{align*}
(\Phi(\phi))^k(a_k,\ldots,a_1) & = \sum_{i,b} (-1)^{(|\phi|-1) \cdot \dagger_i} \Phi^{k-i}(a_k,\ldots, a_{k+b+1},\phi^i(a_{b+i},\ldots, a_b),a_{b-1},\ldots, a_1) \\  
(\Psi(\psi))^k(a_k,\ldots,a_1) & = \sum_{i_1 + \cdots + i_s = k} \psi^s(\Psi^{i_1}(a_k,\ldots, a_{k-i_1}), \ldots,\Psi^{i_s}(a_{i_s},\ldots,a_1))
\end{align*}

 We define $b_{\Phi} \in CC^1(\scrA_{\Phi},\scrA_{\Phi})$ to be the image of $b$.   The proof that this satisfies the required conditions, i.e. that it agrees with the Euler field to order $k$, is an easy explicit computation left to the reader; the non-trivial fact $b_{\Phi}^k = 0$ follows from $\Phi^k = - \Psi^k$.
\begin{Lemma} \label{lem:new_b_linear_order_k+1}
  If $\scrA $, $b$, and $\scrA_{\Phi} $ are as in Lemma \ref{lem:formality_induction_step_algebra}, $b_{\Phi}$ is a pure \nc-vector field which is linear to order $k$. \qed
\end{Lemma}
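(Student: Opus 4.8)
The plan is to verify directly that the pushforward cochain $b_{\Phi} = \Psi \circ \Phi_* (b) \in CC^1(\scrA_{\Phi}, \scrA_{\Phi})$ satisfies the three defining conditions: it is a cocycle, it is pure (i.e. $b_{\Phi}^0 = 0$ and its linear term induces the Euler field on cohomology), and it is linear to order $k$ (i.e. $b_{\Phi}^i = 0$ for $1 < i \leq k$). The fact that $b_{\Phi}$ is a cocycle is formal: the composite $\Psi \circ \Phi$ is a chain map on Hochschild complexes (pushforward along $A_\infty$-functors commutes with the Hochschild differential), so it carries the cocycle $b$ to a cocycle. Since $\scrA_{\Phi}$ and $\scrA$ agree as graded algebras (both having underlying product $\mu^2_{\scrA}$, with $\scrA_{\Phi}$ formal to order $k+1$ by Lemma \ref{lem:formality_induction_step_algebra}), purity of $b_{\Phi}$ will follow once I control its low-order terms.

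First I would record the explicit form of the functorial maps. Because $\Phi^1 = \Psi^1 = \Id$ and the only nonzero higher components of $\Phi$ and $\Psi$ occur in degree $k$ (with $\Psi^k = -\Phi^k$), the pushforward formula for Hochschild cochains simplifies dramatically in low order. For the order-$0$ and order-$1$ parts, every contributing term must involve only the identity components of $\Phi$ and $\Psi$ (since inserting a degree-$k$ component would raise the arity above $1$), so $b_{\Phi}^0 = b^0 = 0$ and $b_{\Phi}^1 = b^1$, which is the Euler field. This establishes purity. For the intermediate terms $b_{\Phi}^i$ with $1 < i < k$, the same arity bookkeeping shows that no nonidentity functor component can appear, so $b_{\Phi}^i = b^i = 0$ by the hypothesis that $b$ is linear to order $k-1$.

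The crux is the order-$k$ term. Here $b_{\Phi}^k$ receives contributions in which exactly one of $\Phi^k$, $\Psi^k$, or $b^k$ appears (the others being identities), since the arities must sum correctly and no two higher components fit simultaneously. By hypothesis $b^k = 0$ (as $b$ is linear to order $k-1$, so its first genuinely higher term is at order $k$; this is precisely what we want to show vanishes after pushforward, but the hypothesis is that it is linear \emph{to order} $k-1$, so $b^k$ need not vanish a priori). Let me instead organize the computation so that the surviving contributions to $b_{\Phi}^k$ are exactly the term from $\Psi^k$ composed with the identity part of $\Phi$, the term from $\Phi^k$, and the term from $b^k$ itself. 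Using $\Psi^k = -\Phi^k$ and tracking the Koszul signs, the $\Phi^k$ and $\Psi^k$ contributions assemble into $b^1 \circ \Phi^k - \Phi^k \circ (\text{Euler on inputs})$, which by the definition $\Phi^k = b^k/(1-k)$ and the grading-compatibility of $b^1 = e$ telescopes against the raw $b^k$ term. The hard part is precisely this sign-and-grading bookkeeping: one must verify that the weight $(1-k)$ introduced by passing the Euler field through $\Phi^k$ exactly cancels the defining denominator, yielding $b_{\Phi}^k = b^k + (1-k)\Phi^k = b^k - b^k = 0$. I would carry this out by applying the Euler field's defining property $e(a) = |a|\,a$ together with the additivity $|\mu^2(a_2,a_1)| = |a_2| + |a_1|$, exactly as in the derivation of Equation \eqref{eq:vanishing_expression_db_k}, and then confirming that the reduced-degree signs $\dagger_i$ match across the three terms. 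Once this cancellation is checked, all conditions of Definition \ref{def:purity_algebra} hold for $b_{\Phi}$ at the appropriate orders, completing the lemma.
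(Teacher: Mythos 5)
Your strategy coincides with the paper's own: the paper defines $b_{\Phi}$ as the image of $b$ under the composite $CC^*(\scrA,\scrA) \to CC^*(\scrA,\scrA_{\Phi}) \to CC^*(\scrA_{\Phi},\scrA_{\Phi})$, declares the verification ``an easy explicit computation left to the reader'', and records that the one non-trivial point, $b_{\Phi}^k = 0$, follows from $\Phi^k = -\Psi^k$. Your identification of the three surviving contributions at order $k$ (the raw $b^k$, the insertion of $b^1$ into $\Phi^k$, and $b^1$ applied to $\Psi^k$), and your treatment of the orders $0$, $1$ and $1 < i < k$, are all correct.

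However, the sign bookkeeping in the crux cancellation --- which you yourself flag as the hard part --- is wrong as written, and your final line reaches $0$ only through a second, compensating slip. With the paper's two maps, the insertion term enters with a plus sign,
\begin{equation}
\sum_i \Phi^k(a_k,\ldots,b^1(a_i),\ldots,a_1) = \Bigl(\sum_i |a_i|\Bigr)\,\Phi^k(a_k,\ldots,a_1),
\end{equation}
while the term coming from $\Psi^k$ is
\begin{equation}
b^1\bigl(\Psi^k(a_k,\ldots,a_1)\bigr) = -\,b^1\bigl(\Phi^k(a_k,\ldots,a_1)\bigr) = -\Bigl(1-k+\sum_i |a_i|\Bigr)\,\Phi^k(a_k,\ldots,a_1),
\end{equation}
using $\Psi^k=-\Phi^k$ and $|\Phi^k(a_k,\ldots,a_1)| = 1-k+\sum_i |a_i|$. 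Their sum is $(k-1)\,\Phi^k$, i.e.\ the commutator is $\Phi^k\circ(\textrm{Euler on inputs}) - b^1\circ\Phi^k$ with coefficient $k-1$ --- the negative of your expression $b^1\circ\Phi^k - \Phi^k\circ(\textrm{Euler on inputs}) = (1-k)\Phi^k$. This is exactly the same commutator that produces the coefficient $k-1$ in the paper's derivation of Equation \eqref{eq:vanishing_expression_db_k}, which you cite, so your sign is inconsistent with the very computation you invoke. The discrepancy is not cosmetic: since $\Phi^k = b^k/(1-k)$, your coefficient gives $(1-k)\Phi^k = +\,b^k$, so your displayed identity ``$b_{\Phi}^k = b^k + (1-k)\Phi^k = b^k - b^k$'' is false (the middle expression equals $2b^k$). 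The correct computation is
\begin{equation}
b_{\Phi}^k \;=\; b^k + (k-1)\,\Phi^k \;=\; b^k + \frac{k-1}{1-k}\,b^k \;=\; b^k - b^k \;=\; 0.
\end{equation}
Once this sign is repaired, your argument is complete and agrees with the proof the paper intends.
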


To apply these results, we recall the definition of a composition of formal diffeomorphisms
\begin{equation} \label{Eqn:Compose-formal_differo}
(\Phi \circ \Psi)^d(a_d,\ldots, a_1) = \sum_{i,\,  j_1+\cdots+j_i = d} \Phi^{i}(\Psi^{j_1}(a_d,\ldots,a_{d-j_1+1}), \ldots, \Psi^{j_i}(a_{j_i}, \ldots, a_1)).
\end{equation}

We can now give the proof of Seidel's formality criterion.

\begin{proof}[Proof of Theorem \ref{thm:Pure}]
By assumption, we are given an algebra which is formal to order $2$, and an \nc-vector field which agrees with the Euler vector field to order $1$.  This corresponds to the base case $k=2$ of our inductive procedure.  Lemmata \ref{lem:formality_induction_step_algebra} and \ref{lem:new_b_linear_order_k+1} provide us with sequences $(\scrA_k, b_k)$ of algebras formal to order $k$ and vector fields linear to order $k-1$, together with formal diffeomorphisms $\Phi_{k+1}$ on $A$ such that $\scrA_{k+1}$ is obtained by applying $\Phi_{k+1}$ to $\scrA_{k}$. 

We shall define a formal diffeomorphism as an infinite composition of diffeomorphisms $\Phi_{k}$. To see that this is well defined, consider $(a_k,\ldots,a_1) \in A^{\otimes k}$. Since  $\Phi_{k}$ agrees with the identity to order $k$, we find that
\begin{equation} \label{eq:check_composition_defined}
   \left(\Phi_j \circ \cdots \circ \Phi_3 \right)^{k}(a_k,\ldots,a_1) =  \left(\Phi_k \circ \cdots \circ \Phi_3 \right)^{k}(a_k,\ldots,a_1)
\end{equation}
whenever $j \geq k$. The infinite composition
\begin{equation}
\Phi = \cdots \Phi_{k+1} \circ \Phi_k \circ \Phi_{k-1} \cdots \circ \Phi_3
\end{equation}
is therefore well defined. Moreover, Equation \eqref{eq:check_composition_defined} implies that the higher products on $\scrA_{\Phi}$ agree up to order $k$ with the higher products on $\scrA_{k}$. Since $k$ is arbitrary, we conclude that all higher products on  $\scrA_{\Phi}$ vanish, hence that $\scrA$ is formal.
\end{proof}

\begin{rem}
 Our proof in fact provides us with the following slightly sharper statement: if $b$ defines a pure structure on an $A_{\infty}$-algebra $\scrA$ over a characteristic zero field, then there is an $A_{\infty}$-equivalence $\scrA \to A$ which maps $b$ to the Euler vector field.
\end{rem}

\subsection{Formality for categories}

Theorem \ref{thm:Pure} has an obvious generalisation to $A_{\infty}$-categories rather than $A_{\infty}$-algebras.  First, the Hochschild complex of an $A_{\infty}$-category is defined exactly analogously to the case for algebras, using chains of composable morphisms.   Thus, $HH^*(\scrA,\scrA)$ is computed by a chain complex $CC^*(\scrA,\scrA)$ for which a degree $r$ cochain is a sequence $(h^d)_{d\geq 0}$ of collections of linear maps
\[
h^d_{(X_1,\ldots,X_{d+1})}: \bigotimes_{i=d}^1 hom_{\scrA}(X_i,X_{i+1}) \rightarrow hom_{\scrA}(X_1,X_{d+1})[r-d]
\]
for each $(X_1,\ldots,X_{d+1})\in \Ob(\scrA)^{d+1}$.  The differential is the obvious analogue of \eqref{eq:CC_differential}, but where the inputs are now composable sequences of morphisms in $\scrA$.

Suppose then $\scrA$ is an $A_{\infty}$-category and fix an \nc-vector field $b \in CC^1(\scrA,\scrA)$ on $\scrA$. The constant term $b^0$ of $b$ defines a cocycle $b^0|_L \in \hom_{\scrA}^1(L,L)$ for every object $L\in \Ob \scrA$. 

The straightforward generalisation of the assumption that $\Phi(b)=0$  in Definition \ref{def:purity_algebra} is to require that $b^0|_L  $ vanish. It is convenient to consider a slightly more general setup:

\begin{Definition} An (infinitesimally) equivariant object is a pair $(L,c)$, with  $L \in \Ob \scrA$ and $c \in \hom_{\scrA}^0(L,L)$, with $dc = b^0|_L$. 
\end{Definition}
\begin{rem}
The intuition, compare to Remark \ref{Rem:justify-nc}, is that the vector field $b$ integrates to a flow, and the condition we have written corresponds to being a fixed point. Since we shall not consider any other notion of equivariant object in this paper, we shall often elide ``infinitesimally'' from our terminology.
\end{rem}
There is a natural notion of equivalence for equivariant objects, in which two choices of $c$ which differ by a degree zero cocycle are regarded as equivalent.  For a given $L$, the obstruction to the existence of any suitable $c$ is given by $[b^0|_L] \in H^1(\hom_{\scrA}(L,L))$, and the set of choices when this vanishes forms an affine space over $H^0(\hom_{\scrA}(L,L))$. 

 Given two infinitesimally equivariant objects $(L,c_L)$ and $(L', c_{L'})$, there is a distinguished endomorphism of $H^*(\hom_{\scrA}(L,L'))$  induced by the linear part $b^1$ of $b \in CC^1(\scrA,\scrA)$.  Because we have not assumed that $b^0$ vanishes, $b^1$ is not necessarily a chain map. However, the endomorphism of $ \hom_{\scrA}(L,L') $ defined by the equation
\begin{equation} \label{Eqn:Endomorphism}
\Phi \co \phi \mapsto b^1(\phi) - \mu^2(c_L, \phi) + \mu^2(\phi,  c_{L'})
\end{equation}
 is a chain map, and descends to cohomology (preserving the cohomological degree). In particular, one can then decompose $H^*(\hom_{\scrA}(L,L'))$ into the generalised eigenspaces of \eqref{Eqn:Endomorphism}, which gives an additional ``grading" of this group, which we shall call the \emph{weight} and denote by $\wt$. That grading depends only on the equivalence class of the equivariant structures on $L$ and $L'$.  \emph{A priori}, the weight grading is indexed by elements of the algebraic closure $\bar{\bk}$ of the coefficient field.  

 For later use, we record some  general properties of these weight gradings. Consider equivariant objects  $(L,c_L)$, $(L',c_{L'})$ for a vector field $b \in CC^1(\scrA,\scrA)$.

\begin{Lemma} \label{lem:WeightGradings} Suppose that $H^0(\hom_{\scrA}(L,L))$ and $H^0(\hom_{\scrA}(L',L'))$ both have rank one, and fix the unique $\bk$-linear identifications of these groups with the ground field that map the unit in $ H^0(\hom_{\scrA}(L,L))$ resp.  $H^0(\hom_{\scrA}(L',L'))$ to $1 \in \bk$. Then:
\begin{enumerate}
\item The endomorphism \eqref{Eqn:Endomorphism} is a derivation. 
\item A change in equivariant structures changes the weights by a shift:
\begin{equation} \label{Eqn:ShiftWeights}
\Phi_{(L,c_L), (L', c_{L'})} \ = \  \Phi_{(L,c_L+s), (L', c_{L'}+s')}  + (s-s')\id 
\end{equation}
for any $s, s' \in \bk$. 
\item The weights on $H^*(\hom_{\scrA}(L,L))$ are independent of the choice of equivariant structure $c_L$ on $L$.
\end{enumerate}
\end{Lemma}

\begin{proof} The first statement follows from the cocycle condition for $b$. The second statement follows from the definition \eqref{Eqn:Endomorphism}, and in turn implies the third statement.
\end{proof}

Since $H^0(\hom_{\scrA}(L,L)) \cong \bk$ has rank one, it is generated by the identity endomorphism of $L$. The fact that \eqref{Eqn:Endomorphism} is a derivation thus implies that it acts by zero on $H^0(\hom_{\scrA}(L,L))$, and is furthermore compatible with product structures, meaning that if $\alpha$ and $\beta$ are of pure weight (live in single generalised eigenspaces), then
\begin{equation}\label{Eqn:WeightFloerProduct}
  \wt(\mu^2_{\scrA}(\alpha,\beta)) = \wt(\alpha) + \wt(\beta).
\end{equation}
See \cite{Seidel-Solomon}, Remark 4.4, Equation (4.9) and Corollary 4.6 for the corresponding statements for dilations in symplectic cohomology.

 We say the category admits a pure vector field (``is pure") if there is some $b \in CC^1(\scrA,\scrA)$, and lifts of all objects to infinitesimally equivariant objects, in such a way that  the above endomorphism $\Phi$  agrees with the Euler vector field for every pair of objects.  Generalising the case of algebras, we obtain the following result:

\begin{Corollary} \label{cor:Pure2} 
Suppose $\bk$ has characteristic zero. If $\scrA$ is pure, then $\scrA$ is formal.
\end{Corollary}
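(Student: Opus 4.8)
The plan is to deduce the categorical statement from the algebra case, Theorem \ref{thm:Pure}. Its proof is written entirely in the language of the Hochschild complex, formal diffeomorphisms and the composition law \eqref{Eqn:Compose-formal_differo}, all of which make sense verbatim for an $A_\infty$-category once every product and cochain is read as indexed by chains of composable morphisms; the inductive Lemmata \ref{lem:formality_induction_step_algebra} and \ref{lem:new_b_linear_order_k+1} and the convergence argument around \eqref{eq:check_composition_defined} then apply without change. The single new feature of the categorical definition is that a pure vector field need not have vanishing constant term: in place of the condition $b^0 = 0$ we are given equivariant structures $c_L \in \hom^0_\scrA(L,L)$ with $dc_L = b^0|_L$, and purity is formulated through the twisted operator $\Phi$ of \eqref{Eqn:Endomorphism} rather than through $b^1$. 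The essential step is therefore to gauge the equivariant structures into the vector field so as to recover the strict hypotheses of Definition \ref{def:purity_algebra}.

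First I would pass to a minimal model, using that both formality and the existence of a pure structure are invariant under quasi-equivalence. In a minimal model $\mu^1 = 0$, so the defining relation $dc_L = b^0|_L$ forces $b^0 = 0$; nevertheless the linear part $b^1$ need not itself induce the Euler field, since purity only asserts this of the twisted operator $\Phi$. I would then assemble the equivariant structures into a Hochschild $0$-cochain $c = (c_L)_{L \in \Ob \scrA} \in CC^0(\scrA,\scrA)$ and replace $b$ by $\tilde b := b - \delta c$. As $\delta^2 = 0$, the cochain $\tilde b$ is again a degree one cocycle; its constant term is $\tilde b^0|_L = b^0|_L - \mu^1(c_L) = 0$, and expanding $(\delta c)^1$ reproduces precisely the correction terms $-\mu^2(c_L, \phi) + \mu^2(\phi, c_{L'})$ appearing in \eqref{Eqn:Endomorphism}, so that $\tilde b^1 = \Phi$. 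By hypothesis $\Phi$ induces the Euler field on $H^*(\hom_\scrA(L,L'))$ for every pair of objects, whence $\tilde b$ is a pure \nc-vector field in the strict sense of Definition \ref{def:purity_algebra}.

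With $\tilde b$ so arranged, the remaining step is to run the inductive construction of Theorem \ref{thm:Pure}: Lemma \ref{lem:formality_induction_step_algebra} produces formal diffeomorphisms removing the higher products one order at a time, Lemma \ref{lem:new_b_linear_order_k+1} keeps a pure vector field available at each stage, and the infinite composition converges by \eqref{eq:check_composition_defined} to a formal equivalence of $\scrA$ with its cohomological category. Characteristic zero enters, exactly as before, only through the division by $1-k$ in the formula $\Phi^k = b^k/(1-k)$.

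The main obstacle is not a hard estimate but sign bookkeeping: one must check that $(\delta c)^1$ reproduces the twisted terms of \eqref{Eqn:Endomorphism} with the conventions of \cite{FCPLT}, and that nothing in the formal-diffeomorphism machinery is disturbed by passing from an endomorphism algebra to a category of composable morphisms. Both verifications are routine, which is why the result is advertised as an \emph{obvious} generalisation; the conceptual content lies entirely in the observation that an equivariant structure is exactly the datum required to gauge away the constant term of $b$.
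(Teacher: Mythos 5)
Your proposal is correct, and its overall strategy coincides with the paper's: pass to a minimal model, absorb the equivariant structures $c_L$ into the vector field so as to land in the strict setting of Definition \ref{def:purity_algebra}, then run the algebra induction of Theorem \ref{thm:Pure} (via Lemmata \ref{lem:formality_induction_step_algebra} and \ref{lem:new_b_linear_order_k+1}). The one place where you genuinely diverge is the gauging step, and your version is arguably cleaner. The paper defines the corrected vector field by modifying \emph{only} the linear term, setting $\tilde{b}^1 = \Phi$ as in \eqref{Eqn:Endomorphism} and $\tilde{b}^d = b^d$ for $d \neq 1$; since this is an ad hoc modification rather than a coboundary shift, the paper must then argue that $\tilde{b}$ is still a cocycle, which it does by observing that in a minimal model each $c_L$ is a multiple of the unit and invoking strict unitality (so that all insertions of $c_L$ into $\mu^{\geq 3}$ vanish). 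You instead set $\tilde{b} = b - \delta c$ with $c = (c_L) \in CC^0(\scrA,\scrA)$, so the cocycle condition is automatic, strict unitality is never needed, and no implicit rank-one assumption on $H^0(\hom_{\scrA}(L,L))$ enters (the paper's claim that $c_L$ is a multiple of the unit quietly uses such an assumption, which holds in the geometric application but is not part of the hypotheses of Corollary \ref{cor:Pure2}). The two constructions differ exactly by the higher-order terms $(\delta c)^{\geq 2}$ — these vanish under the paper's unitality hypotheses but are in any case harmless in your version, since Lemma \ref{lem:formality_induction_step_algebra} starts at order $k=2$, where the condition of being ``linear to order $k-1$'' is vacuous. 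Two remarks on the sign bookkeeping you flag: first, with the paper's stated conventions $(\delta c)^1(\phi)$ carries a reduced-degree Koszul sign relative to the formula \eqref{Eqn:Endomorphism} as printed, so the identification $\tilde{b}^1 = \Phi$ holds only after fixing conventions consistently; second, this is really a defect of \eqref{Eqn:Endomorphism} rather than of your argument, since the chain-map property asserted there is most transparently \emph{proved} by exactly your observation — the linear part of a degree-one cocycle with vanishing constant term is a chain map — so the correctly signed twisted operator \emph{is} $(b - \delta c)^1$.
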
 
\begin{proof}
Note that the assumptions are invariant under quasi-isomorphisms, so it suffices to prove the result in the case when $\scrA$ is minimal and strictly unital, since every $A_{\infty}$-category is quasi-isomorphic to one which is minimal and strictly unital by \cite[Lemma 2.1]{FCPLT}. In this case, $b^0$ vanishes, and we may define a new vector field
\begin{align}
\tilde{b}^1| \hom_{\scrA}(L,L') & = b^1 - \mu^2(c_L, \_ ) + \mu^2(\_,  c_{L'}) \\
 \tilde{b}^d & = b^d \textrm{ if } d \neq 1.
\end{align}
Since $\scrA$ is minimal, and both $H^0(\hom_{\scrA}(L,L))$ and $H^0(\hom_{\scrA}(L',L'))$ have rank one by assumption, $c_{L}$ and $c_{L'}$ are each multiples of the corresponding units. The assumption that $\scrA$ is strictly unital implies that $\tilde{b}$ is a cocycle.  By construction, $\tilde{b}^1$ agrees with the Euler vector field on all morphism spaces; the reader may now easily repeat the argument we gave for algebras to prove the formality of $\scrA$.
\end{proof}

\newcommand{\Mbar}{\bar{M}}
\newcommand{\Mdbar}{\bar{\bar{M}}}
\newcommand{\Mod}[2]{\scrR^{#2}_{#1}}
\newcommand{\Modbar}[2]{\bar{\scrR}^{#2}_{#1}}
\newcommand{\Fuk}{\scrF}
\newcommand{\Chord}{{\EuScript X}}
\newcommand{\ro}{{\mathrm o}}
\newcommand{\CO}{{\mathcal CO}}
\newcommand{\sCO}{co}
\newcommand{\Lagr}{{\EuScript L}}
\newcommand{\ev}{\mathrm{ev}}
\section{Geometry generalities} \label{Sec:Generalities}

This section abstracts the particular features of the geometric situation encountered later which enable us to define an \nc-vector field on an exact Fukaya category $\scrF(M)$ via counting discs in a partial compactification.  This geometric set-up is by no means the most general possible.  In Section \ref{Sec:Milnor} we shall apply this construction when $M$ is the complex two-dimensional $A_{2k-1}$-Milnor fibre, whilst the case  $M = \scrY_k$, viewed as an open subset of $\Hilb^{[k]}(A_{2k-1})$, is covered in Section \ref{Sec:HilbScheme}.  At the start of each of Sections \ref{Sec:Milnor} and \ref{Sec:HilbScheme} we present a short dictionary for comparison with the notation and hypotheses of this section.

\subsection{Set-up}

We begin with a smooth projective variety $\Mdbar$ of complex dimension $n$, equipped with a triple of reduced (not necessarily smooth or irreducible) effective divisors $D_0$, $D_{\infty}$, $D_r$. We denote by $\Mbar$ the symplectic manifold obtained by removing $D_{\infty}$ from $\Mdbar$, and by $M$ the symplectic manifold obtained by removing the three divisors from $\Mdbar$.  When the meaning is clear from context, we shall sometimes write $D_{0}$ for $D_{0} \cap \Mbar$ and $D_{r}$ for  $D_{r} \cap \Mbar$. We assume:

\begin{Hypothesis} \label{Hyp:Main} 
  \begin{align} \label{eq:hyp_main_1}
& \parbox{30em}{the union  $D_0 \cup D_{\infty} \cup D_r$ supports an  ample divisor $D$ with strictly positive coefficients of each of $D_0, D_{\infty}, D_r$.} \\
& \parbox{30em}{$D_{\infty}$ is nef (or, at least, non-negative on rational curves).}\\ \label{eq:hyp_main_3}
&  \parbox{30em}{$\Mbar$ admits a meromorphic volume form $\eta$ which is non-vanishing in $M$, holomorphic along $D_r \cap \Mbar$, and with simple poles along $D_0 \cap \Mbar$.}  \\
& \parbox{30em}{Each irreducible component of the divisor $D_0 \cap \Mbar$ moves in $\Mbar$, with base locus containing no rational curves.} 
\end{align}
\end{Hypothesis}

Let $D'_0 \subset \Mbar$ be a divisor linearly equivalent to and sharing no irreducible component with $D_0$, and $B_0 = D_0 \cap D'_0$, which is then a subvariety of  $\Mbar$ of complex codimension $2$.

Fix a K\"ahler form $\omega_{\Mdbar}$ in the cohomology class Poincar\'e dual to $D$. Ampleness implies that $M$ is an affine variety, in particular an exact symplectic manifold which can be completed to a Stein manifold of finite type, modelled on the symplectization of a contact manifold near infinity.  We will denote by $\lambda$ a primitive of the symplectic form $\omega_M$ given by restricting $\omega_{\Mdbar}$ to $M$, so $d\lambda = \omega_M$.     By the third assumption above, $M$ has vanishing first Chern class.

The assumption that each irreducible component of $D_0$ moves in $\Mbar$ is not essential, but simplifies some of the arguments, cf. Remark \ref{Rem:Conormal} for an indication of how to proceed otherwise.  

We shall write $J$ for the natural complex structure on $\Mdbar$, $\Mbar$, and $M$.  

\begin{Lemma} \label{lem:chern_0_positive_D_r}
 Let $C \subset \Mbar$ be the image of a non-constant rational curve $u: \bP^1 \rightarrow \Mbar$. 
 
 \begin{enumerate}
 \item The intersection number  $\langle D_0, C\rangle $ is non-negative, and agrees with the Chern number $ \langle c_1(\Mbar), C \rangle$.
 \item If $C \cap D_0 \neq \emptyset$, then $\langle c_1(\Mbar), C \rangle > 0$ is strictly positive.
 \item If $\langle c_1(\Mbar), C \rangle = 0$, then $C$ intersects  $D_r$ strictly positively.
 \end{enumerate}
\end{Lemma}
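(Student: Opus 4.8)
The plan is to reduce all three statements to elementary intersection theory on $\Mdbar$, after using the volume form to pin down the canonical class. First I would extract from the volume form hypothesis \eqref{eq:hyp_main_3} the identity $c_1(\Mbar) = \mathrm{PD}[D_0]$: the meromorphic volume form $\eta$ is a meromorphic section of $K_{\Mbar}$ whose divisor is $-D_0$ (simple poles along $D_0$ and, being non-vanishing on $M$, no zeros or further poles on $\Mbar$), so $K_{\Mbar}\cong\calO_{\Mbar}(-D_0)$ and hence $\langle c_1(\Mbar),C\rangle=\langle D_0,C\rangle$ for every class $C$. This already yields the equality asserted in part (1).

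For the non-negativity in part (1), I would use the moving assumption. Writing $D_0=\sum_i D_0^{(i)}$ as the sum of its reduced irreducible components, each $D_0^{(i)}$ moves and its base locus contains no rational curves; since $C$ is the image of $\bP^1$, it is not contained in that base locus, so a general member $D_0^{(i)\prime}$ linearly equivalent to $D_0^{(i)}$ does not contain $C$. Then $\langle D_0^{(i)},C\rangle=\langle D_0^{(i)\prime},C\rangle = \deg u^*\calO(D_0^{(i)\prime})\geq 0$, because the pullback of the defining section is a non-zero holomorphic section of a line bundle on $\bP^1$ and so has only finitely many zeros. Summing over $i$ gives $\langle D_0,C\rangle\geq 0$.

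Part (2) I would split into two cases. If $C\not\subset D_0$, then $C$ meets some component $D_0^{(i_0)}$ properly at a point of $C\cap D_0$, the local intersection multiplicity there is at least one, so $\langle D_0,C\rangle\geq\langle D_0^{(i_0)},C\rangle\geq 1>0$ (the remaining components contributing non-negatively). The delicate case, and the step I expect to be the main obstacle, is $C\subset D_0^{(i_0)}$, where the tautological section vanishes on $C$ and the naive count is unavailable. Here I would exploit the Calabi--Yau structure inherited by the component: taking the Poincar\'e residue of $\eta$ along $D_0^{(i_0)}$ and applying adjunction gives $K_{D_0^{(i_0)}} = (K_{\Mbar}+D_0^{(i_0)})|_{D_0^{(i_0)}} = -\sum_{j\neq i_0}(D_0^{(j)}\cap D_0^{(i_0)})$, so that $\langle K_{D_0^{(i_0)}},C\rangle = -\sum_{j\neq i_0}\langle D_0^{(j)},C\rangle$. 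The aim would then be to show that a rational $C\subset D_0^{(i_0)}$ with $\langle D_0,C\rangle=0$ is forced to meet a further component $D_0^{(j)}$, contradicting the vanishing; in the surface model this is immediate from the adjunction formula, which forces such a $C$ to have $C^2=-2$ and hence to meet the rest of $D_0$. Establishing this cleanly in arbitrary dimension (where $\langle K_{D_0^{(i_0)}},C\rangle=0$ is no longer sign-constrained) is where the genuine difficulty lies, and is the part I would expect to require either the specific geometry of $\Mbar$ or a sharper use of the moving hypothesis inside $D_0^{(i_0)}$.

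Finally, part (3) follows formally from the first two together with ampleness. If $\langle c_1(\Mbar),C\rangle = 0$ then $\langle D_0,C\rangle=0$ by part (1), and the contrapositive of part (2) gives $C\cap D_0=\emptyset$. Since $C$ is a compact curve contained in $\Mbar=\Mdbar\setminus D_\infty$, it is disjoint from $D_\infty$, so $\langle D_\infty,C\rangle = 0$. Writing the ample divisor of \eqref{eq:hyp_main_1} as $D = aD_0+bD_\infty+cD_r$ with $a,b,c>0$, ampleness gives $0<\langle D,C\rangle = a\langle D_0,C\rangle+b\langle D_\infty,C\rangle+c\langle D_r,C\rangle = c\langle D_r,C\rangle$, whence $\langle D_r,C\rangle>0$, as required.
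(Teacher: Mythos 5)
Your parts (1) and (3) are correct and coincide with the paper's own argument: the identification $c_1(\bar{M}) = \mathrm{PD}[D_0]$ via the divisor of $\eta$, non-negativity by moving each component of $D_0$ off $C$ (licensed by the base-locus clause of Hypothesis \ref{Hyp:Main}), and the ampleness computation $0 < \langle D, C\rangle = c\langle D_r, C\rangle$ for part (3). The gap is where you say it is, in part (2) for $C \subset D_0$ — but the missing idea is not a residue or adjunction argument. The paper's proof simply applies the moving hypothesis a second time: ``Since $D_0 \cap \bar{M}$ moves, we can suppose that $C$ is not completely contained in $D_0$,'' i.e.\ one replaces each component containing $C$ by a linearly equivalent member of its linear system not containing $C$ (possible precisely because the base locus contains no rational curves), after which every intersection is proper and positivity of local intersections gives $\langle D_0, C\rangle \geq 0$ with equality only if $C$ is disjoint from the moved divisor. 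In other words, part (2) is to be read with $D_0$ so moved; both the hypothesis $C \cap D_0 \neq \emptyset$ and the lemma's numerical conclusions then refer to that representative, and the case you were struggling with never arises.

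Your instinct that the literal set-theoretic statement for $C \subset D_0$ is hard was correct for a stronger reason than you suspected: it is \emph{false} in the paper's main application. In $\bar{M} = \Hilb^{[k]}(\bar{A}_{2k-1})$, take $C$ to be a fibre of the Hilbert--Chow morphism over a cycle whose support contains a point of the section $s_0$: every subscheme parametrised by $C$ has support meeting $s_0$, so $C \subset D_0$ and $C \cap D_0 \neq \emptyset$, yet $\langle c_1(\bar{M}), C\rangle = 0$ by crepancy (such a $C$ does meet $D_r$ strictly positively, consistently with part (3)). Hence no refinement of your adjunction strategy can close the gap in the generality of Hypothesis \ref{Hyp:Main}; your surface computation (adjunction forcing $C^2 = -2$, contradicting that the component moves) is correct but does not generalise because the unmoved statement itself does not hold. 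Reassuringly, every invocation of this lemma in the paper uses only its numerical content — non-negativity, $\langle D_0, C\rangle = \langle c_1(\bar{M}), C\rangle$, and $\langle D_r, C\rangle > 0$ when the Chern number vanishes — so adopting the ``after moving'' reading loses nothing downstream.
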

\begin{proof}
Since $D_0 \cap \Mbar$ moves,  and the base locus of that linear system contains no rational curves, we can suppose that $C$ is not completely contained in $D_0$. Then $\langle c_1(\Mbar), C\rangle = \langle D_0, C\rangle \geq 0$, with equality only if $C \cap D_0 = \emptyset$. In the latter case, since $D$ is ample  and $C$ is disjoint from $D_{\infty}$, $D_r$ meets $C$ strictly positively, giving the final conclusion.
\end{proof}

\subsection{The Fukaya category} \label{sec:fukaya-category}

Denote by $\Fuk(M)$  the Fukaya category of $M$.  We shall work  within the setting of \cite{FCPLT} to obtain a $\bZ$-graded category over an arbitrary characteristic field $\bk$. To start, fix a (typically finite) collection $\Lagr$ of Lagrangians which we require to be exact, closed, and disjoint from a neighbourhood $\nu D$ of $D$ that contains $D'_0$. We equip the elements of $\Lagr$ with brane data  comprising an orientation, a  $Spin$ structure and a grading with respect to the complex volume form $\eta$. There is a minor difference with \cite{FCPLT}:  we ensure compactness of  moduli spaces of curves in $M$ by using positivity of intersection in $\Mdbar$, rather than the maximum principle  (see Lemma \ref{lem:c-0-estimate-discs} below).

Given a pair  $(L_0, L_1)$ of  Lagrangians, we choose a compactly supported Hamiltonian
\begin{equation}
  H_{L_0,L_1} \co [0,1] \times M \to \bR
\end{equation}
whose time-$1$ Hamiltonian flow maps $L_0$ to a Lagrangian that is transverse to $L_1$. Let $\Chord(L_0,L_1)$ denote the set of intersection points of the time-$1$ image of $L_0$ and $L_1$, and define
\begin{equation}
  CF^*(L_0,L_1) \equiv \bigoplus_{x \in \Chord(L_0,L_1)}  \ro_{x}
\end{equation}
where $\ro_x$ is a $1$-dimensional $\bk$-vector space associated to $x$ by index theory, see \cite[Section 11h]{FCPLT}. The differential in $ CF^*(L_0,L_1)  $ counts rigid Floer trajectories with respect to $J$, and we assume that $H_{L_0,L_1}  $ is chosen generically so that these moduli spaces are regular, see \cite{FHS}.

To define the $A_{\infty}$-structure, for $k\geq 2$ let $\scrR^{k+1}$ denote the moduli space of discs with $k+1$ punctures on the boundary; we fix a distinguished puncture $p_0$, and order the remainder $\{p_1,\ldots, p_k\}$ counter-clockwise along the boundary. As in Section (9g) of \cite{FCPLT}, we choose families of strip-like ends for all punctures, i.e. denoting 
\begin{equation} \label{eqn:in-out-striplike-ends}
  Z_- = (-\infty,0] \times [0,1] \textrm{ and } Z_+ = [0,\infty) \times [0,1]
\end{equation}
we choose, for each surface $\Sigma$ representing a point in $ \scrR^{k+1} $, conformal embeddings of punctured half-strips
\begin{equation}
\epsilon_{0}: Z_- \rightarrow \Sigma, \quad \epsilon_i: Z_+ \rightarrow \Sigma \quad  \textrm{for} \, 1 \leq i \leq k\end{equation}
 which take $\partial Z_{\pm}$ into $\partial \Sigma$, and which converge at the end to the punctures $p_i$. 

Given a sequence  $(L_0, \ldots, L_k)$ of objects we choose inhomogeneous data on the space of maps parametrised by the universal curve over $ \scrR^{k+1} $. Given a curve $\Sigma$ representing an element of $\scrR^{k+1}  $ and a point $z \in \Sigma$, this datum consists of a map 
\begin{equation}
  K \co T_{z} \Sigma \to C^{\infty}_{ct}(M, \bR)
\end{equation} 
subject to the constraint that the pullback of $K$ under $\epsilon_i$ agrees with $H_{L_{i-1},L_{i}} dt $.

 Having fixed these choices, $K$ defines a $1$-form on $\Sigma$ valued in the space of vector fields on $M$, obtained by taking the Hamiltonian vector field associated to a function on $M$:
\begin{equation}
  Y(\xi) = X_{K(\xi)}.
\end{equation}
We obtain a pseudo-holomorphic curve equation:
\begin{equation} \label{eq:Floer_equation_0-puncture}
 J \left( du(\xi) - Y(\xi)  \right) = du(j \xi) - Y(j\xi) ,
\end{equation}
on the space of maps
\begin{equation}
  u \co \Sigma \to M
\end{equation}
with the property that the image of  the segment along the boundary from $p_{i-1}$ to $p_{i}$ lies in $L_i$.  We denote by
\begin{equation}
  \scrR^{k+1}(M | x_0; x_k ,\cdots, x_1)
\end{equation}
the space of such solutions which have finite energy, and converge to $x_i$ along the end $\epsilon_i$.

\begin{Lemma} \label{lem:c-0-estimate-discs}
All elements of $ \scrR^{k+1}( M|x_0; x_k ,\cdots, x_1 )   $ have image contained in a fixed compact subset of $M$.
\end{Lemma}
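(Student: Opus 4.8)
The plan is to argue by contradiction, replacing the usual maximum principle by Gromov compactness inside the projective compactification $\Mdbar$ together with positivity of intersection, the key geometric input being Lemma \ref{lem:chern_0_positive_D_r}. First I would isolate two elementary reductions. Since each Hamiltonian $H_{L_{i-1},L_i}$, and hence the inhomogeneous datum $K$, is compactly supported, every solution $u$ of \eqref{eq:Floer_equation_0-puncture} is honestly $(j,J)$-holomorphic outside a fixed compact subset of $M$; moreover its boundary lies on the Lagrangians of $\Lagr$, which are contained in a compact set disjoint from the neighbourhood $\nu D$ of $D = D_0 \cup D_{\infty} \cup D_r$, so no solution has boundary in $\nu D$. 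Secondly I would record the standard a priori energy bound for Floer solutions: by exactness of $\omega_M = d\lambda$ and of the Lagrangian branes, together with compact support of $K$, the geometric energy of $u$ is controlled by the actions of the asymptotic chords $x_0; x_k,\ldots, x_1$ and a fixed curvature contribution, giving $E(u) \leq E_0$ independently of $u$ (compare \cite[Section 9]{FCPLT}).

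Viewing the $u$ as maps into the closed Kähler manifold $\Mdbar$, this bounds the symplectic area $\int_{\Sigma} u^* \omega_{\Mdbar}$, and since $\omega_{\Mdbar}$ is Poincaré dual to the ample divisor $D$ of \eqref{eq:hyp_main_1}, it bounds the total intersection number of $u$ with $D$. Suppose now the conclusion fails, so that some sequence $u_n \in \scrR^{k+1}(M|x_0; x_k,\ldots,x_1)$ has images leaving every compact subset of $M$; inside $\Mdbar$ the images then accumulate on $D$. Applying Gromov compactness to this bounded-energy sequence produces, after passing to a subsequence, a stable map limit $u_{\infty}$ in $\Mdbar$. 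Exactness of the Lagrangian boundary conditions excludes disc bubbling, so every non-principal component of $u_{\infty}$ is a rational curve; since the principal component still carries the fixed boundary data on $\Lagr$, which avoids $\nu D$, it cannot by itself account for the escape, and the escaping energy must be carried by a rational component $C$ with $C \cap D \neq \emptyset$.

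The crux is to exclude such a $C$. Writing $D = c_0 D_0 + c_{\infty} D_{\infty} + c_r D_r$ with all $c_i > 0$, ampleness gives $\langle D, C \rangle > 0$, so $C$ meets at least one of the three divisors, and I would treat the three possibilities in turn. The nefness hypothesis on $D_{\infty}$ in Hypothesis \ref{Hyp:Main} forbids $C$ from being absorbed into $D_{\infty}$ with negative intersection, while Lemma \ref{lem:chern_0_positive_D_r}, together with the vanishing of $c_1(M)$ furnished by the volume form $\eta$ of \eqref{eq:hyp_main_3}, governs the interaction with $D_0$ and $D_r$: any $C$ meeting $D_0$ has $\langle c_1(\Mbar), C \rangle > 0$, and any $C$ with $\langle c_1(\Mbar), C \rangle = 0$ meets $D_r$ strictly positively. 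Combining these inequalities with the index bookkeeping for the rigid moduli space — a rational bubble of positive Chern number drops the expected dimension below zero for generic data — and with the hypothesis that each component of $D_0$ moves with base locus containing no rational curves, I would derive a contradiction, concluding that no such $C$ exists and hence that the images $u_n(\Sigma)$ are confined to a fixed compact subset of $M$.

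I expect this final step to be the main obstacle. The delicate point is to organise the limit stable map near $D$ so that positivity of intersection and nefness genuinely rule out \emph{every} configuration absorbing the escaping energy, in particular components that lie entirely inside one of the divisors, where the naive intersection-theoretic inequalities of Lemma \ref{lem:chern_0_positive_D_r} degenerate and the moving-divisor hypothesis of Hypothesis \ref{Hyp:Main} must be invoked to restore control.
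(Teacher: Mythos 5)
Your proposal contains a genuine gap, and it is precisely the step you flag as ``the main obstacle.'' The missing idea is that the total homological intersection number of each solution with $D$ is not merely \emph{bounded} by an energy estimate --- it is exactly \emph{zero}, because every element of $\scrR^{k+1}(M|x_0;x_k,\ldots,x_1)$ has image contained in $M = \Mdbar \setminus (D_0 \cup D_\infty \cup D_r)$ and boundary on Lagrangians disjoint from $D$. This intersection number is preserved under Gromov limits in $\Mdbar$. Once you know the limit configuration has total intersection number zero with $D$, the paper's argument closes in two lines: by positivity of intersection, each disc component (whose boundary lies on Lagrangians disjoint from $D$, and whose interior intersections with the divisors all count positively against the positive coefficients of $D$ from Hypothesis \eqref{eq:hyp_main_1}) contributes non-negatively, and by ampleness each sphere component contributes non-negatively, with equality only for constant spheres. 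A sum of non-negative terms equal to zero forces every component to be disjoint from $D$ and every sphere to be constant, contradicting the assumption that the limit configuration meets a divisor at infinity. No nefness of $D_\infty$, no Chern-number dichotomy from Lemma \ref{lem:chern_0_positive_D_r}, no moving-divisor hypothesis, and no index bookkeeping is needed.

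Without this observation, your concluding case analysis cannot be completed as sketched. The dimension-counting step (``a rational bubble of positive Chern number drops the expected dimension below zero for generic data'') is not available here: the almost complex structure $J$ is fixed and integrable near $D$, sphere bubbles may be multiply covered, and rigidity of the original moduli space gives no transversality control over bubble components --- this is exactly why the paper introduces Hypothesis \ref{Hyp:regular} elsewhere, and why it avoids any such argument in this lemma. Separately, your assertion that ``exactness of the Lagrangian boundary conditions excludes disc bubbling'' is unjustified in $\Mdbar$: exactness only rules out non-constant holomorphic discs contained in $M$, whereas a disc bubble meeting $D$ is not a priori excluded; in the paper's proof such discs are permitted and are eliminated by the positivity argument above rather than by exactness.
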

\begin{proof}
Suppose for contradiction that this is not true. Considering such maps as pseudo-holomorphic discs in $\Mdbar$; Gromov compactness implies the existence of a configuration of discs and rational curves which intersects one of the divisors at infinity, but such that the total configuration has trivial intersection number with $D$. We shall show that this is impossible.

Since all Lagrangians are disjoint from $D$, any disc component in the limit has non-negative intersection number with $D$ since the coefficients of that divisor are positive by Hypothesis \eqref{eq:hyp_main_1}. On the other hand, ampleness implies that the intersection number of any sphere component with $D$ is non-negative, and vanishes only if the sphere is constant. We conclude that there are no sphere components, and that all disc components are disjoint from $D$, yielding the desired contradiction.
\end{proof}

This result implies that the Gromov-Floer construction produces a compactification of $  \scrR^{k+1}(M| x_0; x_k ,\cdots, x_1 )  $ consisting only of stable discs mapping to $M$ (recall that $M$ is exact, so such discs have no sphere components).   Standard regularity results imply that, for generic data, this space is  a smooth manifold of dimension
\begin{equation}
k - 2 +  \deg(x_0) - \sum_{i=1}^k \deg(x_i).
\end{equation}

In particular, whenever the above expression vanishes, the signed count of elements of the moduli space $  \scrR^{k+1}(M|x_0; x_k, \ldots, x_1) $ defines a map:
\begin{equation} \label{eqn:summand}
   \ro_{x_k} \otimes  \cdots \otimes \ro_{x_1} \to \ro_{x_0}.  
\end{equation}
which is canonical up to a choice of orientation of the Stasheff associahedron. Let $\Delta$ denote the unit disc in $\bC$. We follow the conventions of Section (12g) of \cite{FCPLT}, and orient the moduli space of discs by fixing the positions of $p_0$, $p_1$, and $p_2$ on the boundary, and using the corresponding identification of the interior of $  \scrR^{k+1} $ with an open subset of $(\partial \Delta)^{k-2}$, which is naturally oriented.

By definition, \eqref{eqn:summand}, twisted by a sign whose parity is $\sum_{i=1}^{k} i \deg x_i $,  defines the $\ro_{x_0}$-component of the restriction of the $A_{\infty}$-operation $\mu_{\Fuk(M)}^k$ to 
\begin{equation}
   \ro_{x_k} \otimes  \cdots \otimes \ro_{x_1}  \subset CF^*(L_{k-1}, L_k) \otimes \cdots \otimes CF^*(L_{0}, L_1)  \to CF^*(L_{0}, L_k).
\end{equation}

\subsection{A Gromov-Witten invariant}
Let $A \in H_2(\Mbar; \bZ)$ be a $2$-dimensional homology class, with the property that
\begin{equation} \label{eq:intersection_0_1}
  \langle A, D_{r} \rangle = 0 \textrm{ and }   \langle A, D_{0} \rangle = 1. 
\end{equation}
Consider the moduli space of stable rational curves in $\Mbar$ with one marked point
\begin{equation}
\scrM_{1}(\Mbar| 1) = \coprod_{\substack{A \in  H_2(\bar{M};\bZ) \\ \textrm{Condition \eqref{eq:intersection_0_1} holds}}} \scrM_{1; A}(\Mbar)
\end{equation}
which can be decomposed according to  the homology class $A \in H_2(\Mbar;\bZ)$ represented by each element. Recall that this is the Deligne-Mumford partial compactification of the quotient of the space of holomorphic maps $\bP^1 \to \Mbar$  by the subgroup of automorphisms of $\bP^1$ preserving the point $1 \in \bP^1$. (It may be worth emphasising that the Deligne-Mumford space is only a partial compactification since $\Mbar$ itself is not compact.) There is a natural evaluation map
\begin{equation} \label{eq:evaluation_sphere}
\ev_1: \scrM_{1}(\Mbar| 1) \rightarrow \Mbar.
\end{equation}

\begin{Lemma}
Equation \eqref{eq:evaluation_sphere} defines   a proper map.
\end{Lemma}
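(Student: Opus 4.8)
The plan is to verify properness through the standard criterion that the preimage of every compact set is compact, which here reduces to a Gromov-compactness argument carried out in the projective model $\Mdbar$. Fix a compact subset $K \subset \Mbar$ and a sequence $[(u_n,z_n)]$ in $\ev_1^{-1}(K)$, where $u_n$ represents a class $A_n$ satisfying \eqref{eq:intersection_0_1} and $u_n(z_n) \in K$. First I would bound the energy: since the image of each $u_n$ lies in $\Mbar = \Mdbar \setminus D_\infty$, positivity of intersection in $\Mdbar$ forces $\langle \iota_* A_n, D_\infty\rangle = 0$, where $\iota \co \Mbar \hookrightarrow \Mdbar$ is the inclusion; combined with $\langle A_n, D_0\rangle = 1$ and $\langle A_n, D_r\rangle = 0$, this pins the symplectic area $\langle \iota_* A_n, D\rangle$ to a quantity bounded independently of $n$ (it is determined by the intersection numbers with the components of $D_0$). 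In particular only finitely many classes occur, and after passing to a subsequence we may assume $A_n = A$ is fixed.

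With a uniform energy bound in hand, Gromov compactness in the compact variety $\Mdbar$ produces a subsequential limit which is a connected genus-zero nodal stable map $u_\infty \co \Sigma_\infty \to \Mdbar$ of total class $\iota_* A$, carrying a marked point $z_\infty$ with $u_\infty(z_\infty) = \lim u_n(z_n) \in K$. The entire content of the lemma is then to show that this limit in fact lands in $\Mbar$, i.e. that no component of $u_\infty$ meets $D_\infty$; granting this, the limit is a point of $\scrM_1(\Mbar|1)$ whose image under $\ev_1$ lies in $K$, which establishes the sequential compactness of $\ev_1^{-1}(K)$ and hence properness.

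The heart of the argument, and the step I expect to be the main obstacle, is ruling out escape into $D_\infty$. Writing the components of the limit as $C_i$ with classes $B_i$, so that $\sum_i \langle B_i, D_\infty\rangle = \langle \iota_* A, D_\infty\rangle = 0$, I would invoke the nef hypothesis on $D_\infty$ (the second clause of Hypothesis \ref{Hyp:Main}) to conclude that $\langle B_i, D_\infty\rangle \geq 0$ for every component, whence each term vanishes separately. For a component not contained in $D_\infty$, positivity of intersection then upgrades $\langle B_i, D_\infty\rangle = 0$ to genuine disjointness from $D_\infty$. It remains to exclude components contained in $D_\infty$: the component carrying $z_\infty$ sends a point into $K \subset \Mbar$, so it is not contained in $D_\infty$ and is therefore disjoint from it; propagating along the tree $\Sigma_\infty$, any node joining an already-disjoint component to a neighbour forces the neighbour through a point of $\Mbar$, so the neighbour cannot be contained in $D_\infty$ either, and by connectedness of the domain the whole limit avoids $D_\infty$. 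The delicacy here is precisely that nef-ness controls only the numerical intersection and not set-theoretic incidence, so disjointness must be extracted by combining the vanishing of each $\langle B_i, D_\infty\rangle$ with positivity of intersection and the connectivity of $\Sigma_\infty$.
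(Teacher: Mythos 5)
Your proposal is correct and follows essentially the same route as the paper: the paper packages your Gromov-compactness step as compactness of the moduli space $\scrM_{1}(\Mdbar|1)$ of stable curves in the projective compactification (with $\langle A, D_{\infty}\rangle = 0$ built into the definition) and then shows its fibres over points of $\Mbar$ coincide with those of $\ev_1$, which is exactly your sequential argument in disguise. The key geometric step is identical in both — nefness of $D_{\infty}$ forces every component of a limit through a point of $\Mbar$ to have zero intersection with $D_{\infty}$, and positivity of intersection plus connectedness of the domain then yields genuine disjointness — with your tree-propagation argument simply making explicit what the paper compresses into one sentence.
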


\begin{proof}
Consider the moduli space of $\scrM_{1}(\Mdbar| 1) $ of rational curves in homology classes $A$ satisfying
\begin{equation}
  \langle A, D_{r} \rangle = 0, \, \,  \langle A, D_{\infty} \rangle = 0, \textrm{ and }   \langle A, D_{0} \rangle = 1.
\end{equation}
These constraints fix the energy of any curve in $\scrM_{1}(\Mdbar| 1) $, so that this moduli space is compact. The evaluation map extends to a map
\begin{equation}
  \bar{\ev}_1: \scrM_{1}(\Mdbar| 1) \rightarrow \Mdbar.
\end{equation}
Given a point $p \in \Mbar$, we claim that $\bar{\ev}_1^{-1}(p)= \ev_{1}^{-1}( p)$. Indeed, the condition that a stable curve $u$ pass through $p$ implies that it cannot lie entirely within $D_{\infty} $. Since this divisor is nef, we conclude that every component of the image of  $u$ is disjoint from $D_{\infty}$. It follows that  $\ev_{1}^{-1}( p )$  is compact.
\end{proof}

Recall that the virtual complex dimension of $\scrM_{1;A}(\Mbar| 1) $ is 
\begin{equation}
  n+c_1(A) - \dim_{\bC}(\Aut(\bP^1,1) ) = n-1,
\end{equation}
where $ \Aut(\bP^1,1) $ is the $2$-dimensional group of M\"obius transformations fixing the point $1\in\bP^1$. To simplify the discussion, we impose the following assumption. A  holomorphic curve $u$ is regular if the linearisation of the Cauchy-Riemann equation defines a surjective operator at the point $u$.  Classical methods achieve regularity away from curves with multiply covered components.  In our setting, any such components of a stable curve in class $A$ have vanishing Chern number. Indeed, we consider classes $A$ with $\langle A, D_0 \rangle = 1$; intersections with $D_0$ are non-negative for each component of a stable curve, by the first part of Lemma \ref{lem:chern_0_positive_D_r}, so a multiply covered component must be disjoint from $D_0$ or it would contribute at least its multiplicity to the intersection number. The first part of Lemma \ref{lem:chern_0_positive_D_r} then  implies that the Chern number of the multiply covered component is zero, and the last part of Lemma \ref{lem:chern_0_positive_D_r} in turn implies that the component meets $D_r$.

\begin{Hypothesis} \label{Hyp:regular}  There is a subvariety $B_{r} \subset \Mbar$ of complex codimension $2$ such that any element of $ev_{1}^{-1}( \Mbar \setminus B_{r})$  is regular and has image disjoint from $D_r$.
\end{Hypothesis}

 By assumption, $\ev_1 \scrM_1(\Mbar|1) $ is an algebraic subscheme of complex codimension $1$, hence admits a well-defined class in the second cohomology of $\Mbar$.  It is technically convenient to use Poincar\'e duality to define this class: the image of the set of non-regular points is a subvariety contained in $B_r$, which by assumption has real codimension $2$ in $\ev_1 \scrM_1(\Mbar|1)$. We can therefore pick a triangulation of $ \ev_1 \scrM_1(\Mbar|1) $  so that the interior of all cells of codimension $0$ and $1$ are contained in the complement of $B_r$. The (weighted) sum of all top-dimensional simplices defines a fundamental class
 \begin{equation}
   [\ev_{1}  \scrM_{1}(\Mbar| 1) ]  \in  C_{2n-2}^{lf}(\Mbar;\bZ),
 \end{equation}
where $C_{*}^{lf}(\Mbar;\bZ) $ are the locally finite (also known as Borel-Moore) chains of $\Mbar$; the elements of this complex are (possibly) infinite linear combinations of simplices satisfying the property that each compact subset intersects the image of only finitely many simplices. This complex is quasi-isomorphic to the cochains of $\Mbar$. Using the isomorphism
\begin{equation}
  H_{2n-2}^{lf}(\Mbar;\bZ) \cong  H^{2}(\Mbar;\bZ),
\end{equation}
we conclude:


\begin{Lemma}
The evaluation image gives a well defined class
\begin{equation}
GW_{1}  \equiv  [\ev_{1}  \scrM_{1}(\Mbar| 1) ] \in  H^{2}(\Mbar; \bZ). 
\end{equation} \qed
\end{Lemma}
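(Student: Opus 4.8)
The plan is to build the class first over the open locus where the moduli space is controlled, and then to extend it uniquely across the exceptional set $B_r$, whose high codimension makes the extension automatic. Set $V = \Mbar \setminus B_r$, an open subset of the smooth real $2n$-manifold $\Mbar$, oriented by its complex structure. By Hypothesis \ref{Hyp:regular}, every stable curve in $\ev_1^{-1}(V)$ is regular and has image disjoint from $D_r$; since any multiply-covered component would have vanishing Chern number and hence meet $D_r$ by Lemma \ref{lem:chern_0_positive_D_r}, all such curves have no multiply-covered components, so the top stratum is a genuine manifold and integral coefficients are available. I would therefore argue that over $V$ the space $\ev_1^{-1}(V)$ is a complex-analytic space whose top stratum of irreducible-domain curves is a smooth manifold of real dimension $2n-2$, canonically oriented by the complex structure, while its remaining (nodal) boundary strata have complex codimension at least one, i.e. real codimension at least two. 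A space of this type carries a fundamental class $[\ev_1^{-1}(V)] \in H_{2n-2}^{lf}(\ev_1^{-1}(V))$.

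Next I would push this class forward. The properness of $\ev_1$ established in the preceding Lemma is preserved on restricting to the preimage of the open set $V$, so $\ev_1 \co \ev_1^{-1}(V) \to V$ is proper; since proper maps act covariantly on locally finite homology, this yields a class
$$ (\ev_1)_{\ast}\,[\ev_1^{-1}(V)] \ \in \ H_{2n-2}^{lf}(V). $$

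It then remains to extend this class across $B_r$. The essential numerical input is that $B_r$ has complex codimension two, hence real dimension $2n-4$, so that $H_j^{lf}(B_r) = 0$ for every $j > 2n-4$; in particular both $H_{2n-2}^{lf}(B_r)$ and $H_{2n-3}^{lf}(B_r)$ vanish. Feeding this into the long exact sequence in locally finite homology of the closed subset $B_r \subset \Mbar$ with open complement $V$,
$$ H_{2n-2}^{lf}(B_r) \longrightarrow H_{2n-2}^{lf}(\Mbar) \longrightarrow H_{2n-2}^{lf}(V) \longrightarrow H_{2n-3}^{lf}(B_r), $$
the outer terms disappear and the restriction map $H_{2n-2}^{lf}(\Mbar) \to H_{2n-2}^{lf}(V)$ becomes an isomorphism. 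I would define $GW_1$ to be the unique preimage of $(\ev_1)_{\ast}[\ev_1^{-1}(V)]$ under this isomorphism. Finally, Poincar\'e duality for the oriented $2n$-manifold $\Mbar$ gives $H_{2n-2}^{lf}(\Mbar) \cong H^2(\Mbar;\bZ)$, matching the statement.

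The hard part is the very first step: verifying that $\ev_1^{-1}(V)$ genuinely supports a fundamental class, i.e. that over $V$ any failure of the moduli space to be a smooth oriented $(2n-2)$-manifold is confined to real codimension at least two. This is precisely what Hypothesis \ref{Hyp:regular} is engineered to provide: were the non-regular curves, or those meeting $D_r$, to survive over a real-codimension-one subset of $\Mbar$ rather than being absorbed into the real-codimension-four set $B_r$, the evaluation image could become singular in real codimension one and no classical fundamental class would exist --- this is exactly the subtlety flagged in the remark preceding the statement. Once the bad behaviour is quarantined inside $B_r$, the pushforward and the homological extension are formal.
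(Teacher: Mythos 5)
Your proposal is correct, but it follows a genuinely different route from the paper's proof. The paper never invokes a fundamental class for the moduli space at all: it takes a Morse cochain model for $H^*(\Mbar;\bZ)$, chooses a proper Morse--Smale pair generically so that descending manifolds of critical points of index less than $4$ avoid $B_r$ and are transverse to $\ev_1$ over $\Mbar\setminus B_r$, and defines $GW_1 \in CM^{2}(f;\bZ)$ directly as the signed count of intersections of the evaluation image with the two-dimensional descending manifolds; closedness follows because the evaluation image of the singular locus has real codimension at least $4$, and independence of the Morse data is checked with continuation maps. You instead build a locally finite fundamental class of $\ev_1^{-1}(V)$ for $V=\Mbar\setminus B_r$, push it forward by the proper map $\ev_1$, and extend it uniquely across $B_r$ using the long exact sequence in locally finite homology together with the codimension-two bound on $B_r$; this makes well-definedness automatic, with no continuation argument needed. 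Both proofs consume exactly the same geometric inputs (Hypothesis \ref{Hyp:regular}, properness of $\ev_1$, and Lemma \ref{lem:chern_0_positive_D_r}), and your homological bookkeeping is sound. Two remarks on the comparison. First, your ``hard part'' is easier than you suggest: over $V$ there are no nodal configurations whatsoever. In class $A$ exactly one non-constant component can have Chern number one, and every other non-constant component would have Chern number zero, hence would meet $D_r$ by Lemma \ref{lem:chern_0_positive_D_r}, which is excluded over $V$; ghost components are ruled out by stability, since there is only one marked point. Thus $\ev_1^{-1}(V)$ is a smooth, canonically oriented complex $(n-1)$-manifold and its fundamental class is immediate, so the stratified-space generality you invoke is not actually needed. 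Second, the paper's more pedestrian chain-level formulation is what the later sections use: $GW_1$ must be represented by an explicit geometric cochain admitting a bounding cochain $gw_1$ as in Equation \eqref{eq:bounding_chain_gw}, which is then fed into fibre products with moduli spaces of discs in the construction of Proposition \ref{prop:ConstructDilation}. Your argument produces the cohomology class cleanly, but one would still need to fix a cycle-level representative to interface with those constructions; this is routine, but it is the reason the paper opts for the Morse model.
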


\begin{Remark}
Hypothesis \ref{Hyp:regular} is restrictive because we have fixed the almost complex structure $J$ throughout the discussion.  Transversality and smoothness for the space of maps from $\bP^{1}$ to $\Mbar$ follows by standard methods,  see \cite{McD-S}, since the assumption $\langle A, D_0\rangle = 1$ implies that no holomorphic curve in class $A$ can be multiply covered.  However, there may be configurations in $\scrM_1(\Mbar|1)$ which have some multiply covered components, and the definition of the fundamental class in this situation would \emph{a priori} require more sophisticated techniques. Hypothesis \ref{Hyp:regular}  allows us to bypass this problem.
\end{Remark}

\subsection{Infinitesimally equivariant Lagrangians} \label{sec:equiv-struct}

We now add another two hypotheses on the ambient geometry:

\begin{Hypothesis}  \label{Hyp:GWvanishes} 
  \begin{align}
 & GW_{1} = \sum_A GW_{1;A}|_M = 0 \in  H^2(M; \bZ); \\  
& \parbox{30em}{$B_0$ is homologous to a cycle  supported on the union $(D_0 \cap D_{r}) \cup D_0^{sing}$, where $D_0^{sing}$ denotes the singular locus of $D_0$.}
  \end{align}
\end{Hypothesis}
The second hypothesis above can be stated more precisely in terms of locally finite homology: triangulating $B_0$ yields a fundamental class
\begin{equation}
  [B_0] \in  C_{2n-2}^{lf}(D_0 , (D_{0} \cap D_{r})  \cup D_{0}^{sing}  ; \bZ).
\end{equation}
obtained by taking the image of the fundamental class in $C_{2n-2}^{lf}(D_0   ; \bZ)$ under the natural projection map.

Appealing to Hypothesis \ref{Hyp:GWvanishes}, we can therefore fix a cochain
fix cochains 
\begin{align} \label{eq:bounding_chain_infinity}
\beta_0 & \in C_{2n-3}^{lf}(D_0 , (D_{0} \cap D_{r})  \cup D_{0}^{sing}  ; \bZ) \\
\partial (\beta_0) & = [B_0],
\end{align}
as well as a cochain
\begin{align}
\label{eq:bounding_chain_gw}
  gw_{1} & \in C^{lf}_{2n-1}(\Mbar ; \bZ) \\
\partial  (gw_{1}) & = GW_{1}.
\end{align}

Next, we consider the moduli space 
\begin{equation}
  \scrR_{2}^{1}(\Mbar; (1,0)| L  )
\end{equation}
of maps from a disc to $\Mbar$ with $2$ interior marked points denoted $(z_0, z_1)$ and one boundary marked point, with boundary mapping to $L$, and intersection number $1$ with $D_{0}$ and $0$ with $D_r$. There is a unique way to identify the domain with the standard unit disc $\Delta \subset \bC$, in such a way that the first interior marked point maps to $0$, and the boundary marked point maps to $1$. The position of the second marked point, together with the evaluation maps at the two interior points, defines a map
\begin{equation} \label{eq:evaluation_two_marked_points_projection}
    \scrR_{2}^{1}(\Mbar; (1,0)| L  ) \to \Delta \times \Mbar \times \Mbar.
\end{equation}

We define
\begin{equation}
  \Mod{(0,1)}{1}(L) \subset   \scrR_{2}^{1}(\Mbar; (1,0)| L  )
\end{equation}
to be the inverse image of
\begin{equation}
  (0,1) \times D_0 \times D_0'
\end{equation}
under the evaluation map in Equation \eqref{eq:evaluation_two_marked_points_projection}.  A typical representative of this moduli space is depicted in Figure \ref{Fig:BasicDomain}.
\begin{center}
\begin{figure}[ht]
\includegraphics[scale=0.5]{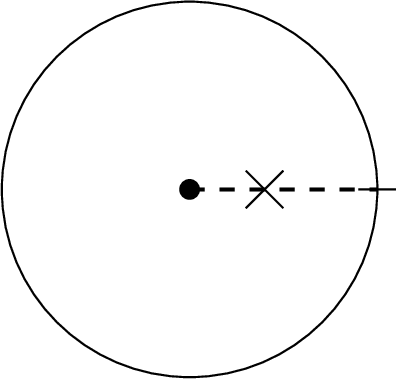}
\caption{A representative of $\Mod{(0,1)}{1}(L)$, with boundary in $L$, $\bullet$ mapping to $D_0$ and $\times$ mapping to $D_0'$.{\label{Fig:BasicDomain}}}
\end{figure}
\end{center}

We shall be interested in describing the boundary of $  \Mod{(0,1)}{1}(L) $. To this end,  we introduce the moduli space 
\begin{equation}
  \scrR_{1}^{1}(\Mbar; (1,0)| L  )
\end{equation}
of discs in $\Mbar$ with boundary on $L$ and one interior and one boundary marked point, with intersection numbers $(1,0)$ with $(D_0,D_r)$; this has a natural map $\scrR_1^1(\Mbar; (1,0)|L ) \rightarrow \Mbar$ via evaluation at the interior marked point.  If the inhomogeneous data for the Floer equation are chosen generically in a relatively compact open subset of $\Mdbar$ with closure disjoint from $D$, then the moduli spaces $\Mod{(0,1)}{1}(L)$, $\scrR_{1}^{1}(\Mbar; (1,0)| L  )$ considered above, and the evaluation maps from these moduli spaces to $\Mbar$, are $C^1$-smooth; for generic Floer data the evaluation maps can moreover be assumed transverse to any fixed finite set of smooth maps representing locally finite cycles in $\Mbar$.   With that understood, we impose the following  additional Hypothesis on the Lagrangian submanifold $L$.
\begin{Hypothesis} \label{hyp:no_Maslov_0_disc_Maslov_1}
  \begin{align}
& \parbox{30em}{$B_{r}$ is disjoint from $L$.} \\
& \parbox{30em}{The moduli spaces of discs $  \Mod{(0,1)}{1}(L)$,    $ \scrR_{1}^{1}(\Mbar; (1,0)| L  ) \times_{\Mbar} B_{0}$, and of spheres $\scrM_{1}(\Mbar| 1) \times_{\Mbar} L$,  are each  transverse fibre products.} \\
& \parbox{30em}{Every disc in $\Mbar$ whose intersection number with $D_{0}$ vanishes and whose boundary lies on $L$ is constant.} 
  \end{align}
\end{Hypothesis}
The first condition makes the transversality of $\scrM_{1}(\Mbar| 1) \times_{\Mbar} L$  unambiguous, but can be weakened to the requirement that the evaluation map from the moduli space of discs on $L$ with one interior marked point is transverse to $B_{r}$. The second set of conditions allows us to avoid using virtual perturbations even whilst fixing the almost complex structure; it could be weakened by choosing domain dependent inhomogeneous perturbations. In order to remove the last condition, we would need to use multivalued perturbations.

\begin{lem} \label{lem:description_boundary_moduli_toy}
The Gromov compactification $\Modbar{(0,1)}{1}(L) $ is a manifold of dimension $n-1$ with boundary strata:
\begin{align} \label{eq:first_stratum_interior_bubble_toy}
&   \scrM_{1}(\Mbar| 1) \times_{\Mbar} L  \\ \label{eq:second_stratum_interior_bubble_toy}
& B_{0}\times_{\Mbar} \scrR_{1}^{1}(\Mbar; (1,0)| L  ).
\end{align}
\end{lem}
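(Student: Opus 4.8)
The plan is to combine a dimension count with a Gromov--compactness analysis of the possible boundary degenerations, using the hypotheses to eliminate every source of non-compactness except the two advertised strata, and then to appeal to a standard gluing theorem to upgrade this set-theoretic description to a manifold-with-boundary structure.

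First I would verify the dimension. Since $\eta$ has simple poles along $D_0$ by Hypothesis \eqref{eq:hyp_main_3} and $L$ is graded, a disc on $L$ with intersection numbers $(1,0)$ with $(D_0,D_r)$ has Maslov index two; the moduli space of unparametrised such discs therefore has dimension $n-1$, and recording two interior and one boundary marked point gives $\dim \scrR_2^1(\Mbar;(1,0)|L) = n+4$. The incidence conditions cutting out $\Mod{(0,1)}{1}(L)$ --- the position of the second interior point on the real-codimension-one locus $(0,1)\subset\Delta$, together with $u(z_0)\in D_0$ and $u(z_1)\in D_0'$, each of real codimension two --- remove five dimensions, so $\dim\Mod{(0,1)}{1}(L) = n-1$. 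That the open part is a smooth manifold of this dimension follows from the regularity provided by Hypothesis \ref{Hyp:regular} and the transversality of the fibre product asserted in the second bullet of Hypothesis \ref{hyp:no_Maslov_0_disc_Maslov_1}, with $B_r$ disjoint from $L$ by the first bullet.

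Next I would enumerate the limits produced by Gromov compactness: sphere bubbling, disc bubbling at the boundary, collision of the interior point $z_1$ with $z_0$, and $z_1$ running into the boundary marked point at $1$. Positivity of intersection discards the unwanted ones. By Lemma \ref{lem:chern_0_positive_D_r} every non-constant sphere has $\langle D_0,\cdot\rangle\ge 1$, and since the total class satisfies $\langle D_r\rangle = 0$ while each component meets $D_r$ non-negatively, no component can have vanishing Chern number; hence only a single non-constant sphere, absorbing all of $\langle D_0\rangle = 1$, may bubble, and the residual disc becomes constant. The third bullet of Hypothesis \ref{hyp:no_Maslov_0_disc_Maslov_1} likewise forces every non-constant disc bubble to have $\langle D_0\rangle\ge 1$; combined with the normalisation $u(z_0)\in D_0$, which pins the unique $D_0$-intersection to the main component, this rules out boundary disc bubbles entirely, since a bubble absorbing the $D_0$-intersection would leave the main disc constant with $z_0\mapsto L$, contradicting $u(z_0)\in D_0$ as $L$ is disjoint from $D_0$. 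The same reasoning excludes the limit $z_1\to 1$: the constraint $u(z_1)\in D_0'$ keeps $z_1$ away from $L$, so it cannot run into the boundary without a (forbidden) disc bubble.

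This leaves exactly two codimension-one degenerations. The bubbling of one sphere of class $\langle D_0\rangle = 1$, $\langle D_r\rangle = 0$ at an interior node, with the disc collapsing to the attaching point on $L$ and the sphere carrying both the $D_0$- and $D_0'$-intersections, yields the stratum \eqref{eq:first_stratum_interior_bubble_toy}, identified with $\scrM_1(\Mbar|1)\times_{\Mbar} L$ through evaluation at the node. The collision $z_1\to z_0$, whose limit point necessarily lies in $D_0\cap D_0' = B_0$, yields the stratum \eqref{eq:second_stratum_interior_bubble_toy}, namely $B_0\times_{\Mbar}\scrR_1^1(\Mbar;(1,0)|L)$. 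A computation with $\dim\scrM_1(\Mbar|1) = 2n-2$, $\dim\scrR_1^1(\Mbar;(1,0)|L) = n+2$, $\dim L = n$ and $\dim B_0 = 2n-4$ shows each fibre product has dimension $n-2$, hence codimension one, and both are transverse by the second bullet of Hypothesis \ref{hyp:no_Maslov_0_disc_Maslov_1}. The final, most delicate step is the gluing theorem: I would show that each configuration in \eqref{eq:first_stratum_interior_bubble_toy} and \eqref{eq:second_stratum_interior_bubble_toy} is the unique endpoint of a local one-parameter family in the interior, so that adjoining them compactifies $\Mod{(0,1)}{1}(L)$ to a manifold with boundary. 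This gluing --- attaching a sphere along an interior node at a point of $L$, and resmoothing two colliding interior marked points through $B_0$ --- is where the analysis concentrates, though it is routine given the assumed transversality; all remaining degenerations (further bubbling, multiple collisions) are of codimension at least two and do not contribute to the boundary.
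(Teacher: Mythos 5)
Your proof is correct and follows essentially the same route as the paper's: both exclude all Gromov limits except the two advertised strata via positivity of intersection, Lemma \ref{lem:chern_0_positive_D_r}, exactness of $L$ in $M$, and Hypothesis \ref{hyp:no_Maslov_0_disc_Maslov_1}, and your four degeneration types correspond exactly to the paper's case analysis by the limiting position of $z_1$. The only quibble is that your opening citation of Lemma \ref{lem:chern_0_positive_D_r} (``every non-constant sphere has $\langle D_0,\cdot\rangle \geq 1$'') overstates that lemma, which only gives non-negativity; your very next clause (Chern-zero spheres must meet $D_r$ strictly positively, which the vanishing of the total $D_r$-intersection forbids) is the correct repair and is precisely how the paper argues.
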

\begin{proof}
There are three cases to consider: (i) $z_1 \to 1$ (ii) $z_1 \in (0,1)$, and (iii) $z_1 \to 0$, cf. Figure \ref{Fig:FourTypes}. We shall show that only the last possibility can define non-trivial strata.

{\bf Case (i):}  The domain has two disc components,  each carrying an interior marked point, mapping respectively to (the homologous locally finite cycles) $D_0$ and $D'_0$. Since any other component intersects $D_0$ non-negatively by Lemma \ref{lem:chern_0_positive_D_r}, we conclude that the intersection number with $D_0$ is $\geq 2$, contradicting the original assumption that the intersection number is $1$.  

{\bf Case (ii):} Since $z_1 \neq 0$, the principal disc component of the stable map meets $D_0$, hence has Maslov index 2, which implies that any rational component has Maslov index zero since the total intersection number with $D_0$ is $1$.  
We conclude that all rational curve components of the image meet $D_{r}$ strictly positively by Lemma \ref{lem:chern_0_positive_D_r}.   Since the total intersection number with $D_{r}$ vanishes by assumption, positivity of intersection again implies that all components other than the one containing the interior marked points are discs with image inside $M$. Since $L$ is exact in $M$, there are no such discs, and we conclude that there are no boundary strata corresponding to Case (ii).

Finally, we analyse the more delicate case $z_1 \to 0$. As above, there can be only one component with non-vanishing intersection number with $D_0$.

{\bf Case (iii\,a):} If this component is a disc, then all rational curve components have vanishing Maslov index. Since such curves have positive intersection number with $D_{r}$, and any disc component has non-negative intersection, the fact that the configuration has total intersection number $0$ with $D_{r}$ implies that the only non-constant components other than the one that  meets $D_0$ are discs whose image is contained in $M$. Since $L$ is exact in $M$, there are no such additional disc components, and hence the given  configuration corresponds to a disc attached to a ghost sphere carrying the marked points  $z_1$ and $z_0$. Since these interior marked points are required to map to $D_0$ and $D'_0$, this corresponds to Equation \eqref{eq:second_stratum_interior_bubble_toy}.

{\bf Case (iii\,b):} Assume all discs are disjoint from $D_0$, which by Hypothesis \ref{hyp:no_Maslov_0_disc_Maslov_1}  implies that there is only one disc component on which  the map is moreover constant. This disc is attached to a tree of sphere bubbles,  which represents an element of $\scrM_{1}(\Mbar| 1)$. We conclude that this stratum corresponds to Equation \eqref{eq:first_stratum_interior_bubble_toy}. To show that the moduli space is a manifold along this stratum, we use the first two parts of Hypothesis \ref{hyp:no_Maslov_0_disc_Maslov_1}.
\end{proof}

By evaluation at $1 \in \partial D$, we obtain a cochain
\begin{equation}
   \tilde{b}_{D}^{0} = \ev_{*}[\Modbar{(0,1)}{1}(L)  ]  \in C^{1}(L; \bZ).
\end{equation}
As is evident from Lemma \ref{lem:description_boundary_moduli_toy}, this cochain is not closed.  To construct a cycle, consider the fibre product of $ \beta_{0}$ and $\scrR_{1}^{1}(\Mbar; (1,0)| L  ) $ over evaluation at the interior marked point.  Using  the evaluation map $\ev_*$ at the boundary marked point of elements of $\scrR_{1}^{1}(\Mbar; (1,0)| L  ) $, we obtain a cochain:
\begin{align} 
\sCO^{0}( \beta_0)  =  \ev_{*}  [\beta_{0}\times_{\Mbar} \scrR_{1}^{1}(\Mbar; (1,0)| L  )] & \in C^{1}(L; \bZ).\label{eqn:cancel-strata}
\end{align}

\begin{lem} \label{Lem:order-zero-part-of-CO}
  The sum of the restriction of $gw_{1}$ with $b_{D}^{0}$, and $\sCO^{0} \beta_0$ defines a cycle
\begin{equation} \label{eq:0-part-CC_cocycle}
b^0_D =  \tilde{b}_{D}^{0} + gw_1|L + \sCO^{0}( \beta_0) \in C^1(L; \bZ) .
\end{equation}
\end{lem}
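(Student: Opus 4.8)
The plan is to show that $b^0_D$ is a cocycle by computing $\partial\tilde{b}^0_D$ geometrically and checking that its two terms are cancelled by $\partial(gw_1|L)$ and $\partial(\sCO^0(\beta_0))$ respectively. The cochain $\tilde{b}^0_D = \ev_*[\Modbar{(0,1)}{1}(L)]$ is the pushforward of a compact moduli space \emph{with boundary}, so it is not closed; rather, the Stokes-type boundary formula for evaluation chains identifies $\partial\tilde{b}^0_D$ with the signed sum of the evaluation images of the two codimension-one boundary strata of Lemma \ref{lem:description_boundary_moduli_toy}, namely \eqref{eq:first_stratum_interior_bubble_toy} and \eqref{eq:second_stratum_interior_bubble_toy}. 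The statement then reduces to identifying each of these strata with the coboundary of one of the correction cochains.

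First I would analyse the interior-bubble stratum $\scrM_1(\Mbar|1)\times_{\Mbar} L$ of \eqref{eq:first_stratum_interior_bubble_toy}. In this configuration the principal disc is the constant ghost disc identified in the proof of Lemma \ref{lem:description_boundary_moduli_toy}, so evaluation at the boundary marked point agrees with the point where the sphere bubble attaches to $L$; the resulting cochain on $L$ is exactly the restriction $GW_1|L$ of the Gromov--Witten class. Since $\partial(gw_1)=GW_1$ on $M$ by \eqref{eq:bounding_chain_gw} and pullback under the inclusion $L\hookrightarrow M$ is a cochain map, this stratum contributes $\partial(gw_1|L)$ up to sign.

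Next I would treat the ghost-sphere stratum $B_0\times_{\Mbar}\scrR_1^1(\Mbar;(1,0)|L)$ of \eqref{eq:second_stratum_interior_bubble_toy}, whose evaluation image is by definition $\sCO^0([B_0])$. Using $\partial\beta_0=[B_0]$ from \eqref{eq:bounding_chain_infinity}, I want to recognise this as $\partial(\sCO^0(\beta_0))$. Computing the coboundary of $\sCO^0(\beta_0)=\ev_*[\beta_0\times_{\Mbar}\scrR_1^1(\Mbar;(1,0)|L)]$ produces the term $\sCO^0(\partial\beta_0)$ together with contributions from the degenerations of the moduli space $\scrR_1^1(\Mbar;(1,0)|L)$. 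The term $\sCO^0(\partial\beta_0)$ in turn equals $\sCO^0([B_0])$ once one observes that the relative part of $\partial\beta_0$ supported on $(D_0\cap D_r)\cup D_0^{sing}$ (Hypothesis \ref{Hyp:GWvanishes}) does not contribute: an interior marked point on $D_0\cap D_r$ would force the disc to meet $D_r$, contradicting the vanishing $D_r$-intersection by positivity.

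I expect the main obstacle to be showing that the moduli-space degeneration terms vanish, so that the only surviving contribution to $\partial(\sCO^0(\beta_0))$ is $\sCO^0([B_0])$. This should follow from exactness of $L$ together with the third clause of Hypothesis \ref{hyp:no_Maslov_0_disc_Maslov_1} and Lemma \ref{lem:chern_0_positive_D_r}, which rule out disc breaking and Maslov-zero bubbling exactly as in the analysis of Cases (ii) and (iii) of Lemma \ref{lem:description_boundary_moduli_toy}. Granting this, the two boundary strata give $\partial\tilde{b}^0_D=-\partial(gw_1|L)-\partial(\sCO^0(\beta_0))$ once the orientations of Lemma \ref{lem:description_boundary_moduli_toy} are matched against the conventions fixing the primitives $gw_1$ and $\beta_0$, whence $\partial b^0_D=0$.
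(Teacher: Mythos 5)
Your overall strategy coincides with the paper's: apply a Stokes-type argument to $\Modbar{(0,1)}{1}(L)$, identify the stratum \eqref{eq:first_stratum_interior_bubble_toy} with $\partial(gw_1|L)$ and the stratum \eqref{eq:second_stratum_interior_bubble_toy} with the $[B_0]$-part of $\partial(\sCO^{0}(\beta_0))$, and check that no other codimension-one contributions arise. Your identification of the two strata, the use of $\partial(gw_1)=GW_1$ and $\partial\beta_0=[B_0]$, and your sketch for why degenerations of $\scrR_{1}^{1}(\Mbar;(1,0)|L)$ contribute only in codimension at least two are all consistent with the paper, which compresses these points into the transversality assertion and the statement of Lemma \ref{lem:description_boundary_moduli_toy}.

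There is, however, one genuine gap. The chain $\beta_0$ is relative to $(D_{0} \cap D_{r}) \cup D_{0}^{sing}$, so you must rule out contributions from \emph{both} pieces of its relative boundary, and your argument only covers the first. Positivity of intersection with $D_r$ shows that a disc with vanishing $D_r$-intersection number is disjoint from $D_r$, hence its interior marked point cannot lie on $D_0 \cap D_r$; but this says nothing about points of $D_{0}^{sing}$ away from $D_r$. For those, the paper invokes Fulton \cite[Proposition 7.2]{Fulton}: any holomorphic curve not contained in $D_0$ which passes through a singular point of $D_0$ has local intersection multiplicity at least $2$ with $D_0$, contradicting the defining condition that elements of $\scrR_{1}^{1}(\Mbar;(1,0)|L)$ have total intersection number $1$ with $D_0$ (discs with boundary on $L \subset M$ are never contained in $D_0$, so Fulton's hypothesis holds). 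Without this step, the part of $\partial\beta_0$ supported on $D_{0}^{sing}$ could a priori meet the evaluation image of $\bar{\scrR}_{1}^{1}(\Mbar;(1,0)|L)$ and contribute unwanted terms to $\partial(\sCO^{0}(\beta_0))$, destroying the cancellation you need; this disjointness is also what makes the fibre product \eqref{eqn:cancel-strata} transverse and well defined, which your computation uses implicitly.
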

\begin{proof}
We view $\beta_0$ as a locally finite chain with boundary in $(D_0 \cap D_r) \cup D_0^{sing}$. Every holomorphic curve which is not contained in $D_0$, and which passes through the singular locus of $D_0^{sing}$, has intersection number with $D_0$ strictly greater than $1$, see Fulton's \cite[Proposition 7.2]{Fulton}.  It follows that the evaluation map at the interior marked point $\bar{\scrR}_1^1(\Mbar, (1,0)|L) \rightarrow \Mbar$ has image disjoint from $D_0^{sing}$.  

We claim that the evaluation image is also disjoint from $D_0 \cap D_r$.   To see this, we argue by contradiction as in the proof of Lemma \ref{lem:description_boundary_moduli_toy}.   First, suppose that the interior marked point lies on a disc component with boundary on $L$. Since $L$ is disjoint from $D_0$ and $D_r$, this component would then have strictly positive intersection number with $D_r$ and strictly positive Maslov index. Since all Chern zero rational curves themselves have strictly positive intersection number with $D_r$, by Lemma \ref{lem:chern_0_positive_D_r}, the total intersection number of the stable map with $D_r$ would be positive, which is a contradiction. 

Alternatively, the interior marked point lies on a sphere component. If the disc component of the stable map meets $D_0$, the sphere bubble has vanishing Chern number, and again the total intersection number of the configuration with $D_r$ is strictly positive.  Therefore, the disc component is disjoint from $D_0$, hence constant as in Case (iii b) of Lemma \ref{lem:description_boundary_moduli_toy}.  We have a tree of sphere bubbles, the principal one of which passes through $L$, hence is not contained in $D_0$ or $D_r$ and which has positive Maslov index.  All other sphere components have vanishing Chern number, hence meet $D_r$ strictly positively.  This would again lead to a contradiction.

 It follows that the fibre product in \eqref{eqn:cancel-strata} can be made transverse; the result is then immediate from combining Lemma \ref{lem:description_boundary_moduli_toy} with the definitions of $\beta_0$, and $gw_1$. 
\end{proof}

\begin{defn}
A Lagrangian brane $L$ is \emph{(infinitesimally) invariant} if the cycle in Equation \eqref{eq:0-part-CC_cocycle} is null-homologous. An \emph{(infinitesimally) equivariant} structure on $L$, over $\bk$,  is a choice of bounding cochain in $ C^0(L; \bk) $ for this cycle. 
\end{defn}

\begin{rem}
  The terminology is justified as follows, cf. Remark \ref{Rem:justify-nc}. Under mirror symmetry, a holomorphic vector field on the mirror of $M$ gives rise to an $\nc$-vector field. Assume that such a vector field integrates to a $\bC^{*}$ action, and hence to an action on the category of coherent sheaves, to which one can associate an equivariant category. The condition of a Lagrangian being (infinitesimally) invariant is mirror to a sheaf being invariant under the $\bC^*$-action, and the equivariant structure corresponds to a lift to the equivariant category.
\end{rem}

In the next sections, we extend the construction above to the Hochschild cochains of the Fukaya category. We will then work with discs with strip-like ends rather than with boundary marked points.  The passage between the two involves gluing abstract operators over half-planes to strip-like ends, cf. Figure \ref{Fig:BdryVersusStriplike}.
\begin{center}
\begin{figure}[ht]
\includegraphics[scale=0.4]{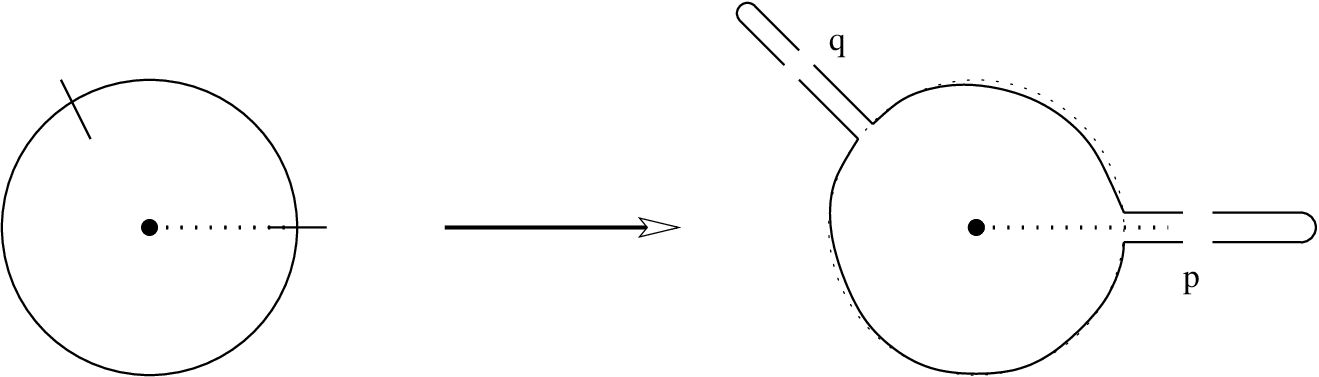}
\caption{Replacing boundary marked points by strip-like ends\label{Fig:BdryVersusStriplike}}
\end{figure}
\end{center}
  Since the endomorphism $b^1: HF(L,L') \rightarrow HF(L,L')$ preserves the absolute grading, in our setting the absolute indices associated to the two boundary marked points co-incide.  Gluing on the relevant operators over half-planes, the resulting closed boundary condition is a loop of Maslov index zero.  In the following sections, we will move back and forth between these two points of view when $L=L'$.

\subsection{From the closed sector to Hochschild cochains} \label{sec:relating-closed-open}

In order to relate Gromov-Witten theory to the Fukaya category, we consider   the cyclohedron $\scrR_{1}^{k+1}$, which is the moduli space of discs with $k+1$ boundary marked points and a single interior marked point.  There is a natural map
\begin{equation}
  \scrR_{1}^{k+1} \to \scrR^{k+1}
\end{equation}
obtained by forgetting the interior marked point.

Having fixed Floer data on the moduli space $\scrR^{k+1}$ for the purpose of defining the $A_{\infty}$ structure, we choose such data on the moduli space $\scrR^{k+1}_{1}$ of discs with one interior marked point. The setup is completely standard; the reader may compare to \cite[Section 4]{Abouzaid:generation} for a related construction involving symplectic cohomology.   As in \eqref{eq:Floer_equation_0-puncture}, we work with the fixed integrable complex structure $J$ on $\Mbar$.   On a given surface, the Floer data defining the pseudo-holomorphic curve equation therefore comprise a closed 1-form $\alpha_{\Sigma}$ vanishing on the boundary whose pullback under the ends agrees with $dt$, a family of Hamiltonians $H_{z}$ on $\Mbar$, parametrised by $z \in \Sigma$, with compact support disjoint from the neighbourhood $\nu D$ containing $D_0'$ and which are constant on the ends $\epsilon, \epsilon_i$. If $X_z$ is the Hamiltonian flow of $H_z$, we obtain a pseudo-holomorphic curve equation
\begin{equation} \label{eq:dbar_equation_discs_two_punctures}
(du - X_z \otimes \alpha_{\Sigma})^{0,1} = 0.
\end{equation}

To state precisely  the required properties of the \emph{inhomogeneous term} $X_z \otimes \alpha_\Sigma  $, consider a  sequence $(L_0, \ldots, L_k)$ of Lagrangians. Label the segments along the boundary of $\Delta$ counterclockwise, starting with the segment $(p_0,p_1)$, by the Lagrangians $L_i$, and choose a  pseudo-holomorphic equation subject to the following constraints:
\begin{itemize}
\item  all inhomogeneous terms vanish outside a compact set that is disjoint from $D$; 
\item if $z$ lies in a neighbourhood of $p_{i+1}$, then $H_z$ agrees with the Hamiltonian $H_{L_i,L_{i+1}}$ used to define $CF^*(L_i,L_{i+1})$.
\end{itemize}

 If $(x_0, \ldots, x_k)$ is a sequence of chords $x_{i} \in \Chord(L_{i-1}, L_i)$, \begin{equation} \label{eq:discs_one_interior_marked_point}
\scrR_{1}^{k+1}(\Mbar | x_0; x_k ,\cdots, x_1 ) 
\end{equation}
denotes the corresponding moduli space of maps into $\Mbar$, with boundary conditions $(L_0, \ldots, L_k)$ and asymptotic conditions $(x_0, \ldots, x_k)$.  
Evaluation at the interior marked point defines a map
\begin{equation} \label{eq:evaluate_disc_one_interior}
   \scrR_{1}^{k+1}(\Mbar| x_0; x_k ,\cdots, x_1 ) \to \Mbar.
\end{equation}

This moduli space decomposes as a union
\begin{equation}
 \scrR_{1}^{k+1}(\Mbar | x_0; x_k ,\cdots, x_1  )  = \coprod_{0 \leq d_0, \, 0\leq d_r } \scrR_{1}^{k+1}(\Mbar, (d_0,d_r)  | x_0; x_k ,\cdots, x_1  ) ,
\end{equation}
where each component of the right hand side consists  of curves whose intersection number with $D_0$ is $d_0$ and with $D_r$ is $d_r$. We shall be particularly interested in the cases $d_0=0$, which consists of curves whose image lies in $\Mbar \backslash D_0$, and $d_0=1$, which are curves meeting $D_0$ once. These moduli spaces have virtual dimension
\begin{equation} \label{eq:dimension_1_marked_point_degree_d}
      k +  2d_0 + \deg(x_0) - \sum_{i=1}^k \deg(x_i).
\end{equation}

We choose the inhomogeneous data defining these moduli spaces so that the spaces of discs with one interior marked point are regular, and so that
\begin{equation} \label{eq:transversality_bad_locus_evaluation}
    \parbox{34em}{the restriction of the evaluation map to $ \scrR_{1}^{k+1}(\Mbar, (0,d_r)  | x_0; x_k ,\cdots, x_1  ) $ is transverse to $B_r$, and the restriction to $\scrR_{1}^{k+1}(\Mbar, (1,0)  | x_0; x_k ,\cdots, x_1  )  $ is transverse to $B_0$.}
\end{equation}
Whenever $d_0=d_r = 0$, we write
\begin{equation} \label{eq:moduli_space_OC}
 \scrR_{1}^{k+1}(M| x_0; x_k ,\cdots, x_1  )  =   \scrR_{1}^{k+1}(\Mbar, (0,0)  | x_0; x_k ,\cdots, x_1  ).
\end{equation}
Indeed, positivity of intersection and  ampleness of the divisor $D$ supported on $ D_0 \cup D_{\infty} \cup D_r$ implies that curves in $\Mbar = \Mdbar \backslash D_{\infty}$ whose intersection numbers with $D_0$ and $D_r$ both vanish are disjoint from both, which implies that their image lies in $M$.

The moduli  spaces in Equation \eqref{eq:moduli_space_OC} give rise to a map
\begin{equation}
\CO \co  C^*(M; \bk) \to CC^*(\Fuk(M),\Fuk(M)).
\end{equation}
 In the compact setting, such a map was defined by Fukaya, Oh, Ohta, and Ono in \cite{FO3}, and in the non-compact setting  by  Seidel in \cite{Seidel:ICM} (see \cite{Ganatra} for a detailed implementation).  In brief, if we denote by
 \[
 \scrF(L_0,\ldots,L_n) \ = \ hom_{\scrF(M)}(L_{n-1},L_n) \otimes \cdots \otimes hom_{\scrF(M)}(L_{0},L_1)
 \]
 then the map
\[
\CO^d: C^*(M;\bk) \to \prod_{n\geq 0} \prod \mathrm{Hom}(\scrF(L_0,\ldots,L_n),\\\scrF(L_0,L_n)[d])
\]
(where the second product is over tuples of objects $(L_0,\ldots,L_n)$) 
is defined by counting holomorphic discs whose source is an arbitrary element of the moduli space \eqref{eq:moduli_space_OC}, with $k=n$, with an interior marked point constrained to lie on a locally finite cycle representing a generator in $C^*(M;\bk)$, and with Lagrangian boundary conditions given by the $L_i$.  It is a fact that $\CO$ defines a chain map, and is then a unital ring homomorphism on cohomology.  The most non-trivial point in the construction of $\CO$ is to ensure that the Gromov-Floer compactifications of these moduli spaces do not involve any maps whose images leave $M$. The argument of Lemma \ref{lem:c-0-estimate-discs} applies verbatim to establish this.  

The first term of $\CO$ assigns to every cochain in $M$ a class in $CF^*(L,L)$; we denote this map
\begin{align}
   C^*(M ; \bk) & \to CF^*(L,L) \\
c & \mapsto c|L,
\end{align}
since it agrees with the classical restriction of cochains used in Section \ref{sec:equiv-struct} under the natural quasi-isomorphism from the classical cochains of $L$ with its self-Floer cochains.

Next, we study the moduli spaces  $\scrR_{1}^{k+1}(\Mbar, (1,0)  | x_0; x_k ,\cdots, x_1  )  $. In order to define operations using this moduli space, we need better control of the compactification.

\begin{Lemma} \label{lem:moduli_space_one_marked} 
The image of  $ \bar{\scrR}_{1}^{k+1}(\Mbar, (1,0)  | x_0; x_k ,\cdots, x_1  ) $ under the evaluation map is disjoint from  the  singular locus of $D_0$.  It is also disjoint from $D_0 \cap D_{r}$ if the virtual dimension of $ \bar{\scrR}_{1}^{k+1}(\Mbar, (1,0)  | x_0; x_k ,\cdots, x_1  ) $  is less than $6$, i.e. if 
\begin{equation} \label{eq:virtual_dimension_bound}
        k + \deg(x_0) - \sum_{i=1}^k \deg(x_i)  < 4.
\end{equation}
\end{Lemma}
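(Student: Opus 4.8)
The plan is to run a stratum-by-stratum analysis of the Gromov--Floer compactification $\bar{\scrR}_{1}^{k+1}(\Mbar,(1,0)| x_0; x_k,\ldots,x_1)$, controlling each irreducible component through its homological intersection numbers with $D_0$ and $D_r$ via positivity of intersection. As in the proof of Lemma~\ref{lem:description_boundary_moduli_toy}, I would first record that every stable curve in the compactification has image contained in $\Mbar$ (nef-ness of $D_\infty$ together with the energy bound coming from the fixed intersection numbers), and that Lemma~\ref{lem:chern_0_positive_D_r}, combined with the hypothesis that $D_0$ moves, gives $\langle C, D_0\rangle \geq 0$ for every non-constant component $C$. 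Since these numbers sum to $1$, there is a unique non-constant component $C_*$ with $\langle C_*, D_0\rangle = 1$, while all others are disjoint from $D_0$; in particular $C_*$ is simple, as $\langle C_*, D_0\rangle=1$ precludes multiple covers. One also checks, as in the cited lemma and using connectedness of the total image together with the moving hypothesis, that no non-constant component is contained in $D_0$: a rational curve sitting inside $D_0$ cannot be joined to the boundary-carrying disc components, whose boundaries lie on $L \subset M$, without forcing an extra crossing of $D_0$ and raising the total degree beyond $1$.

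For the first assertion, let $p$ be the image of the interior marked point and suppose $p \in D_0$. Tracing through any ghost bubbles, $p$ lies on a non-constant component meeting $D_0$, which by the above must be $C_*$. Since $C_*$ is not contained in $D_0$ and $\langle C_*, D_0\rangle = 1$, \cite[Proposition~7.2]{Fulton} forces $p \notin D_0^{sing}$: a curve not contained in $D_0$ and passing through a singular point of $D_0$ meets $D_0$ with multiplicity at least $2$, contradicting the total degree. This step uses no dimension hypothesis.

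For the second assertion I would use a dichotomy on strata together with the bound \eqref{eq:virtual_dimension_bound}, which by \eqref{eq:dimension_1_marked_point_degree_d} says that the real dimension $k + 2 + \deg(x_0) - \sum_{i} \deg(x_i)$ of the compactified moduli space is less than $6$. On any stratum all of whose components are discs (the open stratum and the Floer- and disc-breaking boundary), each component has $\langle \cdot, D_r\rangle \geq 0$ with total $0$ and hence, not being contained in $D_r$, is disjoint from $D_r$; so by positivity the evaluation image misses $D_r \supseteq D_0 \cap D_r$. The evaluation image can reach $D_r$ only on a stratum carrying a rational component that meets $D_r$ --- by Lemma~\ref{lem:chern_0_positive_D_r}(3) every component of vanishing Chern number is of this type --- and the presence of such a sphere bubble is a phenomenon of real codimension at least $2$. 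Imposing in addition that the marked point map into the real-codimension-$4$ locus $D_0 \cap D_r$ then cuts the virtual dimension of the offending locus to at most $\bigl(k+2+\deg(x_0)-\sum_{i}\deg(x_i)\bigr) - 2 - 4 < 0$. Using the regularity of Hypothesis~\ref{Hyp:regular} and the transversality \eqref{eq:transversality_bad_locus_evaluation}, such a locus is empty, which is exactly the inequality \eqref{eq:virtual_dimension_bound}.

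The main obstacle is the bookkeeping for the boundary strata that approach $D_r$. Because $D_r$ is not assumed nef, a sphere component could in principle be contained in $D_r$, with a negative normal contribution that would have to be balanced by further $D_r$-meeting bubbles; and the $\langle c_1\rangle=0$ spheres forced to meet $D_r$ are precisely the multiply covered, possibly non-regular curves that Hypothesis~\ref{Hyp:regular} confines to $B_r$. The delicate points are therefore (i) to pin down that no non-constant component is genuinely contained in $D_0$, where I would lean on the moving of $D_0$ and the residue of the volume form $\eta$ along $D_0$ from \eqref{eq:hyp_main_3}, and (ii) to verify that the relevant evaluation maps are transverse to $D_0 \cap D_r$ and to $B_r$, so that the expected-dimension estimate genuinely yields emptiness rather than a merely virtual count. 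This transversality is what Hypotheses~\ref{Hyp:regular} and \eqref{eq:transversality_bad_locus_evaluation} are designed to supply, and keeping it compatible with the fixed almost complex structure $J$ is the part of the argument requiring the most care.
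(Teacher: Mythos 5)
Your first assertion is handled exactly as in the paper: the component carrying the marked point is not contained in $D_0$ and carries the full intersection number $1$, so Fulton's \cite[Proposition 7.2]{Fulton} rules out passage through $D_0^{sing}$, with no dimension hypothesis needed. Your dichotomy for the second assertion also mirrors the paper's case analysis: if the marked point lies on a disc component, positivity of intersection with $D_r$ (each component meets $D_r$ non-negatively, total zero, yet the marked point would force a strictly positive contribution) gives the contradiction, which is the paper's Case (i); the only dangerous strata are those where the marked point sits on a sphere tree.

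The gap is in how you kill those strata. Your count subtracts $2$ for sphere bubbling and $4$ for the constraint $\ev(z_0)\in D_0\cap D_r$, but that codimension-$4$ constraint is imposed on the \emph{sphere} part of the configuration, which is precisely where the curves may be multiply covered and non-regular for the fixed $J$ --- so no transversality statement is available to convert the constraint into an honest codimension count, and none of the hypotheses supplies one: \eqref{eq:transversality_bad_locus_evaluation} gives transversality of the $(0,d_r)$ disc moduli to $B_r$ and of the $(1,0)$ moduli to $B_0$, not transversality of anything to $D_0\cap D_r$. The paper's proof avoids this by transferring the entire constraint onto the disc component, where transversality does hold: since Chern-zero spheres meet $D_r$ strictly positively (Lemma \ref{lem:chern_0_positive_D_r}) and the total $D_r$-number vanishes, the unique Chern-one sphere must be \emph{contained} in $D_r$; Hypothesis \ref{Hyp:regular} then forces the attaching node --- a point lying on a stable Chern-one sphere meeting $D_r$ --- into $B_r$; and the disc component is an element of $\scrR_{1}^{k+1}(\Mbar,(0,d_r)|x_0;x_k,\ldots,x_1)$, whose dimension is $k+\deg(x_0)-\sum_i\deg(x_i)<4$ because the term $2d_0$ in \eqref{eq:dimension_1_marked_point_degree_d} drops from $2$ to $0$ when the $D_0$-crossing escapes into the bubble (this is where your ``$-2$'' actually lives). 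Transversality of the disc evaluation to the real-codimension-$4$ locus $B_r$ then yields emptiness. You correctly name Hypothesis \ref{Hyp:regular} and \eqref{eq:transversality_bad_locus_evaluation} and flag this step as delicate, but the reduction from ``sphere tree meets $D_r$ with marked point in $D_0\cap D_r$'' to ``disc evaluates into $B_r$'' is the missing idea; your item (ii), asking to ``verify'' transversality to $D_0\cap D_r$, cannot be carried out at non-regular multiple covers without virtual perturbations, which the whole framework is engineered to avoid.
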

\begin{proof}
As in Lemma \ref{Lem:order-zero-part-of-CO}, 
every curve not contained in $D_0$ and which meets $D_0^{sing}$  has intersection number $\geq 2$ with $D_0$ by \cite[Proposition 7.2]{Fulton}. This implies the first part of the Lemma.

Next, assume for contradiction that some element $u$ of $  \bar{\scrR}_{1}^{k+1}(\Mbar, (1,0)  | x_0; x_k ,\cdots, x_1  )  $ maps to $D_0 \cap D_r$ under the evaluation map $\ev$ at the interior marked point.  There are two cases to consider:

{\bf Case (i):} The interior marked point lies on a disc component. In this case, since the intersection number with $D_0$ is $1$, all sphere components must have vanishing Chern number. On the other hand, Chern zero spheres meet $D_r$ strictly positively, which contradicts the assumption that the intersection number with $D_r$ vanishes.

{\bf Case (ii):} The interior marked point lies on a sphere component.  By considering the intersection number with $D_0$, we see that the disc component of $u$ is  an element of 
\begin{equation} \label{eq:discs_one_interior_M_h}
 \coprod_{0 \leq d_r } \scrR_{1}^{k+1}(\Mbar, (0,d_r)  | x_0; x_k ,\cdots, x_1  ) ,
\end{equation}
which is a moduli space of  virtual dimension less than $4$ if the inequality \eqref{eq:virtual_dimension_bound} holds. There is a unique sphere component of Chern number $1$, and all other components are spheres of vanishing Chern number.  Since the total intersection number of $u$ with $D_r$ vanishes, and all spheres of vanishing Chern number intersect $D_r$ positively, the Chern $1$ sphere is contained in $D_r$. 

Recall that the singular locus of $\scrM_1(\Mbar|1)$ evaluates into the real codimension 4 subset $B_r \subset \Mbar$.   Assumption \eqref{eq:transversality_bad_locus_evaluation} and Equation \eqref{eq:virtual_dimension_bound} imply that the moduli spaces in \eqref{eq:discs_one_interior_M_h} have evaluation image outside $B_r$.  For generically chosen  perturbation data, these evaluation images  will therefore be transverse to the evaluation image of $\scrM_1(\Mbar|1)$; that transversality implies that the images are in fact disjoint, for dimension reasons.  We conclude that  the restriction of the evaluation map to the moduli spaces in \eqref{eq:discs_one_interior_M_h}  has image disjoint from stable rational curves of total Chern number $1$.  This excludes the existence of such a configuration $u$.
\end{proof}

Lemma \ref{lem:moduli_space_one_marked} implies that, if $j \leq 3$ we have a well defined map
\begin{equation} \label{eq:map_cycles_D_0_relative_CC}
 \sCO \co  C_{2n-2-j}^{lf}(D_0 ,  (D_{0} \cap D_{r})  \cup D_{0}^{sing} ; \bk) \to CC^{j}(\Fuk(M),\Fuk(M))
\end{equation}
which can be constructed as follows. Let $\Diff(\Mbar, D_0 \cup  D_r)$ denote the group of diffeomorphisms of $\Mbar$ which preserves $D_0$ and $D_r$, in particular $D_0 \cap D_r $ and $ D_{0}^{sing} $ are preserved. Given a (smooth map on a simplex contributing to a locally finite)  chain $\beta \co \Delta^{2n-2-j} \to D_0$, and a moduli space $\scrR_{1}^{k+1}(\Mbar, (1,0)  | x_0; x_k ,\cdots, x_1  ) $ of virtual dimension $j$, the fibre product
\begin{equation} \label{eq:fibre_product_chain_D_0}
\scrR_{1}^{k+1}(\Mbar, (1,0)  | x_0; x_k ,\cdots, x_1  )  \times_{\Mbar} (\phi_{\beta}  \circ \beta),
\end{equation}
is transverse for a generic map
\begin{equation}
  \phi_{\beta} \co \Delta^{2n-2-j} \to \Diff(\Mbar, D_0 \cup D_r ),
\end{equation}
where $\phi_{\beta} \circ \beta$ denotes the pointwise composition over $\Delta^{2n-2-j}$.
The moduli space $\scrR_{1}^{k+1}(\Mbar, (1,0)  | x_0; x_k ,\cdots, x_1  ) $ is moreover naturally oriented relative to the orientation lines of the chords $x_i$. Hence, whenever
\begin{equation}
   k + \deg(x_0) - \sum_{i=1}^k \deg(x_i)  = j,
\end{equation}
we multiply by a sign whose parity is $j + \sum_{i=1}^k \deg x_i  $ and we obtain a map
\begin{equation}
 \sCO_{u}(\beta) \co \ro_{x_k} \otimes \cdots \otimes \ro_{x_1} \to  \ro_{x_{0}}
\end{equation}
associated to elements of  the finite set \eqref{eq:fibre_product_chain_D_0}. Taking the sum over all such elements defines the desired map in Equation \eqref{eq:map_cycles_D_0_relative_CC}.

\begin{Lemma}
  If $j \leq 3$, we have a commutative diagram
\begin{equation}
\xymatrix{C^{lf}_{2n-2-j}(D_0 , (D_{0} \cap D_{r})  \cup D_{0}^{sing} ; \bk) \ar[r]^-{\sCO} & CC^{j}(\Fuk(M),\Fuk(M)) \\
C^{lf}_{2n-2-j-1}(D_0 , (D_{0} \cap D_{r})  \cup D_{0}^{sing} ; \bk) \ar[r]^-{\sCO}  \ar[u]^{\partial} & CC^{j-1}(\Fuk(M),\Fuk(M)) \ar[u]_{\delta} }
\end{equation} 
with $\partial$ and $\delta$ the relevant chain complex boundary operators.\qed
\end{Lemma}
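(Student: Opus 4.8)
The plan is to recognise the asserted commutativity as the statement that $\sCO$ is a chain map, i.e. $\delta \circ \sCO = \pm\, \sCO \circ \partial$, where $\partial$ is the boundary operator on the relative locally finite complex of $D_0$ and $\delta$ is the Hochschild differential. As with every open--closed operation of this type, the identity is to be extracted from the codimension-one boundary of the one-dimensional pieces of the parametrising moduli spaces. First I would fix a relative chain $\beta$ of codimension $j-1$ and, for each tuple $(x_0; x_k, \ldots, x_1)$ with
\begin{equation}
k + \deg(x_0) - \sum_{i=1}^{k} \deg(x_i) = j,
\end{equation}
consider the Gromov--Floer compactification of the fibre product \eqref{eq:fibre_product_chain_D_0}. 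By the dimension count \eqref{eq:dimension_1_marked_point_degree_d} together with the codimension of $\beta$, this is a compact one-manifold, so its signed boundary count vanishes; the proof then consists of identifying those boundary points with the terms of $\delta\,\sCO(\beta)$ and of $\sCO(\partial\beta)$.

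The genuine interior boundary strata are of two kinds. In the first, the image of the interior marked point runs into the boundary $\partial\beta$ of the chain; summing these contributions over all $(x_0; x_k, \ldots, x_1)$ reconstructs $\sCO(\partial\beta)$. Here the relative structure is essential: by Lemma \ref{lem:moduli_space_one_marked} the evaluation image of $\bar{\scrR}_{1}^{k+1}(\Mbar, (1,0)\mid \cdots)$ is disjoint from $D_0^{sing}$, and --- because $j \leq 3$ forces the bound \eqref{eq:virtual_dimension_bound} --- also from $D_0 \cap D_r$, so the portion of $\partial\beta$ lying in $(D_0\cap D_r)\cup D_0^{sing}$ contributes nothing and only the relative boundary is seen. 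In the second kind of stratum the domain degenerates along $\partial\Delta$: an exact disc in $M$ bubbles off, attached to the principal component, which retains the interior marked point and the single intersection with $D_0$. According to whether the bubble carries the output chord or a consecutive block of inputs, these strata reproduce the two families of terms $\mu^{\bullet}(\ldots, \sCO(\beta)(\ldots),\ldots)$ and $\sCO(\beta)(\ldots, \mu^{\bullet}(\ldots),\ldots)$ comprising $\delta\,\sCO(\beta)$, with signs matching those implicit in \eqref{eq:map_cycles_D_0_relative_CC} by the orientation conventions of \cite{FCPLT}.

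The hard part will be showing that no \emph{other} boundary stratum occurs, so that the boundary count is genuinely $\delta\,\sCO(\beta) \pm \sCO(\partial\beta)$. This is exactly the analysis already carried out at the level of the zeroth Hochschild term, and I would rerun it verbatim in each arity, following Lemmata \ref{lem:description_boundary_moduli_toy} and \ref{lem:moduli_space_one_marked}. Escape of curves to $D_\infty$ is excluded by nefness and the energy bound as in Lemma \ref{lem:c-0-estimate-discs}; sphere bubbling is controlled by positivity of intersection, since Lemma \ref{lem:chern_0_positive_D_r} forces any Chern-zero sphere to meet $D_r$, which together with the standing intersection numbers $(1,0)$ with $(D_0,D_r)$ and the regularity Hypotheses \ref{Hyp:regular} and \ref{hyp:no_Maslov_0_disc_Maslov_1} rules out configurations in which a sphere carries the intersection with $D_0$ once the dimension is small; and interior disc bubbles are excluded by exactness of the Lagrangians. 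The constraint $j \leq 3$, equivalently \eqref{eq:virtual_dimension_bound}, is precisely what keeps these moduli spaces in the range where Lemma \ref{lem:moduli_space_one_marked} applies and the evaluation image avoids the loci where transversality could fail. Granting this exclusion, the vanishing of the signed boundary count of the one-manifold yields the chain-map identity, hence the commutativity of the square.
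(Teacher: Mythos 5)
Your proposal is correct and is essentially the paper's own (omitted) argument: the paper states this lemma with the proof reduced to the remark that one checks commutation ``by considering further fibre products as in Equation \eqref{eq:fibre_product_chain_D_0}'', and your boundary analysis of the compactified one-dimensional fibre products --- with the degenerate strata excluded via Lemma \ref{lem:moduli_space_one_marked}, the standing intersection numbers $(1,0)$, exactness, and the bound $j \leq 3$ --- is precisely that check carried out. One small point of bookkeeping: in matching strata with $\delta\,\sCO(\beta)$, recall that the Hochschild differential includes arity-zero insertions $\mu^{\bullet}(\ldots, \sCO^{0}(\beta), \ldots)$, which arise geometrically from degenerations in which the component carrying the interior marked point (a Maslov index two disc meeting $D_0$ once) has no boundary inputs at all; your case ``the bubble carries the output chord'' should be read as subsuming these configurations.
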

\begin{proof}
It suffices to ensure that the boundary of the fibre product in Equation \eqref{eq:fibre_product_chain_D_0} decomposes into strata which correspond to the composition of $\sCO$ with the differential in the source and target.  Lemma \ref{lem:moduli_space_one_marked} implies such a decomposition holds, provided the perturbations $\phi_{\beta}$ are compatible across boundary strata of the simplices $\beta$, and the Floer perturbation data as usual is compatible with  the compactification $ \bar{\scrR}_{1}^{k+1}(\Mbar, (1,0)  | x_0; x_k ,\cdots, x_1  ) $ of $\scrR_{1}^{k+1}(\Mbar, (1,0)  | x_0; x_k ,\cdots, x_1  ) $. We therefore choose maps $\phi_{\beta}$ first for all $2n-5$-dimensional chains $\beta$ so that transversality is achieved (this is the case $j=3$), then choose the maps for higher dimensional simplices by extending the boundary values given by induction.
\end{proof}
We note that the $0$\th order term of the map $\sCO$, together with the natural quasi-isomorphism from Floer to ordinary cohomology, recovers the map $\sCO^0$ used in Section \ref{sec:equiv-struct}, see Equation \eqref{eq:0-part-CC_cocycle}.

\subsection{Seidel-Solomon moduli spaces of discs}\label{Sec:moduli}


Let $\Delta$ denote the closed unit disc. For $k\geq 0$, let $\Mod{(0,1)}{k+1}$ denote the moduli space of domains comprising the disc $\Delta$
\begin{enumerate}
\item with two marked points  $z_0 = 0$ and $z_1  \in (0,1)$, and
\item with $k+1$ boundary punctures at  $p_0 = 1 \in\partial \Delta$ and points $\{p_1,\ldots,p_k\} \subset \partial \Delta \backslash \{1\}$ ordered counter-clockwise.
\end{enumerate}
It is important to note that the point $1$ will play the role of an output, while the points $p_i$ will correspond to inputs. At the level of Floer theory, this asymmetry is implemented by choosing strip-like ends of a different nature near these punctures: let $Z_- = (-\infty,0] \times [0,1]$ and $Z_+ = [0,\infty) \times [0,1]$.    Given $\Sigma \in \Mod{(0,1)}{k+1}  $, a choice of strip-like ends is a collection of conformal embeddings of punctured half-strips 
\begin{equation}
\epsilon_{0}: Z_- \rightarrow \Sigma, \quad \epsilon_i: Z_+ \rightarrow \Sigma \quad \textrm{for} \, 1 \leq i \leq k\end{equation}
 which take $\partial Z_{\pm}$ into $\partial \Sigma$, which converge at the end to the punctures $p_i$.

This space has a natural  compactification $\Modbar{(0,1)}{k+1} $ of Deligne-Mumford type. To describe its  codimension one boundary facets, recall that we denote the associahedron by $\scrR^{k+1}$. 
The boundary strata of $\Modbar{(0,1)}{k+1} $ arise from the following degenerations: 
\begin{enumerate}
\item ($\{p_i, \ldots, p_{i+j}\}$ move together) A nodal domain in which a collection of input boundary punctures bubble off:
  \begin{equation} \label{eq:first_boundary_mod}
    \coprod_{\substack{2 \leq  j \leq k \\ 1 \leq i \leq k-j}} \Mod{(0,1)}{k-j+1} \times \scrR^{j+1} 
  \end{equation}
Note that the parameter $i$ does not appear in the components, but corresponds to the boundary marked point of $\Delta $ along which the two discs are attached.
\item ($z_1 \rightarrow 0$) A domain with a sphere bubble carrying the two interior marked points attached to a disc with only boundary marked points. Letting $\scrM_{0,3}$ denote the moduli space of spheres with $3$ marked points, this component is
  \begin{equation} \label{eq:second_boundary_mod}
   \scrR^{k+1}_{1} \times \scrM_{0,3}
  \end{equation}
\item ($\{p_{k-l}, \ldots, p_k,  p_1,\ldots, p_i \} \rightarrow 1$) A domain with two discs, one carrying both interior marked points and a subset $\{p_{i+1},\ldots, p_{k-l-1}\}$ of the boundary inputs, the other carrying the remaining inputs and the output $p_0$.
  \begin{equation}\label{eq:third_boundary_mod}
    \coprod_{1 \leq  i + l  \leq k}  \scrR^{i+l+1}  \times \Mod{(0,1)}{k-i-l+1} 
  \end{equation}
\item ($\{z_1\} \cup \{p_1,\ldots, p_i, p_{k-l}, \ldots, p_k\} \rightarrow 1$) A nodal domain with two discs, one carrying $k-i-l$ input boundary marked points and one interior marked point, the bubble carrying the second interior marked point, the remaining boundary marked points and the outgoing point $p_0$.
  \begin{equation} \label{eq:extra_boundary_empty}
    \coprod_{0 \leq  i + l  \leq k}  \scrR^{i+l+1}_{1}  \times \scrR^{k-i-l}_{1}
  \end{equation}
\end{enumerate}

Figure \ref{Fig:FourTypes} depicts the four kinds of degeneration.
\begin{center}
\begin{figure}[ht]
\includegraphics[scale=0.6]{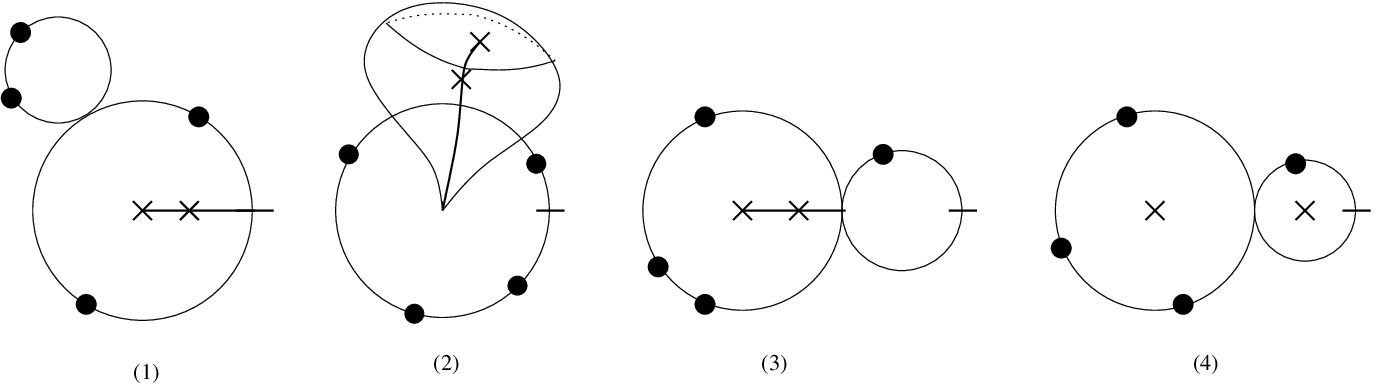}
\caption{Boundary strata of the compactification $\Modbar{(0,1)}{k+1} $ \label{Fig:FourTypes}}
\end{figure}
\end{center}

  Once families of strip-like ends are chosen for all elements of the moduli space, there are gluing maps defined for sufficiently large parameters, which for the first stratum for instance takes the form 
\begin{equation}
\Mod{(0,1)}{k-j+1} \times \scrR^{j+1} \times [\rho,\infty) \rightarrow  \Mod{(0,1)}{k+1} .
\end{equation}

\subsection{Seidel-Solomon moduli spaces of maps}

We study moduli spaces of pseudo-holomorphic maps from domains in $  \Mod{(0,1)}{k+1} $ to $\Mbar$, and hence require perturbation data parametrised by such moduli spaces as in Section \ref{sec:relating-closed-open}. The data will still be comprised of the auxiliary terms appearing in Equation \eqref{eq:dbar_equation_discs_two_punctures}.

Given such a choice, we define
\begin{equation}
  \Mod{(0,1)}{k+1}(x_0; x_k, \ldots, x_1)
\end{equation}
to be the moduli space of finite energy maps
\begin{equation}
  u \co \Sigma \to \Mbar,
\end{equation}
whose relative homology class satisfies
\begin{equation} \label{eq:intersection_numbers_discs}
\begin{aligned}
{[u] \cdot [D_{r}]} & = 0 \\
{[u] \cdot [D_0]} & =1,
\end{aligned}  
\end{equation}
and which solve Equation \eqref{eq:dbar_equation_discs_two_punctures}, mapping the boundary segment between $p_i$ and $p_{i+1}$ to $L_i$, which converge along the $i$\th end to $x_i$, and such that
\begin{equation} \label{eq:constraints_marked_points}
  u(z_0) \in D_0 \textrm{ and } u(z_1) \in D'_0.
\end{equation}

Using positivity of intersection, we can recast Conditions  \eqref{eq:intersection_numbers_discs} and \eqref{eq:constraints_marked_points} as
\begin{equation} \label{eq:divisor_constraints}
 u^{-1}(D_r) = \emptyset, \,\, u^{-1} (D_0) = z_0  \textrm{, and } u^{-1} (D'_0) = z_1.
\end{equation}

\begin{Lemma} \label{lem:vir_dim_Mod}
  The virtual dimension of $\Mod{(0,1)}{k+1}(x_0; x_k, \ldots, x_1)$ is
  \begin{equation}
    k - 1 +  \deg(x_0) - \sum_{i=1}^k \deg(x_i).
  \end{equation}
\end{Lemma}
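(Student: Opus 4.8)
The plan is to read off the virtual dimension as an index count, organised into three independent contributions: the dimension of the domain moduli space $\Mod{(0,1)}{k+1}$, the Fredholm index of the linearised Floer operator over a fixed domain, and the codimension of the two divisor constraints $u(z_0)\in D_0$ and $u(z_1)\in D_0'$ imposed by \eqref{eq:constraints_marked_points}. First I would record that the domain moduli space $\Mod{(0,1)}{k+1}$ has real dimension $k+1$: normalising $z_0=0$ and $p_0=1$ exhausts the three-dimensional automorphism group of $\Delta$, after which the marked point $z_1$ contributes one parameter as it ranges over $(0,1)$ and the boundary punctures $p_1,\ldots,p_k$ contribute $k$ parameters. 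This decomposition is robust for all $k\geq 0$ precisely because fixing $z_0$ and $p_0$ fully rigidifies the disc, so one avoids the usual low-$k$ automorphism subtleties of the associahedron.

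Next, over a fixed domain I would compute the index of the operator governing maps with asymptotics $(x_0;x_k,\ldots,x_1)$ and relative homology class satisfying \eqref{eq:intersection_numbers_discs}. The cleanest bookkeeping is to compare with the $A_\infty$ moduli space $\scrR^{k+1}(M|x_0;x_k,\ldots,x_1)$, whose total dimension $k-2+\deg(x_0)-\sum_{i=1}^{k}\deg(x_i)$ together with the domain dimension $k-2$ shows that the operator index for maps staying in $M$ is $\deg(x_0)-\sum_{i=1}^{k}\deg(x_i)$. Since $M$ carries a nowhere-vanishing complex volume form, $c_1(M)=0$, and by Lemma \ref{lem:chern_0_positive_D_r} all of the first Chern number is concentrated on $D_0$; hence a disc with $[u]\cdot[D_0]=1$ has Maslov index $2$, raising the operator index to $\deg(x_0)-\sum_{i=1}^{k}\deg(x_i)+2$. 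The two point constraints $u(z_0)\in D_0$ and $u(z_1)\in D_0'$ are each a single complex (two real) condition, so they cut the dimension by $4$ in total. Adding the three contributions gives
\[
(k+1) + \left(\deg(x_0)-\sum_{i=1}^{k}\deg(x_i)+2\right) - 4 = k-1+\deg(x_0)-\sum_{i=1}^{k}\deg(x_i),
\]
which is the claimed formula.

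The step requiring genuine care — and the expected main obstacle — is justifying that the divisor constraints genuinely cut full codimension and contribute no excess dimension. Here the point is that positivity of intersection, together with $[u]\cdot[D_0]=1$ and the fact that $D_0'$ is linearly equivalent to but shares no irreducible component with $D_0$, forces $u^{-1}(D_0)$ and $u^{-1}(D_0')$ to be single reduced points, which is exactly the content of \eqref{eq:divisor_constraints}. The constraints then pin $z_0$ and $z_1$ to these points, and for generic perturbation data meeting the transversality requirements of \eqref{eq:transversality_bad_locus_evaluation} the relevant evaluation maps are transverse to $D_0$ and $D_0'$; this is what guarantees that the naive codimension count is correct and that the fibre products are cut out cleanly. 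The Maslov-index input is comparatively routine once $c_1(M)=0$ is in hand, so the genuine subtlety lies entirely in the transversality and positivity analysis rather than in the arithmetic of the count.
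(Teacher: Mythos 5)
Your proposal is correct and follows essentially the same route as the paper: domain dimension $k+1$, plus a Fredholm index of $2+\deg(x_0)-\sum_{i=1}^k\deg(x_i)$ (the $+2$ coming from the Maslov contribution of a single intersection with $D_0$, since $\eta$ has simple poles there), minus the real codimension $4$ cut out by the two interior point constraints. One small caveat: Lemma \ref{lem:chern_0_positive_D_r} concerns closed rational curves and absolute Chern numbers, so it is not quite the right citation for the Maslov index of a \emph{disc}; the correct justification is the one you also give, namely that the meromorphic volume form with simple poles along $D_0$ makes the Maslov class of $(\Mbar,L)$ equal to twice the class dual to $D_0$ in $H^2(\Mbar,L)$.
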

\begin{proof}
  The Fredholm index of any element of $\Mod{(0,1)}{k+1}(x_0; x_k, \ldots, x_1)$ is
  \begin{equation}
    2 + \deg(x_0) - \sum_{i=1}^k \deg(x_i).
  \end{equation}
The constant term is due to the fact that $D_0$ represents twice the Maslov class in $H^{2}(\Mbar,L)$ for each Lagrangian $L$ that we consider, and the fact that we are considering discs whose intersection number with $D_0$ is $1$. The dimension of $\Mod{(0,1)}{k+1} $ is $k+1 $, and the conditions imposed at the two interior marked points amount to a real codimension $4$ constraint.
\end{proof}

\subsection{Compactification and transversality for Seidel-Solomon spaces}

We shall describe the Gromov-Floer  compactification $ \Modbar{(0,1)}{k+1}(x_0; x_k, \ldots, x_1) $ introduced above whenever its virtual dimension is $0$ or $1$, under appropriate transversality assumptions.  

For a sequence $x_i$ of Floer generators, we define
\begin{equation} \label{eq:moduli_in_M_r}
  \scrR^{k+1}(\Mbar | x_0; x_k ,\cdots, x_1) 
\end{equation}
to be the moduli space of solutions to Equation \eqref{eq:Floer_equation_0-puncture} in $\Mbar$, with the same boundary conditions as in Section \ref{sec:fukaya-category}. 

 Note that Hypothesis \ref{hyp:no_Maslov_0_disc_Maslov_1} implies that 
if $x \in \Chord(L,L)$, then $ \scrR^{1}(\Mbar|x)$ contains no element with trivial intersection with $D_0$.  More precisely, Gromov compactness shows that by taking the perturbation used to define $\Chord(L,L)$ small, an element of $ \scrR^{1}(\Mbar|x)$ is close to a disc with boundary on $L$.  The 3rd part of Hypothesis  \ref{hyp:no_Maslov_0_disc_Maslov_1} then implies no element of $\scrR^1(\Mbar|x)$ has trivial intersection with $D_0$ up to some preassigned energy level $E$, and the energy level $E$ goes to infinity as the perturbation goes to zero.  For simplicity we will elide this point.

For $k \geq 1$, we assume that the data in Section \ref{sec:fukaya-category} are chosen generically so that
\begin{equation} \label{eq:regularity_discs_M_r}
  \parbox{34em}{the moduli spaces $  \scrR^{k+1}(\Mbar \backslash D_0 | x_0; x_k ,\cdots, x_1)  $  are regular.}
\end{equation}
The techniques for achieving transversality are again standard: for $k \geq 2$, the underlying curves are stable, so we can use inhomogeneous perturbations depending on the source as in  \cite{FCPLT}, while for $k=1$, we use the result of Floer-Hofer-Salamon \cite{FHS} which yields a somewhere injective point even in the presence of an $\bR$-symmetry group of translations of the domain.

We can now analyse the strata of $ \Modbar{(0,1)}{k+1}(x_0; x_k, \ldots, x_1) $. A picture of the possible degenerations of one-dimensional moduli spaces is given in Figure \ref{Fig:DilationDegenerate}. 
\begin{center}
\begin{figure}[ht] 
\includegraphics[scale=0.5]{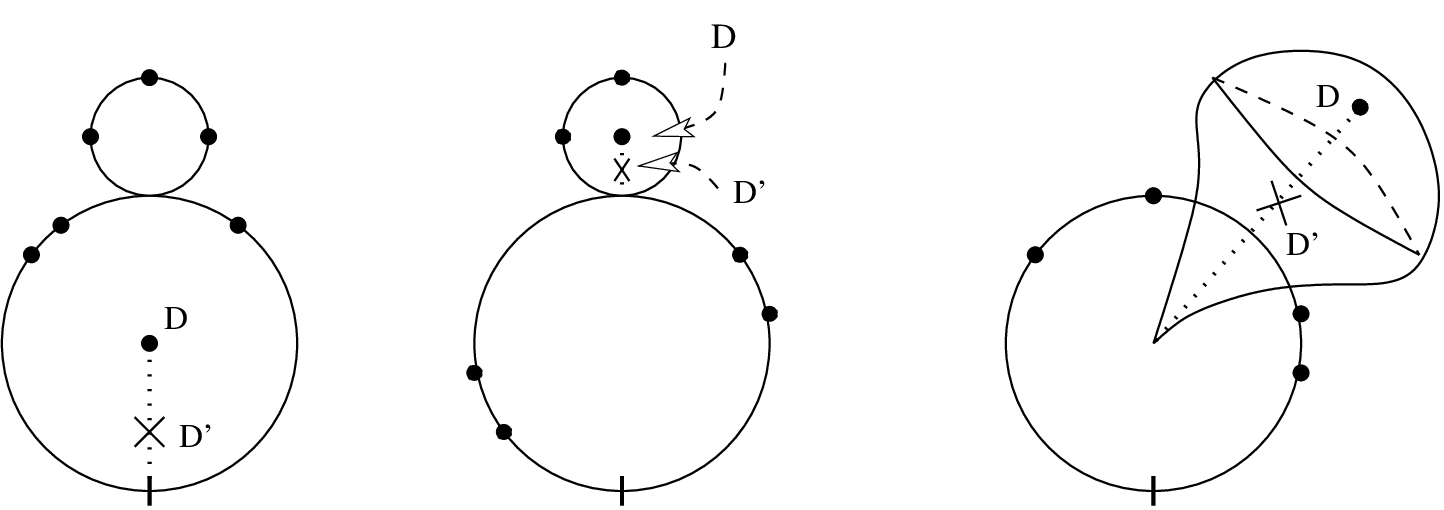}
\caption{\label{Fig:DilationDegenerate}
}
\end{figure}
\end{center}
\begin{Lemma} 
  The strata corresponding to $z_1 \to 1$ are empty.
\end{Lemma}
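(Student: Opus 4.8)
The plan is to show that the defining intersection condition $[u]\cdot[D_0]=1$ from \eqref{eq:intersection_numbers_discs} is incompatible with the combinatorics of the degeneration $z_1\to 1$, which is the stratum \eqref{eq:extra_boundary_empty}; the argument is the exact analogue of Case (i) in the proof of Lemma \ref{lem:description_boundary_moduli_toy}. First I would recall the shape of this degeneration: as the interior marked point $z_1$ collides with the output puncture $p_0=1$, a boundary (disc) bubble splits off carrying $z_1$, while the marked point $z_0=0$ remains on the complementary principal component. Thus any element of this stratum has two principal disc components with Lagrangian boundary, a component $C_0$ carrying $z_0$ and a bubble component $C_1$ carrying $z_1$; the constraints \eqref{eq:constraints_marked_points}, which persist in the Gromov limit, force $u(z_0)\in D_0$ on $C_0$ and $u(z_1)\in D_0'$ on $C_1$.

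The key point is then a lower bound on intersection numbers. Every Lagrangian boundary condition is disjoint from the neighbourhood $\nu D$, which contains both $D_0$ and $D_0'$; hence neither $C_0$ nor $C_1$ can be constant or contained in the relevant divisor, since its boundary would otherwise have to meet $D_0$ or $D_0'$. Positivity of intersection for the integrable structure $J$ then yields $[C_0]\cdot[D_0]\geq 1$ and $[C_1]\cdot[D_0']\geq 1$, the constrained marked points $z_0$ and $z_1$ each accounting for at least one intersection.

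To conclude I would pass from $D_0'$ back to $D_0$. Since $D_0'$ is linearly equivalent to $D_0$ and both are disjoint from every Lagrangian boundary, they are homologous and $[C_1]\cdot[D_0]=[C_1]\cdot[D_0']\geq 1$. Any further components --- additional disc bubbles or sphere components --- meet $D_0$ non-negatively, as the coefficient of $D_0$ in $D$ is positive and all Lagrangians avoid $D_0$, compare Lemma \ref{lem:chern_0_positive_D_r}. Writing $[u]=[C_0]+[C_1]+\cdots$ and summing gives
\[
 [u]\cdot[D_0] \ = \ [C_0]\cdot[D_0] + [C_1]\cdot[D_0] + \cdots \ \geq \ 2,
\]
which contradicts $[u]\cdot[D_0]=1$. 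Hence no such configuration exists and the stratum is empty.

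The only delicate points are bookkeeping rather than conceptual. One must verify that the two interior marked points really do land on distinct principal components in the $z_1\to 1$ limit, so that $D_0$ and $D_0'$ are hit separately --- this holds because $z_0=0$ is pinned at the centre while only $z_1$ migrates into the bubble --- and one must justify the equality $[C_1]\cdot[D_0]=[C_1]\cdot[D_0']$ for relative classes, which is legitimate precisely because $L$ is disjoint from both divisors. No transversality input is needed: the exclusion is purely topological.
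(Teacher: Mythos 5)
Your proof is correct and follows essentially the same route as the paper's: both arguments use positivity of intersection to show that the component carrying $z_0$ and the component carrying $z_1$ each meet $D_0$ strictly positively (the latter via the homological identification of $D_0'$ with $D_0$), and non-negativity of $D_0$ on all remaining components, contradicting the total intersection number $1$. Your extra bookkeeping remarks (distinctness of the components, legitimacy of replacing $D_0'$ by $D_0$ in the presence of Lagrangian boundary) are points the paper leaves implicit, but they do not change the argument.
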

\begin{proof}
  For such strata, $z_0$ and $z_1$ lie on distinct components of the domain. The constraints that $z_0$ map to $D_0$ and $z_1$ to $D'_0$, together with positivity of intersection, imply that the intersection numbers of these components with $D_0$ are both strictly positive. Since $D_0$ is non-negative on all rational curves in $\Mbar$, we conclude that any such stable curve has algebraic intersection number with $D_0$ strictly greater than $1$. This contradicts the fact that elements of $ \Mod{(0,1)}{k+1}(x_0; x_k, \ldots, x_1)  $ have intersection number $1$ with $D_0$.
\end{proof}
Next, we consider the case $z_1 \in (0,1)$. We shall show that there can be no component which is a rational curve. Just as in Lemma \ref{lem:description_boundary_moduli_toy}, any rational curve component would have vanishing Chern number, so meet $D_0$ trivially and $D_r$ strictly positively, by  Lemma \ref{lem:chern_0_positive_D_r}. This would then contradict the fact that the total intersection number with $D_r$ vanishes, and again implies that all components other than the one carrying the interior marked points are discs with image in $M$. Having assumed regularity for these moduli spaces, we conclude:
\begin{Lemma} \label{lem:boundary_mod_1}
  The strata of $ \Modbar{(0,1)}{k+1}(x_0; x_k, \ldots, x_1) $ corresponding to $z_1 \in (0,1) $ have dimension equal to their virtual dimension. In particular, they are empty whenever the virtual dimension of $ \Mod{(0,1)}{k+1}(x_0; x_k, \ldots, x_1) $ is negative,  and consist of points when the virtual dimension of this space is  $0$, and arcs when it is $1$.  In the $1$-dimensional case, the boundary strata are:
\begin{align} \label{eq:boundary_stratum_differential_output_1}
& \coprod_x \  \Modbar{(0,1)}{k-k_1+1}(x_0; x_k, \ldots, x_{i+k_1+1}, x, x_{i},  \ldots,  x_1) \times \scrR^{k_1+1}(M| x; x_{i+k_1} ,\cdots, x_{i+1}  ) \\\label{eq:boundary_stratum_differential_output_2}
& \coprod_x \  \scrR^{k_1+1}(M| x_0, x_k, \ldots, x_{i+k_1+1}, x, x_{i},  \ldots,  x_1) \times \Modbar{(0,1)}{k_1}(x; x_{i+k_1+1},  \ldots , x_{i}) 
\end{align} 
where the union is over generators $x$ satisfying
\begin{align}
 {\deg(x) = \deg(x_{i+1}) + \cdots + \deg(x_{i+k_1}) - k_1 +2 } & \quad \mathrm{in} \ \eqref{eq:boundary_stratum_differential_output_1} \\ 
 {\deg(x) = \deg(x_{i+1}) + \cdots + \deg(x_{i+k_1}) - k_1 +1}&  \quad \mathrm{in} \ \eqref{eq:boundary_stratum_differential_output_2}.
\end{align}
\qed
\end{Lemma}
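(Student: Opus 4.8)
The plan is to run the standard Gromov--Floer compactness-and-gluing package, leaning on the geometric input already assembled in the paragraphs immediately preceding the lemma to exclude all but the Floer-type degenerations. First I would apply Gromov--Floer compactness: any sequence in $\Mod{(0,1)}{k+1}(x_0; x_k, \ldots, x_1)$ whose domains keep $z_1 \in (0,1)$ subconverges to a stable map over a stable curve of $\Modbar{(0,1)}{k+1}$. The discussion before the lemma shows that no such limit carries a rational component: a sphere bubble would have vanishing Chern number, hence meet $D_0$ trivially and $D_r$ strictly positively by Lemma \ref{lem:chern_0_positive_D_r}, contradicting $[u]\cdot[D_r]=0$, and the same positivity forces every component other than the principal one to be an exact disc mapping to $M$. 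Thus the only possible limits are Floer-broken configurations: one principal Seidel--Solomon disc carrying $z_0,z_1$ and realising the incidences $u(z_0)\in D_0$, $u(z_1)\in D'_0$ with $[u]\cdot[D_0]=1$ and $[u]\cdot[D_r]=0$, glued along boundary nodes to genuine Fukaya discs in $M$.

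Next I would extract dimensions from regularity and index additivity. The Fukaya bubbles are regular by the transversality assumption \eqref{eq:regularity_discs_M_r}, and the Seidel--Solomon components are regular for generic perturbation data, using \cite{FHS} to produce a somewhere-injective point in the $k=1$ case where an $\bR$-translation symmetry persists. Since the Fredholm index is additive under gluing and each Floer breaking drops the dimension by one, the open stratum of unbroken maps with $z_1\in(0,1)$ is a manifold whose dimension equals the virtual dimension of Lemma \ref{lem:vir_dim_Mod}, namely $k - 1 + \deg(x_0) - \sum_{i=1}^k \deg(x_i)$, while every broken stratum has strictly smaller dimension. In particular the stratum is empty when this number is negative, consists of transversally cut-out points when it is $0$, and is a $1$-manifold when it is $1$; the gluing theorem supplies the collar charts exhibiting the compactified $1$-dimensional space as a manifold with boundary.

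Finally, in the $1$-dimensional case I would match the codimension-one broken strata against the domain degenerations \eqref{eq:first_boundary_mod}--\eqref{eq:extra_boundary_empty}. The types \eqref{eq:second_boundary_mod} and \eqref{eq:extra_boundary_empty} involve $z_1\to 0$ (a sphere bubble absorbing an interior marked point) or $z_1 \to 1$; the former produce a rational component and are excluded as above, and the latter are empty by the lemma preceding this one. The surviving degenerations are type \eqref{eq:first_boundary_mod}, where a consecutive block of inputs bubbles off as a pure associahedron factor $\scrR^{k_1+1}(M|x;\cdots)$ glued at an input of the Seidel--Solomon disc, giving \eqref{eq:boundary_stratum_differential_output_1}, and type \eqref{eq:third_boundary_mod}, where the Seidel--Solomon disc produces an intermediate generator $x$ feeding the output node of a Fukaya disc $\scrR^{k_1+1}(M|x_0;\cdots)$, giving \eqref{eq:boundary_stratum_differential_output_2}. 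The stated degree constraints on $x$ are exactly the conditions that the broken-off factor be rigid: for \eqref{eq:boundary_stratum_differential_output_1} this is the vanishing of the dimension of the Fukaya disc, equivalently the $A_\infty$ degree shift of $\mu^{k_1}$, and for \eqref{eq:boundary_stratum_differential_output_2} it is the vanishing of the virtual dimension of the Seidel--Solomon factor via Lemma \ref{lem:vir_dim_Mod}.

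I expect the main obstacle to be transversality for the Seidel--Solomon moduli spaces together with orientation compatibility along the boundary: one must arrange that the domain-dependent perturbation data are chosen consistently with the gluing maps so that the broken configurations are simultaneously regular and correctly oriented relative to the orientation lines $\ro_{x_i}$, and confirm that no further degeneration (in particular no second component sinking into $D_0$ beyond the single principal incidence) can occur. The positivity-of-intersection bookkeeping against $[u]\cdot[D_0]=1$ rules out the latter, so the genuinely delicate part is propagating coherent perturbations and signs across the whole compactified family.
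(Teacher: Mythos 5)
Your core argument is the paper's own: for strata with $z_1 \in (0,1)$, any rational component would carry vanishing intersection with $D_0$ (the principal disc already accounts for the single intersection point), hence vanishing Chern number, hence strictly positive intersection with $D_r$ by Lemma \ref{lem:chern_0_positive_D_r}, contradicting $[u]\cdot[D_r]=0$; all non-principal components are therefore exact discs in $M$, the regularity assumption \eqref{eq:regularity_discs_M_r} (with \cite{FHS} for the strip case) gives that actual and virtual dimensions agree, and the codimension-one strata are the two Floer-type breakings, with the degree constraints being exactly the rigidity (zero-dimensionality) of the broken-off factor. This matches the paper's treatment, which consists of the paragraph preceding the lemma together with the dimension bookkeeping of Lemma \ref{lem:vir_dim_Mod}.

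There is, however, one wrong assertion in your third paragraph: you claim that the degenerations of type \eqref{eq:second_boundary_mod}, where $z_1 \to 0$ and a sphere bubble absorbs the interior marked points, ``produce a rational component and are excluded as above.'' The exclusion argument does \emph{not} apply to them: the sphere (or tree of spheres) carrying $z_0$ and $z_1$ meets $D_0$ with intersection number $1$ and therefore has Chern number $1$, not $0$, so Lemma \ref{lem:chern_0_positive_D_r} yields no contradiction. These configurations genuinely occur; they are precisely the boundary strata \eqref{eq:first_stratum_interior_bubble} and \eqref{eq:second_stratum_interior_bubble} of Lemma \ref{lem:description_boundary_moduli}, and their non-emptiness is the whole reason the cochain $\tilde{b}_{D}$ is not closed by itself and must be corrected by $\CO(gw_1)$ and $\sCO(\beta_0)$ in Equation \eqref{eq:total_hochschild_cochain}. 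For the lemma at hand your slip is harmless, because the statement concerns only the strata lying over $z_1 \in (0,1)$, so the $z_1 \to 0$ limits are outside its scope by definition rather than by emptiness; but if you carried your reasoning forward you would wrongly conclude that $\tilde{b}_{D}$ alone defines a Hochschild cocycle, collapsing the construction of Proposition \ref{prop:ConstructDilation}.
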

Finally, we analyse the more delicate case $z_1 \to 0$. 
\begin{lem} \label{lem:description_boundary_moduli}
If the virtual dimension of $\Mod{(0,1)}{k+1}(x_0; x_k, \ldots, x_1) $ is strictly less than $3$, all strata corresponding to $z_1 \to 0 $ are regular and those of top dimension are
\begin{align} \label{eq:first_stratum_interior_bubble}
&   \scrM_{1}(\Mbar| 1) \times_{\Mbar} \scrR_{1}^{k+1}(M| x_0; x_k ,\cdots, x_1  )  \\ \label{eq:second_stratum_interior_bubble}
& B_{0}\times_{\Mbar} \scrR_{1}^{k+1}(\Mbar; (1,0)| x_0; x_k ,\cdots, x_1  ).
\end{align}
\end{lem}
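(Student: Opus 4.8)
The plan is to analyze the Gromov--Floer degeneration as $z_1 \to 0$ by adapting the case-by-case argument already carried out in Lemma \ref{lem:description_boundary_moduli_toy}, upgrading it from the setting of a single boundary marked point to that of $k+1$ boundary punctures with strip-like ends. As $z_1 \to 0$, the two interior marked points $z_0, z_1$ are forced together, so in the limit they either lie on a common bubble component or the configuration develops a node separating them. Since any element of $\Mod{(0,1)}{k+1}(x_0; x_k, \ldots, x_1)$ has intersection number $1$ with $D_0$ and $0$ with $D_r$, positivity of intersection together with Lemma \ref{lem:chern_0_positive_D_r} forces a strong dichotomy: exactly one component can meet $D_0$, and every rational component of vanishing Chern number must meet $D_r$, which is prohibited. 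I would first record this global constraint, then split into the two cases according to whether the principal disc component (the one carrying the strip-like ends and the output $p_0$) meets $D_0$.

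In the case where the principal disc component carries the divisor intersection (so it has Maslov index $2$ and meets $D_0$ once), the argument of Case (iii\,a) applies: all rational components have vanishing Chern number and hence meet $D_r$, while any extra disc components would have image in $M$ with boundary on an exact Lagrangian, hence are constant. The surviving configuration is a disc in $\scrR_{1}^{k+1}(\Mbar; (1,0)| x_0; x_k, \ldots, x_1)$ with the two interior constraints realized as a node carrying $z_1$ mapping to $D'_0$; this gives exactly the stratum \eqref{eq:second_stratum_interior_bubble}, with the fibre product over $B_0 = D_0 \cap D'_0$. In the complementary case, following Case (iii\,b), the principal disc is disjoint from $D_0$, so by the third clause of Hypothesis \ref{hyp:no_Maslov_0_disc_Maslov_1} every disc is constant on $D_0$-homology; combined with exactness the disc component carrying the strip-like ends lies in $M$ and the interior marked points have escaped onto a sphere bubble realizing an element of $\scrM_{1}(\Mbar| 1)$, giving the stratum \eqref{eq:first_stratum_interior_bubble} as the fibre product with $\scrR_{1}^{k+1}(M| x_0; x_k, \ldots, x_1)$ over the interior evaluation. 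The hypothesis $\mathrm{vdim} < 3$ (equivalently the index bound \eqref{eq:virtual_dimension_bound}) is what excludes configurations in which the Chern-one sphere hides inside $D_r$, exactly as in Lemma \ref{lem:moduli_space_one_marked}.

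For regularity along these strata I would invoke the transversality already set up: the assumed regularity \eqref{eq:regularity_discs_M_r} of the $M$-moduli spaces, Hypothesis \ref{Hyp:regular} guaranteeing smoothness of $\scrM_1(\Mbar|1)$ away from $B_r$, and the transversality of the fibre products in \eqref{eq:transversality_bad_locus_evaluation}. The point is that the evaluation image of the disc factor is disjoint from the bad locus $B_r$ (because $B_r \cap L = \emptyset$ and by the dimension count), so that the fibre products \eqref{eq:first_stratum_interior_bubble} and \eqref{eq:second_stratum_interior_bubble} are transversely cut out and carry the expected dimension. A standard gluing argument then shows these strata are genuine codimension-one boundary facets rather than deeper strata, so that $\Modbar{(0,1)}{k+1}(x_0; x_k, \ldots, x_1)$ is a manifold with boundary along them.

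The main obstacle will be the bookkeeping in the case analysis to ensure that no \emph{mixed} degeneration survives: a priori as $z_1 \to 0$ one could imagine the interior marked points distributed across a tree of bubbles attached at various boundary nodes, and one must verify that the intersection-number constraints, exactness of the $L_i$, and the dimension bound together rule out all but the two listed configurations. This is precisely where the hypothesis $\mathrm{vdim} < 3$ does essential work, mirroring the role of \eqref{eq:virtual_dimension_bound} in Lemma \ref{lem:moduli_space_one_marked}: it is the constraint that prevents a Chern-one sphere from being absorbed into $D_r$ and thereby decoupling the $D_0$-constraint from the principal disc. Carefully tracking this energy/index balance across all possible bubble trees, rather than the single-component picture of the toy Lemma \ref{lem:description_boundary_moduli_toy}, is the technical heart of the argument.
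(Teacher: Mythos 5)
Your proposal follows essentially the same route as the paper's proof: the same two-case analysis paralleling Lemma \ref{lem:description_boundary_moduli_toy}, split according to whether the unique component meeting $D_0$ is the disc or lies in a sphere tree, yielding the strata \eqref{eq:second_stratum_interior_bubble} and \eqref{eq:first_stratum_interior_bubble} respectively, with the hypothesis on virtual dimension used, via the transversality condition \eqref{eq:transversality_bad_locus_evaluation} and the locus $B_r$, precisely to rule out a Chern-one sphere hiding inside $D_r$, exactly as in Lemma \ref{lem:moduli_space_one_marked}. The one wobble is in your Case B: the third clause of Hypothesis \ref{hyp:no_Maslov_0_disc_Maslov_1} does not literally apply to the principal component (which carries punctures and inhomogeneous Floer data, not closed boundary on a single Lagrangian); the paper instead deduces that this component lies in $M$ from the regularity assumption \eqref{eq:regularity_discs_M_r} together with intersection-number bookkeeping, which forces $d_r=0$ once the sphere tree has been shown disjoint from $D_r$ --- ingredients you in fact invoke elsewhere in your argument.
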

\begin{proof}
The proof again parallels Lemma \ref{lem:description_boundary_moduli_toy}. 
As usual there is only one component with non-vanishing intersection number with $D_0$.

{\bf Case (i):} If this component is a disc, then all rational curve components have vanishing Maslov index, which excludes the presence of bubbles and implies that all other disc components map to $M$ using the vanishing of intersection with $D_{r}$. We conclude that all the moduli spaces that appear in such a configuration are regular by our choice of inhomogeneous data, hence that the top dimensional strata in this case have only one disc component which is an element of $\scrR_{1}^{k+1}(\Mbar, (1,0)| x_0; x_k ,\cdots, x_1  ) $. This stratum is  attached to a ghost sphere carrying the marked points  $z_1$ and $z_0$. Since these interior marked points are required to map to $D_0$ and $D'_0$, this corresponds to Equation \eqref{eq:second_stratum_interior_bubble}.

{\bf Case (ii):} Assume all discs are disjoint from $D_0$. Since our data is chosen, cf. \eqref{eq:regularity_discs_M_r},  to ensure regularity of the moduli spaces of discs with $0$ or $1$ interior marked points in the complement of $D_0$, in codimension $1$ the following is the only possible type of configuration.   There is one disc component, which is an element of $ \scrR_{1}^{k+1}(\Mbar, (0,d_r)| x_0; x_k ,\cdots, x_1  )  $ for some integer $d_r$, which  is attached to a tree of sphere bubbles, one of which is a sphere which intersects $D_0$ once.  Forgetting the other components and marked points, we identify this sphere component with an element of $\scrM_1(\bar{M}|1)$.  Condition \eqref{eq:transversality_bad_locus_evaluation} implies that, if the dimension is less than $3$, this sphere is disjoint from $D_{r}$. Since the total intersection with $D_r$ vanishes, we conclude from Lemma \ref{lem:chern_0_positive_D_r} that there can be no sphere of vanishing Chern class. We conclude that this boundary stratum is given by Equation \eqref{eq:first_stratum_interior_bubble}.
\end{proof}

\subsection{Construction of an \nc-vector field} \label{sec:construction-an-nc}
The moduli spaces $  \Mod{(0,1)}{k+1}(x_0; x_k, \ldots, x_1) $  admit natural orientations relative to the orientation lines $\ro_{x_i} $ and the moduli spaces of underlying curves $  \Mod{(0,1)}{k+1}  $. We fix the orientation for $ \Mod{(0,1)}{k+1}  $ induced by identifying the interior points of this moduli space with an open subset of
\begin{equation}
(0,1) \times  (\partial \Delta)^{k+1}.
\end{equation}
Using the fact that a $0$-dimensional manifold admits a natural orientation, which we twist by  $\sum_{i=1}^{k} i \deg x_i $, we obtain a canonical map
\begin{equation}
\tilde{b}^{u} \co \ro_{x_k}  \otimes \cdots  \otimes \ro_{x_1} \to \ro_{x_0}
\end{equation}
whenever $\deg(x_0) =k + 1 +   \sum_{i=1}^{k} \deg(x_i) $, and $u$ is an element of $  \Mod{(0,1)}{k+1}(x_0; x_k, \ldots, x_1) $. In particular, we define a Hochschild cochain
\begin{align}
  \tilde{b}_{D} &  \in CC^{*}( \Fuk(M), \Fuk(M) ) \\
\tilde{b}_{D}^{k} |  \ro_{x_k}  \otimes \cdots  \otimes \ro_{x_1} & = \sum_{x_0} \sum_{u \in  \Mod{(0,1)}{k+1}(x_0; x_k, \ldots, x_1)}  b^{u}.
\end{align}

Consider the sum
\begin{equation} \label{eq:total_hochschild_cochain}
\tilde{b}_{D} + \CO(gw_1) + \sCO( \beta_0 ) \in  CC^*(\Fuk(M),\Fuk(M)).
\end{equation}
To see that this is closed, we return to the description of the boundary strata of the moduli space in Lemma \ref{lem:boundary_mod_1}: these two strata respectively correspond to
\begin{equation}
\tilde{b}^i_D(\ldots, \mu_{\Fuk}^j(\ldots, \ldots), \ldots) \quad \textrm{and} \quad \mu_{\Fuk}^k(\ldots, \tilde{b}^j_D(\ldots, \ldots), \ldots).
\end{equation} 
The $A_{\infty}$-operations which arise in the first two pictures in Figure \ref{Fig:DilationDegenerate} involve products in the exact manifold $M$, i.e.  the disc not containing the interior marked point lies wholly in $M$; hence such degenerations are indeed governed by the structure coefficients $\mu^*_{\Fuk}$.  Lemma \ref{lem:description_boundary_moduli} accounts for two additional terms in the boundary of $b_D$, which are cancelled by the differential of the second two terms in Equation \eqref{eq:total_hochschild_cochain}. Indeed, Equations \eqref{eq:first_stratum_interior_bubble} and \eqref{eq:second_stratum_interior_bubble} respectively correspond to
\begin{equation}
   \CO(GW_1)  \textrm{ and } \sCO( [B_0] ).
\end{equation}
  Hypothesis \ref{Hyp:GWvanishes}, together with the fact that  $\CO$ and $\sCO$ are  chain maps, allows us to conclude:

\begin{Proposition} \label{prop:ConstructDilation}
The sum
\begin{equation}
b_D \ = \   \tilde{b}^{D} + \CO(gw_1) + \sCO( \beta_0 ) 
\end{equation}
defines an \nc-vector field $b$ for $\Fuk(M)$.
\end{Proposition}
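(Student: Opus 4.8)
The plan is to verify the two defining properties of an \nc-vector field: that $b_D$ has the correct degree, lying in $CC^1(\Fuk(M),\Fuk(M))$, and that it is a Hochschild cocycle, $\delta b_D = 0$. The degree claim I would dispatch first by dimension bookkeeping. By Lemma \ref{lem:vir_dim_Mod} the moduli space $\Mod{(0,1)}{k+1}(x_0; x_k, \ldots, x_1)$ is rigid exactly when $\deg(x_0) - \sum_{i=1}^k \deg(x_i) = 1-k$, which is precisely the condition for $\tilde{b}_D^k$ to define a degree-one operation in Seidel's reduced-degree conventions (compare the computation placing $\mu$ in degree two). The other two summands are degree one by construction: $gw_1 \in C^1(M;\bk)$ and $\CO$ preserves degree, while $\beta_0 \in C^{lf}_{2n-3}(D_0, \ldots)$ is a codimension-one relative chain and $\sCO$ sends codimension-$j$ chains into $CC^j$.

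For the cocycle property, since $\delta b_D$ is a statement in $CC^2$, I would fix inputs with $\deg(x_0) - \sum_{i=1}^k \deg(x_i) = 2-k$, so that $\Mod{(0,1)}{k+1}(x_0; x_k, \ldots, x_1)$ has virtual dimension one. The strategy is to read $\delta b_D$ off from the signed count of the boundary of its Gromov--Floer compactification $\Modbar{(0,1)}{k+1}(x_0; x_k, \ldots, x_1)$, which vanishes because it is the oriented boundary of a compact $1$-manifold. The boundary is classified by the preceding lemmas: the $z_1 \to 1$ strata are empty, the $z_1 \in (0,1)$ strata are Equations \eqref{eq:boundary_stratum_differential_output_1} and \eqref{eq:boundary_stratum_differential_output_2}, and the $z_1 \to 0$ strata are Equations \eqref{eq:first_stratum_interior_bubble} and \eqref{eq:second_stratum_interior_bubble} (the dimension hypotheses of Lemmas \ref{lem:moduli_space_one_marked} and \ref{lem:description_boundary_moduli} hold here, since the virtual dimension is one). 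In the $z_1 \in (0,1)$ strata the disc not carrying the interior marked points lies entirely in $M$, by the positivity-of-intersection and exactness argument preceding Lemma \ref{lem:boundary_mod_1}; hence these degenerations are governed by the honest structure maps $\mu_{\Fuk(M)}$, and the two families \eqref{eq:boundary_stratum_differential_output_1}, \eqref{eq:boundary_stratum_differential_output_2} assemble exactly into the two sums comprising the Hochschild differential $\delta\tilde{b}_D$. The two $z_1 \to 0$ strata are, by their very construction, the contributions $\CO(GW_1)$ and $\sCO([B_0])$.

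Assembling, the vanishing of the total boundary count gives
\[
\delta\tilde{b}_D + \CO(GW_1) + \sCO([B_0]) = 0,
\]
with signs dictated by the orientation of $\Mod{(0,1)}{k+1}$ fixed via $(0,1)\times(\partial\Delta)^{k+1}$. At this point I would invoke the relations $\partial(gw_1) = GW_1$ and $\partial(\beta_0) = [B_0]$ furnished by Hypothesis \ref{Hyp:GWvanishes}, together with the fact that $\CO$ and $\sCO$ are chain maps (the latter via the commuting square established earlier): these give $\CO(GW_1) = \delta\CO(gw_1)$ and $\sCO([B_0]) = \delta\sCO(\beta_0)$, and therefore $\delta b_D = \delta\tilde{b}_D + \delta\CO(gw_1) + \delta\sCO(\beta_0) = 0$.

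The substantive geometric work has already been isolated in the boundary-classification lemmas, so I expect the main obstacle to be the bookkeeping of signs and orientations: one must check that the boundary orientations induced on the strata \eqref{eq:boundary_stratum_differential_output_1}--\eqref{eq:second_stratum_interior_bubble} agree with the signs appearing in the Hochschild differential and in the chain-map identities for $\CO$ and $\sCO$, so that the cancellation is exact rather than merely up to sign. A secondary point worth confirming is that $\Modbar{(0,1)}{k+1}(x_0; x_k,\ldots,x_1)$ is genuinely a compact $1$-manifold whose boundary is \emph{precisely} the enumerated strata, i.e. that the regularity hypotheses and the transversality conditions \eqref{eq:transversality_bad_locus_evaluation} exclude any further codimension-one degeneration.
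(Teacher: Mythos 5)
Your proposal is correct and follows essentially the same route as the paper: the paper likewise defines $\tilde{b}_D$ by counting rigid elements of the Seidel--Solomon moduli spaces, reads the cocycle equation off the boundary of the one-dimensional compactifications using the preceding classification lemmas (empty $z_1\to 1$ strata, Hochschild-differential terms from the $z_1\in(0,1)$ strata of Lemma \ref{lem:boundary_mod_1}, and $\CO(GW_1)$, $\sCO([B_0])$ from the $z_1\to 0$ strata of Lemma \ref{lem:description_boundary_moduli}), and then cancels the latter via the bounding cochains of Hypothesis \ref{Hyp:GWvanishes} and the chain-map property of $\CO$ and $\sCO$. Your explicit degree bookkeeping and the caveats about orientations and exhaustiveness of the boundary strata are points the paper leaves implicit, but they do not constitute a different argument.
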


The resulting \nc-vector field enables one to bigrade Floer cohomology groups between equivariant Lagrangian submanifolds by weights lying in the algebraic closure $\bar{\bk}$. The corresponding gradings are compatible with the Floer product,  in the sense that if $\alpha, \beta \in HF^*(L,L')$ are eigenvectors for the endomorphism associated to the \nc-vector field, with eigenvalues $\lambda_{\alpha}$ and $\lambda_{\beta}$,  then $\alpha \cdot \beta$ is also an eigenvector, and 
\begin{equation}
\lambda_{\alpha\cdot\beta} = \lambda_{\alpha} + \lambda_{\beta}.
\end{equation}
The proof follows from consideration of Figure \ref{Fig:WeightProduct}, which indicates the relevant degenerations of discs in $  \Modbar{(0,1)}{2}(x_0; x_2, x_1) $ with two input marked points (we have excluded degenerations involving sphere bubbles, which are cancelled by the choice of ambient bounding cycle, and for simplicity suppose that the equivariant structures on $L$ and $L'$ are trivial).  
\begin{center}
\begin{figure}[ht]
\includegraphics[scale=0.4]{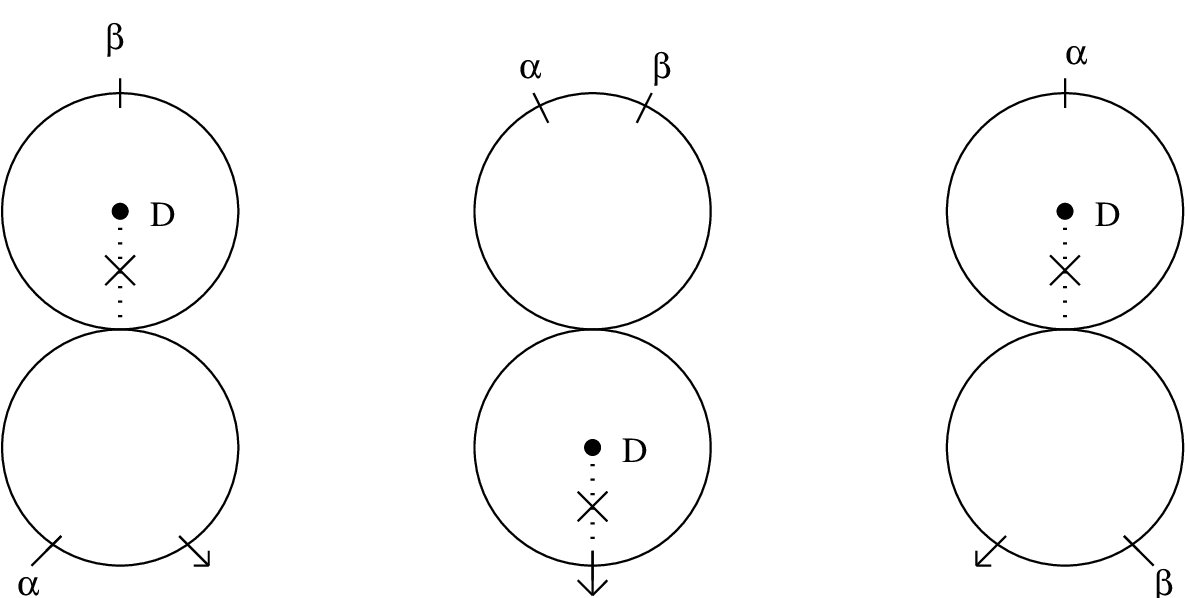}
\caption{Compatibility of weights and the Floer product\label{Fig:WeightProduct}}
\end{figure}
\end{center}

\begin{Remark} \label{Rem:IsotopyCaveat}
Note that we  have not proved that the weights on $HF^*(L,L')$ are invariant under Hamiltonian isotopies of $L, L'$, as this will in general require virtual perturbations.   Our discussion of transversality  assumed, Hypothesis \ref{hyp:no_Maslov_0_disc_Maslov_1},  that $L$ bounds no Maslov zero disc and is disjoint from $B_r$, and neither of those properties will be preserved under general isotopies.  
\end{Remark}

\begin{Remark} \label{Rem:Conormal}
Fix a domain $\Delta^{\to}$, which is the closed unit disc with an interior marked point at the origin and a tangent vector $v \in T_0\Delta$ which points along the positive real axis. Note this is not fixed by any non-identity element of $PSL_2(\bR)$.  Fix a generic conormal section $\nu$ on the smooth locus of the divisor $D_0$; the space of such is connected.  Rather than defining the element $b^0 \in CF^1(L,L)$ by counting discs with $0\in D_0$ and an intersection point with $D_0'$ along the arc $[0,1] \subset \Delta$, one can instead count holomorphic maps from $\Delta^{\to}$ to $\bar{M}$ which take $0 \mapsto D_0$ and for which the conormal section $\nu(v) \in \bR^{+}$.  There are similar counts with additional boundary inputs for defining the higher terms of the cocycle $\{b^i\}$.  This gives a route to building an \nc-vector field if some component of the divisor $D_0$ does not move, i.e. if there is no suitable $D_0'$.\end{Remark}

\section{Milnor fibres} \label{Sec:Milnor}

This section illustrates the previous construction of \nc-vector fields in a model case of the Milnor fibre of the $A_k$-singularity $\bC^2 / \bZ_{k+1}$.  The corresponding Fukaya category is known to be intrinsically formal \cite{SeidelThomas}, from which existence of a pure vector field can be inferred \emph{a posteriori}. However, the geometric construction of the \nc-vector field which appears in this section will underpin the more general construction for Hilbert schemes of the Milnor fibre which appears later, where the methods of \cite{SeidelThomas} do not apply.

\subsection{Dictionary}  To help orient the reader, we give a dictionary relating the objects appearing in the rest of this section with their avatars from Section \ref{Sec:Generalities}.  Comparing to Hypotheses \ref{Hyp:Main}, \ref{Hyp:regular}, \ref{Hyp:GWvanishes} and \ref{hyp:no_Maslov_0_disc_Maslov_1}, one has:

\begin{itemize}
\item The projective variety $\Mdbar$ is the rational surface obtained from blowing up $\bP^1\times\bP^1$ at $(k+1)$ points lying in distinct fibres of the second projection $\bP^1\times\bP^1 \rightarrow \bP^1$.  
\item Fix sections $s_0, s_{\infty}$ of the projection $\Mdbar \rightarrow \bP^1$ with the property that at each reducible fibre, one section hits each component.  Let $F_{\infty}$ denote the fibre over $\infty \in \bP^1$. Then $D_0 = \im(s_0) \cup \im(s_{\infty})$, $D_{\infty} = F_{\infty}$ and $D_r = \emptyset$. With these definitions, Hypothesis \ref{Hyp:Main} is established at the end of this section.  
\item The exact manifold $M = A_{k}$ is the usual Milnor fibre, and the partial compactification $\Mbar = \bar{A}_k$ is the properification of the Lefschetz fibration $M \rightarrow \bC$ (compactifying each $\bC^*$ fibre by points at $\pm\infty$).  We will be concerned with Lagrangian matching spheres in $M$.
\item The Chern one spheres in $\Mbar$ are components of reducible fibres, the moduli space $\scrM_1(\Mbar|1)$ is smooth, and we take $B_r = \emptyset$, which makes Hypothesis \ref{Hyp:regular} elementary, cf. Lemma \ref{Lem:monotoneAbar}.
\item Hypothesis \ref{Hyp:GWvanishes} (vanishing of the Gromov-Witten invariant) is proved in Section \ref{Sec:MilnorGW} below. The second part of Hypothesis \ref{Hyp:GWvanishes} is vacuous in this case, since $B_0$, $D_r$ and $D_0^{sing}$ are all empty.  
\item  Since $B_r = \emptyset$, the first part of Hypothesis  \ref{hyp:no_Maslov_0_disc_Maslov_1} is immediate;  the 3rd part of Hypothesis  \ref{hyp:no_Maslov_0_disc_Maslov_1} follows from Lemma \ref{Lem:monotoneAbar}; and the 2nd part of Hypothesis  \ref{hyp:no_Maslov_0_disc_Maslov_1} follows from the explicit description of the Maslov two discs with boundary on a matching sphere obtained in Lemma \ref{lem:vanishingequivt}.
\end{itemize}

To clarify the construction of $\Mdbar$, consider the trivial fibration $p: \bP^1 \times \bP^1 \rightarrow \bP^1$ and fix disjoint sections $s, s'$ and a fibre $F_{\infty}$.  We blow up $k+1$ distinct points which all lie on $s'$.  There is an induced Lefschetz fibration $\pi: \Mdbar \rightarrow \bP^1$ with $(k+1)$ reducible fibres.  The fibration $\pi$ admits two disjoint sections $s_0, s_{\infty}$, given by the proper transforms of $s$ respectively $s'$; the former is disjoint from all the exceptional divisors, and the latter meets each once.   There is  an effective divisor representing $-K_{\Mdbar}$ given by $s_0 + s_{\infty} + 2F_{\infty}$, noting that $s_{\infty} = s_0 - \sum E_i$. For large $k$,  $-K_{\Mdbar}$ is not nef since $s_{\infty}$ has square $-(k+1)$, but the affine variety $M$ admits a nowhere zero holomorphic volume form which has simple poles along $D_0$, and double poles on $D_{\infty}$.  The component $s_0 \subset D_0$ has holomorphically trivial normal bundle and moves on $\Mdbar$.  The component $s_{\infty} \subset D_0$ does not move on $\Mdbar$, since it has negative square, but $D_0 \cap \Mbar$ moves, and we take $B_0 = \emptyset$. The divisor $D_{\infty} = F_{\infty}$ is clearly nef, and for sufficiently large $N$ there are ample divisors in the class $ND_{\infty} + D_0$.  Thus, we satisfy all parts of Hypothesis \ref{Hyp:Main}.

\subsection{Real spheres}\label{Sec:Real_Spheres}

Let $A_k$ denote the $A_k$-Milnor fibre
\begin{equation}
\{x^2 + y^2 + p_{k+1}(z) = 0\} \subset \bC^3,
\end{equation}
where $p_{k+1}$ is a degree $k+1$ polynomial with distinct roots $\xi_i$. 
Projection to the $z$-co-ordinate defines a Lefschetz fibration $\pi: A_k \rightarrow \bC$ with $k+1$ critical values $\xi_i$.  

Let $\bar{A}_k$ denote the properification of the fibration $\pi: A_k \rightarrow \bC$;  this is a Lefschetz fibration over $\bC$ with generic fibre $S^2$ and $k+1$ special fibres; it can be seen as an open subset of a blow-up of $\bP^1\times \bP^1$, minus a fibre over infinity of the resulting Lefschetz fibration over $\bP^1$.  We denote by $D_0 = D_{\pm}$ the divisor  $\bar{A}_k \backslash A_k$, comprising the disjoint union of sections at $\pm \infty$.  As indicated above,  $\bar{A}_k$ admits a monotone symplectic structure satisfying the conditions of Hypothesis \ref{Hyp:Main}, which we take (after completing the base $\bP^1 \backslash \{\infty\}$ to $\bC$) to come from blowing up the standard product form on $\bC\times \bP^1$.  The induced symplectic structure $\omega$ on the Stein open subset $A_k \subset \bar{A}_k$ has finite area fibres, hence is still not complete, but has completion symplectomorphic to the standard form obtained by restriction from $(\bC^3, \omega_{std})$.   

\begin{Lemma} \label{lem:MatchingSphere}
An embedded path $\gamma: [0,1] \rightarrow \bC$ with $\gamma(0), \gamma(1) \in \{\xi_i\}$ and $\gamma$ disjoint from $\{\xi_i\}$ except at the end-points defines an embedded Lagrangian sphere $L_{\gamma} \subset (A_k, \omega)$, well-defined up to Hamiltonian isotopy.
\end{Lemma}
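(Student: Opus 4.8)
The plan is to realise $L_\gamma$ as a \emph{matching cycle} for the Lefschetz fibration $\pi \co A_k \to \bC$, following the construction of Seidel \cite{FCPLT}. Over the regular locus $\bC \setminus \{\xi_i\}$ the symplectic orthogonal complements $(\ker d\pi)^{\perp_\omega}$ of the fibre tangent spaces define a horizontal distribution, and parallel transport along a path preserves the restriction of $\omega$ to fibres, hence carries Lagrangian submanifolds of one fibre to Lagrangian submanifolds of another. The smooth fibres here are affine conics $\{x^2+y^2 = -p_{k+1}(z)\} \cong \bC^*$, so are real surfaces, and each critical value $\xi_i$ carries a \emph{vanishing cycle}: a Lagrangian circle $V_i$ in a nearby fibre that collapses to the node of $\pi^{-1}(\xi_i)$ as the base point tends to $\xi_i$, as one sees from the standard quadratic local model of a Lefschetz singularity.

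First I would fix an interior point $m = \gamma(t_0)$ and transport the vanishing cycles of the two endpoints inwards: parallel transport of $V_{\gamma(0)}$ along $\gamma|_{[0,t_0]}$ sweeps out a Lagrangian \emph{thimble} $\Delta_0$, an embedded disc with boundary an embedded circle $c_0 \subset \pi^{-1}(m)$, and similarly $\gamma|_{[t_0,1]}$ yields a thimble $\Delta_1$ with boundary $c_1 \subset \pi^{-1}(m)$. The sphere $L_\gamma$ is then the union $\Delta_0 \cup \Delta_1$, glued along the seam; the construction presents $L_\gamma$ as two $2$-discs glued along a common boundary circle, hence as a $2$-sphere, which is Lagrangian since both thimbles are.

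The crucial \emph{matching condition} is that $c_0$ and $c_1$ can be made to coincide. This is where the two-dimensionality of the fibres is decisive: $c_0$ and $c_1$ are embedded, non-contractible, exact circles in the surface $\pi^{-1}(m) \cong \bC^*$, and any two such circles are Hamiltonian isotopic --- homotopic embedded loops in a surface are smoothly isotopic, and exactness pins down the enclosed symplectic area, so a Moser argument upgrades the isotopy to an ambient Hamiltonian one. Applying this fibrewise Hamiltonian isotopy, supported near $\pi^{-1}(m)$, I would arrange $c_0 = c_1$; the standard quadratic local model near the seam then shows that $\Delta_0 \cup \Delta_1$ is a \emph{smoothly} embedded Lagrangian $S^2$, rather than merely a topological one. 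A convenient sanity check is provided by the real spheres of the section title: when $p_{k+1}$ has real coefficients and $\gamma$ is a segment of $\bR$ between two consecutive real roots $\xi_i < \xi_{i+1}$, the locus $\{x^2+y^2 = -p_{k+1}(z),\ x,y,z \in \bR\}$ is visibly an embedded $S^2$, and it is Lagrangian as the fixed-point set of the antisymplectic involution $(x,y,z) \mapsto (\bar x, \bar y, \bar z)$; this is precisely the matching sphere over such a $\gamma$.

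Finally I would check independence of the auxiliary data up to Hamiltonian isotopy. The space of choices --- the interior point $m$, the horizontal distribution used for parallel transport, and the matching Hamiltonian isotopy in the fibre --- is connected, and any two choices are joined by a one-parameter family; a further Moser argument produces an ambient Hamiltonian isotopy between the resulting spheres. The same argument, applied to an isotopy of $\gamma$ rel endpoints through embedded paths avoiding the other $\xi_j$, shows that $L_\gamma$ depends only on the isotopy class of $\gamma$. \textbf{The main obstacle} is the matching condition together with the smoothness of the glued sphere; both become tractable here only because the fibres are surfaces, so that the delicate higher-dimensional question of whether two Lagrangian spheres in a fibre are isotopic degenerates to the elementary classification of embedded curves up to Hamiltonian isotopy in a surface.
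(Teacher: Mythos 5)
Your proposal is correct and takes essentially the same route as the paper: both run the Donaldson--Seidel matching-path construction from \cite[Section 16]{FCPLT}, with the matching step supplied by the two-dimensionality of the fibres (disc cotangent bundles of $S^1$), so that the two vanishing cycles, being exact embedded circles, are Hamiltonian isotopic, after which a deformation of the parallel transport (your fibrewise Hamiltonian correction) makes them coincide exactly. The paper's proof is simply a compressed version of this same argument.
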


\begin{proof} This is well-known for the complete symplectic structure, for instance as an application of  Donaldson's ``matching path" construction \cite[Section 16]{FCPLT}.  Each fibre of $\pi$ is symplectomorphic, with respect to $\omega$, to a disc cotangent bundle of $S^1$, hence contains a unique exact Lagrangian circle up to Hamiltonian isotopy. The vanishing cycles associated to the paths $\gamma|_{[0,1/2]}$ and $\gamma|_{[1/2,1]}$ are exact, hence exact isotopic. After a suitable deformation of the symplectic parallel transport over $\gamma$ -- which by exactness of the base does not change the global symplectic structure -- one can ensure that the vanishing cycles associated to the end-points of $\gamma$ co-incide precisely. \end{proof}

 In particular, taking $p_{k+1}$ to have real roots one finds that $A_k$ retracts to an $A_k$-chain of $k$ Lagrangian spheres, whence the name.  Henceforth take $p_{k+1}(z) = \prod_{j=1}^{k+1} (z-j)$.

\begin{Definition}
The segment of the real axis between two consecutive critical values $\{j, j+1\}$, $1 \leq j \leq k$, defines an embedded Lagrangian 2-sphere $L\subset A_k$, which we will refer to as a \emph{real} matching sphere.
\end{Definition}

There is an action of the braid group $\Br_{k+1}$ on $A_k$ generated by Dehn twists in Lagrangian matching spheres forming an $A_k$ chain.  The group $\Br_{k+1}$  injects into $\pi_0\Symp_{ct}(A_k)$ by the main result of \cite{khovanov-seidel}. By contrast, the  representation 
 \begin{equation}\Br_{k+1} \rightarrow \pi_0\Symp_{ct}(\bar{A}_k)\end{equation} factors through the symmetric group $\Sym_{k+1}$. Indeed, 
a real matching sphere is obtained from a symplectic cut of a product Lagrangian cylinder in $\bC \times \bP^1$, cf. \cite[Example 4.25]{Smith:quadric}.  Any such sphere is therefore contained in the blow-up of a ball, $B^4 \# 2\bar{\bP}^2 \subset \bar{A}_k$, which implies by \cite{Seidel:4dtwist}  that the squared Dehn twist is Hamiltonian isotopic to the identity.

\begin{Lemma} \label{Lem:monotoneAbar}
The divisor $D_0 \subset \bar{A}_k$ comprising the two sections at infinity is cohomologous to the first Chern class and the symplectic form. $\bar{A}_k$ is monotone of minimal Chern number 1. The only Chern number 1 spheres are the components of the singular fibres of the fibration $\pi$.
\end{Lemma}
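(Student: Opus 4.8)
The plan is to reduce everything to intersection theory on the rational surface $\Mdbar$ of the dictionary, namely the blow-up of $\bP^1\times\bP^1$ at $k+1$ points lying in distinct fibres of the projection $p$ that induces $\pi$. First I would fix a basis of $H_2(\Mdbar;\bZ)$ consisting of the fibre class $F=[F_\infty]$ and a section class $S$ of $\pi$ (the two rulings of $\bP^1\times\bP^1$), together with the exceptional classes $E_1,\ldots,E_{k+1}$, with intersection form $F^2=S^2=0$, $F\cdot S=1$, $E_i\cdot E_j=-\delta_{ij}$, and $F,S$ orthogonal to the $E_i$. Since $-K_{\bP^1\times\bP^1}=2F+2S$, one has $c_1(\Mdbar)=2F+2S-\sum_i E_i$, while $D_0=s_0\cup s_\infty$ has class $[D_0]=S+(S-\sum_i E_i)=2S-\sum_i E_i$; in particular $c_1(\Mdbar)-[D_0]=2F=2[F_\infty]$. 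The two components of the reducible fibre over each blown-up point are the proper transform $C_i:=F-E_i$ and the exceptional curve $E_i$, and a direct computation gives $c_1\cdot E_i=c_1\cdot C_i=1$, whereas $c_1\cdot F=2$.

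For the first two assertions I would restrict attention to spherical classes. Any continuous sphere in $\bar{A}_k=\Mdbar\setminus F_\infty$ has image disjoint from $F_\infty$, so its class $D=aF+bS+\sum_i c_iE_i$ satisfies $b=D\cdot[F_\infty]=D\cdot F=0$; thus the spherical lattice lies in the span of $F$ and the $E_i$. On this sublattice one checks $c_1\cdot D=2a+\sum_i c_i=[D_0]\cdot D$, so that $c_1(\Mdbar)$ and $[D_0]$ agree on all spherical classes (equivalently, they define the same class in $H^2(\bar{A}_k)$, since their difference is Poincar\'e dual to $2[F_\infty]$ and hence restricts to zero on the complement). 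Monotonicity is then arranged by choosing the blow-up parameters of the product form so that $\omega(E_i)=\lambda$ for all $i$ and $\omega(F)=2\lambda$ for a single $\lambda>0$ (which forces $\omega(C_i)=\lambda$); since $F$ and the $E_i$ generate the spherical lattice, this gives $[\omega]=\lambda\,c_1$ there. As every non-constant holomorphic sphere then has $c_1=\omega/\lambda>0$, hence $c_1\geq 1$, and $E_i$ realises the value $1$, the minimal Chern number is exactly $1$.

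For the classification of Chern number one spheres, let $u\colon\bP^1\to\bar{A}_k$ be holomorphic with $c_1\cdot[u]=1$. Because the minimal Chern number is $1$, $u$ cannot be multiply covered (a degree $m\geq 2$ cover of a non-constant sphere would have Chern number $\geq 2$), so $u$ is somewhere injective and its image $C$ is an irreducible rational curve in $\Mdbar$ disjoint from $F_\infty$. The key step is that $[C]\cdot F=[C]\cdot[F_\infty]=0$ forces $C$ to be vertical for $\pi$: if $\pi|_C$ were dominant it would have some positive degree $d$, giving $[C]\cdot F=d\geq 1$. Hence $C$ is an irreducible component of a single fibre of $\pi$. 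The generic fibre is an irreducible $\bP^1$ of Chern number $2$, so $C$ must lie in one of the $k+1$ reducible fibres, whose only components are the $C_i$ and $E_i$; as these are exactly the Chern number one curves, $C$ is one of them.

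I expect the genuinely substantive step to be the verticality argument in the last paragraph: once one knows a Chern-one sphere is disjoint from $F_\infty$ and somewhere injective, the vanishing $[C]\cdot F=0$ pins it inside a fibre and the classification is immediate. The bookkeeping of the first two paragraphs is routine, the only point requiring care being the verification that the product-form blow-up can be normalised to be genuinely monotone on the spherical lattice (rather than merely having $c_1$ and $[D_0]$ agree there), for which one uses that $S$ is non-spherical and hence imposes no constraint.
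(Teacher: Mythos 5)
Your proposal is correct, but it takes a different route from the paper at the key step, and it proves more than the paper's own proof does. The paper's proof is two sentences: every holomorphic sphere in $\bar{A}_k$ projects to a point under $\pi\colon\bar{A}_k\to\bC$ --- there are no non-constant holomorphic maps from $\bP^1$ to the affine base, by the maximum principle --- hence lies in a fibre of $\pi$; and the components of the reducible fibres (exceptional curves and proper transforms of fibres through blown-up points) each have Chern number one. You obtain the same verticality statement homologically instead: disjointness from $F_{\infty}$ gives $[C]\cdot F=0$, which rules out a dominant projection for an irreducible curve. The two arguments are equivalent in effect, but the maximum principle applies directly to arbitrary, possibly multiply covered, spheres, so the paper never needs your preliminary reduction to somewhere-injective curves; in your argument that reduction is in any case dispensable, since once the image is known to be a fibre component of Chern number one, the relation $1=m\cdot c_1(\mathrm{image})$ forces the covering degree $m$ to equal $1$ directly. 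Your first two paragraphs (the lattice computation $c_1=[D_0]+2[F_{\infty}]$, its restriction to the complement of $F_{\infty}$, and the normalisation $\omega(F)=2\,\omega(E_i)$) have no counterpart in the paper's proof of the lemma: the paper treats those claims as part of the construction preceding it, where it records $-K=s_0+s_{\infty}+2F_{\infty}$ and asserts that blowing up the standard product form with suitable parameters yields a monotone structure. What your version buys is an explicit identification of the spherical lattice of $\bar{A}_k$ with the span of $F$ and the $E_i$, and the correct observation --- left implicit in the paper --- that monotonicity can only be arranged on this sublattice, since $c_1$ and $[\omega]$ cannot be proportional on all of $H_2$ of the projective compactification.
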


\begin{proof} Any holomorphic sphere projects to a point under $\bar{A}_k \rightarrow \bC$.  Each component of a  singular fibre is the exceptional curve of a blow-up or the proper transform of the fibre through a point which was  blown up, in each case of Chern number one.
\end{proof}

We may therefore satisfy Hypothesis \ref{Hyp:regular} with $B_r = \emptyset$.

\subsection{Computing the Gromov-Witten invariant} \label{Sec:MilnorGW}
We now turn to Hypothesis \ref{Hyp:GWvanishes} in this example.

\begin{Lemma} \label{lem:real_sphere}
Each real matching sphere meets exactly four Chern 1 spheres in $\bar{A}_k$, two with intersection number $+1$ and two with intersection number $-1$. 
\end{Lemma}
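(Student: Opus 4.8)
The plan is to reduce the statement to a homology computation in the closed surface $\Mdbar = \mathrm{Bl}_{k+1}(\bP^1\times\bP^1)$. Since a real matching sphere $L = L_j$ and all the Chern $1$ spheres are compact cycles contained in $\bar{A}_k \subset \Mdbar$ and disjoint from the fibre $F_\infty = D_\infty$ deleted to form $\bar{A}_k$, their intersection numbers may be computed in $\Mdbar$. I would work in the basis $F, S, E_1,\dots,E_{k+1}$ of $H_2(\Mdbar)$, where $F$ is the fibre class of $\pi$, $S = [s_0]$ the class of the section missing the blow-up points, and the $E_i$ are the exceptional classes; the pairings are $F^2 = S^2 = 0$, $F\cdot S = 1$, $E_i\cdot E_l = -\delta_{il}$, and $E_i\cdot F = E_i\cdot S = 0$. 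By Lemma \ref{Lem:monotoneAbar} the Chern $1$ spheres are exactly the components of the reducible fibres: over each critical value $\xi_i$ the fibre splits as $\tilde f_i \cup E_i$ with $[\tilde f_i] = F - E_i$, and one checks $c_1\cdot E_i = c_1\cdot(F-E_i) = 1$ from $-K_{\Mdbar} = 2F + 2S - \sum_l E_l$.

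First I would observe that $L_j$ projects under $\pi$ onto the segment $[\xi_j,\xi_{j+1}] = [j,j+1]$ (Lemma \ref{lem:MatchingSphere}), whereas each Chern $1$ sphere projects to a single critical value. Since $[j,j+1]$ contains exactly the two critical values $j$ and $j+1$, the sphere $L_j$ can only meet the four components lying over $\xi_j$ and $\xi_{j+1}$, namely those with classes $E_j,\ F-E_j,\ E_{j+1},\ F-E_{j+1}$; this already yields the bound ``at most four''. Next I would pin down the class $[L_j]$. Being a Lagrangian $2$-sphere it has self-intersection $-2$; being contained in $A_k = \bar{A}_k\setminus D_0$ it is disjoint from the two sections at infinity, so $[L_j]\cdot S = 0$ and $[L_j]\cdot\bigl(S - \sum_l E_l\bigr) = 0$ (recall $[s_\infty] = S - \sum_l E_l$); and consecutive real matching spheres form an $A_k$-chain. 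These constraints force $[L_j] = E_j - E_{j+1}$ up to the overall sign fixed by orientation: this class has square $-2$, pairs trivially with both sections, and $(E_j - E_{j+1})\cdot(E_{j+1}-E_{j+2}) = 1$.

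I would then read off the four intersection numbers:
\begin{align*}
(E_j - E_{j+1})\cdot E_j &= -1, & (E_j - E_{j+1})\cdot (F-E_j) &= +1, \\
(E_j - E_{j+1})\cdot E_{j+1} &= +1, & (E_j - E_{j+1})\cdot (F-E_{j+1}) &= -1,
\end{align*}
while $E_j - E_{j+1}$ pairs to zero with $E_i$ and with $F - E_i$ for every $i\neq j,j+1$. Since each of the four numbers is nonzero, the corresponding geometric intersections are non-empty, upgrading ``at most four'' to ``exactly four''; and there are two of sign $+1$ and two of sign $-1$, as claimed. (An overall sign change of $[L_j]$ merely swaps the two roles, leaving the count two-and-two intact.)

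The main obstacle will be the honest identification $[L_j] = E_j - E_{j+1}$ with the correct signs: one must verify that the two thimbles of the real matching path cap off the common vanishing cycle precisely at the nodes where the components $E_i$ and $F - E_i$ cross, and keep track of orientations carefully enough that the two positive and two negative intersections emerge consistently. Once the class and the component classes are correctly fixed, everything reduces to the routine pairings in the blow-up lattice recorded above.
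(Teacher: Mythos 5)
Your proposal is correct in substance, and at heart it is the same homological argument as the paper's: both cut the candidates down to the four fibre components over the endpoints of the matching path, identify $[L_j]$ up to sign with a difference of two exceptional classes, and read off the intersection numbers. The implementations differ in where the class gets pinned down. The paper deletes one of the two sections at infinity, so that the matching sphere sits in an open subsurface diffeomorphic to $\bC^2 \# 2\overline{\bC\bP}^2$ whose only compact rational curves are the two exceptional spheres; there the identification $[L_j]=\pm[E_1-E_2]$ is nearly immediate, and the paper shortens the computation further by noting that $[L_j]\cdot F = 0$ forces the two components of each reducible fibre to meet $L_j$ with opposite signs, so that only nonvanishing of the intersection numbers needs checking. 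You instead stay in the compact blow-up of $\bP^1\times\bP^1$ and determine the class by lattice arithmetic; this is more self-contained (no appeal to the symplectic-cut/blow-up-of-a-ball description of the sphere), at the cost of working in a larger lattice.

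There is, however, one repairable slip at the key step. The constraints you list as forcing $[L_j]=\pm(E_j-E_{j+1})$ --- square $-2$, disjointness from $s_0$ and $s_\infty$, and the $A_k$-chain relations --- do not suffice. Writing $[L_j] = aF + bS + \sum_l c_l E_l$, they give $a=0$, $\sum_l c_l = 0$ and $\sum_l c_l^2 = 2$, but they leave $b$ unconstrained and do not force the two nonzero coefficients to sit at the indices $j$ and $j+1$: for every $b$ the class $bS + E_j - E_{j+1}$ satisfies all of the listed constraints, and $b\neq 0$ would change the pairings with the classes $F - E_i$, hence the conclusion. What closes the gap is precisely the observation in your own first paragraph: since $L_j$ projects onto the segment $[j,j+1]$, it is disjoint from every fibre over a point outside that segment, so $[L_j]\cdot F = 0$ (killing $b$) and $[L_j]\cdot E_i = [L_j]\cdot(F-E_i) = 0$ for $i\neq j,j+1$ (locating the nonzero coefficients). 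Once these constraints are added, $[L_j]=\pm(E_j-E_{j+1})$ does follow and the rest of your computation is correct as written.
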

\begin{proof}
Let $\gamma$ be a real matching path between critical values $\xi, \xi'$ of $\pi$. The only Chern one spheres meeting $L_{\gamma}$ are the four components of the fibres $\pi^{-1}(\xi)$, $\pi^{-1}(\xi')$. Since the fibre class itself is (homologically) disjoint from $L_{\gamma}$, the two components of any reducible fibre must have opposite intersection with $L_{\gamma}$, so it suffices to prove that the intersection number is non-trivial.  This can be checked by a local computation; we work in an open subset  $U \subset \bar{A}_k$ which is the restriction of the Lefschetz fibration to an open neighbourhood of the matching path $\gamma$.  Let  $D_{\pm}$ denote the two compactification divisors at infinity which are the irreducible components of $D_0$.  If we remove either of these from $U$, the resulting space, which contains only two irreducible rational curves $E_i$, is diffeomorphic to $\bC^2 \# 2\overline{\bC\bP}^2$ and the Lagrangian has homology class $\pm[E_1-E_2]$. The result follows. 
\end{proof}

The previous Lemma implies that if $GW_{1} = \sum_{\beta} GW_{1; \beta}$ is the chain swept by Chern one spheres in $\bar{A}_k$ over all possible homotopy classes $\beta$, then $GW_{1}|_L = 0 \in H^2(L)$ for any real matching sphere.  In fact, more is true:

\begin{Lemma} \label{GWAkvanishes}
The invariant $GW_{1} = 0 \in H^2(A_k)$.
\end{Lemma}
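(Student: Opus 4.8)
The plan is to identify the locally finite cycle $GW_1$ explicitly and then detect its vanishing purely homologically. First I would pin down $GW_1$ itself. By Lemma~\ref{Lem:monotoneAbar} the only Chern number one spheres in $\bar{A}_k$ are the $2(k+1)$ components of the $(k+1)$ reducible fibres of $\pi$; each meets $D_0 = D_+ \cup D_-$ exactly once (one component through each section) and is disjoint from $D_r = \emptyset$, so each lies in a class $A$ contributing to $\scrM_1(\bar{A}_k|1)$. Since $\langle A, D_0\rangle = 1$ no curve in class $A$ is multiply covered, and monotonicity with minimal Chern number one forbids stable maps with two or more non-constant components; every such sphere is embedded, rigid and regular. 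For each component $C$ the space of parametrisations modulo automorphisms fixing $1 \in \bP^1$ therefore evaluates onto $C$ with multiplicity one, and I conclude that, as a locally finite $2$-cycle, $GW_1 = \sum_C [C]$, the sum of the fundamental classes of the fibre components.

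Next I would invoke the topology of the Milnor fibre. The space $A_k$ is simply connected and homotopy equivalent to a wedge of $k$ copies of $S^2$, so $H_2(A_k;\bZ) \cong \bZ^k$ is free, and universal coefficients gives $H^2(A_k;\bZ) \cong \Hom(H_2(A_k;\bZ),\bZ)$. Hence $GW_1$ vanishes precisely when it pairs to zero with a generating set of $H_2(A_k;\bZ)$. The real matching spheres $L_1,\dots,L_k$, associated to the consecutive pairs $\{j,j+1\}$ of critical values, form the standard $A_k$-chain of vanishing cycles and hence a basis of $H_2(A_k;\bZ)$.

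It then remains to pair. Under the duality $H^2(A_k) \cong H^{lf}_2(A_k)$, the number $\langle GW_1, [L_j]\rangle$ is the intersection number $\sum_C (C \cdot L_j)$, computed at interior points of $A_k$ since each $L_j$ is disjoint from $D_0$. By the preceding Lemma every real matching sphere meets exactly four Chern one spheres, two with intersection number $+1$ and two with $-1$, so this sum is $+1+1-1-1 = 0$ for every $j$. Thus $GW_1$ annihilates a basis of $H_2(A_k)$ and vanishes in $H^2(A_k)$.

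The main obstacle is the first step: verifying that $GW_1$ really is the naive multiplicity-one sum $\sum_C [C]$, i.e. that no multiply covered or reducible stable maps contaminate $\scrM_1(\bar{A}_k|1)$ and that the evaluation is degree one onto each sphere. This is exactly where the monotonicity and minimal Chern number input of Lemma~\ref{Lem:monotoneAbar} is indispensable. Once the cycle is identified, the remaining content is the purely homological observation that the real matching spheres generate $H_2(A_k)$, combined with the intersection count already established.
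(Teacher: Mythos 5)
Your proof is correct, but it takes a genuinely different route from the paper's at the vanishing step. Both arguments begin by identifying $GW_1$ with the multiplicity-one sum of the fibre components of the reducible fibres of $\pi$; one caveat is that you should justify regularity by automatic regularity for embedded spheres of normal bundle $\mathcal{O}(-1)$ in a complex surface (Sikorav's criterion, \cite[Lemma 3.3.1]{McD-S}), as the paper does -- rigidity plus monotonicity alone do not give surjectivity of the linearised operator. From there the paper argues constructively: choosing pairwise disjoint vanishing paths $\gamma_p$ from each critical value to infinity, the locally finite chain $\cup_p \pi^{-1}(\gamma_p)$ has boundary exactly the union of the exceptional fibres, so $GW_1$ vanishes already in $H^2(\bar{A}_k)$ and a fortiori after restriction to $A_k$. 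You instead pair $GW_1$ against the basis of $H_2(A_k;\bZ)$ given by the real matching spheres, using $H^2(A_k;\bZ) \cong \Hom(H_2(A_k;\bZ),\bZ)$ (valid since $H_1(A_k)=0$) together with the intersection count of the preceding lemma; this legitimately upgrades what the paper presents as the weaker observation $GW_1|_L = 0$, and it is in fact the same strategy the paper itself uses later for the Hilbert scheme (Lemma \ref{Cor:GWZeroHilb}), where injectivity of the restriction $H^2(\scrY_k) \to \oplus_{\wp} H^2(L_{\wp})$ plays the role of your basis argument. What the paper's constructive proof buys, and yours does not, is an explicit bounding chain: in Lemma \ref{lem:vanishingequivt} the bounding cochain $gw_1$ is taken to be intersection with $\cup_p \pi^{-1}(\gamma_p)$, so that $gw_1|_L$ vanishes at the cochain level for real matching spheres (for suitable vanishing paths); a purely existential proof of vanishing would not feed directly into that later computation of equivariant structures and purity.
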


\begin{proof}
We work with the standard integrable complex structure $J$ on $\bar{A}_k$, which in particular makes projection $\pi: \bar{A}_k \rightarrow \bC$ holomorphic. The only Chern one spheres in $\bar{A}_k$ are the components of the reducible fibres of $\pi$. The normal bundle to any such component is $\mathcal{O}(-1)$, so these spheres are regular by Sikorav's automatic regularity criterion \cite[Lemma 3.3.1]{McD-S}.  

For each critical value $p$ of $\pi$, fix a vanishing path $\gamma_p$ from $p$ to infinity, with the property that distinct such paths  are pairwise disjoint. 
The cycle  $\sum_\beta GW_{1;\beta} \in H^2(\bar{A}_k)$  is  Poincar\'e dual to the union of the exceptional fibres of $\pi$, equipped with its natural orientation and of multiplicity one. This vanishes in $H^2(\bar{A}_k)$, being the boundary of the locally finite cycle  $\cup_p \pi^{-1}(\gamma_p)$, hence its image under restriction to $A_k$ also vanishes.
\end{proof}

Since $D_0^{sing} = \emptyset = D_r$, and $L\subset \bar{A}_{k}$ is monotone,  the remaining parts of Hypotheses \ref{Hyp:GWvanishes} and the first and last parts of Hypothesis \ref{hyp:no_Maslov_0_disc_Maslov_1} are immediate. The central part of Hypothesis \ref{hyp:no_Maslov_0_disc_Maslov_1} asserts transversality of certain fibre products.  One of these is empty (a fibre product with $B_0$), whilst the other two are the fibre product of the moduli space of Maslov 2 discs with boundary on $L$ with $D_0, D_0'$ respectively the fibre product of the space of Chern one spheres with $L$ itself.  The explicit description of all the Chern one spheres in $\bar{A}_k$ obtained in Lemma \ref{GWAkvanishes} shows transversality of $\scrM_1(\bar{A}_k|1) \times_{\bar{A}_k} L$.  The corresponding transversality for $\Mod{(0,1)}{1}(L)$ follows directly from the analogous description of all the Maslov index 2 discs with boundary on $L$, which we provide in Lemmas \ref{lem:vanishingequivt} and \ref{lem:WeightForOneLag} below.  

It follows that there is an \nc-vector field on $\scrF(A_k)$ defined by the general machinery of Section \ref{Sec:Generalities}.

 \subsection{Orientations}\label{Sec:Orientations}

Let $X$ be a real algebraic variety, meaning a complex algebraic variety equipped with an anti-holomorphic involution $\sigma$, and $L \subset X$ a smooth Lagrangian submanifold which is a component of the fixed point set of $\sigma$.  

Fixing a $Spin$ structure on $L$ determines an orientation on the moduli space of holomorphic discs with boundary on $L$ carrying interior and boundary marked points \cite{deSilva,FO3,FCPLT}. The involution $\sigma$, together with the action by conjugation on the domain,  yields a natural involution on this moduli space which was studied by Solomon in \cite{Solomon}. Whether this action preserves or reverses orientations can be analysed by considering the action on determinant lines of the linearised Cauchy-Riemann problem, and keeping track of the re-ordering of the input boundary marked points under complex conjugation of the domain. In particular, the following result is a special case of \cite[Proposition 5.1]{Solomon}.
\begin{Lemma} \label{lem:Solomon}
The action induced by $\sigma$ on the moduli space of holomorphic discs preserves orientation on the component with $k$ boundary marked points, $2$ interior marked points, and Maslov index $\mu$, if and only if
\begin{equation} \label{SolomonFormula}
\frac{\mu(\mu+1)}{2} + \frac{(k-1)(k-2)}{2} + k = 0 \mod 2.
\end{equation} \qed
\end{Lemma}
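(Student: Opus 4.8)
The statement is a specialization of Solomon's general sign computation \cite[Proposition 5.1]{Solomon}, so the plan is to recall how the orientation of the moduli space is assembled and then track the sign of the involution factor by factor. Write $\mathcal{M}$ for the component under consideration, and recall that a choice of $Spin$ structure on $L$ fixes an orientation of $\det(D_u)$, the determinant line of the linearized Cauchy--Riemann operator $D_u$ with totally real boundary conditions, compatibly in families \cite{deSilva,FO3,FCPLT}. Combining this with the natural orientation of the space of domains (a disc with $k$ boundary and $2$ interior marked points) orients $\mathcal{M}$. The involution is $u \mapsto \bar u = \sigma \circ u \circ c$, where $c\co \Delta \to \Delta$ is complex conjugation; it acts on both the analytic factor $\det(D_u)$ and the combinatorial factor, and the total sign is the product of the two contributions. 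I would therefore compute each contribution separately and then read off the parity condition.

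For the combinatorial factor, $c$ fixes the boundary marked point lying on the real axis and reverses the cyclic order of the remaining inputs; relabelling these $k-1$ points counter-clockwise is the full order reversal, whose permutation sign is $(-1)^{(k-1)(k-2)/2}$, accounting for the middle term. In addition $c$ reverses the orientation of each boundary-position coordinate and acts on the two interior marked points by $z \mapsto \bar z$; collected against the identification of the domain moduli space with an open subset of a product of circles and intervals fixed in Section \ref{Sec:moduli}, these orientation reversals assemble, in the chosen convention, into the linear term $(-1)^{k}$. The bookkeeping here is routine but sign-error prone, since it depends on the precise ordering conventions for the marked points and on the orientation convention for the domain moduli; getting the coefficient of $k$ right is exactly where I would expect to have to be scrupulous.

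The analytic factor is the crux. One must determine the sign of the canonical isomorphism $\det(D_u) \cong \det(D_{\bar u})$ induced by $(\sigma, c)$, measured against the two $Spin$-induced orientations. I would reduce to a standard model by deforming $D_u$ through Fredholm operators to the $\bar\partial$-operator on a sum of line bundles realizing the total Maslov index $\mu$, for which the boundary value problem is explicit; equivalently one doubles the disc to obtain $\calO(\mu) \to \bP^1$ equipped with the real structure covering $c$. Conjugation then acts on the $(\mu+1)$-dimensional real space of holomorphic sections by a reflection whose determinant is $(-1)^{\mu(\mu+1)/2}$, which is precisely the first term in \eqref{SolomonFormula}. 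This step is delicate because it requires keeping the $Spin$ orientation and the conjugation action strictly compatible through the deformation, and this compatibility is exactly what Solomon establishes.

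Multiplying the three signs, the involution preserves orientation if and only if $\frac{\mu(\mu+1)}{2} + \frac{(k-1)(k-2)}{2} + k$ is even, as claimed. The main obstacle is the analytic determinant-line sign, whose quadratic dependence on $\mu$ reflects the action of conjugation on the $(\mu+1)$-dimensional kernel; the combinatorial terms, while elementary, demand careful attention to the orientation conventions so that the linear-in-$k$ contribution comes out correctly rather than off by one.
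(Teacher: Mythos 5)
Your proposal is correct and takes essentially the same approach as the paper: the paper proves the lemma simply by declaring it a special case of Solomon's Proposition 5.1, after noting that the sign is governed by the action on determinant lines of the linearised Cauchy--Riemann problem together with the re-ordering of the input boundary marked points under conjugation of the domain --- precisely the two contributions you compute. Your factor-by-factor unpacking (the permutation sign $(-1)^{(k-1)(k-2)/2}$, the positional sign $(-1)^{k}$ with the two interior marked points contributing trivially, and the determinant-line sign $(-1)^{\mu(\mu+1)/2}$ via the doubled model $\calO(\mu)\to\bP^1$) is a consistent elaboration of that citation rather than a different argument.
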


 Now consider the diagram
 \begin{equation}
 \begin{array}{ccc}
 \Mod{(0,1)}{1}(L) & \longrightarrow & \Mod{2}{1}(L) \\ 
 \downarrow & & \downarrow \\
 \Mod{(0,1)}{1} & \stackrel{\alpha}{\longrightarrow} & \Mod{2}{1} 
 \end{array}
 \end{equation}
relating the abstract moduli space of discs with two interior marked points and one boundary point $\Mod{2}{1}$, the space of such discs with boundary on a Lagrangian $L$, and the corresponding spaces when the two interior marked points are constrained to lie on the interval $[0,1) \subset \Delta$ as in our applications.  The map $\alpha$ is an inclusion onto a codimension one submanifold,  and complex conjugation reverses the co-orientation of its image.  More precisely,   let $D^4_{op}$ denote the space of distinct pairs $(D^2 \times D^2) \backslash \textrm{Diagonal}$.
 The moduli space of discs with two interior marked points and one boundary point,  not taken modulo automorphisms, is obviously in bijection with  $D^4_{op} \times S^1$. The complex conjugation map $(z,w,\theta) \mapsto (\bar{z}, \bar{w},-\theta)$ reverses the co-orientation of the interval $I\subset D^4_{op} \times S^1$ defined by taking the marked points to be $(0,t,1)_{t\in[0,1)}$.  
 
 Taking $\mu=2$ and $k=1$ in \eqref{SolomonFormula}, and incorporating this additional sign change, shows that:

\begin{Lemma} \label{lem:cancel}
The involution induced by $\sigma$ reverses orientation on the moduli space $ \Mod{(0,1)}{1}(L) $.  
\end{Lemma}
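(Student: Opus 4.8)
The plan is to deduce the orientation-reversal on $\Mod{(0,1)}{1}(L)$ from the orientation behaviour on the ambient moduli space $\Mod{2}{1}(L)$ of discs with two \emph{free} interior marked points and one boundary marked point, exploiting the fact that $\Mod{(0,1)}{1}(L)$ is cut out inside $\Mod{2}{1}(L)$ by the codimension-one domain constraint displayed in the square relating these four spaces. First I would record that the discs in question have Maslov index $\mu = 2$ (they meet $D_0$ once, and $D_0$ represents twice the Maslov class, cf. Lemma~\ref{lem:vir_dim_Mod}) and $k=1$ boundary marked point, so Lemma~\ref{lem:Solomon} applies directly to $\Mod{2}{1}(L)$. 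Substituting $\mu=2$ and $k=1$ into \eqref{SolomonFormula} gives $\frac{2\cdot 3}{2} + \frac{0\cdot(-1)}{2} + 1 = 4 \equiv 0 \pmod 2$, so the involution induced by $\sigma$ (together with conjugation of the domain) \emph{preserves} the orientation of $\Mod{2}{1}(L)$.

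Next I would compare orientations through the forgetful maps to the abstract domain moduli. The space $\Mod{(0,1)}{1}(L)$ is the preimage under $\Mod{2}{1}(L)\to\Mod{2}{1}$ of the image of the codimension-one inclusion $\alpha\colon\Mod{(0,1)}{1}\hookrightarrow\Mod{2}{1}$, so it is a codimension-one submanifold of $\Mod{2}{1}(L)$ whose co-orientation is pulled back from that of $\alpha(\Mod{(0,1)}{1})$. The analytic (determinant-line) data are unchanged by restricting the domain to this submanifold, so the only difference in the $\sigma$-action between the ambient and the restricted moduli space lives in the co-orientation. Using the identification of the non-quotiented domain moduli with $D^4_{op}\times S^1$ and the conjugation map $(z,w,\theta)\mapsto(\bar z,\bar w,-\theta)$, the interval $I$ of configurations $(0,t,1)_{t\in[0,1)}$ cutting out $\alpha(\Mod{(0,1)}{1})$ has its co-orientation \emph{reversed}, as already observed in the paragraph preceding the statement.

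Finally I would combine the two parities. Writing the ambient orientation, up to the standard sign convention, as the co-orientation of the submanifold wedged with the submanifold orientation, and using that $\sigma$ preserves the ambient orientation but reverses the co-orientation, it follows that $\sigma$ must reverse the orientation of $\Mod{(0,1)}{1}(L)$, which is the claim.

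The main obstacle I anticipate is bookkeeping rather than conceptual: one must check that passing from the free moduli $\Mod{2}{1}(L)$ to the constrained moduli $\Mod{(0,1)}{1}(L)$ contributes no sign beyond the single co-orientation flip — in particular that Solomon's formula is applied with the marked-point ordering and $Spin$ conventions consistent with those fixing the orientation of $\Mod{(0,1)}{1}(L)$, and that the conjugation action on the determinant line (already encoded in \eqref{SolomonFormula}) is genuinely decoupled from the conjugation action on the one-dimensional domain parameter $t$. Once these conventions are aligned, the lemma reduces to the two parity computations above.
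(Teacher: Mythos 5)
Your proposal is correct and takes essentially the same route as the paper: Solomon's formula with $\mu=2$, $k=1$ gives orientation preservation on the ambient moduli space with two free interior marked points, and the conjugation-induced reversal of the co-orientation of the codimension-one constraint locus $(0,t,1)_{t\in[0,1)} \subset D^4_{op}\times S^1$ then forces orientation reversal on the constrained space. This is precisely the combination of parities the paper uses in the discussion preceding the lemma.
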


\subsection{Equivariant structure and purity} 
Let $L\subset A_k$ be a Lagrangian 2-sphere.   Since $L$ is (necessarily) exact, we have an isomorphism $HF^*(L,L) \cong H^*(S^2)$ of graded rings. Vanishing of $HF^1(L,L)$ implies that $L$ admits an equivariant structure, i.e. $b^0|_L = dc_L$ is the coboundary of some element $c_L \in CF^0(L,L)$.   The following Lemma is not strictly required for the proof of formality, but is instructive in view of the argument for purity of the equivariant structure in Lemma \ref{lem:WeightForOneLag}.

\begin{Lemma} \label{lem:vanishingequivt}
One can choose data defining $\scrF(A_1)$ such that $c_L$ vanishes identically for the unique real  matching sphere.
\end{Lemma}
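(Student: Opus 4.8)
The plan is to prove the stronger statement that $b^0|_L$ vanishes identically in $CF^1(L,L)$; the defining equation $dc_L = b^0|_L$ is then solved by $c_L = 0$. Recall from Lemma~\ref{Lem:order-zero-part-of-CO} that $b^0|_L = \tilde b^0_D + gw_1|L + \sCO^0(\beta_0)$. According to the dictionary for this section we have $B_0 = \emptyset$, so we take $\beta_0 = 0$ and the last summand is zero. It therefore suffices to show that $\tilde b^0_D$ and $gw_1|L$ both vanish.

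For $\tilde b^0_D = \ev_*[\Modbar{(0,1)}{1}(L)]$, the idea is to exploit the anti-holomorphic involution $\sigma$ of $\bar A_1$ given by complex conjugation. All auxiliary data can be chosen $\sigma$-equivariantly: the integrable complex structure is anti-invariant under $\sigma$, the divisor $D_0 = D_{\pm}$ is defined over $\bR$, the auxiliary divisor $D'_0$ may be taken real, and the marked points $z_0 = 0$, $z_1 \in (0,1)$ and $p_0 = 1$ all lie on the real axis of $\Delta$, hence are fixed by conjugation of the domain. With such choices $\sigma$ (composed with conjugation of the domain) acts on the moduli space $\Modbar{(0,1)}{1}(L)$, and by Lemma~\ref{lem:cancel} this action reverses orientation. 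The crucial point is that the real matching sphere $L$ is exactly the component of the fixed locus of $\sigma$ lying over $\pi(L) = [1,2]$, so $\sigma$ restricts to the identity on $L$ and the boundary evaluation map satisfies $\ev\circ\sigma = \ev$. Since an orientation-reversing self-map has degree $-1$, one has $\sigma_*[\Modbar{(0,1)}{1}(L)] = -[\Modbar{(0,1)}{1}(L)]$; pushing forward under $\ev$ and using $\ev\circ\sigma = \ev$ gives $\tilde b^0_D = \ev_*\sigma_*[\Modbar{(0,1)}{1}(L)] = -\tilde b^0_D$, whence $\tilde b^0_D = 0$.

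For the remaining term, recall from the proof of Lemma~\ref{GWAkvanishes} that $gw_1$ may be represented geometrically, Poincar\'e dually to the restriction to $A_1$ of the locally finite cycle $\cup_p \pi^{-1}(\gamma_p)$ bounding the exceptional fibres, where the $\gamma_p$ are disjoint vanishing paths from the critical values of $\pi$ to infinity. Choosing the $\gamma_p$ to run into the lower half-plane, and hence to be disjoint from the real segment $\pi(L) = [1,2]$ except at its endpoints, arranges that the geometric support of this representative is disjoint from $L$; the resulting cochain therefore satisfies $gw_1|L = 0$. Combining the three vanishings yields $b^0|_L = 0$, so $c_L = 0$ is an admissible equivariant structure.

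The main obstacle is to reconcile the $\sigma$-equivariance used above with the transversality needed to realise $\Modbar{(0,1)}{1}(L)$ as an oriented manifold so that Lemma~\ref{lem:cancel} applies: a generic perturbation achieving regularity could destroy the symmetry. The way around this is to rely on the integrable complex structure --- which is automatically $\sigma$-anti-invariant --- together with automatic regularity for the low-index discs and spheres in the monotone surface $\bar A_1$ (as in Lemma~\ref{Lem:monotoneAbar}), or to perturb only within the class of $\sigma$-invariant data; checking that this can be done compatibly, and that the bounding cochain $gw_1$ genuinely admits a representative supported away from $L$, constitutes the routine but careful part of the argument.
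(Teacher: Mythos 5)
Your proposal follows the paper's own proof essentially verbatim: the paper likewise reduces to chain-level vanishing of $b^0|_L$ (so that $c_L=0$ works), kills the $\beta_0$ and $gw_1|_L$ terms by the same choices, and cancels $\tilde{b}^0_D$ using the anti-holomorphic involution $\iota:(x,y,z)\mapsto(-\bar{x},-\bar{y},\bar{z})$ fixing $L$ pointwise, $\iota$-equivariant choices of $D_0'$ and of the almost complex structure (with equivariant transversality unproblematic because no disc is fixed pointwise), and the orientation reversal of Lemma \ref{lem:cancel}. The only slip is your claim that the support of the representative for $gw_1$ is disjoint from $L$: since the vanishing paths must start at the critical values, $\cup_p \pi^{-1}(\gamma_p)$ necessarily meets $L$ at the two critical points, but, as the paper notes, this finite intersection has codimension two in $L$ and so still forces $gw_1|_L = 0$ as a $1$-cochain.
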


\begin{proof}
Since the compactification divisor $D_0 = D_{\pm} \subset \bar{A}_1$ has components with trivial normal bundle, it is straightforward to write down push-offs $D_0'$. We take $p(z)$ to be a polynomial with real critical values $\{1,2\}$, so the $A_1$-space is given by $\{x^2+y^2+(z-1)(z-2)=0\}$.  Along the interval $[1,2] \subset \bR \subset \bC_z$ in the $z$-plane, $x^2+y^2<0$ and hence the real matching sphere is defined by $\{x,y \in i\bR\}$.  The antiholomorphic involution 
\begin{equation}
\iota: (x,y,z) \mapsto (-\bar{x}, -\bar{y}, \bar{z}) \quad \textrm{on} \ \bC^3
\end{equation}
preserves the hypersurface defining $A_1$ and fixes the real sphere $L$ pointwise.  
The involution naturally extends to the compactification $\bar{A}_k$, exchanging the two components of the compactification divisor $D_{\pm}$.  We choose the push-offs $D_0' = \cup D'_{\pm}$ to be parallel copies of $D_{\pm}$ which are also exchanged by the involution $\iota$.  We work with perturbations of the standard almost complex structure which make projection to $\bC$ holomorphic and which are also anti-equivariant with respect to $\iota$. 
\begin{figure}[ht]
\begin{center}
\includegraphics[scale=0.5]{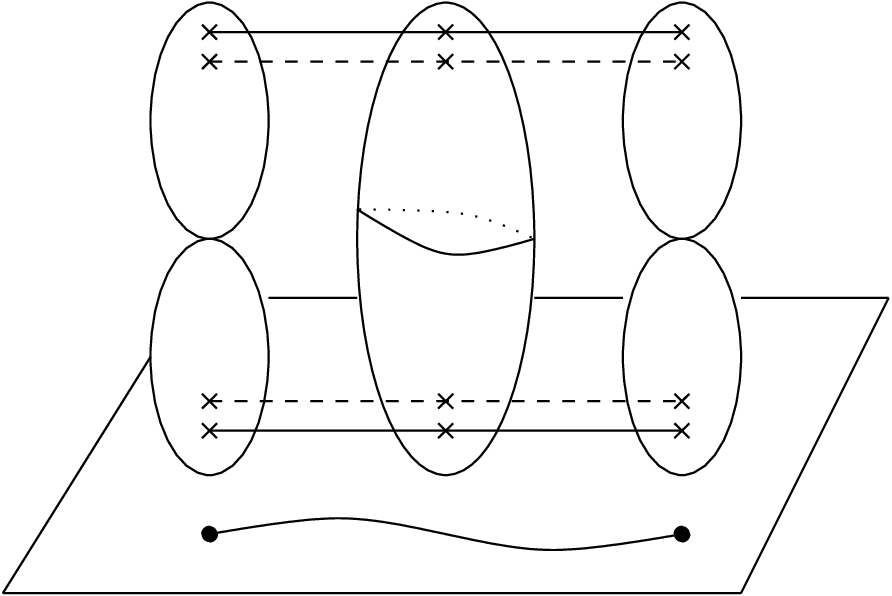}
\caption{Divisors $D_{\pm}$, $D'_{\pm}$ in the fibres over a real matching sphere}
\end{center}
\end{figure}
Equivariant transversality is not problematic since no disc is fixed pointwise by $\iota$, cf. \cite[Proposition 5.13]{khovanov-seidel}.

Lemma \ref{GWAkvanishes} implies that the 1-cochain $gw_{1}|_L$ is trivial, since the corresponding chain $\cup_p \pi^{-1}(\gamma_p)$ meets $L$ in a finite set (assuming that we choose the vanishing paths to be disjoint from the interior of the projection of $L$).  The chain $\beta_0$  introduced after Hypothesis \ref{Hyp:GWvanishes} and occuring in Lemma \ref{Lem:order-zero-part-of-CO} is also trivial, since the divisor $D_0$ is everywhere non-singular and $D_0\cap D_0' = \emptyset$.  Therefore, the Floer cocycle 
\[
b^0|_L = \tilde{b}^0|_L \, = \,  ev_*[\bar{\scrR}^1_{(0,1)}(L)]
\] of Lemma \ref{Lem:order-zero-part-of-CO}  is exactly the locus swept via evaluation at $1\in \partial \Delta$ by discs with two interior marked points, one at $0$ constrained to lie in $D_0$ and the other at a variable point on $(0,1) \subset \Delta$ which is constrained to lie on $D_0'$.  There are two such discs over every point of the matching path $\pi(L) \subset \bC$, coming from the two hemispheres of each fibre of $\pi$ lying over the matching path bound by the vanishing circle in that fibre.  Each family of hemispheres gives rise to an oriented arc  in $L$ between the two Lefschetz critical points in $L$.  

The $\iota$-invariance of $D_0, D_0'$ implies that the holomorphic discs in each fibre are exchanged up to conjugation by  $\iota$. 
Therefore the two families of discs give rise to identical arcs between the same end-points (the intersections with critical points of $\bar{A}_1\rightarrow \bC$). Lemma \ref{lem:cancel} implies the arcs carry opposite orientations. The corresponding singular 1-chains therefore cancel identically, which implies the desired vanishing of the equivariant structure on $L$.
\end{proof}

\begin{Remark} \label{Rem:Morseequivt}
Consider a Morse model for Floer cohomology $CF^*(L,L) = C^*_{Morse}(L)$.  If we pick a perfect Morse function $f: L \rightarrow \bR$, with minimum and maximum lying in the fibre over the mid-point of the associated matching path, then $CF^1(L,L) = 0$, hence $b^0|_L = 0$ as a chain and one can take $c_L = 0$.    
\end{Remark}

\begin{Lemma} \label{lem:WeightForOneLag}
$\scrF(A_k)$ admits an \nc-vector field for which 
real Lagrangians are pure: weight and grading co-incide on $HF^*(L,L)$ for each matching sphere $L$ in the $A_k$-chain.
\end{Lemma}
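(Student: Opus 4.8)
The plan is to reduce the statement to a single disc count and then import the sign mechanism already set up in Lemma \ref{lem:vanishingequivt}. Since $HF^*(L,L)\cong H^*(S^2)$, the only nonzero graded pieces are $HF^0(L,L)=\bk\cdot 1$ and $HF^2(L,L)$, while $HF^1(L,L)=0$. By Lemma \ref{lem:WeightGradings} the endomorphism $\Phi$ of \eqref{Eqn:Endomorphism} is a derivation annihilating the unit, so it already has weight $0$ (equal to the grading) on $HF^0$; purity is therefore equivalent to the single assertion that $\Phi$ acts on $HF^2(L,L)$ by $2$. Working in the Morse model of Remark \ref{Rem:Morseequivt}, with the perfect Morse function whose extrema lie in the fibre over the midpoint of the matching path, we have $b^0|_L=0$ on the nose, so $c_L=0$, the endomorphism is literally $\Phi=b^1$, and it remains only to show that $b^1$ multiplies the generator of $HF^2(L,L)$ by $2$.

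First I would identify the discs contributing to $b^1$ on the top class. By Lemma \ref{lem:vir_dim_Mod}, a rigid element of $\Mod{(0,1)}{2}(x_0;x_1)$ with $x_0,x_1$ both equal to the maximum has Maslov index $2$ (one intersection with $D_0$, which represents twice the Maslov class). In the fibred picture such a disc is a hemisphere bounded by the vanishing circle in a fibre over the matching path, carrying an interior marked point on $D_0$ at $0$, an interior marked point on $D_0'$ along $(0,1)$, and boundary input and output asymptotic to the maximum. Exactly as in the proof of Lemma \ref{lem:vanishingequivt}, there are two such families, the upper and lower hemispheres meeting $D_+,D_+'$ respectively $D_-,D_-'$, and they are interchanged by the anti-holomorphic involution $\iota$.

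The crux is the orientation computation, and here the single extra boundary input flips the outcome relative to Lemma \ref{lem:cancel}. For $b^0$ one has $k=1$ boundary marked point and $\mu=2$, so the left side of \eqref{SolomonFormula} is $3+0+1\equiv 0 \pmod 2$ and Solomon's involution preserves orientation; composed with the co-orientation reversal on the interval $(0,1)$ of the second interior point, this made the two hemisphere families cancel. For $b^1$ we instead have $k=2$ boundary marked points (one input, one output), and \eqref{SolomonFormula} reads $3+0+2\equiv 1 \pmod 2$, so Solomon's involution now \emph{reverses} orientation; composed with the same interval co-orientation reversal, $\iota$ now preserves the orientation of $\Mod{(0,1)}{2}(\mathrm{max};\mathrm{max})$. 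Consequently the two $\iota$-related rigid discs contribute with the same sign, and $b^1$ sends the generator of $HF^2(L,L)$ to twice itself. This gives weight $2$ on $HF^2$, so weight and grading agree. The case of a general matching sphere in the $A_k$-chain is identical, the contributions localising in the fibres over the relevant sub-path just as $gw_1|_L$ and $\beta_0$ were seen to vanish in Lemma \ref{GWAkvanishes}.

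The main obstacle is the sign bookkeeping in the last paragraph: one must check not merely the parity flip but that, after gluing the abstract half-plane operators onto the two strip-like ends (cf. Figure \ref{Fig:BdryVersusStriplike}) and fixing the brane data, the net coefficient is exactly $+2$ and not $-2$, and that the two hemispheres are the only rigid contributions, i.e. that no broken configurations or sphere bubbles survive (this follows from monotonicity of $\bar{A}_k$ and $B_r=\emptyset$, as recorded in the dictionary). Establishing equivariant regularity for this count in the $\iota$-symmetric model, as was done in Lemma \ref{lem:vanishingequivt}, is the remaining technical point.
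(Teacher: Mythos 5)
Your proof is correct and takes essentially the same route as the paper's own argument: reduce to the $\iota$-symmetric model of Lemma \ref{lem:vanishingequivt} (using independence of the weights from the choice of equivariant structure and almost complex structure), observe that the weight on $HF^0(L,L)$ vanishes by Lemma \ref{lem:WeightGradings}, and then apply Solomon's formula \eqref{SolomonFormula} together with the co-orientation reversal of the interval constraint to conclude that the two conjugate hemisphere discs contribute to $b^1$ with the \emph{same} sign, forcing weight $2$ on $HF^2(L,L)$. The residual issues you flag --- pinning down $+2$ rather than $-2$, and equivariant regularity --- are exactly the points the paper disposes of via Remark \ref{Rem:Robust} and the transversality observation in Lemma \ref{lem:vanishingequivt}, so they are not gaps in your argument.
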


\begin{proof}
For any given real sphere $L \subset A_k$ there is some anti-holomorphic involution of $A_k$ which preserves $L$ pointwise. Weights on self-Floer-cohomology are independent of the choice of equivariant structure and hence of the choice of almost complex structure, so we can reduce to the situation where $L$ is fixed by an involution $\iota$ as in Lemma \ref{lem:vanishingequivt}. Exactly as in the proof of that Lemma, there are precisely two holomorphic discs passing through the generic point $p$ of $L$, namely the two hemispheres of the $\bP^1$-fibre of $A_k$ containing that point.  

To define the endomorphism $b^1$ on $CF^2(L,L)$, we should consider configurations as follows.  We rigidify the domain disc $\Delta$  by fixing the parametrization for which the origin is the interior marked point which will map to $D_0$ and the evaluation output is the point $1 \in \partial \Delta$.  We have an additional \emph{variable} input boundary marked point, which should be constrained to a generic point $p\in L$ representing the generator of $CF^2(L,L)$, and an additional interior marked point on the segment $(0,1) \subset \Delta$ which should map to $D_0'$.

Taking $p$ generic, it lies over an interior point of the matching path $\gamma$ defining $L$, and hence on the boundary of two hemisphere discs.  To actually perform the computation, it is helpful to  consider instead the parametrization of such a hemisphere so that $0 \in D_0$ and the fixed section $D_0'$ of the Lefschetz fibration lies on the real arc $(0,1)\subset \Delta$.  This rigidifies the domain;  the point $p$ then determines a unique modulus of the variable input boundary marked point, and evaluating at $1 \in \partial \Delta$ yields a point $q \in L$ lying on the same fibre of the projection $L \to \gamma$ as $p$.  The complex conjugate disc (i.e. the other hemisphere) defines another contribution to $b^1(p)$, in which the input point $p$ now lies at the conjugate point of the boundary, and evaluation at $1$ again yields exactly the point $q$, since we have chosen the two components of each of $D_0$ and $D_0'$ to be strictly exchanged by the involution.

Lemma \ref{lem:Solomon} and the subsequent discussion implies that for domains with \emph{two} input boundary marked points and two interior marked points constrained to $[0,1] \subset \Delta$, a pair of conjugate discs contribute with the \emph{same} sign to the relevant moduli space. This  implies that the two output contributions of the point $q$ obtained above occur with the same sign, and hence the weight is $\pm 2$ on $HF^2(L,L)$.  Reversing the orientation on $D_0$ replaces the \nc-vector field $b \in CC^1(\scrF(A_k),\scrF(A_k))$ by its negative $-b$, so we can assume that the weight is  $+2$.

The fact that the weight on $HF^0(L,L)$ is trivial since the \nc-vector field is a derivation (see Lemma \ref{lem:WeightGradings}), and $HF^0(L,L)$ is generated by the unit.
 \end{proof}

\begin{Remark} \label{Rem:Robust}
The above argument places a misleading emphasis on an apparently subtle compatibility of signs.  If the two discs contributing to the weight on $HF^2$ had the same sign, one could  twist any disk $u$ mapping to $\bar{A}_k$ by $(-1)^{u\cdot D_{-}}$, i.e. by a sign counting intersection number with a single component of the compactification divisor. This is compatible with all breaking, since we only count discs of Maslov index $2$. Discs inside the open locus $A_k$ are untwisted, hence the $A_{\infty}$-operations defining the Fukaya category are not themselves altered. Thus, one could correct the sign of the two discs contributing to $b^1|_L$ after the fact if needed.  In other words, the important point for this paper is not the specific sign computation carried out by Solomon, but rather the weaker fact that the sign changes if one changes the parity of the number of boundary marked points.
\end{Remark}

We now fix gradings on the Lagrangian compact core of $A_k$ so that all Floer gradings are symmetric, i.e. the unique intersection point of two adjacent matching spheres has Floer grading $1$ in both directions.  This is obviously possible: grade the first sphere arbitrarily, and then shift the subsequent spheres in order to achieve symmetry.   In fact, given two matching spheres $L_{\pm}$ lying respectively over arcs in the upper and lower half-plane, Figure \ref{Fig:LocalGrading} governs the grading of the generator of $HF(L_{+}, L_{-})$ corresponding to a transverse isolated intersection point,  with respect to the standard holomorphic volume form and the natural choices of graded structures on the two Lagrangians.  (Note that  \cite[Equation 6.5]{khovanov-seidel} implies that for $2$-complex-dimensional Milnor fibres the phase function of a matching sphere is given by twice the argument of the defining arc in $\bC$.)

\begin{center}
\begin{figure}[ht]
\includegraphics[scale=0.6]{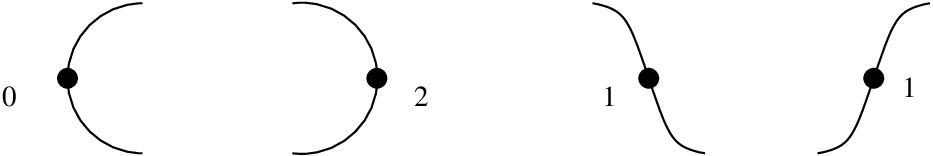}
\caption{Floer gradings for matching spheres in the Milnor fibre}
\label{Fig:LocalGrading}
\end{figure}
\end{center}

\begin{Lemma} \label{Lem:Shift}
Fix the zero equivariant structure on the first matching sphere of an $A_k$-chain.  There is a unique choice of equivariant structure on the remaining spheres such that the induced weights on $HF(L,L')$ and $HF(L', L)$ are symmetric for every $L, L'$.
\end{Lemma}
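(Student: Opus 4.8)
The plan is to reduce the statement to solving a triangular linear system along the chain, exploiting that the only nontrivial Floer groups between distinct spheres occur for adjacent pairs. Write $L_1, \ldots, L_k$ for the spheres of the $A_k$-chain, so that $L_i$ and $L_{i+1}$ meet transversely in a single point while $L_i$ and $L_j$ are disjoint for $|i-j| \geq 2$. Since each $L_i$ is an exact Lagrangian $2$-sphere we have $HF^0(L_i,L_i) \cong \bk$ and $HF^1(L_i,L_i)=0$, so each $L_i$ admits an equivariant structure and the set of such structures is an affine space over $\bk$. Fixing the zero structure on $L_1$ demanded by the hypothesis, I will record a choice of equivariant structure on $L_i$ by the scalar $s_i \in \bk$ by which its bounding cochain differs (as a multiple of the unit) from a fixed reference, with $s_1 = 0$.

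For each adjacent pair let $x_i \in HF^1(L_i,L_{i+1})$ and $y_i \in HF^1(L_{i+1},L_i)$ denote the generators at the common intersection point, which by the symmetric grading convention fixed above both have degree $1$. The first key step is to observe that their weights always sum to $2$, independently of the chosen equivariant structures: the Floer product $\mu^2(y_i,x_i) \in HF^2(L_i,L_i)$ is the point class, which is nonzero by Poincar\'e duality, and Lemma \ref{lem:WeightForOneLag} identifies its weight with its degree, namely $2$. Weight-additivity \eqref{Eqn:WeightFloerProduct} then gives $\wt(x_i) + \wt(y_i) = 2$. This is consistent with Lemma \ref{lem:WeightGradings}(2), since changing $s_i, s_{i+1}$ shifts $\wt(x_i)$ by $-s_i + s_{i+1}$ and $\wt(y_i)$ by $+s_i - s_{i+1}$, leaving the sum fixed.

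Since the sum is pinned at $2$, the symmetry condition $\wt(x_i) = \wt(y_i)$ is equivalent to $\wt(x_i) = 1$. By Lemma \ref{lem:WeightGradings}(2) the weight of $x_i$ for the structure $(s_i, s_{i+1})$ equals $\wt(x_i)^{(0)} - s_i + s_{i+1}$, where $\wt(x_i)^{(0)} \in \bk$ is the weight for the reference structures; the weight lies in $\bk$ rather than $\bar{\bk}$ because $HF(L_i,L_{i+1})$ has rank one over $\bk$, so the endomorphism $\Phi$ acts on it as an honest scalar. Hence the symmetry requirement becomes the single equation
\begin{equation}
  s_{i+1} = s_i + 1 - \wt(x_i)^{(0)},
\end{equation}
which determines $s_{i+1}$ uniquely from $s_i$. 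Starting from $s_1 = 0$ this recursion produces a unique sequence $s_2, \ldots, s_k$, and hence a unique tuple of equivariant structures.

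Finally I will check that this choice achieves symmetry for every pair $L, L'$, not merely the adjacent ones used above. For $L = L'$ the two groups coincide and symmetry is vacuous; indeed weight equals grading on the diagonal by Lemma \ref{lem:WeightForOneLag}, independently of the structure by Lemma \ref{lem:WeightGradings}(3). For non-adjacent $L_i, L_j$ the Floer cohomology vanishes, so there is no constraint. Thus the adjacency constraints are the only ones, and since the adjacency graph of the chain is a path, in particular a tree with no cycles, there is no cocycle obstruction to solving them. The one point demanding care, and the main obstacle, is establishing that the weight sum is \emph{forced} to equal $2$ rather than merely being symmetrizable: this rests on the nonvanishing of the Floer product into the top class together with the identification of weight and grading on the diagonal from Lemma \ref{lem:WeightForOneLag}, which is precisely the input that makes the difference of weights the only free parameter and thereby pins down both existence and uniqueness.
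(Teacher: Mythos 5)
Your proof is correct and takes essentially the same route as the paper's: the product of the two generators at an intersection point hits the top class by Poincar\'e duality, purity of the diagonal groups (Lemma \ref{lem:WeightForOneLag}) together with weight-additivity \eqref{Eqn:WeightFloerProduct} pins their weight sum at $2$, and the shift formula \eqref{Eqn:ShiftWeights} then determines the equivariant structures uniquely by iterating down the chain. Your write-up simply makes the recursion $s_{i+1} = s_i + 1 - \wt(x_i)^{(0)}$ and the absence of constraints from non-adjacent pairs explicit, which the paper leaves implicit.
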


\begin{proof}
We know that for every pair $L, L'$ of intersecting Lagrangians, the weights on the individual Lagrangians are $\{0,2\}$ by Lemma \ref{lem:WeightForOneLag}.  Recall that weight gradings are compatible with Floer product. Moreover, if $L_i, L_{i+1}$ are adjacent real matching spheres, the product
\begin{equation} \label{Eqn:MilnorProduct}
HF(L_{i+1}, L_i) \otimes HF(L_i, L_{i+1}) \longrightarrow HF(L_i, L_i)
\end{equation}
hits the top class, by Poincar\'e duality.  Hence the weights of the unique intersection point in the two directions sum to 2 by Poincar\'e duality and the derivation property, Lemma \ref{lem:WeightGradings}.  The result can now be achieved by shifting the equivariant structures appropriately iteratively down the $A_k$-chain, appealing to \eqref{Eqn:ShiftWeights}.
\end{proof}

We can now recover the formality result of Seidel and Thomas from \cite{SeidelThomas}:

\begin{Corollary}
If $L_1,\ldots, L_{k} \subset A_k$ form the $A_k$-chain of real matching spheres, then the $A_{\infty}$-algebra $\oplus_{i,j} HF^*(L_i,L_j)$ is formal over characteristic zero fields.
\end{Corollary}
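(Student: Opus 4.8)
The plan is to deduce formality from the categorical purity criterion, Corollary \ref{cor:Pure2}, by exhibiting a pure \nc-vector field together with equivariant structures for which the induced weight grading coincides with the cohomological grading on every morphism space. First I would invoke the setup of Section \ref{Sec:Milnor}: the dictionary, together with Lemmas \ref{Lem:monotoneAbar} and \ref{GWAkvanishes}, verifies Hypotheses \ref{Hyp:Main}, \ref{Hyp:regular}, \ref{Hyp:GWvanishes} and \ref{hyp:no_Maslov_0_disc_Maslov_1} for $M = A_k$, so Proposition \ref{prop:ConstructDilation} furnishes an \nc-vector field $b_D \in CC^1(\scrF(A_k),\scrF(A_k))$. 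Since each real matching sphere $L_i$ is exact with $HF^*(L_i,L_i) \cong H^*(S^2)$, the group $HF^1(L_i,L_i)$ vanishes, so $[b^0|_{L_i}] = 0$ and each $L_i$ admits an equivariant structure.

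Purity then amounts to checking that the endomorphism $\Phi$ of \eqref{Eqn:Endomorphism} equals the Euler vector field on each $HF^*(L_i,L_j)$, i.e. that weight equals cohomological degree throughout. I would dispose of three cases. On the diagonal, $HF^*(L_i,L_i)$ is concentrated in degrees $0$ and $2$, and Lemma \ref{lem:WeightForOneLag} already identifies the weights there as $0$ and $2$ respectively, matching the degree. For non-adjacent $i,j$ the spheres are disjoint, $HF^*(L_i,L_j) = 0$, and there is nothing to check. The only remaining case is adjacent spheres $L_i, L_{i+1}$, whose unique transverse intersection point generates $HF^1$ in both directions once gradings are symmetrized as in the discussion preceding Lemma \ref{Lem:Shift}.

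For the adjacent case I would invoke Lemma \ref{Lem:Shift}: after fixing the zero equivariant structure on $L_1$ and propagating the prescribed shifts down the chain, the weights on $HF(L_i,L_{i+1})$ and $HF(L_{i+1},L_i)$ are symmetric, hence equal. Combined with the fact that these two weights sum to $2$ --- which follows from the compatibility of weights with the Floer product \eqref{Eqn:WeightFloerProduct}, the derivation property of Lemma \ref{lem:WeightGradings}, and Poincar\'e duality forcing the product \eqref{Eqn:MilnorProduct} to hit the weight-$2$ top class --- each off-diagonal weight is forced to equal $1$, which is exactly the cohomological degree. Hence $\Phi$ agrees with the Euler vector field on every morphism space, the category $\scrF(A_k)$ is pure, and Corollary \ref{cor:Pure2} delivers formality over any field of characteristic zero.

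I anticipate the genuine content to be concentrated not in this assembly but in the two inputs I am quoting: the sign computation behind Lemma \ref{lem:WeightForOneLag}, via Solomon's orientation analysis in Lemmas \ref{lem:Solomon}--\ref{lem:cancel}, showing that the two conjugate discs contribute with equal sign so that $HF^2$ carries weight $2$; and the bookkeeping of Lemma \ref{Lem:Shift} guaranteeing a consistent simultaneous symmetrization of all off-diagonal weights. Granting those, the present corollary is essentially a reduction to the purity criterion, and the one thing to be careful about is that the equivariant structures used to make the off-diagonal weights symmetric are the very same lifts entering the definition of purity for the whole collection $\{L_i\}$ --- which is precisely what the iterative construction in Lemma \ref{Lem:Shift} provides.
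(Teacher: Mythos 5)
Your proposal is correct and takes essentially the same approach as the paper: the paper's one-line proof is precisely the observation that, with the gradings fixed to be symmetric and the equivariant structures chosen via Lemma \ref{Lem:Shift}, both weights and Floer gradings are symmetric (each summing to $2$ on adjacent spheres), hence coincide, so purity holds and Corollary \ref{cor:Pure2} gives formality. Your case analysis --- the diagonal handled by Lemma \ref{lem:WeightForOneLag}, non-adjacent pairs trivially, and adjacent pairs by Lemma \ref{Lem:Shift} together with the weights-sum-to-$2$ argument --- merely unpacks what the paper leaves implicit.
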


\begin{proof}
Both the Floer gradings and weights are symmetric, hence they necessarily co-incide, and formality follows from Corollary \ref{cor:Pure2}.
\end{proof}



\section{The symplectic arc algebra} \label{Sec:Slice}
We recall some features of the geometry of the space $\scrY_k$ from \cite{SS}, and Manolescu's embedding  \cite{Manolescu} of $\scrY_k$ into the Hilbert scheme of the Milnor fibre.  The space $\scrY_k$ contains a distinguished finite collection of Lagrangian submanifolds, whose Floer cohomology algebra defines the ``symplectic arc algebra" from the Introduction.  While later sections will focus on the Hilbert scheme description, there is one key result from \cite{SS} (recalled as Proposition \ref{Prop:Markov1} below) which we do not know how to prove from that viewpoint.

\subsection{The slice} \label{Sec:Slices}
 Fix an integer $k \geq 1$. Let $\Slice_k \subset \mathfrak{sl}_{2k}(\bC)$ be the affine subspace consisting of matrices of the form
\begin{equation} \label{eq:y-matrix}
A = \begin{pmatrix}
A_{1} & I &&& \\
A_{2} && I && \\
\dots &&& \dots & \\
A_{k-1} &&&& I \\
A_{k} &&&& 0
\end{pmatrix}
\end{equation}
with $A_1 \in \mathfrak{sl}_2(\bC)$, $A_j \in \mathfrak{gl}_2(\bC)$ for $j>1$, and where $I \in \mathfrak{gl}_2(\bC)$ is the identity matrix.

Let $\Sym_0^{2k}(\bC)$ be the subspace of the symmetric product $\Sym^{2k}(\bC)$ consisting of collections with center of mass zero. Symmetric polynomials yield an isomorphism $\Sym_0^{2k}(\bC) \iso \bC^{2k-1}$. Consider the adjoint quotient map 
\begin{equation} \label{Eqn:AdjointQuotient}
\chi: \Slice_k \rightarrow \Sym_0^{2k}(\bC)\end{equation}
 which takes a matrix $A$ to the collection of its eigenvalues. If we identify $\Sym_0^{2k}(\bC) \iso \bC^{2k-1}$ as before, this map is just given by the nontrivial coefficients of the characteristic polynomial. In our case,
\begin{equation} \label{eq:det2}
\begin{aligned}
& \det(x - A) = \det(A(x)), \quad \text{where} \\
& A(x) = x^k I - x^{k-1} A_1 - x^{k-2} A_2 - \cdots - A_k \in \mathfrak{gl}_2(\bC[x]).
\end{aligned}
\end{equation}

The part of $\chi$ lying over the open subset $\Conf^{2k}_0(\bC) \subset \Sym_0^{2k}(\bC)$ of configurations of unordered $2k$-tuples of pairwise distinct points is a differentiable fibre bundle. Fix some $t \in \Conf^{2k}_0(\bC)$, and denote the fibre of $\chi$ at that point by $\scrY_k^t = \scrY_k$. By definition, this is a smooth affine variety of complex dimension $2k$.

An important feature of the symplectic fibre bundle $\chi: \Slice_k \rightarrow \bC^{2k-1}$ is that it has fibred $A_1$-singularities at points  $t_0 \in \Conf^{2k}(\bC)$ 
where  a  pair of eigenvalues co-incide. 
The critical locus of the relevant fibre
corresponds to a subregular adjoint orbit, and is hence denoted by
$\mathcal{O}^{sub}$. 

\begin{Lemma}[Lemma 24 of \cite{SS}] \label{th:2-coincide}
Fix a disc $D \subset \bC^{2k-1}$ parametrising eigenvalues
$(\mu-\sqrt{z},\mu+\sqrt{z},\mu_3,\dots,\mu_{2k})$, with $z$ small. 
Then there is a neighbourhood of $\mathcal{O}^{sub}$ inside
$\chi^{-1}(D)$, and an isomorphism of that with a
neighbourhood of $\mathcal{O}^{sub} \times \{0\}^3$
inside $\mathcal{O}^{sub} \times \bC\,^3$, fitting into a commutative diagram
\[
\begin{CD}
 \chi^{-1}(D)  @>{\text{local $\mathrm{iso}$ defined near
     $\mathcal{O}^{sub} \cap 
\Slice_k$}}>> \mathcal{O}^{sub} \times \bC\,^3 \\
 @V{\chi}VV @V{a^2+b^2+c^2}VV \\
 D @>{\quad\qquad\qquad z \qquad\qquad\quad}>> \bC
\end{CD}
\]
where $a,b,c$ are coordinates on $\bC\,^3$.  \qed
\end{Lemma}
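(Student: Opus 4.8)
The plan is to localise at a point $A_0$ of the critical locus $\mathcal{O}^{sub}$ and to show that, transverse to $\mathcal{O}^{sub}$, the family $\chi\co \chi^{-1}(D)\to D$ is the standard smoothing of the $A_1$ surface singularity. The whole statement is a relative (fibred over $\mathcal{O}^{sub}$) version of the elementary fact that the $\mathfrak{sl}_2$ adjoint quotient $\mathfrak{sl}_2(\bC)\to\bC$, $M\mapsto -\det M = \tfrac12\mathrm{tr}(M^2)$, is a nondegenerate quadratic form, hence equals $a^2+b^2+c^2$ in suitable linear coordinates, with its only critical point the vertex $M=0$ of the nilpotent cone. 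The disc $D$ is parametrised precisely so that its coordinate $z$ equals the square of the half-difference $\sqrt z$ of the two colliding eigenvalues $\mu\pm\sqrt z$, which is exactly the $\mathfrak{sl}_2$-invariant above.

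First I would isolate the colliding pair of eigenvalues by a holomorphic spectral projection. For $A$ in a neighbourhood of $A_0$ inside $\chi^{-1}(D)$ the spectrum splits into two eigenvalues near $\mu$ and $2k-2$ eigenvalues near the distinct values $\mu_3,\dots,\mu_{2k}$; the Riesz projector $P(A)=\frac{1}{2\pi i}\oint_{|\zeta-\mu|=\epsilon}(\zeta-A)^{-1}\,d\zeta$ depends holomorphically on $A$ and cuts out a rank-two subspace $V_\mu(A)$. Restricting $A$ to $V_\mu(A)$ and subtracting its half-trace gives a holomorphic function $M(A)\in\mathfrak{sl}(V_\mu(A))\cong\mathfrak{sl}_2(\bC)$. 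Over $D$ the constraints built into the disc force $\tfrac12\mathrm{tr}(A|_{V_\mu})=\mu$ and pin the remaining eigenvalues to $\mu_3,\dots,\mu_{2k}$, so the single surviving invariant is $-\det M(A)$, which visibly pulls back the coordinate $z$. Expressing $M$ in coordinates diagonalising $-\det$ then gives $z = a^2+b^2+c^2$.

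Next I would identify the factors of the model. The subregular locus is exactly $\mathcal{O}^{sub}=\{M(A)=0\}$, the matrices diagonalisable at $\mu$, and the three directions in which $M$ varies are transverse to $\mathcal{O}^{sub}$: composing $M$ with a linear isomorphism $\mathfrak{sl}_2(\bC)\cong\bC^3$ diagonalising the quadratic form produces the $(a,b,c)$-coordinates, while the position of the basepoint within its conjugacy class supplies the $\mathcal{O}^{sub}$-factor. Concretely, one checks that $A\mapsto(\text{projection to }\mathcal{O}^{sub},\,M(A))$ is a local biholomorphism onto a neighbourhood of $\mathcal{O}^{sub}\times\{0\}$ in $\mathcal{O}^{sub}\times\bC^3$. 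A dimension count is reassuring: $\dim\chi^{-1}(D)=2k+1$, $\dim\mathcal{O}^{sub}=2k-2$, and the three transverse quadratic directions make up the difference. Commutativity of the stated diagram is then immediate from $\chi=z=-\det M$ on the $\bC^3$-factor.

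The $\mathfrak{sl}_2$-computation and the identification $z=-\det M$ are straightforward; the main work lies elsewhere. The essential input is that the explicit affine-linear slice $\Slice_k$ of \eqref{eq:y-matrix} meets the orbit stratification transversally near $A_0$ — a property inherited from its role as a Jacobson--Morozov/Slodowy slice — so that the transversal to $\mathcal{O}^{sub}$ is parametrised by exactly the three coordinates of $M$, with no extra or missing directions, and $z$ appears as the full nondegenerate quadratic rather than a more degenerate invariant; this is the Brieskorn--Slodowy principle that the transverse singularity to the subregular orbit in type $A$, where two eigenvalues collide, is an $A_1$ singularity. The remaining technical point, which I expect to be the genuine obstacle, is upgrading the fibration over $\mathcal{O}^{sub}$ to an \emph{honest} product $\mathcal{O}^{sub}\times\bC^3$: the function $M$ a priori takes values in the rank-three bundle $\mathfrak{sl}(V_\mu)\to\mathcal{O}^{sub}$ built from the tautological eigen-two-plane, and producing a fixed $\bC^3$ requires trivialising this bundle, which must be arranged from the explicit coordinates on $\Slice_k$ or by an equivariance argument. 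Once transversality and this trivialisation are secured, the preceding paragraphs assemble into the required local isomorphism and commutative square.
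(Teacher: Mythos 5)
First, note that the paper offers no proof of this lemma for you to be compared against: it is stated with a terminal box and attributed to \cite{SS} (it is Lemma~24 there), so your proposal must be measured against the argument in that reference, which is likewise based on splitting off the two colliding eigenvalues. Your core strategy is sound, and in one respect tighter than a Morse--Bott normal-form argument, because the identity $z\circ\chi=-\det M$ holds \emph{exactly}, not just to second order: once $(\pi,M)$ is known to be a biholomorphism onto a neighbourhood of $(\mathcal{O}^{sub}\cap\Slice_k)\times\{0\}$, no further normalisation is needed. Of your two flagged obstacles, the bundle-trivialisation worry dissolves exactly as you guessed, from the explicit coordinates: writing $v=(v_1,\dots,v_k)$ with $v_j\in\bC^2$, the equation $(y-\mu)v=0$ forces $v_j=(\mu^{j-1}-\mu^{j-2}A_1-\cdots-A_{j-1})v_1$ and reduces to $A(\mu)v_1=0$ with $A(x)$ as in \eqref{eq:det2}; on the critical locus $A(\mu)=0$, so $v\mapsto v_1$ trivialises $V_\mu$ there, hence on a neighbourhood, and carries $-\det$ to a constant nondegenerate quadratic form. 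The transversality input is also genuine, but your attribution is off: the slice \eqref{eq:y-matrix} is \emph{not} a Jacobson--Morozov slice (the Introduction makes exactly this distinction), so transversality cannot be ``inherited'' verbatim; one must either check surjectivity of $dM$ directly for \eqref{eq:y-matrix} (a resolvent computation, in which varying $A_k$ alone already suffices) or route through the slice-uniqueness statement \cite[Lemma 14]{SS} quoted in Section~\ref{Sec:Slices}.

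The one step that fails as literally written is your description of the $\mathcal{O}^{sub}$-factor of the chart. ``The position of the basepoint within its conjugacy class'' defines no map to the critical locus: a matrix lying over $z\neq 0$ has conjugacy class disjoint from $\mathcal{O}^{sub}$ (its eigenvalues differ), and even for matrices in the singular fibre the orbit $\mathcal{O}^{sub}$ meets $\Slice_k$ in the entire $(2k-2)$-dimensional critical locus, so conjugation singles out no basepoint. What you need is an honest holomorphic retraction $\pi$ of a neighbourhood of $\mathcal{O}^{sub}\cap\Slice_k$ inside $\chi^{-1}(D)$ onto it, and holomorphic retractions do not exist for free; here one can invoke the Docquier--Grauert tubular neighbourhood theorem, which applies because the critical locus is a smooth affine, hence Stein, variety (it is identified with a fibre of the smaller slice by Lemma~\ref{Lem:2eigenvalues}). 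Granting $\pi$, your dimension count together with the transversality above makes $(\pi,M)$ a local biholomorphism at every point of the critical locus; injectivity on some (possibly shrinking) neighbourhood of the non-compact critical locus is a routine point-set argument, and the exact identity $z\circ\chi=-\det M$ then yields the commutative square.
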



\begin{Lemma}[Lemma 24 of \cite{SS}]  \label{Lem:2eigenvalues}
The subspace $\left \{ y \in \Slice_k \ \big| \ \ker(y) \ \textrm{is \ 2-dimensional} \ \right\}$ is canonically isomorphic to $\Slice_{k-1}$ by an isomorphism which is compatible with the adjoint quotient $\chi$. \qed
\end{Lemma}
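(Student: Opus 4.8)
The plan is to reduce everything to an explicit kernel computation for the block companion matrices defining $\Slice_k$, after which both the isomorphism and its compatibility with $\chi$ become transparent. First I would describe $\ker(A)$ for an arbitrary $A \in \Slice_k$. Writing a vector of $\bC^{2k}$ in block form $v = (v_1,\dots,v_k)$ with each $v_i \in \bC^2$, the equation $Av=0$ unwinds along the superdiagonal to give $v_{i+1} = -A_i v_1$ for $1 \le i \le k-1$, together with the single remaining constraint $A_k v_1 = 0$. Thus $v \mapsto v_1$ identifies $\ker(A)$ with $\ker(A_k)$, so $\dim \ker(A) = \dim \ker(A_k) \le 2$, with equality precisely when $A_k = 0$. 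This already pins down the locus in question: the subspace where $\ker(y)$ is two-dimensional is the linear subspace $\Slice_k^{\circ} = \{A \in \Slice_k : A_k = 0\}$.

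Next I would construct the identification with $\Slice_{k-1}$ intrinsically, via the image. When $A_k=0$ the bottom block-row of $A$ vanishes, so $\im(A)$ is contained in $W = V_1 \oplus \cdots \oplus V_{k-1}$, where $V_i \cong \bC^2$ denotes the $i$-th block; since $\dim \im(A) = 2k-2 = \dim W$ by the kernel count, in fact $\im(A) = W$, and one checks directly that $A$ preserves $W$. Reading off the matrix of $A|_W$ in the block basis recovers exactly the block companion matrix with data $(A_1,\dots,A_{k-1})$, which is an element of $\Slice_{k-1}$ (the condition $A_1 \in \mathfrak{sl}_2$ supplies the trace-zero normalization of the leading block). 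The assignment $A \mapsto A|_{\im(A)}$ involves no auxiliary choice, is a linear isomorphism of affine spaces onto $\Slice_{k-1}$, and its inverse simply appends a zero block; this is the asserted canonical isomorphism.

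Finally I would verify compatibility with the adjoint quotient using the determinant formula \eqref{eq:det2}. Setting $A_k=0$ lets one factor $A(x) = x\bigl(x^{k-1}I - x^{k-2}A_1 - \cdots - A_{k-1}\bigr)$, so that $\det(x-A) = x^2\,\det(x - A|_W)$. Hence the eigenvalue multiset of $A$ is that of $A|_W$ with two additional zeros, which is precisely the statement that $\chi$ on $\Slice_k^{\circ}$ and $\chi$ on $\Slice_{k-1}$ agree under the map $\Sym_0^{2(k-1)}(\bC) \hookrightarrow \Sym_0^{2k}(\bC)$ adjoining two points at the origin; the center-of-mass normalization is automatically preserved because adjoining zeros keeps the sum of eigenvalues equal to zero.

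I do not anticipate a serious obstacle, since the content is elementary linear algebra. The one place meriting care is the bookkeeping in the second step: confirming that the restriction to the image literally reproduces the companion normal form defining $\Slice_{k-1}$ rather than merely some conjugate of it, and that this identification is exactly the one intertwining the two adjoint quotients in the third step.
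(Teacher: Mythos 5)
Your proof is correct: the kernel computation (identifying $\ker(A)$ with $\ker(A_k)$, hence the locus with $\{A_k=0\}$), the restriction to $\im(A)=V_1\oplus\cdots\oplus V_{k-1}$ recovering the block companion form of $\Slice_{k-1}$, and the factorization $\det(x-A)=x^2\det(x-A|_W)$ via \eqref{eq:det2} all check out, including the trace/center-of-mass bookkeeping. The paper itself gives no proof here --- it quotes Lemma 24 of \cite{SS} with a \qed --- and your elementary argument is essentially the one carried out there, so there is nothing further to compare.
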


It follows that the critical locus $\mathcal{O}^{sub}  \times \{0\}^3$ of Lemma \ref{th:2-coincide} is exactly the fibre 
\begin{equation} \label{Eqn:IdentifyCriticalSet}
\chi^{-1}\{\mu_3,\ldots,\mu_{2k}\} = \scrY_{k-1} \subset \Slice_{k-1}.
\end{equation}

We have a natural $\bC^{*}$ action $\lambda$ on $ \Slice_k$, 
\begin{equation}
 \lambda_r:  r \cdot (A_1, \ldots, A_k) = (r \cdot A_1, \ldots, r^{k} \cdot A_{k})
\end{equation}
preserving the fibre over $0$.  The explicit slice \eqref{eq:y-matrix} is not the only possible choice;  the usual slices are obtained from the Jacobson-Morozov theorem, whilst a different explicit slice occurs in \cite{Mirkovic-Vybornov}.  A basic fact \cite[Lemma 14]{SS} is that any two $\lambda$-invariant slices are $\bC^*$-equivariantly isomorphic by an isomorphism which moves points only in their adjoint orbits.

\subsection{K\"ahler forms}\label{Sec:Kaehler}

Following \cite{SS}, $\Slice_k$ carries an exact K{\"a}hler form $\Omega_k=\Omega$. Fix some $\alpha>k$, and for each $1 \leq j \leq k$ choose a strictly subharmonic function $\psi_j: \bC \rightarrow \bR$, such that $\psi_j(z) = |z|^{\alpha/j}$ at infinity. Apply $\psi_j$ to each entry of $A_j$, and let $\psi$ be the sum of the resulting terms, which is an exhausting plurisubharmonic function. Then set
\begin{equation} \label{eq:original-omega}
\Omega = -dd^c\psi.
\end{equation}
$\Omega$ defines a symplectic connection on $\chi: \Slice \rightarrow \Conf^{2k}_0(\bC)$. Since the fibres are non-compact, one may not be able to integrate the associated horizontal vector fields to obtain parallel transport maps. 
The choice of K\"ahler form is motivated by three related requirements:
\begin{itemize}
\item It is asymptotically $\bC^*$-equivariant at infinity, \cite[Lemma 40]{SS}, meaning that $\lim_{r\rightarrow \infty} (\psi \circ \lambda_r)/r^{2\alpha} = \psi$.

\item Take the horizontal vector fields defined by $\Omega$  and add a large multiple of the fibrewise Liouville vector field dual to $\Theta = -d^c\psi$. Asymptotic $\bC^*$-equivariance implies that these corrected horizontal vector fields can be integrated  (yielding ``rescaled parallel transport" maps) on arbitrarily large compact subsets of a fibre \cite[Section 5]{SS}.

\item Under the Morse-Bott degeneration discussed in Lemma \ref{th:2-coincide} and the identification of \eqref{Eqn:IdentifyCriticalSet}, $\Omega_k$ restricts on the critical locus of the singular fibre to the form $\Omega_{k-1}$ on the smaller slice.
\end{itemize}

The existence of rescaled parallel transport is sufficient to associate to any closed exact Lagrangian submanifold $L \subset \scrY_k^{t_0}$ lying over any regular value $t_0 \in \Conf^{2k}_0(\bC)$ and an embedded path $\gamma: [0,1] \rightarrow \Conf^{2k}_0(\bC)$ with $\gamma(0) = t_0$ and $\gamma(1) = t_1$ a Lagrangian $\Phi_{\gamma}(L) \subset \scrY_k^{t_1}$, well-defined up to Hamiltonian isotopy. This is sufficient to conclude that the compact exact Fukaya category of a regular fibre of $\chi_k$ does not depend, up to quasi-equivalence, on the choice of regular fibre.

 Let $\chi_{loc} = a^2+b^2+c^2$ denote the map appearing on the right side of the diagram of Lemma \ref{th:2-coincide}, defined with respect to a holomorphic trivialisation of the normal bundle of the critical locus of a fibre where two eigenvalues co-incide.  Since $\chi_{loc}$ has non-degenerate Hessian in normal directions to the critical locus, the function re$(\chi_{loc})$ is Morse-Bott. The stable manifold of the negative gradient flow $-\nabla(\textrm{re}(\chi_{loc}))$ is the Lefschetz thimble of the degeneration, whose intersection with a regular fibre is (by definition) the vanishing cycle of the degeneration.  Since the gradient flow of re$(f)$ is the Hamiltonian flow of im$(f)$, in particular preserves the symplectic form, this vanishing cycle defines a co-isotropic subspace of the regular fibre $\scrY_k$, which is an $S^2$-bundle over the critical locus. 

There is a delicacy at this point, since (rescaled) parallel transport maps for $\Omega$ are defined on compact sets, but not on the entire fibre, which means that the co-isotropic is strictly only  defined as an $S^2$-bundle over compact subsets of the critical locus $\scrY_{k-1} = \mathrm{Crit}(\scrY_k^{t_0})$, where $t_0$ is a tuple of eigenvalues $(0,0,t_3,\ldots, t_{2k})$ with the $t_i \in \bC^*$ pairwise distinct. Nonetheless, this implies that any closed Lagrangian submanifold $L\subset \scrY_{k-1}$ has an associated vanishing cycle, diffeomorphic to $L \times S^2$, in a nearby smooth fibre $\scrY_k^t$ with $t=(-\epsilon, \epsilon, t_3,\ldots, t_{2k})$.

\subsection{The Hilbert scheme}

Let $S$ be a smooth quasiprojective complex surface. Let $\Hilb^{[k]}(S)$ denote the $k$-th Hilbert scheme of $S$, which is a smooth quasiprojective variety. There is a canonical Hilbert-Chow morphism
\begin{equation}
\Psi: \Hilb^{[k]}(S) \longrightarrow \Sym^k(S)
\end{equation} which is a crepant resolution of singularities.  Let $E$ denote the exceptional divisor of $\Psi$. 

\begin{Lemma} \label{lem:H2}
For $k\geq 2$, $H^2(\Hilb^{[k]}(S; \bR))\  \cong \ H^2(S^k;\bR)^{\Sym_k} \oplus \bR\langle E \rangle $. In particular, if $b_1(S)=0$ then $b_2(\Hilb^{[k]}(S)) = b_2(S) + 1.$
\end{Lemma}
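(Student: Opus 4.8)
The plan is to compare the rational cohomology of $X := \Hilb^{[k]}(S)$ with that of $\Sym^k(S)$ through the Hilbert--Chow morphism $\Psi$, exploiting that $\Psi$ is an isomorphism away from the exceptional divisor. Write $\Delta \subset \Sym^k(S)$ for the big diagonal, the closed locus of cycles with a repeated point, and set $\Sym^k_*(S) = \Sym^k(S) \setminus \Delta$, the configuration space of distinct unordered points. Since $E = \Psi^{-1}(\Delta)$ and $\Psi$ restricts to a biholomorphism over $\Sym^k_*(S)$, the open set $U := X \setminus E$ is identified with $\Sym^k_*(S)$. The strategy is to run the long exact sequences of cohomology with supports for $(\Sym^k S,\Delta)$ and for $(X,E)$, compare them through $U$, and read off $H^2(X)$.

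On the symmetric product side, I would first record that $\Sym^k(S) = S^k/\Sym_k$ is a finite quotient, so over $\bR$ one has $H^*(\Sym^k S;\bR) \cong H^*(S^k;\bR)^{\Sym_k}$; in particular $H^2(\Sym^k S) \cong H^2(S^k)^{\Sym_k}$. Being a finite quotient of a manifold, $\Sym^k S$ is moreover a rational homology manifold of real dimension $4k$, so Poincar\'e duality gives $H^p_{\Delta}(\Sym^k S;\bR) \cong H^{lf}_{4k-p}(\Delta;\bR)$. As $\Delta$ has complex codimension $2$, hence $\dim_\bR \Delta = 4k-4$, these groups vanish for $p < 4$, and the restriction $H^p(\Sym^k S) \to H^p(U)$ is therefore an isomorphism for $p \le 2$.

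On the Hilbert scheme side, $X$ is a smooth complex variety of real dimension $4k$ and $E$ is an irreducible divisor. By Poincar\'e--Lefschetz duality $H^p_E(X;\bR) \cong H^{lf}_{4k-p}(E;\bR)$, which vanishes for $p \le 1$ and, since $E$ is irreducible of complex dimension $2k-1$, equals $\bR\langle E\rangle$ for $p = 2$. The long exact sequence of the pair then reads
\begin{equation*}
0 \to H^1(X) \to H^1(U) \to \bR\langle E\rangle \xrightarrow{\ \iota\ } H^2(X) \to H^2(U) \to H^3_E(X).
\end{equation*}
The map $\iota$ sends the generator to the cycle class $[E]$, which is nonzero: a fibre $C \cong \bP^1$ of $\Psi$ over a generic point of $\Delta$ is contracted and satisfies $E \cdot C \neq 0$, so $[E]$ pairs nontrivially with $[C] \in H^{lf}_2(X)$. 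Hence $\iota$ is injective, yielding a short exact sequence $0 \to \bR\langle E\rangle \to H^2(X) \to H^2(U) \to 0$; the surjection holds because, $\Psi$ being an isomorphism over $U$, the composite of $\Psi^*\colon H^2(\Sym^k S) \to H^2(X)$ with restriction to $U$ is exactly the restriction isomorphism of the previous paragraph. That same factorisation shows $\Psi^*$ splits the sequence, whence
\begin{equation*}
H^2(X;\bR) \cong \bR\langle E\rangle \oplus H^2(\Sym^k S;\bR) \cong \bR\langle E\rangle \oplus H^2(S^k;\bR)^{\Sym_k}.
\end{equation*}
For the Betti number statement, when $b_1(S)=0$ the K\"unneth decomposition of $H^2(S^k)$ retains only the summands $H^0(S)^{\otimes(i-1)}\otimes H^2(S)\otimes H^0(S)^{\otimes(k-i)}$, on which $\Sym_k$ acts by permutation; the invariants form a single diagonal copy of $H^2(S)$, so $b_2(X) = b_2(S) + 1$.

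The routine inputs are the two duality/purity computations; the genuine content, and the step I would treat most carefully, is the pair of facts that $\Sym^k S$ is a \emph{rational} homology manifold (so that the codimension-$2$ diagonal is invisible to $H^{\le 2}$) and that $E$ is irreducible with $[E]\neq 0$, since these are precisely what force the rank to jump by exactly one. The hypothesis $k \geq 2$ enters only in guaranteeing that $\Delta$, and hence $E$, is nonempty.
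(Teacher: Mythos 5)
Your argument is correct, but it takes a genuinely different route from the paper, which gives no proof at all: Lemma \ref{lem:H2} is disposed of by a citation to Chapter 6 of Nakajima's book, where the \emph{entire} cohomology of $\Hilb^{[k]}(S)$ is computed (G\"ottsche's formula) via the decomposition theorem applied to the semismall Hilbert--Chow morphism. You instead extract only the degree-two statement by elementary means: the long exact sequences of cohomology with supports for $(\Sym^k S,\Delta)$ and $(X,E)$, duality over $\bR$ (legitimate on the symmetric product because a finite quotient of a manifold is a rational homology manifold), the codimension count $\mathrm{codim}_{\bC}\,\Delta = 2$ which kills $H^{p}_{\Delta}$ for $p\le 3$, irreducibility of $E$ which makes $H^2_E(X)$ of rank one, nonvanishing of $[E]$ against a contracted fibre (the same intersection number $E\cdot C=-2$ that the paper invokes in Lemma \ref{Lem:H2equality}), and finally the splitting of the resulting short exact sequence by $\Psi^*$. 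What your route buys is self-containedness and minimal machinery, proving exactly what is needed for any smooth connected quasiprojective surface; what the citation buys is the full Betti-number generating function, of which this lemma is the degree-two shadow. Note also that the two structural facts underlying your codimension/purity steps --- semismallness-type dimension bounds for the exceptional locus and irreducibility of $E$ (Brian\c{c}on) --- are precisely the inputs to the decomposition-theorem proof, so your argument can be viewed as the low-degree part of Nakajima's done by hand. The only things to flag are that the $\bQ$-homology manifold property of $\Sym^k S$ and the irreducibility of $E$ are asserted rather than proved; both are standard, and both require $S$ connected, which the paper's statement tacitly assumes.
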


\begin{proof}
See \cite[Chapter 6]{Nakajima}.
\end{proof}

For a class $a\in H^2(S;\bR)$ we will write $a_k$ for the class in $H^2(\Hilb^{[k]}(S))$ defined by the $\Sym_k$-invariant class $(a,\ldots,a,0) \in H^2(S^k)\oplus\bR\langle E\rangle$ via the isomorphism of Lemma \ref{lem:H2}. 

Suppose now $\pi: S \rightarrow \bC$ is a quasi-projective surface fibred over $\bC$. If $k=2$, the relative Hilbert scheme $\Hilb^{[2]}(\pi) \subset \Hilb^{[2]}(S)$, which away from Crit$(\pi)$ is  the second relative symmetric product along the fibres of $\pi$, is a divisor.  By definition, when $k>2$, the variety $\Hilb^{\pi,[k]}(S)$ is the complement of the divisor $\Hilb^{[k]}(\pi)$ which is the image of the finite map 
\begin{equation}
\Hilb^{[k-2]}(S) \times \Hilb^{[2]}(\pi) \rightarrow \Hilb^{[k]}(S).
\end{equation}
 Explicitly, the excluded divisor comprises all length $k$ subschemes whose projection to the base $\bC$ under $\pi$ has length $<k$.  We will refer, by abuse of notation, to this divisor as the ``relative Hilbert scheme"; it is not birational to the relative $k$-th symmetric product along the fibres, which would not be divisorial when $k>2$.

\begin{Lemma} \label{Lem:H2equality}
There is an identity $[\Hilb^{[k]}(\pi)] = -[E]/2 \ \in H^2(\Hilb^{[k]}(S);\bk)$.
\end{Lemma}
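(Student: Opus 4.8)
The plan is to realise both $\Hilb^{[k]}(\pi)$ and $E$ as components of the pullback of a single \emph{principal} divisor, namely the discriminant, under the natural map to the symmetric product of the base. Set
\begin{equation}
\phi \ = \ \Sym^k(\pi)\circ\Psi \co \Hilb^{[k]}(S)\longrightarrow \Sym^k(\bC),
\end{equation}
where $\Psi$ is the Hilbert--Chow morphism and $\Sym^k(\pi)$ is induced by $\pi$. Via elementary symmetric functions $\Sym^k(\bC)\cong\bC^k$, and the discriminant $P$ of the universal monic degree $k$ polynomial is a regular function on $\bC^k$ whose zero locus $\mathcal{D}=\{P=0\}$ is exactly the set of non-distinct configurations. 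Pulling back, $\phi^*P$ is a nonzero regular function on $\Hilb^{[k]}(S)$, so $\phi^*\mathcal{D}=\mathrm{div}(\phi^*P)$ is a principal divisor and hence homologically trivial: $[\phi^*\mathcal{D}]=0\in H^2(\Hilb^{[k]}(S);\bk)$ (in fact over $\bZ$). Everything then reduces to identifying the effective divisor $\phi^*\mathcal{D}$ explicitly.

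Set-theoretically $\phi^{-1}(\mathcal{D})$ is precisely the locus of length $k$ subschemes whose image under $\pi$ has length $<k$, i.e. $\Hilb^{[k]}(\pi)\cup E$; these are its only codimension one components, since $\Hilb^{[k]}(\pi)$ is irreducible with generic point a reduced configuration having a single ``vertical'' pair, $E$ is the irreducible non-reduced locus, and the remaining strata (three or more points on a fibre, two vertical pairs, etc.) have codimension $\geq 2$. It remains to compute the two multiplicities, which I would do in a local chart of $\Hilb^{[2]}$ after reducing to the two-point model (near a generic point of either component the local structure of $\Hilb^{[k]}$ is a product of a $\Hilb^{[2]}$-factor with neighbourhoods of the $k-2$ spectator points, whose cross-differences $\pi(p_i)-\pi(p_j)$ are units). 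Writing $S=\bC^2$ with coordinates $(u,v)$ and $\pi=u$, in the chart of $\Hilb^{[2]}(\bC^2)$ with ideal $(u-(av+b),\,v^2-tv-r)$ a direct computation gives, up to a unit,
\begin{equation}
\phi^*P \ = \ e_1^2-4e_2 \ = \ a^2\,(t^2+4r),
\end{equation}
where $\{a=0\}$ is $\Hilb^{[k]}(\pi)$ (distinct points on a common fibre) and $\{t^2+4r=0\}$ is $E$ (the double root locus). Hence $\phi^*\mathcal{D}=2\,[\Hilb^{[k]}(\pi)]+[E]$, and combining with $[\phi^*\mathcal{D}]=0$ yields $2[\Hilb^{[k]}(\pi)]+[E]=0$, i.e. $[\Hilb^{[k]}(\pi)]=-[E]/2$; the division by $2$ is legitimate as $\mathrm{char}(\bk)\neq 2$.

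The main obstacle will be the local multiplicity bookkeeping: verifying that $P$ pulls back with multiplicity exactly $2$ along the same-fibre divisor and exactly $1$ along $E$, and that no further codimension one component intervenes. The factor of $2$ is the essential point; it reflects that the collision of two distinct points in the base is a generically transverse, codimension one event recorded by $\mathcal{D}$ with its reduced structure downstairs (equivalently, it is the ramification of $\bC^k\to\Sym^k(\bC)$ along the big diagonal), whereas $E$ meets this locus only in the residual, multiplicity one direction. I would also want to confirm, using the product structure of $\Hilb^{[k]}$ near generic points of each component, that the two-point computation determines the global multiplicities, and to invoke the standard irreducibility of the non-reduced locus $E$ so that its class is the one appearing in Lemma \ref{lem:H2}.
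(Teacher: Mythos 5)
Your argument is correct, but it takes a genuinely different route from the paper's. The paper proves the identity by pairing both classes against a spanning set of $2$-cycles (using Lemma \ref{lem:H2}): cycles supported away from the diagonal meet neither divisor, while a rational curve $C$ in a fibre of the Hilbert--Chow morphism over a generic diagonal point has $\langle [E], C\rangle = -2$ by crepancy and $\langle [\Hilb^{[k]}(\pi)], C\rangle = +1$, the intersection point being the unique subscheme with a vertical tangency; this forces the stated relation. You instead upgrade the cohomological identity to a linear equivalence: $2\,\Hilb^{[k]}(\pi) + E$ is the divisor of the global regular function $\phi^{*}P$, hence null-homologous, with the coefficients $2$ and $1$ coming from your chart computation $e_1^2 - 4e_2 = a^2(t^2+4r)$ (which is correct: with $v_1+v_2=t$, $v_1v_2=-r$, $u_i = av_i + b$ one gets exactly this), together with the observation that the deeper strata have codimension at least $2$ and so cannot carry divisor components. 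Your route buys independence from Nakajima's description of $H^2$ of the Hilbert scheme and from crepancy of Hilbert--Chow, and it gives the sharper integral statement $2[\Hilb^{[k]}(\pi)]+[E]=0$ over $\bZ$ (indeed the line bundle $\mathcal{O}(2\Hilb^{[k]}(\pi)+E)$ is trivial), with $\mathrm{char}(\bk)\neq 2$ needed only for the final division by $2$ --- a caveat that equally affects the paper's pairing argument and is harmless since the applications are in characteristic zero. The paper's route is shorter because Lemma \ref{lem:H2} is established anyway, and its two intersection numbers $-2$ and $+1$ are precisely the pairing-dual of your two local multiplicities: order $2$ along the vertical-pair divisor because $P$ is locally the square of $u_1-u_2 = a\cdot(\mathrm{unit})$, and order $1$ along $E$ because $t^2+4r$ is a reduced local equation for it. The points you flag as needing verification (irreducibility of the two divisors, the product structure of the Hilbert scheme near a generic point of each, and the exclusion of further components) are all standard and hide no gap.
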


\begin{proof}
Consider first the general case of a fibred surface $\pi: S \rightarrow B$ over a complex curve $B$.  The fibre $\pi^{-1}(pt) = [F]$ defines a class in $H^2(S;\bR)$ and hence an associated class  $[F]_k \in H^2(\Hilb^{[k]}(S);\bR)$.  Since we are working over a field $\bk$, it suffices to evaluate $H^2$-classes on the Hilbert scheme on a basis of $H_2$, bearing in mind Lemma \ref{lem:H2}.  Given a curve $C\subset S$ we obtain a curve  $C_k \subset \Hilb^{[k]}(S)$ by adding $k-1$ distinct points to $C$ which we assume lie away from $C$ and in different fibres of $\pi$.  $C_k$ is trivially disjoint from the exceptional divisor $E$, and has algebraic intersection $(k-1)d$ with $[\Hilb^{[k]}(\pi)]$, where $d$ is the degree of the projection $\pi|_{C}$.  All the curves in the fibres of $\Psi$ are homologically proportional. For a rational curve in a fibre of $\Psi$ over a generic point of the diagonal, the intersection with $E$ is $-2$ (by crepancy) and with the relative Hilbert scheme is $+1$ (the intersection comes from the unique point of $\Hilb^{[2]}(\pi)$ lying over a point of the diagonal of $\Sym^2(S)$ which corresponds to a double point with a vertical tangency).  These computations, and Poincar\'e duality, yield an identity
\[
[\Hilb^{[k]}(\pi)] = (k-1)[F]_k - [E]/2.
\]
Now return to the special case $B = \bC$. In that case, the fibre $[F] = [\pi^{-1}(q)]$ of $\pi$ is trivial in $H_{2}^{lf}(S) \cong H^2(S)$, by considering the preimage of a half-line from $q$ to infinity, and hence the associated class $[F]_k \in H^2(\Hilb^{[k]}(S))$ also vanishes.   The result follows.
\end{proof}

\subsection{K\"ahler forms on the Hilbert scheme} \label{Sec:Varouchas}

The following result is due to Varouchas \cite{V1,V2} (see also \cite{Perutz}).

\begin{Lemma} \label{lem:Kahler}
If $\omega$ is a K\"ahler form on $S$, then for all $\epsilon > 0$ sufficiently small, there are K\"ahler forms on $\Hilb^{[k]}(S)$ in the class $[\omega]_k - \epsilon E$.  Moreover, one can find such K\"ahler forms which are product-like (i.e. which agree on pullback to $S^k$ with a product K\"ahler form) away from any given fixed analytic open neigbourhood of the big diagonal.
\end{Lemma}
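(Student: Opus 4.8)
The plan is to realise the target class as the pullback of a K\"ahler class on the symmetric product, corrected by a small multiple of a $\Psi$-relatively ample class, and then to invoke the gluing technique of Varouchas \cite{V1,V2} to produce an honest K\"ahler form in that class. First I would observe that the product form $\sum_i p_i^*\omega$ on $S^k$ (here $p_i$ is the $i$\th projection) is $\Sym_k$-invariant, hence descends to a form $\omega_{\Sym}$ on $\Sym^k(S)$ which is K\"ahler on the smooth locus and, since $\Sym^k(S) = S^k/\Sym_k$ has only finite quotient singularities, endows $\Sym^k(S)$ with the structure of a K\"ahler space in the sense of Varouchas: locally it admits a continuous strictly plurisubharmonic potential obtained by averaging a product potential $\sum_i \psi_i$ over the symmetric group. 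Under the identification of Lemma \ref{lem:H2} one has $\Psi^*[\omega_{\Sym}] = [\omega]_k$, since the pullback lands exactly in the $\Sym_k$-invariant part with vanishing $E$-component, so that the target class is
\begin{equation}
[\omega]_k - \epsilon E = \Psi^*[\omega_{\Sym}] + \epsilon\, c_1(\calO(-E)).
\end{equation}

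The next step is to record that $\calO(-E)$ is relatively ample for the Hilbert-Chow morphism. This is the key geometric input: the fibres of $\Psi$ over the strata of the diagonal are projective, and on a fibre rational curve $C$ one computes $E\cdot C < 0$ --- for instance $E\cdot C = -2$ over a generic point of the diagonal by crepancy, exactly as in the proof of Lemma \ref{Lem:H2equality} --- so $-E$ has positive degree on every curve contracted by $\Psi$. Hence $\calO(-E)$ restricts to a relatively ample line bundle, and near the exceptional locus it admits local fibrewise-strictly-plurisubharmonic potentials $\phi$. Varouchas's gluing lemma then applies: the pullback $\Psi^*(\text{potential for }\omega_{\Sym})$ is plurisubharmonic but degenerate precisely along $E$, while $\phi$ is strictly plurisubharmonic in the fibre directions; for $\epsilon > 0$ small their sum (patched to the pulled-back symmetric potential away from $E$) is strictly plurisubharmonic in all directions, and the resulting local potentials glue to a global K\"ahler form in the class $[\omega]_k - \epsilon E$.

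For the product-like refinement I would carry out the above so that the $\epsilon$-correction is localised near $E$. Away from any fixed analytic neighbourhood $\nu$ of the big diagonal, $\Psi$ is a biholomorphism onto a smooth open subset of $\Sym^k(S)$ and $E$ is empty, so there one may take the local potential to be exactly the pulled-back product potential $\sum_i \psi_i$, which yields precisely the product form on the corresponding open subset of $S^k$. Multiplying $\phi$ by a cutoff supported in $\Psi^{-1}(\nu)$ confines the correction to the neighbourhood of the big diagonal; for $\epsilon$ small the positivity coming from the base directions of $\Psi^*\omega_{\Sym}$ dominates the error introduced by differentiating the cutoff, so positivity is maintained globally while the form is left untouched --- hence product-like --- outside $\Psi^{-1}(\nu)$.

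The main obstacle is exactly this last positivity-under-cutoff step: naively a cutoff destroys plurisubharmonicity of $\phi$, and one must check that the genuinely positive pullback form $\Psi^*\omega_{\Sym}$ controls the negative contribution of $dd^c(\text{cutoff})$ uniformly, which is where the smallness of $\epsilon$ and the relative ampleness of $-E$ (guaranteeing that the correction is only needed in the fibre directions that $\Psi^*\omega_{\Sym}$ fails to see) are essential. This is precisely the content that Varouchas's patching argument is designed to supply, so the remaining work lies in verifying its hypotheses --- the K\"ahler-space structure on $\Sym^k(S)$ and the relative ampleness of $\calO(-E)$ --- rather than in reproving the underlying analysis.
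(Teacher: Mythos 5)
Your route is genuinely different from the paper's. The paper never descends to the symmetric product and never invokes relative ampleness of $\calO(-E)$: it works directly with the universal family over $\Hilb^{[k]}(S)$, defines a Weil--Petersson form by fibre integration of the given K\"ahler form over the smooth locus of the family, identifies it with the first Chern form of a determinant line bundle following Biswas--Schumacher \cite{BiswasSchumacher}, extends that bundle --- and its curvature, as a closed positive $(1,1)$-current with \emph{continuous} potential, via \cite[Lemma 3.4]{V1} --- across the exceptional locus, and then applies Varouchas's smoothing theorem \cite[Theorem 2.5]{V2} for continuous strictly plurisubharmonic functions. Product-likeness off the diagonal is automatic because the universal family is \'etale there, exactly as in your last step. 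What that route buys, and the reason the paper takes it, is a canonical, globally defined positive representative of (a rescaling of) the desired class: no competition of the form ``small $\epsilon$ times a correction beats the error terms'' ever arises, and all that remains is a purely local smoothing statement, which is insensitive to non-compactness.

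This is precisely where your proposal has a genuine gap. You correctly isolate the problem --- the cutoff destroys plurisubharmonicity, and the negative part of $dd^c(\mathrm{cutoff}\cdot\phi)$ must be dominated by $\Psi^*\omega_{\Sym}$ --- but the assertion that ``this is precisely the content that Varouchas's patching argument is designed to supply'' is not accurate. Varouchas's gluing lemmas combine continuous \emph{strictly} plurisubharmonic local potentials (via regularized maxima); they do not convert a degenerate pullback plus a fibrewise-only-positive potential into a strictly positive one. That conversion is the classical ``blow-up of a K\"ahler manifold is K\"ahler'' argument, which needs compactness (or bounded geometry) to make $\epsilon$ uniform. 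Here $S$ is only quasi-projective, so the big diagonal, the divisor $E$ and your gluing region are all non-compact, and the pointwise threshold $\epsilon(p)$ below which positivity holds can a priori degenerate at infinity; in that case one obtains \emph{some} K\"ahler form, but not one in the fixed class $[\omega]_k - \epsilon E$, which is what the statement requires. A secondary gap: your justification of relative ampleness of $\calO(-E)$ by positivity on contracted curves is insufficient once $k \geq 3$, since the fibres of $\Psi$ then contain subvarieties of complex dimension at least two (punctual Hilbert schemes), and the Nakai--Moishezon/Kleiman criteria require more than curve positivity. The fact itself is true, but it should be quoted from Fogarty \cite{Fogarty} (or deduced from the determinant of the tautological bundle) rather than derived from the intersection computation of Lemma \ref{Lem:H2equality}.
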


\begin{proof}[Sketch]
 Suppose $(Z,\omega_Z)$ is K\"ahler, and let
 \begin{equation}
 \scrX \hookrightarrow Z \times B \stackrel{pr_B}{\longrightarrow B}
 \end{equation}
 be a flat family of $n$-dimensional complex analytic subvarieties of $Z$, parametrized by a complex analytic space $B$, which contains at least one smooth fibre.  Over the locus $B^0 \subset B$ where $\scrX_b$ is smooth, there is a Weil-Petersson form
 \begin{equation}
 \omega_B = \int_{\scrX/B} (pr_Z^*\omega_Z)^{n+1}|_{\scrX}.
 \end{equation}
 Since integration along the fibre commutes with base-change, this is both intrinsic and K\"ahler.  The form can be identified with the first Chern form of a determinant line bundle, cf. for instance \cite{BiswasSchumacher}. That bundle extends from $B_0$ to $B$;  the corresponding extension of the Weil-Petersson form exists as a closed positive  $(1,1)$-current, which \cite[Lemma 3.4]{V1} has a \emph{continuous} $\partial \cdbar$-potential.  A general smoothing lemma for continuous strictly plurisubharmonic functions \cite[Theorem 2.5]{V2} then yields a K\"ahler metric on $B$.  Away from the diagonals, the universal family $(X \times \Hilb^{[k]}(X)) \supset \scrX \rightarrow X$ is \'etale, and the Weil-Petersson form is product-like, which gives the Lemma.
\end{proof}

One can also obtain K\"ahler forms on the Hilbert scheme $\Hilb^{[k]}(S)$ by considering  a projective completion $\bar{S}$ of $S$ and constructing the compactification $\Hilb^{[k]}(\bar{S})$ as a  projective geometric invariant theory quotient, or by appealing  to Voisin's construction \cite{Voisin} of a symplectic form starting from a symplectic form on the underlying four-manifold.  The parameter $\epsilon$ of Lemma \ref{lem:Kahler} is the area of an exceptional sphere $\bP^1 \subset E$ contracted by the Hilbert-Chow morphism. If $\pi: S \rightarrow C$ is a fibration over a curve, then on the open subset $\Hilb^{\pi,[k]}(S) \subset \Hilb^{[k]}(S)$ the K\"ahler forms of Lemma \ref{lem:Kahler} become cohomologous, independent of $\epsilon$.

\subsection{Embedding the nilpotent slice}\label{sec:embed-nilpotent}

Fix a tuple of eigenvalues $\tau \in \Conf^{2k}(\bC)$ and a corresponding space  $\scrY_k^{\tau}$.  A point of $\scrY_k^{\tau}$ is given by a tuple of matrices $A_i$ satisfying 
\begin{equation}
\begin{aligned}
& \det(z - A) = \det(A(z)) = W_{\tau}(z), \quad \text{where} \\
& A(z) = z^k I - z^{k-1} A_1 - z^{k-2} A_2 - \cdots - A_k \in \mathfrak{gl}_2(\bC[z])
\end{aligned}
\end{equation}
where $W_{\tau}(z) = \prod_{t\in\tau}(z-t)$.  Write
\begin{equation}
A(z) = \left( \begin{array}{cc} P(z) & Q(z) \\ R(z) & S(z) \end{array} \right).
\end{equation}
Further, write $U(z) = (Q(z)+ R(z))/2$ and $V(z) = (Q(z)-R(z))/2i$.  The condition $\det A(z) = W_{\tau}(z)$ then becomes
\begin{equation} \label{Eqn:Reformulate}
U(z)^2 + V(z)^2 + W_{\tau}(z) = P(z) S(z).
\end{equation}
Let $A^{\tau}_{2k-1}$ denote the Milnor fibre
\begin{equation}
\{x^2 + y^2 + W_{\tau}(z) = 0\} \subset \bC^3.
\end{equation}
Then a point of $\scrY_k^{\tau}$ defines an ideal 
\begin{equation}
A(z) \ \mapsto \ (U(z)-x, V(z)-y, P(z)) \subset \bC[x,y,z]
\end{equation}
\eqref{Eqn:Reformulate} implies that the corresponding subscheme of $\bC^3$ is supported on $A^{\tau}_{2k-1}$. Moreover, the map which associates to $A(z)$ the ideal
\[
\left\{ \alpha(x,y,z) \ \big| \  \alpha(U(z),V(z),z)  \ \textrm{is divisible by} \ P(z) \right\} \subset \bC[x,y,z]/\langle x^2+y^2+W_{\tau}(z)\rangle
\]
then associates to any element of $\scrY_k^{\tau}$ a length $k$ subscheme of $A^{\tau}_{2k-1}$.

Let $\pi: A^{\tau}_{2k-1} \rightarrow \bC$ denote the canonical Lefschetz fibration defined by projection to the $z$ co-ordinate, which has nodal singular fibres over the points of $\tau$.  Let $\Hilb^{[k]}(A^{\tau}_{2k-1})$ denote the $k$-th Hilbert scheme of $A^{\tau}_{2k-1}$, which is a smooth quasiprojective variety of complex dimension $2k$.

\begin{Lemma}[Manolescu] \label{Lem:Manolescu}
There is a holomorphic open embedding $\scrY_k^{\tau} \hookrightarrow \Hilb^{[k]}(A^{\tau}_{2k-1})$.  The complement of the image is the relative Hilbert scheme of the projection $\pi$, i.e. the subschemes whose projection to $\bC_z$ does not have length $k$.
\end{Lemma}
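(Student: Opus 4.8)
The plan is to read the construction written above as a Beauville--Narasimhan--Ramanan type spectral correspondence for the Lefschetz fibration $\pi\colon A^\tau_{2k-1}\to\bC$: I will verify that the ideal-theoretic assignment defines a flat family of length-$k$ subschemes (hence a morphism to the Hilbert scheme), identify its image with the complement of the relative Hilbert scheme, and then exhibit an explicit algebraic inverse recovering the matrix $A(z)$ from a subscheme. An isomorphism onto an open set then follows formally.

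First I would analyze the forward map on points. Writing $B=\bC[x,y,z]/\langle x^2+y^2+W_\tau(z)\rangle$ for the coordinate ring of $A^\tau_{2k-1}$, the assignment $\alpha\mapsto\alpha(U(z),V(z),z)\bmod P(z)$ is a ring homomorphism $\bar\phi_A\colon B\to\bC[z]/(P(z))$, well-defined precisely because $U^2+V^2+W_\tau=PS\equiv 0\bmod P$ by Equation \eqref{Eqn:Reformulate}. Since the $(1,1)$-entry $P$ of $A(z)=z^kI-z^{k-1}A_1-\cdots-A_k$ is monic of degree $k$, the target has $\bC$-dimension $k$; and $\bar\phi_A$ is surjective because its image is a subalgebra containing $z$. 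Thus $I_A=\ker\bar\phi_A$ satisfies $B/I_A\cong\bC[z]/(P)$, so it cuts out a length-$k$ subscheme $Z_A$, and $B/I_A$ is generated by $1$ as a $\bC[z]$-module, so the $z$-projection of $Z_A$ is the length-$k$ scheme $\{P=0\}\subset\bC_z$. Hence $Z_A$ lies in the open set $H$ complementary to the relative Hilbert scheme $\Hilb^{[k]}(\pi)$, which is the closed divisor of subschemes whose $z$-projection has length $<k$. Because the coefficients of $P$ are regular functions of the entries of $A$, the incidence $\{(A,z):P_A(z)=0\}$ is finite flat of degree $k$ over $\scrY_k^\tau$, and the uniformly surjective maps $\bar\phi_A$ assemble into a flat family of length-$k$ quotients of $B$; the universal property of $\Hilb^{[k]}(A^\tau_{2k-1})$ then yields a morphism $\Theta\colon\scrY_k^\tau\to H$.

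For the inverse I would recover $A(z)$ from a horizontal subscheme. Over $H$ the module $\pi_*\calO_Z$ is cyclic of length $k$ over $\bC[z]$; let $P$ be the monic degree-$k$ characteristic polynomial of multiplication by $z$, so that $\{1,z,\dots,z^{k-1}\}$ is a basis. Expressing the images of $x$ and $y$ in this basis produces unique polynomials $U,V$ of degree $<k$ with $\bar x=U(z),\ \bar y=V(z)$; the relation $x^2+y^2+W_\tau=0$ forces $P\mid U^2+V^2+W_\tau$, and I set $S=(U^2+V^2+W_\tau)/P$, $Q=U+iV$, $R=U-iV$, $A(z)=\left(\begin{smallmatrix}P & Q\\ R & S\end{smallmatrix}\right)$. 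Then $\det A=PS-QR=W_\tau$; moreover $P,S$ are monic of degree $k$ and $Q,R$ have degree $<k$, so $A(z)=z^kI+O(z^{k-1})$; and $A_1\in\mathfrak{sl}_2$ because $[z^{2k-1}](PS)=[z^{k-1}](P+S)=-\mathrm{tr}(A_1)$, which vanishes since $\deg(U^2+V^2)\le 2k-2$ and $\tau\in\Conf^{2k}_0(\bC)$ makes $[z^{2k-1}]W_\tau=0$. Thus the recovered $A$ lies in $\scrY_k^\tau$, these formulas are algebraic in the universal family over $H$, and the resulting morphism $H\to\scrY_k^\tau$ is visibly inverse to $\Theta$ on points. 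Hence $\Theta$ is an isomorphism onto $H$, i.e. an open embedding with the stated complement.

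The main obstacle is not a hard estimate but the families bookkeeping that turns the pointwise bijection into an isomorphism of schemes: one must confirm that $\bar\phi_A$ is \emph{uniformly} surjective so the quotients stay flat, and, on the inverse side, that cyclicity of $\pi_*\calO_Z$ (the very condition defining $H$) lets one extract $P,U,V$ as algebraic data over all of $H$ rather than fibrewise. The degree and normalization accounting is the delicate point: ensuring $\deg Q,\deg R<k=\deg P=\deg S$, and in particular deducing the trace constraint $A_1\in\mathfrak{sl}_2$ from the center-of-mass-zero condition on $\tau$.
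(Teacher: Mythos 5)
Your argument is correct, but note that the paper itself does not really prove Lemma \ref{Lem:Manolescu}: it constructs the forward map exactly as you do (this is the content of Section \ref{sec:embed-nilpotent}, culminating in Equation \eqref{Eqn:Reformulate}) and then cites \cite{Manolescu} for injectivity, openness, and the identification of the complement, observing only that injectivity of a holomorphic map between equidimensional complex manifolds already implies openness. What you have written is in effect a self-contained reconstruction of the cited argument. The two ingredients you supply beyond the paper's text are: (i) the explicit inverse over the cyclic locus --- recovering $P$ as the characteristic polynomial of $z$-multiplication on $\pi_*\calO_Z$ (equivalently the monic generator of the kernel of $\bC[z]\to\calO_Z$), $U,V$ as the degree $<k$ representatives of $\bar x,\bar y$, and $S$ by division --- together with the degree bookkeeping and the check that $\mathrm{tr}(A_1)=0$ follows from $[z^{2k-1}]W_\tau=0$, which is precisely where the center-of-mass-zero normalisation $\tau\in\Conf^{2k}_0(\bC)$ enters (you handle this correctly); and (ii) the flatness/universal-property bookkeeping that upgrades the pointwise assignment to an actual morphism $\scrY_k^\tau\to\Hilb^{[k]}(A^\tau_{2k-1})$ and makes the inverse algebraic over the whole complement of the relative Hilbert scheme (cyclicity plus Nakayama, then a surjection of rank-$k$ locally free modules being an isomorphism). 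Your route buys a stronger and self-contained conclusion --- an explicit algebraic isomorphism onto the complement, giving openness and the complement characterisation in one stroke --- whereas the paper's route gets openness cheaply from injectivity but outsources injectivity and the image identification entirely to the citation.
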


\begin{proof}
The inclusion is the map defined above;  for injectivity (which implies openness) and the characterisation of the complement, see \cite{Manolescu}.  \end{proof}

We will henceforth identify $\scrY_k = \Hilb^{\pi, [k]}(A_{2k-1})$.

\subsection{Crossingless matchings} 

Fix the set $\tau = \{1,2,\ldots, 2k\} \subset \bR \subset \bC$.  A \emph{crossingless matching} on $k$ strands (for the set $\tau$) is any collection of $k$ pairwise disjoint arcs in $\bC$ which join the points of $\tau$ in pairs.   We tacitly identify isotopic matchings.

\begin{Lemma}
Any crossingless matching $\wp$ defines a Lagrangian submanifold  $L_{\wp} \cong (S^2)^k \subset \scrY_k$, well-defined up to Hamiltonian isotopy.  There are exactly $\frac{1}{k+1} {2k \choose k}$ crossingless matchings whose interiors lie in the upper half-plane.  
\end{Lemma}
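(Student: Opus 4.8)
The plan is to treat the geometric construction of $L_\wp$ and the enumeration separately. For the construction I would argue by induction on $k$, using the iterated vanishing-cycle mechanism set up at the end of Section~\ref{Sec:Kaehler}. The base case $k=1$ is Lemma~\ref{lem:MatchingSphere}: there $\scrY_1$ is the $A_1$-Milnor fibre and a single matching arc gives a Lagrangian matching sphere $S^2$. For the inductive step, fix a crossingless matching $\wp$ of $2k$ points. Since the $k$ arcs are pairwise disjoint, any arc $\gamma$ of $\wp$ has a tubular neighbourhood in $\bC$ meeting none of the others; collapsing the two endpoints of $\gamma$ together inside this neighbourhood is realised by a path in $\Conf^{2k}_0(\bC)$ along which the remaining $2k-2$ eigenvalues stay fixed and pairwise distinct, terminating at a configuration with a single fibred $A_1$-degeneration. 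By Lemma~\ref{th:2-coincide} and the identification \eqref{Eqn:IdentifyCriticalSet}, the critical locus of that degeneration is precisely $\scrY_{k-1}$, carrying the matching $\wp'=\wp\setminus\{\gamma\}$ on the remaining points. By induction $\wp'$ determines an exact Lagrangian $L_{\wp'}\cong(S^2)^{k-1}\subset\scrY_{k-1}$, and the coisotropic vanishing cycle of the degeneration restricts over $L_{\wp'}$ to an exact Lagrangian diffeomorphic to $L_{\wp'}\times S^2\cong(S^2)^k$; rescaled parallel transport back to the standard fibre then yields $L_\wp\subset\scrY_k$.

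The subtle point, which I expect to be the main obstacle, is well-definedness up to Hamiltonian isotopy. Three ambiguities must be removed: the isotopy class of the collapsing path, the ambient regular fibre, and the order in which the arcs are collapsed. The first is absorbed by the rescaled parallel transport maps of Section~\ref{Sec:Kaehler}, which depend only on the homotopy class rel endpoints of the path in the base; the second is the independence of the compact Fukaya category on the choice of regular fibre, also recalled there. The genuinely new content is order-independence: collapsing two disjoint arcs $\gamma,\gamma'$ in either order should produce Hamiltonian isotopic Lagrangians. Here disjointness is essential, since it permits a choice of collapsing paths with disjoint support in $\Conf^{2k}_0(\bC)$, so that the associated rescaled parallel transports commute up to Hamiltonian isotopy; the two iterated vanishing cycles are then identified as one and the same $(S^2\times S^2)$-bundle over $L_{\wp''}$, where $\wp''=\wp\setminus\{\gamma,\gamma'\}$. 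A general reordering is a composition of such transpositions of disjoint arcs, so the full statement would follow by a second induction on $k$ once this commutation is established.

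Finally I would enumerate the upper half-plane matchings. A crossingless matching of $\tau=\{1,\dots,2k\}$ whose interiors lie in the open upper half-plane is, up to isotopy, a \emph{non-crossing} perfect matching of $2k$ collinear points. Recording each point, read left to right, as an opening symbol if it is the left endpoint of its arc and as a closing symbol otherwise, sets up a bijection between such matchings and balanced words in $k$ pairs of parentheses: the non-crossing condition is precisely the requirement that every initial segment contain at least as many openings as closings. The number of such words is the Catalan number $C_k=\frac{1}{k+1}{2k \choose k}$, which gives the stated count.
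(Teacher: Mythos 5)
Your proposal is correct and takes essentially the same route as the paper: the paper's proof simply invokes the iterated vanishing cycle construction recalled at the end of Section~\ref{Sec:Kaehler} (resting on Lemmas~\ref{th:2-coincide} and~\ref{Lem:2eigenvalues}), defers the well-definedness details---including the order-independence of arc collapse that you rightly single out as the main point---to \cite{SS}, and cites \cite{Khovanov:Cohomology} for the Catalan count. Your induction on collapsed arcs and the Dyck-word bijection are precisely the details behind those citations, so there is nothing genuinely different here, only a fleshing out of what the paper outsources.
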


\begin{proof}
The first statement follows from the iterated vanishing cycle construction discussed at the end of Section \ref{Sec:Kaehler}, and is essentially a  consequence of Lemmas \ref{th:2-coincide} and \ref{Lem:2eigenvalues}; see \cite{SS} for further details.  The  second statement is well-known, see \cite{Khovanov:Cohomology}. \end{proof}

\begin{Proposition} \label{Prop:Markov1}
The Lagrangian submanifolds $L_{\wp}$ and $L_{\wp'}$ for crossingless matchings $\wp, \wp'$ which differ by moves as in Figure \ref{Fig:Markov1} are Hamiltonian isotopic in $\scrY_k$.
\end{Proposition}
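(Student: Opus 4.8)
The plan is to realise the move of Figure \ref{Fig:Markov1} by an explicit isotopy of the underlying configuration of eigenvalues, and to lift it, via the rescaled parallel transport of Section \ref{Sec:Kaehler}, to a Hamiltonian isotopy of the associated Lagrangians. By the iterated vanishing cycle construction recalled there, $L_{\wp}$ is built by choosing a sequence of paths in $\Conf^{2k}_0(\bC)$ along which the $k$ pairs of endpoints of $\wp$ are brought into collision one at a time; at each collision Lemma \ref{th:2-coincide} replaces the vanishing cycle by an $S^2$-bundle over the critical locus, which by Lemma \ref{Lem:2eigenvalues} is a copy of $\scrY_{k-1}$. The resulting object depends, up to Hamiltonian isotopy, only on the isotopy class of the matching.

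First I would localise. The two matchings $\wp$ and $\wp'$ agree outside a disc $D\subset\bC$ in which the move is performed, so the parts of the two constructions carried out over $\bC\setminus D$ are literally identical; only the contribution over $D$ differs. Restricting $\chi$ to configurations supported in $D$ (with the remaining eigenvalues frozen) reduces the claim to a statement about the finitely many pairs of endpoints inside $D$. Applying Lemma \ref{th:2-coincide} to the collisions taking place in $D$ identifies the relevant local model with the standard map $a^2+b^2+c^2$ on $\mathcal{O}^{sub}\times\bC^3$, whose fibre contains the local $A_1$-picture. In this model the vanishing cycle attached to a path is precisely the matching sphere of Lemma \ref{lem:MatchingSphere}, and depends only on the isotopy class of that path within the full plane.

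Next I would check that the local move is an isotopy of matching data in this model. Each of the moves of Figure \ref{Fig:Markov1} relates two matchings that, while generally not isotopic within the upper half-plane, become isotopic once the arcs are allowed to sweep through the whole of $\bC$; concretely each is realised by an isotopy of the collision paths in $\Conf^{2k}_0(\bC)$ together, possibly, with an interchange of the order in which two disjoint collisions are performed. In the first situation the conclusion is immediate from the isotopy invariance just recalled. In the second I would use that, by the third compatibility bullet in Section \ref{Sec:Kaehler}, $\Omega_k$ restricts on each critical locus to the form $\Omega_{k-1}$ of the smaller slice, so the two orders of collision produce the same iterated $S^2$-bundle up to a fibrewise Hamiltonian isotopy supported over $D$. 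Patching this fibrewise isotopy with the constant isotopy over $\bC\setminus D$ would then yield a global Hamiltonian isotopy $L_{\wp}\htp L_{\wp'}$ in $\scrY_k$.

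The main obstacle is non-compactness. As stressed in Section \ref{Sec:Kaehler}, the horizontal vector fields of $\Omega$ need not integrate on the whole fibre, and rescaled parallel transport is only available over compact subsets of the critical locus; one must therefore ensure that the entire isotopy takes place within the region where transport is defined and that the compact Lagrangian $L_{\wp}$ does not escape to infinity along the way. This is controlled exactly as in \cite[Section 5]{SS}, using the asymptotic $\bC^*$-equivariance of $\psi$ to bound the rescaled horizontal flow on the relevant compact set, together with the compactness of $L_{\wp}$. Once this control is in place the patched isotopy is genuinely Hamiltonian, completing the argument; we note that the same conclusion is the content of the corresponding result in \cite{SS}, which is the source of Proposition \ref{Prop:Markov1}.
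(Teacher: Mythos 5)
Your proposal breaks down at its central step, so it cannot be accepted as a proof. (For reference, the paper itself does not argue this Proposition at all: its ``proof'' is the citation \cite[Lemma 49]{SS}, and it is the mechanism of that lemma which your sketch misses.) The gap is the claim that the two matchings of Figure \ref{Fig:Markov1}, ``while generally not isotopic within the upper half-plane, become isotopic once the arcs are allowed to sweep through the whole of $\bC$.'' This is false, and if it were true the Proposition would be an empty consequence of isotopy invariance of the vanishing-cycle construction. The slid arc $\delta_2'$ has the same endpoints as $\delta_2$ but differs from it, in the fundamental group of the complement of the marked points, by a loop encircling the two endpoints of $\delta_1$; since those endpoints belong to the fixed configuration $\tau$, no isotopy of crossingless matchings (equivalently, no ambient isotopy of $\bC$ fixing $\tau$) carries $\wp$ to $\wp'$, whether or not the arcs are confined to a half-plane. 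Indeed this is exactly why the move has content: Lemma \ref{Lem:RemoveNesting} and the discussion following Proposition \ref{Prop:Markov1} use it to pass between matchings lying in genuinely distinct planar isotopy classes. Your fallback mechanism --- interchanging the order of two disjoint collisions --- does not produce the handleslide either, so neither of the two moves you allow realises Figure \ref{Fig:Markov1}.

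The correct mechanism, which is the heart of \cite[Lemma 49]{SS}, is that the obstruction dies only \emph{after} the collision of the endpoints of $\delta_1$. Construct both Lagrangians by first bringing the endpoints of $\delta_1$ together: by Lemmas \ref{th:2-coincide} and \ref{Lem:2eigenvalues}, the critical locus of the resulting degeneration is a copy of $\scrY_{k-1}$ whose eigenvalue configuration consists of the remaining $2k-2$ points only --- the merged point is deleted from the configuration. Relative to this smaller configuration, $\delta_2$ and $\delta_2'$ \emph{are} isotopic, since the loop separating them now encircles no marked point; hence $\wp\setminus\delta_1$ and $\wp'\setminus\delta_1$ define Hamiltonian isotopic Lagrangians in $\scrY_{k-1}$. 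One then concludes because the relative vanishing cycle construction (the $S^2$-bundle over a Lagrangian in the critical locus, implemented via rescaled parallel transport) sends Hamiltonian isotopic Lagrangians in $\scrY_{k-1}$ to Hamiltonian isotopic Lagrangians in the nearby fibre $\scrY_k$. Your discussion of non-compactness and rescaled parallel transport is apposite for that final step, but without the passage to the critical locus --- which is where the handleslide becomes an honest isotopy --- the argument never gets off the ground.
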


\begin{proof} This is \cite[Lemma 49]{SS}. \end{proof}

\begin{figure}[ht]
\begin{centering}
\begin{picture}(0,0)%
\includegraphics{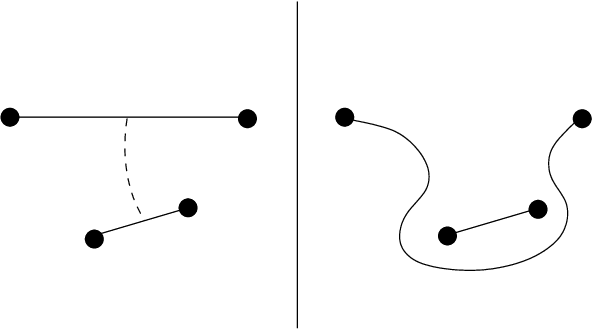}%
\end{picture}%
\setlength{\unitlength}{3947sp}%
\begingroup\makeatletter\ifx\SetFigFont\undefined%
\gdef\SetFigFont#1#2#3#4#5{%
  \reset@font\fontsize{#1}{#2pt}%
  \fontfamily{#3}\fontseries{#4}\fontshape{#5}%
  \selectfont}%
\fi\endgroup%
\begin{picture}(4736,2647)(1110,-2034)
\put(1989,-149){\makebox(0,0)[lb]{\smash{{\SetFigFont{12}{14.4}{\rmdefault}{\mddefault}{\updefault}$\delta_2$}}}}
\put(5689,-936){\makebox(0,0)[lb]{\smash{{\SetFigFont{12}{14.4}{\rmdefault}{\mddefault}{\updefault}$\delta_2$}}}}
\put(2189,-1361){\makebox(0,0)[lb]{\smash{{\SetFigFont{12}{14.4}{\rmdefault}{\mddefault}{\updefault}$\delta_1$}}}}
\put(1389,-1974){\makebox(0,0)[lb]{\smash{{\SetFigFont{12}{14.4}{\rmdefault}{\mddefault}{\updefault}matching
$\wp$}}}}
\put(4764,-1099){\makebox(0,0)[lb]{\smash{{\SetFigFont{12}{14.4}{\rmdefault}{\mddefault}{\updefault}$\delta_1$}}}}
\put(4451,-1974){\makebox(0,0)[lb]{\smash{{\SetFigFont{12}{14.4}{\rmdefault}{\mddefault}{\updefault}matching
$\wp'$}}}}
\end{picture}%
\caption{The handleslide move preserves Hamiltonian isotopy type\label{Fig:Markov1}}
\end{centering}
\end{figure}

There is a more direct construction of a Lagrangian submanifold $\hat{L}_{\wp} \subset \Hilb^{[k]}(A_{2k-1})$ from a crossingless matching.  Each arc of the matching $\wp$ defines a Lagrangian matching sphere in $A_{2k-1}$, and the crossingless condition implies that the product of these spheres is a Lagrangian in $(A_{2k-1})^k$ which is disjoint from the big diagonal.  Since the K\"ahler form on $\Hilb^{[k]}(A_{2k-1})$ is product-like on the complement of a small neighbourhood of the diagonal, this Lagrangian defines a Lagrangian submanifold of the Hilbert scheme. This is disjoint from the relative Hilbert scheme and hence lies inside the image of the embedding $\scrY_k \subset \Hilb^{[k]}(A_{2k-1})$.

\begin{Proposition} 
Under the symplectic embedding $\iota: \scrY_k \hookrightarrow \Hilb^{[k]}(A_{2k-1})$, the vanishing cycle Lagrangian $\iota(L_{\wp}) \simeq \hat{L}_{\wp}$ is Hamiltonian isotopic to the product Lagrangian.
\end{Proposition}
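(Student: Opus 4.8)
The plan is to argue by induction on $k$, peeling off an innermost arc of the crossingless matching and identifying the vanishing cycle construction of $L_{\wp}$ with a product in the Hilbert scheme through the Morse--Bott degeneration of Lemma \ref{th:2-coincide}. First I would reduce to a convenient representative. Both Lagrangians depend only on the isotopy class of $\wp$: the class of $L_{\wp}$ is isotopy-invariant by construction and by Proposition \ref{Prop:Markov1}, while $\hat{L}_{\wp}$ is manifestly invariant under isotopies of the arcs keeping them disjoint and away from the big diagonal, since the product of matching spheres then varies by a Hamiltonian isotopy and the K\"ahler form is product-like there by Lemma \ref{lem:Kahler}. Hence I may assume $\wp$ contains a small innermost cap arc $c$ joining two adjacent critical values $\xi_1,\xi_2$, with the remaining $k-1$ arcs forming a crossingless matching $\wp'$ on $\{\xi_3,\dots,\xi_{2k}\}$ supported away from $c$. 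For the base case $k=1$ the embedding $\iota$ is an isomorphism and $L_{\wp}=\hat{L}_{\wp}$ is the unique matching sphere.

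The inductive step rests on matching two descriptions of the degeneration in which $\xi_1\to\xi_2$. On the slice side, Lemma \ref{th:2-coincide} identifies a neighbourhood of the critical locus in $\scrY_k$ with $\mathcal{O}^{sub}\times\bC^3$ carrying the quadric $a^2+b^2+c^2$, and \eqref{Eqn:IdentifyCriticalSet} identifies $\mathcal{O}^{sub}$ with $\scrY_{k-1}$; the vanishing cycle over a Lagrangian $L\subset\scrY_{k-1}$ is then the total space of the associated $S^2$-bundle, which in the product local model is $L$ times the standard Lagrangian vanishing $2$-sphere of the quadric $a^2+b^2+c^2$. On the Hilbert scheme side, under Manolescu's identification $\scrY_{k}=\Hilb^{\pi,[k]}(A_{2k-1})$ of Lemma \ref{Lem:Manolescu} the critical locus consists of length-$k$ subschemes one of whose points sits at the node of the collapsed fibre while the remaining $k-1$ lie in the sub-Milnor fibre $A_{2k-3}\subset A_{2k-1}$ over $\{\xi_3,\dots,\xi_{2k}\}$; this identifies $\scrY_{k-1}$ with $\Hilb^{\pi,[k-1]}(A_{2k-3})$ compatibly with the previous identification.

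The remaining step is to check that the $\bC^3$-factor of Lemma \ref{th:2-coincide} is carried, under $\iota$, to the neighbourhood of the node in $A_{2k-1}$ in which the distinguished $k$-th point is free to move, and that the standard Lagrangian sphere of the quadric is carried to the matching sphere $S_c$ of the cap arc. Granting this, the vanishing cycle over $L_{\wp'}$ becomes $L_{\wp'}\times S_c$; since $S_c$ is supported near the node and $\hat{L}_{\wp'}$ near $\{\xi_3,\dots,\xi_{2k}\}$, the two factors are disjoint from the diagonal and the product is honest (in particular the $S^2$-bundle is trivial). Applying the inductive hypothesis $L_{\wp'}\simeq\hat{L}_{\wp'}$ in $\scrY_{k-1}=\Hilb^{\pi,[k-1]}(A_{2k-3})$, and recalling that the matching spheres of $\wp'$ may be supported inside $A_{2k-3}\subset A_{2k-1}$, I would conclude $L_{\wp}\simeq\hat{L}_{\wp'}\times S_c=\hat{L}_{\wp}$.

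The main obstacle is precisely this last compatibility: matching the abstract normal model $\mathcal{O}^{sub}\times\bC^3$ of the fibred $A_1$-degeneration with the explicit ``$(k-1)$ points in $A_{2k-3}$ plus one point near the node'' factorisation of the Hilbert scheme near the relative Hilbert divisor, and then promoting the resulting diffeomorphism of Lagrangians to a Hamiltonian isotopy. For the latter I would compare the two exact K\"ahler structures---the Seidel--Smith form $\Omega_k$ built by rescaled parallel transport and the product-like Varouchas form of Lemma \ref{lem:Kahler}---which agree in cohomology on $\Hilb^{\pi,[k]}(A_{2k-1})$, and apply a Moser-type argument together with the control on rescaled parallel transport over compact subsets of the critical locus recalled in Section \ref{Sec:Kaehler}, so that the fibrewise product structure of the vanishing cycle is realised by an ambient Hamiltonian isotopy supported near the node.
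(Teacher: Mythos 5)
The first thing to say is that the paper does not actually contain a proof of this Proposition: its proof is the single line ``This is part of \cite[Proposition 4.3]{Manolescu}''. So there is no in-paper argument to compare against, and the relevant comparison is with Manolescu's proof. Your outline is, in substance, that argument: induction on $k$ following the iterated vanishing-cycle construction, peeling off an innermost cap arc; identification of the fibred $A_1$-degeneration of Lemma \ref{th:2-coincide} (via \eqref{Eqn:IdentifyCriticalSet}) with the local factorisation of the Hilbert scheme near a subscheme having one point at the node and $k-1$ points away from it; and the use of a product-like K\"ahler form (Lemma \ref{lem:Kahler}) so that the vanishing cycle of a split degeneration is a product. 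You have isolated the correct local models on both sides, which is the essential geometric content of the cited proof.

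The genuine gap is the step you yourself flag, and your proposed fix does not close it. A Moser argument produces a diffeomorphism $\phi$ with $\phi^*\omega_1 = \omega_0$; it does not relate the output of the Seidel--Smith construction performed with $\Omega_k$ to the output of the product construction performed with the form of Lemma \ref{lem:Kahler}, because $\phi$ has no reason to respect the fibration $\chi$, the degeneration data, or the matching spheres, and on a non-compact manifold the Moser vector field need not even integrate. What is actually required is an invariance statement for the construction itself: that rescaled parallel transport and relative vanishing cycles, run over the same path of eigenvalues with respect to any exact K\"ahler form satisfying the asymptotic control recalled in Section \ref{Sec:Kaehler}, produce Hamiltonian-isotopic Lagrangians --- proved by interpolating through a family of such forms and using exactness (here $H^1(L_{\wp})=0$, since $L_{\wp}\cong (S^2)^k$) to upgrade the resulting Lagrangian isotopy to a Hamiltonian one. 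Separately, your identification of the $\bC^3$-factor of the normal form with the neighbourhood of the node presupposes that Manolescu's isomorphism of Lemma \ref{Lem:Manolescu} is compatible with the degeneration $\xi_1 \to \xi_2$, i.e. that it fits into the family over the disc appearing in Lemma \ref{th:2-coincide}; that is true, but it is itself a computation with the explicit embedding, not a formality. Both points are precisely what \cite[Section 4]{Manolescu} supplies; as written, your proposal defers them rather than proving them.
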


\begin{proof} This is part of \cite[Proposition 4.3]{Manolescu}. \end{proof}

We will henceforth denote by $L_{\wp}$ the Lagrangian associated to a crossingless matching, which one can define either by the vanishing cycle construction or by the more naive product construction\footnote{For the purposes of Floer cohomological computations the difference is irrelevant. When computing weight gradings later, it will matter that we specify our Lagrangian submanifolds exactly and not just up to isotopy, in view of Remark \ref{Rem:IsotopyCaveat}.  The naive product Lagrangians will then be better suited for importing the geometric set-up of Section \ref{Sec:Generalities}.}.   Two of the Lagrangian submanifolds $L_{\wp}$ play a special role: \begin{itemize}
\item The Lagrangian associated to the crossingless matching comprising a sequence of adjacent arcs joining the $\xi$ in pairs $\{1,2\}, \{3,4\}, \ldots, \{2k-1,2k\}$ is denoted $L_{\wpb}$; this is the \emph{plait} matching.  
\item The Lagrangian associated to the crossingless matching comprising a sequence of nested arcs joining the $\xi$ in pairs $\{1,2k\}, \{2,3\}, \ldots, \{2k-2,2k-1\}$ is denoted $L_{\wp_{\circ}}$; this is the \emph{mixed} matching.
\end{itemize}
\begin{center}
\begin{figure}[ht]
\includegraphics[scale=0.4]{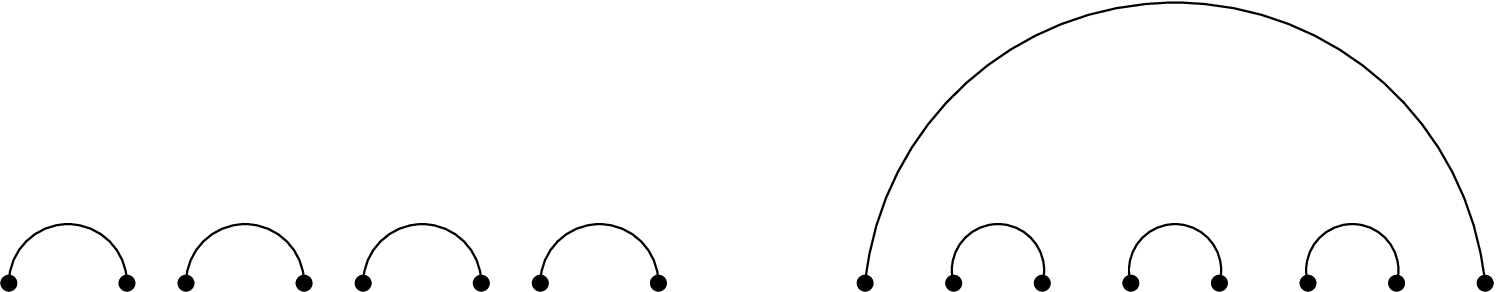}
\caption{The plait (left) and mixed (right) matchings of $2k$ points\label{Fig:PlaitMixed}.}
\end{figure}
\end{center}
There is a ``horseshoe" matching (joining pairs $\{1,2k\}, \{2,2k-1\}, \ldots, \{k-1,k+1\}$), denoted $\wp_{+}$ in \cite{SS}, which appears in the definition of symplectic Khovanov cohomology; that however will not play any special role in this paper.

\subsection{The symplectic arc algebra}

The \emph{symplectic arc algebra} is the $A_{\infty}$-algebra 
\begin{equation} \label{Eqn:SymplecticArc}
\oplus_{\wp,\wp'} HF^*(L_{\wp}, L_{\wp'})
\end{equation}
taking the sum over the finite set of upper-half-plane crossingless matchings, and computing Floer cohomology inside $\scrY_k$. 

 The group $HF^*(L_{\wp}, L_{\wp'})$ can be computed by placing the matching $\wp$ in the upper half-plane and $\overline{\wp'}$ in the lower half-plane, and taking the Floer cohomology of the corresponding Lagrangians.  Here one uses the handle-slide move of Proposition \ref{Prop:Markov1} to see that the Lagrangians $L_{\wp'}$ and $L_{\overline{\wp'}}$ for a given crossingless matching, placed in either the upper or lower half-plane, are Hamiltonian isotopic.

An intersection point of the Lagrangians $L_{\wp}$ and $L_{\overline{\wp'}}$, viewed as fibred over the corresponding upper respectively lower half-plane arcs, is given by a $k$-tuple of critical points $P \subset \{1,2,\ldots, 2k-1, 2k\}$ which are distributed evenly amongst the $k$ arcs of $\wp$ and the $k$ arcs of $\overline{\wp'}$. We will call such tuples admissible. 

Fix a (necessarily unknotted) component $C$ of the planar unlink $\wp\cup\overline{\wp'}$.
\begin{Lemma}
Any intersection point $P$ of the Lagrangians comprises either all the even critical points on $C$ or all the odd critical points on $C$.
\end{Lemma}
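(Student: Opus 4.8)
The plan is to compute the intersection $L_{\wp}\cap L_{\overline{\wp'}}$ fibrewise over the base $\bC$ of the Lefschetz fibration $\pi\colon A_{2k-1}\to\bC$, and to reduce the statement to a purely combinatorial count on a single loop. First I would exploit that the components of $\wp\cup\overline{\wp'}$ are pairwise disjoint simple closed curves: under the embedding $\scrY_k\hookrightarrow\Hilb^{[k]}(A_{2k-1})$ of Lemma~\ref{Lem:Manolescu}, the Lagrangian $L_{\wp}$ is Hamiltonian isotopic to a product of matching spheres, one over each arc, and likewise for $L_{\overline{\wp'}}$. A transverse intersection point is therefore an unordered length-$k$ subscheme meeting each factor, and the disjointness of the loops lets this data split as a product over the components of $\wp\cup\overline{\wp'}$. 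Thus it suffices to describe $P\cap C$ for a single fixed loop $C$.

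Next I would locate the intersection points. Since the arcs of $\wp$ lie in the open upper half-plane and those of $\overline{\wp'}$ in the open lower half-plane, a matching sphere over an arc of $\wp$ and one over an arc of $\overline{\wp'}$ can share a point only in a fibre lying over the real axis, and the two families of arcs meet the real axis only at the points of $\tau$. Hence every intersection point projects to a critical value and is the corresponding critical point of $\pi$; the critical point over $a\in\tau$ lies on the unique arc of $\wp$ and the unique arc of $\overline{\wp'}$ ending at $a$. Requiring the chosen $k$-tuple to lie in both $L_{\wp}$ and $L_{\overline{\wp'}}$ — equivalently, to define a length-$k$ subscheme projecting to $k$ distinct base points and meeting every matching sphere on each side — forces the selected critical points to pick out \emph{exactly one endpoint of each arc}, on both the upper and the lower side.

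Finally I would run the combinatorial argument on $C$. The points of $\tau$ lying on $C$, together with the arcs of $C$ joining them, form an even cycle, and the condition that the selection contain exactly one endpoint of each edge has precisely two solutions, namely the two alternating classes of vertices of this cycle. Because the arcs of a crossingless matching join points of $\tau$ of opposite parity, these two classes are exactly the even and the odd critical points on $C$, which is the assertion. The combinatorics here is immediate once the set-up is in place; the substantive point — and the step I expect to be the main obstacle — is the geometric input of the previous paragraph: verifying that, for the chosen Hamiltonian perturbation, the intersection points of $L_{\wp}$ and $L_{\overline{\wp'}}$ are transversally cut out and are in bijection with the admissible selections of critical points, with no spurious intersections appearing in the fibres over $\tau$ or escaping to the relative Hilbert scheme. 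This rests on the local Morse--Bott model near the nodal fibres and the fibred vanishing-cycle picture recalled above.
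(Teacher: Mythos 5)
Your proposal is correct and takes essentially the same route as the paper: the identification of intersection points of $L_{\wp}$ and $L_{\overline{\wp'}}$ with admissible tuples (exactly one critical point per arc on each side) is precisely the paper's setup stated just before the lemma, and your alternating-class argument on the cycle $C$, resting on the fact that arcs of a crossingless matching join points of opposite parity, is the paper's combinatorial proof phrased positively rather than contrapositively (the paper argues that mixed parities would force two chosen points on a single arc). The geometric step you single out as the main obstacle is treated in the paper as part of the fibred/product description of the Lagrangians rather than as content of the lemma itself, so your extra justification is sound but not where the lemma's weight lies.
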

\begin{proof}
Relabel  the points of $C \cap \{1,\ldots, 2k-1\}$ starting with the left-most critical point as $1$ and then ordering the critical points clockwise along $C$, see Figure \ref{Fig:GradeSum}.   This relabelling preserves parity of critical points on $C$. Note that any arc of any matching joins points of opposite parity.   If an admissible $k$-tuple $P$ contains a point of even parity and a point of odd parity, then it contains some pair of consecutive integers, which (in the new labelling) necessarily both belong to a single arc in either $\wp$ or $\overline{\wp'}$. This violates admissibility. 
\end{proof}

Let $c(\wp,\wp')$ denote the number of components of the unlink $\wp \cup \overline{\wp'}$ obtained by reflecting $\wp'$ in the real axis and placing it  underneath $\wp$.   The above Lemma implies that the rank of $CF^*(L_{\wp}, L_{\overline{\wp'}})$ is given by $2^{c(\wp,\wp')}$.  We shall need to understand the grading on this Floer group. The Lagrangians $L_{\wp}$ and their intersections are disjoint from a neighbourhood of the diagonal in the Hilbert scheme, so one can take the holomorphic volume form on $\Hilb^{[k]}(A_{2k-1})$ to be induced from the product volume form on $A_{2k-1}$, compare to \cite[Lemma 6.2]{Manolescu}. This means that the gradings of intersection points can be computed by adding up local contributions governed by Figure \ref{Fig:LocalGrading}.  

Moreover, the grading of a generator of the Floer complex is the sum over components $C$ of $\wp \cup \overline{\wp'}$ of the contribution of the even or odd elements of that component. To this end, let $|C|$ denote the number of arcs of $\wp$ (equivalently $\overline{\wp'}$) appearing in $C$, so $\sum_C |C| = k$, where we sum over the distinct components of the unlink.  Moreover, given an admissible tuple $P$ we write $P_C$ for the subtuple of points lying on the component $C$.

\begin{Lemma}
The contribution of $P_C$ to the grading of $P$ viewed as a generator of $CF(L_{\wp}, L_{\overline{\wp'}})$ (from the upper half-plane to the lower half-plane matching) is $|C|- 1$ if the initial critical point of $C$ lies on $P_{C}$ and $|C|+1$ otherwise.
\end{Lemma}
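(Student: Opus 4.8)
The plan is to compute the contribution of $P_C$ as a sum of $|C|$ local Maslov indices, one for each intersection point in $A_{2k-1}$ making up $P_C$, and to control this sum by a turning-number computation around the circle $C$, fixing the overall normalisation on the bigon. First I would reduce to a single circle: since $L_\wp$ and $L_{\overline{\wp'}}$ are products of matching spheres and the intersection pattern is local to each connected component of $\wp\cup\overline{\wp'}$, the grading of $P$ splits as the sum over components of the contributions of the $P_C$ (this is exactly the displayed assertion preceding the statement). So it suffices to analyse one component $C$, carrying $|C|$ upper and $|C|$ lower arcs through $2|C|$ marked points.

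Next I would make the local indices explicit. By the remark recalled from \cite{khovanov-seidel} (Equation 6.5), the graded phase of the matching sphere over an arc is twice the argument of the arc's tangent, and Figure \ref{Fig:LocalGrading} records the Floer index of a transverse intersection of an upper and a lower matching sphere as a function of the two arc directions at the common marked point. Traversing $C$ clockwise from its leftmost marked point and labelling the $2|C|$ intersection points $c_1,\dots,c_{2|C|}$ as in the preceding lemma (so that $P_C$ is the odd-indexed subset precisely when it contains the initial point), I would read off $\deg(c_i)$: a \emph{generic} vertex, where the two arcs cross transversally, contributes $1$, while the two extremal vertices at which both arcs open to the same side contribute $0$ (leftmost) and $2$ (rightmost).

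The core is then a turning-tangents computation. Writing $\deg(c_i)=1+\tfrac{1}{2\pi}f(c_i)$, where $f(c_i)$ is the signed phase jump from the upper to the lower arc at $c_i$ and the factor $\tfrac12$ accounts for phase $=2\arg$, the alternation of the upper/lower roles at consecutive vertices conspires with the sign in $(\text{even})-(\text{odd})$ to turn the alternating sum into a plain sum:
\[
(\text{even contribution})-(\text{odd contribution})=\frac{1}{2\pi}\sum_{i=1}^{2|C|}\big(\text{phase jump at }c_i\big),
\]
and by the Umlaufsatz ($C$ being an embedded circle traversed once) the right-hand side equals $\tfrac{1}{2\pi}$ times twice the total turning, namely $\pm 2$; the integer grading-lift monodromies cancel cyclically. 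I fix the sign on the bigon $|C|=1$, where the two generators of $HF\cong H^*(S^2)$ sit in degrees $0$ and $2$, giving $(\text{even})-(\text{odd})=+2$. Combined with the value of the sum $(\text{even})+(\text{odd})=2|C|$ — obtained either by summing the local values of Figure \ref{Fig:LocalGrading} over all $2|C|$ vertices (each $1$ apart from the extremal $0$ and $2$) or from Poincaré self-duality of the rank-two block attached to $C$ — this yields $(\text{odd})=|C|-1$ and $(\text{even})=|C|+1$. As the initial critical point is the index-$1$, hence odd, vertex, the case $P_C\ni$ initial point is exactly the odd case, giving $|C|-1$, and its complement gives $|C|+1$.

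I expect the main obstacle to be the sign and normalisation bookkeeping in the telescoping step: getting the factor of $2$ from phase $=2\arg$ right, checking that the grading-lift monodromies cancel around $C$, and — most delicately — pinning the overall sign so that the generator containing the leftmost point is the \emph{smaller} degree $|C|-1$ rather than $|C|+1$. Anchoring the sign on the explicit bigon (degrees $0,2$) sidesteps tracking all of \cite{FCPLT}'s conventions globally, since the telescoping sign is universal in $C$. A secondary point needing care is the exact value $2|C|$ of the sum; the cleanest justification is that the component block of the Floer complex has rank two and is Poincaré self-dual, forcing its two generator degrees to sum to the real dimension $2|C|$ of the corresponding factor.
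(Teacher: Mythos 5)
Your proposal is correct and follows essentially the same route as the paper's proof: both rest on the local grading contributions of Figure \ref{Fig:LocalGrading} (via the phase $=2\arg$ rule) together with the fact that the Gauss map of the embedded circle $C$ has degree $\pm 1$, which is exactly your Umlaufsatz. The paper is just slightly more direct: it observes that the odd and even points are the fibres of the Gauss map over $\pm i$ and sums the local contributions $1 \mp (\text{local Gauss degree})$ over each fibre separately, getting $|C|\mp 1$ in one step, rather than solving for the two values from their sum $2|C|$ and their telescoped difference $2$.
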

 \begin{center}
\begin{figure}[ht]
\includegraphics[scale=0.4]{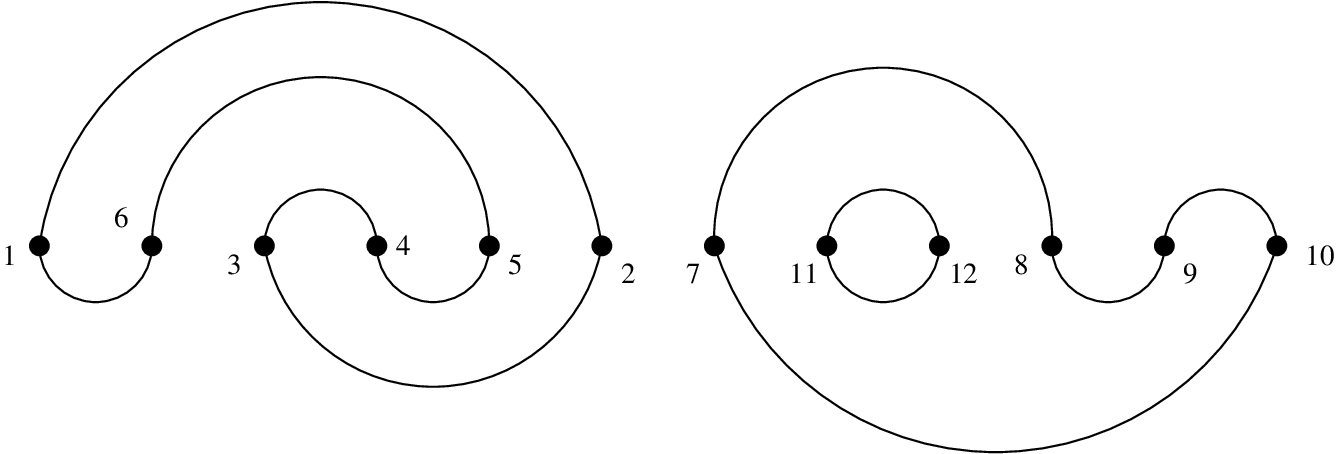}
\caption{Grading contributions from local Gauss map degree: the gradings are $\{2,4\}$, $\{1,3\}$ and $\{0,2\}$ for the left, right, nested components}
\label{Fig:GradeSum}
\end{figure}
\end{center}
\begin{proof}
For concreteness, we assume that the initial critical point of $C$ is odd; the even case follows from the same argument. We have oriented $C$ so that the Gauss map of $C$ has degree $+1$ with respect to the clockwise orientation of the unit circle in $\bC$.  Under the Gauss map, the preimage of the point $i\in S^1$ on the clockwise-oriented unknot $C$ is exactly the set of odd critical points on $C$, and the preimage of $-i \in S^1$  is the set of even critical points.  The local grading contribution of Figure \ref{Fig:LocalGrading} exactly says that an intersection point of a pair of matching arcs in $A_{2k-1}$ contributes to the Floer degree by $1\pm $(local contribution to the degree of the Gauss map), with the sign $-$ for an odd point and $+$ for an even point.  Summing over the $|C|$ arcs yields the result (see Figure \ref{Fig:GradeSum}). \end{proof}

\begin{Proposition}\label{Prop:GradeIt}
The graded vector spaces underlying the symplectic arc algebra are isomorphic to the corresponding graded vector spaces appearing in  Khovanov's arc algebra \cite[Section 2.4]{Khovanov:functor}. In particular:
\begin{itemize}
\item  As a relatively graded vector space, $HF^*(L_{\wp}, L_{\wp'}) \cong H^*(S^2)^{\otimes c(\wp,\wp')}$ with its natural relative grading.
\item The Lagrangians $L_{\wp}$ admit symmetric gradings, i.e. gradings with respect to which the Floer cohomology groups $HF^*(L_{\wp}, L_{\wp'})$ and $HF^*(L_{\wp'}, L_{\wp})$ lie in the same set of degrees, for every $\wp, \wp'$.
\end{itemize}
\end{Proposition}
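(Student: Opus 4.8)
The plan is to read both statements off the two Lemmas immediately preceding, which already enumerate the generators of $CF^*(L_{\wp}, L_{\overline{\wp'}})$ and compute their gradings, together with exactness and Poincar\'e duality.

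For the first bullet I would first record the tensor decomposition. The preceding Lemmas identify the generators of $CF^*(L_{\wp}, L_{\overline{\wp'}})$ with the independent binary (even versus odd) choices on each of the $c(\wp,\wp')$ components $C$ of the unlink $\wp \cup \overline{\wp'}$, and show that on each $C$ the two resulting generators sit in degrees $|C|-1$ and $|C|+1$. Hence $CF^*(L_{\wp}, L_{\overline{\wp'}})$ is the tensor product, over the components $C$, of two-dimensional graded spaces whose generators differ in degree by $2$; each such factor is isomorphic as a relatively graded space to $H^*(S^2)$, shifted by $|C|-1$. Next I would observe that the Floer differential vanishes for parity reasons: every generator has degree congruent to $k + c(\wp,\wp') \bmod 2$, since each component contributes $|C| \pm 1$ and $\pm 1$ is odd, whereas the differential raises degree by one. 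As $\scrY_k$ and the $L_{\wp}$ are exact there is no disc or sphere bubbling, so this complex computes Floer cohomology and $HF^*(L_{\wp}, L_{\wp'}) \cong \bigotimes_C H^*(S^2) \cong H^*(S^2)^{\otimes c(\wp,\wp')}$ as relatively graded vector spaces. The comparison with \cite[Section 2.4]{Khovanov:functor} is then purely combinatorial: Khovanov's space attached to $(\wp,\wp')$ is $A^{\otimes(\#\mathrm{circles})}$ with $A = H^*(S^2)$ and $\#\mathrm{circles} = c(\wp,\wp')$, and its relative grading assigns each circle the same two-step factor, so the relatively graded vector spaces coincide.

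For the symmetric gradings I would exploit the shape of the degree set together with duality. By the tensor description, for any fixed choice of graded structures the set of degrees of $HF^*(L_{\wp}, L_{\wp'})$ is an arithmetic progression of step $2$ and length $c(\wp,\wp')+1$, hence symmetric about its own centre $m_{\wp,\wp'}$. Poincar\'e duality $HF^i(L_{\wp}, L_{\wp'}) \cong HF^{2k-i}(L_{\wp'}, L_{\wp})^{\vee}$, with $\dim_{\bR} L_{\wp} = 2k$, gives $m_{\wp',\wp} = 2k - m_{\wp,\wp'}$, so symmetry of the gradings for the pair $(\wp,\wp')$ is equivalent to $m_{\wp,\wp'} = k$. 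It therefore suffices to choose the integer lifts of the phase functions of the $L_{\wp}$ so that every centre equals $k$. The deviation $d(\wp,\wp') = m_{\wp,\wp'} - k$ is antisymmetric by duality, is anchored by the exact computation $HF^*(L_{\wp}, L_{\wp}) \cong H^*((S^2)^k)$, which lives in degrees $0,\dots,2k$ and hence has centre $k$, and transforms by $d \mapsto d + (s_{\wp'} - s_{\wp})$ under a shift of gradings by $s_{\wp}$. I would then show $d$ is a coboundary — either by verifying additivity $d(\wp,\wp'') = d(\wp,\wp') + d(\wp',\wp'')$ from compatibility of the triangle product with the grading, or, more robustly, by propagating the shifts iteratively across handleslide moves exactly as in the proof of Lemma \ref{Lem:Shift} — so that setting $s_{\wp} = d(\wp, \wp_{\bullet})$ for a fixed basepoint matching $\wp_{\bullet}$ makes all centres equal to $k$.

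The step I expect to be the main obstacle is precisely this control of the \emph{absolute} gradings across all pairs simultaneously. The grading formula of the preceding Lemma is computed by drawing $\wp$ in the upper and $\wp'$ in the lower half-plane, and the handleslide identification $L_{\wp'} \simeq L_{\overline{\wp'}}$ of Proposition \ref{Prop:Markov1} need not preserve the integer grading lift; reconciling the ``upper'' and ``lower'' graded structures attached to a single Lagrangian is exactly what the coboundary and iterative-shift argument is designed to finesse, and is where the substantive bookkeeping lies. The remaining ingredients — the tensor decomposition, the parity vanishing of the differential, and the combinatorial matching with Khovanov's arc algebra — are routine consequences of the two preceding Lemmas.
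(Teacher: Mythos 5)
Your treatment of the first bullet is essentially the paper's own argument: tensor decomposition over the components $C$ of $\wp\cup\overline{\wp'}$, degrees $|C|\pm 1$ per component from the preceding Lemma, concentration in a single mod $2$ degree (hence vanishing differential), and the identification with $H^*(S^2)^{\otimes c(\wp,\wp')}$. The problem is the second bullet, and it sits exactly at the step you flag as the main obstacle: you never prove that the deviation $d(\wp,\wp')=m_{\wp,\wp'}-k$ is a coboundary, and neither of your sketched routes works as stated. Additivity $d(\wp,\wp'')=d(\wp,\wp')+d(\wp',\wp'')$ does \emph{not} follow from mere grading-compatibility of the triangle product; one needs nonvanishing of products in controlled degrees, and the paper's results of that kind (Lemma \ref{Lem:onlyconstant}, Corollary \ref{Lem:TopClassFor2}, Corollary \ref{cor:hittopclass}) come \emph{after} Proposition \ref{Prop:GradeIt} and are proved in its setting, so invoking them here risks circularity — moreover the paper notes explicitly that for some triples of matchings (e.g.\ in $\scrY_5$) the product cannot reach extreme degrees for grading reasons, so such nonvanishing is delicate. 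The iterative-shift route fails for a different reason: the $A_k$-chain argument of Lemma \ref{Lem:Shift} works because only adjacent spheres interact (the intersection pattern is a tree), whereas here \emph{every} pair of matchings has nonzero Floer cohomology, so propagating shifts from the basepoint $\wpb$ imposes a consistency condition on every pair $(\wp,\wp')$ — which is precisely the unproven additivity again.

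The paper avoids all of this because the preceding Lemma computes \emph{absolute}, not just relative, degrees: summing the contributions $|C|\pm 1$ over components places $HF^*(L_{\wp},L_{\overline{\wp'}})$ in degrees $k-c(\wp,\wp')\leq \ast \leq k+c(\wp,\wp')$, a set symmetric about $k$; since Poincar\'e duality sends degree $i$ to $2k-i$, the group with the order of the Lagrangians reversed occupies the same set, and no normalisation of gradings is needed. Your worry about whether the handleslide identification $L_{\wp'}\simeq L_{\overline{\wp'}}$ of Proposition \ref{Prop:Markov1} preserves integer grading lifts is legitimate (the paper elides it), but it has a one-line resolution that your own ``anchor'' nearly supplies: apply the configuration computation to the pair $(\wp',\wp')$ itself. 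It yields degrees $[0,2k]$ for the natural upper/lower product gradings, while a priori $HF^*(L_{\wp'},L_{\wp'})\cong H^*((S^2)^k)$ also sits in degrees $[0,2k]$; comparing the two forces the transported grading to agree with the natural one (zero shift). Hence the degree computation with the natural gradings is valid for every ordered pair simultaneously, every degree set is centred at $k$, and symmetry follows as in the paper. Had you used the anchor this way, rather than only to normalise the diagonal, your argument would close; as written, it does not.
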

\begin{proof}

Taking the tensor product over components $C$ of the unlink $\wp \cup \overline{\wp'}$, and adding gradings, it follows first that $CF^*(L_{\wp}, L_{\overline{\wp'}})$ is concentrated in a single mod 2 degree, hence the Floer differential vanishes, and that as a graded vector space $HF^*(L_{\wp}, L_{\overline{\wp'}}) \cong H^*(S^2)^{\otimes c(\wp,\wp')}[k-c(\wp,\wp')]$, living in symmetrically placed degrees $k-c(\wp,\wp') \leq * \leq k+c(\wp,\wp')$. That such gradings are symmetric in the sense of the Proposition now follows from Poincar\'e duality.
  \end{proof}

  \begin{rem}
 \cite[Proposition 5.6]{Reza2} derives an isomorphism of rings between the symplectic and combinatorial arc algebras, over the field $\bk = \bZ_2$, from a formal application of quilted Floer cohomology. 
  \end{rem}

\subsection{A cylindrical computation}

View the submanifolds $L_{\wp} \subset \scrY_k \subset \Hilb^{[k]}(A_{2k-1})$ as products of matching spheres  in $A_{2k-1}$.  Holomorphic polygons in the Hilbert scheme give rise to holomorphic curves in $A_{2k-1}$, which can be projected to $\bC$ and then have Lagrangian boundary conditions on the constituent arcs of crossingless matchings,  via the tautological correspondence between  
\begin{equation} \label{eqn:cylindrical-view}
\mathrm{maps} \  D^2 \rightarrow \Sym^k(A_{2k-1}) \qquad \mathrm{and \ diagrams} \ \ \begin{array}{ccc} \Sigma & \rightarrow & A_{2k-1} \\ \downarrow & & \\ D^2 & & \end{array}
\end{equation}
(with $\Sigma \stackrel{k:1}{\longrightarrow} D^2$ a degree $k$ branched covering). In the reverse direction, a map $\Sigma \rightarrow A_{2k-1}$ from a $k$-fold branched cover of the disc defines a map $u: D^2 \rightarrow \Sym^k(A_{2k-1})$. If this is not contained in the big diagonal, it lifts away from a finite set to the Hilbert scheme. That lift extends canonically to a map $\hat{u}: D^2 \rightarrow \Hilb^{[k]}(A_{2k-1})$, by taking the proper transform of $u$, viewing the Hilbert scheme as an iterated blow-up of the symmetric product, cf. \cite{Ekedahl-Skjelnes}.  Note that $\hat{u}$ is the only lift of $u$ to a  holomorphic map $D^2 \rightarrow  \Hilb^{[k]}(A_{2k-1})$, and its image lies in $\scrY_k$.  Moreover, all stable maps to $\Hilb^{[k]}(A_{2k-1}) $ which project to $u$ are obtained from  $\hat{u}$ by adding rational components in the Hilbert-Chow divisor, which meet the divisor $\Hilb^{[k]}(A_{2k-1}) \backslash \scrY_k$ non-trivially, so in fact $\hat{u} $ is the only lift of $u$ to a stable map to $ \scrY_k$. This leads to a viewpoint on the symplectic arc algebra akin to Lipshitz' cylindrical reformulation of Heegaard Floer theory \cite{Lipshitz}. 
\begin{Lemma} \label{Lem:Lift}
Let $p: S \rightarrow B$ be a complex surface smoothly fibred over a Riemann surface $B$, and let $C \subset S$ be a section of $p$.  There is a natural embedding $\Sym^k(C) \subset \Hilb^{p;[k]}(S)$ into the complement of the relative Hilbert scheme. 
\end{Lemma}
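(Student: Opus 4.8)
The plan is to realise the desired embedding as the map induced on Hilbert schemes by the closed inclusion of $C$ into $S$, and then to verify that its image misses the relative Hilbert scheme for purely length-theoretic reasons; the hypothesis that $C$ is a \emph{section} enters only at this last step.

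First I would record that, because $p$ is separated and $C$ is the image of a section $\sigma \colon B \to S$, the inclusion $\iota \colon C \hookrightarrow S$ is a closed immersion (a section of a separated morphism is always a closed immersion). In particular $C$ is a smooth closed curve in $S$, so its Hilbert-Chow morphism is an isomorphism and there is a canonical identification $\Sym^k(C) \iso \Hilb^{[k]}(C)$, under which a point of $\Sym^k(C)$ is an effective degree-$k$ divisor, i.e. a length-$k$ subscheme of $C$.

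Next I would produce the morphism $\Hilb^{[k]}(C) \to \Hilb^{[k]}(S)$ by functoriality of the Hilbert scheme in the ambient variety: a flat family of length-$k$ subschemes $\scrX \subset C \times T$ over a base $T$ pushes forward under $\iota \times \id_T$ to a flat family of length-$k$ subschemes of $S \times T$, giving a natural transformation of Hilbert functors and hence a morphism of representing schemes. Since $\iota$ is a closed immersion this morphism is itself a closed immersion: the locus in $\Hilb^{[k]}(S)$ of subschemes contained in $C$ is cut out by the (closed) condition that the universal subscheme lie inside $C \times \Hilb^{[k]}(S)$, and $\Hilb^{[k]}(C)$ is exactly the scheme representing this locus. (On tangent spaces this is the injection $\Hom_{\calO_C}(I_{Z/C},\calO_Z) \hookrightarrow \Hom_{\calO_S}(I_{Z/S},\calO_Z)$ induced by $I_{C/S} \subset I_{Z/S}$, whose cokernel records the deformations of $Z$ normal to $C$.) Composing with the identification of the previous paragraph yields the sought natural embedding $\Sym^k(C) \hookrightarrow \Hilb^{[k]}(S)$.

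The concluding step, which is where I use that $C$ is a section, is to locate the image inside $\Hilb^{p;[k]}(S)$. Since $C$ is a section, $p|_C \colon C \to B$ is an isomorphism, so for any length-$k$ subscheme $Z \subset C$ the scheme-theoretic image $p(Z)$ again has length $k$. By definition the relative Hilbert scheme $\Hilb^{[k]}(p)$ consists of the length-$k$ subschemes whose projection to $B$ has length strictly less than $k$; hence $Z$ avoids it and lies in the complement $\Hilb^{p;[k]}(S)$. Thus the closed immersion constructed above factors through $\Hilb^{p;[k]}(S)$, which proves the lemma. I expect the only point requiring genuine care to be the scheme-theoretic assertion in the third paragraph that a closed immersion of the ambient surface induces a closed immersion of Hilbert schemes; rather than checking injectivity and the immersion property separately, I would argue it through representability of the ``subschemes contained in $C$'' locus. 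For the purposes of this paper only a holomorphic embedding is needed, so even the weaker statement (injective immersion, proper onto its closed image) suffices. The Hilbert-Chow identification for the smooth curve $C$ and the final length computation are both routine.
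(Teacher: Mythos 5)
Your proof is correct, and it reaches the lemma by a genuinely different route from the paper's. The paper argues locally in the analytic topology: it reduces to the model $S = \bC^2 \rightarrow \bC = B$ with $C = \{\xi = 0\}$, introduces the $\bC^*$-action $t\cdot(z,\xi) = (z,t\xi)$ fixing $C$, and invokes Nakajima's computation of the fixed-point set of the induced action on the Hilbert scheme of points to exhibit $\Sym^k(C)$ as a connected component of that fixed locus; it then writes down the ideals of points of this component explicitly (intersections of ideals of reduced points generically, a monomial ideal over the small diagonal) and observes that the intersection of each such ideal with $\bC[z]$ is generated by a single degree-$k$ polynomial, so the projection to $B$ keeps length $k$. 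Your construction is instead global and functorial: the section is a closed immersion, $\Sym^k(C) \cong \Hilb^{[k]}(C)$ for the smooth curve $C$, and a closed immersion of the ambient surface induces a closed immersion of Hilbert schemes via representability of the locus of subschemes contained in $C$. What your approach buys is independence from any local model, group action, or external computation, together with an intrinsic characterisation of the image as the subschemes lying inside $C$; what the paper's approach buys is an explicit ideal-theoretic description of the image points --- in particular of the limiting subschemes as points collide, which makes the ``no vertical tangency'' slogan concrete --- and the realisation of the embedding as a component of a $\bC^*$-fixed locus, from which closedness and smoothness are immediate, all within the framework of Nakajima's book that the paper already uses elsewhere. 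Both arguments end with the identical length-theoretic punchline: since $p|_C$ is an isomorphism onto $B$, the scheme-theoretic projection of any length-$k$ subscheme of $C$ still has length $k$, so the image avoids the relative Hilbert scheme as the paper defines it.
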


\begin{proof}
It suffices to consider the situation locally in the analytic topology, with $S = \bC^2 \rightarrow \bC = B$.  Take co-ordinates $(z,\xi) \in S$ with the map $p$ being projection to $z$, and consider $C = \{\xi=0\} \subset S$.  There is a $\bC^*$-action $t\cdot(z,\xi) = (z,t\xi)$ which fixes $C$ and induces an action on the Hilbert scheme. The fixed point set of that action is computed in \cite[Proposition 7.5]{Nakajima}, and has a connected component which is isomorphic to $\Sym^k(C)$.  More precisely, for a $k$-tuple of distinct points $\{z_1,\ldots, z_k\} \subset C$, the corresponding ideal is given by $\bigcap I_{z_i}$ with $I_{z_i} = ((z-z_i), (z-z_i)\xi) \subset \bC[z,\xi]$, whilst for a point $z_0$ of the small diagonal of $\Sym^k(C)$ the ideal is 
\[
((z-z_0)^k, (z-z_0)^{k-1}\xi, \ldots, (z-z_0)\xi^{k-1}, \xi^k) \ \subset \bC[z,\xi],
\] cf. \cite[Figure 7.3]{Nakajima} (the case at hand is when $k$ single box Young tableaux coalesce into one with a single row of width $k$ and height $1$).   The intersection of that subscheme with $\bC[z]$ is generated by the first element, hence the  subscheme still has length  $k$ after projection to $B$ (informally, none of the points come together with a vertical tangency).
\end{proof}

The planar unlink $\wp \cup \overline{\wp'}$ typically contains nested circles, and for ``cylindrical" Floer computations it is helpful to remove these. 

\begin{Lemma} \label{Lem:RemoveNesting}
For any pair $\wp, \wp'$ of upper half-plane crossingless matchings, there are matchings $\wp_{\flat}, \wp'_{\flat}$ (not in general contained in a half-plane) with the property that 
\begin{enumerate}
\item $L_{\wp} \simeq L_{\wp_{\flat}}$ and $L_{\wp'} \simeq L_{\wp'_{\flat}}$ are Hamiltonian isotopic;
\item the planar unlink $\wp_{\flat} \cup \wp'_{\flat}$ contains no nested components.
\end{enumerate}
\end{Lemma}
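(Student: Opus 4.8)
The plan is to remove the nesting one circle at a time by an induction, using the handleslide move of Proposition \ref{Prop:Markov1} as the sole reconfiguration tool, together with ambient isotopy of matchings rel the marked points, which by construction preserves the Hamiltonian isotopy class of $L_{\wp}$. Since the handleslide preserves the associated Lagrangian, any matching $\wp_{\flat}$ obtained from $\wp$ by a finite sequence of handleslides and isotopies automatically satisfies $L_{\wp} \simeq L_{\wp_{\flat}}$, and likewise for $\wp'$; so it suffices to exhibit such sequences after which the combined planar diagram is a disjoint union of circles with no nesting. Throughout I would draw $\wp$ in the upper half-plane and $\overline{\wp'}$ in the lower half-plane, so that the union is genuinely a disjoint collection of embedded circles, each marked point carrying exactly one upper and one lower arc (the final $\wp_{\flat}, \wp'_{\flat}$ need not remain in half-planes, which is the extra freedom the statement allows).

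First I would introduce a complexity. Define the nesting number $N(\wp \cup \overline{\wp'})$ to be the number of ordered pairs of components $(C,C')$ for which $C$ lies in the disc bounded by $C'$. Condition (2) is precisely $N = 0$, which gives the base case, with $\wp_{\flat} = \wp$ and $\wp'_{\flat} = \wp'$. For the inductive step I assume $N > 0$ and choose an \emph{innermost enclosed} component: a circle $C$ that bounds a disc $\Delta_C$ meeting no other component, yet is itself contained in the disc bounded by some component; among the components enclosing $C$ I take the one, $C'$, that encloses $C$ immediately, so that nothing lies strictly between them. Because $\Delta_C$ contains no marked point in its interior, $C$ passes through a block of consecutive marked points, and running a vertical ray upward from the top of $C$ shows that its topmost arc is an arc of $\wp$ while the first arc of $C'$ met along the ray is again an arc of $\wp$ (symmetrically, one may instead work with the lowermost arcs and $\overline{\wp'}$).

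Next I would push $C$ out of $C'$ by a handleslide. The configuration formed by the topmost arc of the inner circle $C$ and the immediately enclosing arc of $C'$ is designed to be of the local form appearing in Figure \ref{Fig:Markov1}, so Proposition \ref{Prop:Markov1} applies and replaces $\wp$ by a handleslide-equivalent matching $\tilde{\wp}$ in which the foot of the relevant arc has been slid across the enclosing arc. The intended effect on the combined diagram is to move $C$ from inside $C'$ to a position beside it, decreasing $N$ by one and --- this is the point requiring care --- creating no new nestings among the remaining components. Applying the inductive hypothesis to the resulting pair then produces $\wp_{\flat}$ and $\wp'_{\flat}$; since every step is a handleslide or an isotopy the Lagrangians are unchanged up to Hamiltonian isotopy, and the procedure terminates because $N$ strictly decreases at each stage.

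The hard part is exactly this geometric heart of the inductive step: verifying that for an innermost enclosed circle the local picture genuinely matches the hypotheses of Proposition \ref{Prop:Markov1}, that the resulting slide strictly lowers the nesting number, and that it introduces no compensating nesting elsewhere in the diagram. A useful consistency check is that the number of components $c(\wp,\wp')$ is a Hamiltonian isotopy invariant, since it computes $\mathrm{rank}\, HF$ by Proposition \ref{Prop:GradeIt}; hence the moves can only rearrange the fixed collection of $c$ circles, never merge or split them. This invariance constrains the bookkeeping but does not perform it, and the delicate tracking of how a single handleslide alters the global nesting forest is where the real work lies.
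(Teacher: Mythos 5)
Your overall strategy is the paper's own: its proof is precisely the two\-/sentence sketch ``slide across innermost discs, inductively decreasing the nesting'', and your nesting number, innermost enclosed circle, and the invariance of $c(\wp,\wp')$ set that induction up correctly. However, the specific move you propose in the inductive step goes in the wrong direction, and it provably cannot work. You slide an arc of the \emph{inner} circle $C$ across the enclosing arc of $C'$, intending to ``move $C$ from inside $C'$ to a position beside it''. But a handleslide replaces an arc by another arc \emph{with the same endpoints}, and the marked points of $C$ lie inside the disc bounded by $C'$. So any matching obtained by modifying only arcs of $C$ still produces a circle through points interior to $C'$; such a circle is either disjoint from $C'$, in which case it is still nested inside it (no progress), or it crosses $C'$, in which case the union is no longer a planar unlink and the induction cannot even be continued. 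Concretely, take $\wp = \wp' = \{(1,4),(2,3)\}$, so the union is an inner circle through $2,3$ nested in an outer circle $C'$ formed by the upper and lower arcs joining $1,4$. Sliding the upper arc $(2,3)$ over the upper arc $(1,4)$ yields an arc from $2$ to $3$ winding once around the upper arc $(1,4)$; since the disc bounded by $C'$ is simply connected and the upper arc $(1,4)$ lies on its boundary, no path from $2$ to $3$ inside that disc can have such winding, so every representative of the slid matching crosses the lower arc $(1,4)$.

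The correct move re-routes the \emph{enclosing} circle (this is what ``sliding across innermost discs'' means, and it is exactly why the Lemma must allow $\wp_{\flat}$ to leave the half-plane): slide the arc of $C'$ lying over $C$ across \emph{all} of the arcs of $C$ belonging to the same matching $\wp$ --- in general several applications of Proposition \ref{Prop:Markov1}, one per arc, not a single slide. The resulting arc of $C'$ only has to avoid the other arcs of its own matching, and in its isotopy class rel the marked points there is a representative that dips below the real axis and passes underneath the whole of $C$; positioning it between $C$ and the remaining lower arcs, the new circle $C'_{\mathrm{new}}$ and its bounded disc are contained in the old disc of $C'$ but no longer contain $C$. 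From this containment the bookkeeping you flagged becomes routine: anything outside the old $C'$ stays outside, anything inside either stays inside or is released, so nestings are only destroyed, never created, and the nesting number strictly decreases. With the slide reversed in this way your induction goes through; as written, the step fails rather than being merely unverified.
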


 \begin{center}
\begin{figure}[ht]
\includegraphics[scale=0.35]{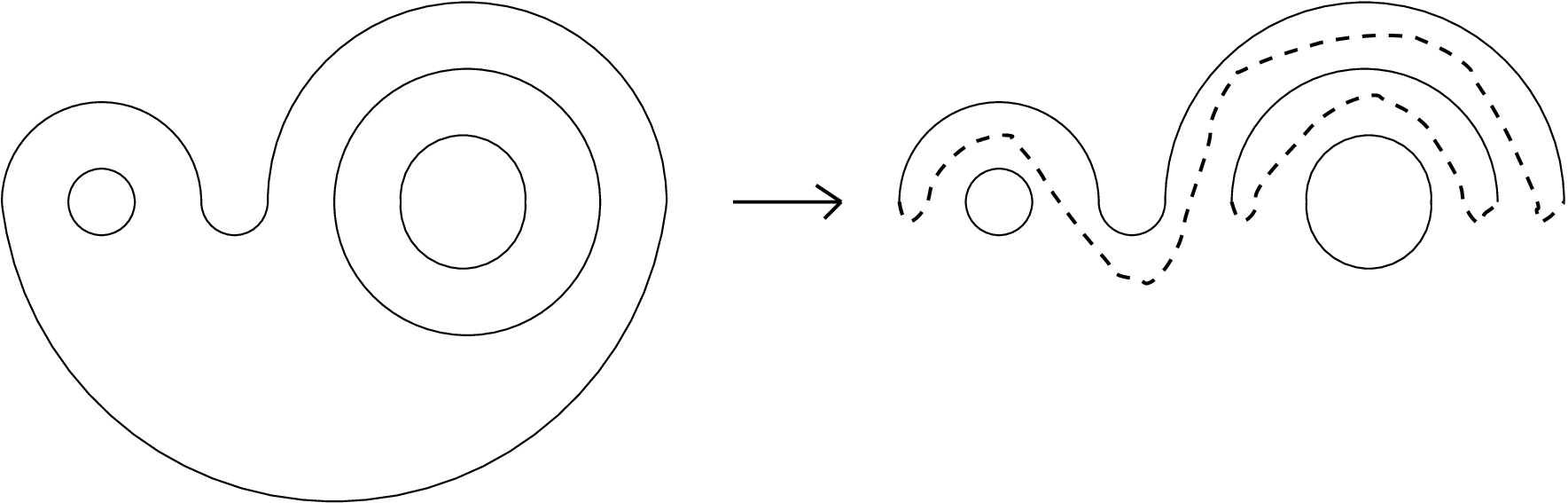}
\caption{After applying handle-slides, the planar unlink underlying $HF^*(L_{\wp}, L_{\wp'})$ has no nested components.}
\label{Fig:Markov1Slide}
\end{figure}
\end{center}

\begin{proof}
One can remove nested circles by applying the move of Proposition \ref{Prop:Markov1}, see Figure \ref{Fig:Markov1Slide}. In general, one argues inductively,   sliding across innermost discs to decrease the level of nesting. 
\end{proof}

The unlink $\wp_{\flat} \cup \wp'_{\flat}$ contains no nested components, hence comprises a bunch of circles bounding disjoint discs.  Each component of $\wp_{\flat} \cup \wp'_{\flat}$ arises by pairing the plait matching $\wp_{\bullet}$ with the mixed matching $\wp_{\circ}$ (from Figure \ref{Fig:PlaitMixed}), for some subset of the critical points; see Figure \ref{Fig:NonNested}.
 The upshot is that, from the point of view of computing Floer cohomology, the pair $L_{\wp}, L_{\wp'}$ can be replaced by products of these basic matchings inside a product of lower-dimensional Hilbert schemes of smaller Milnor fibres.
\begin{center}
\begin{figure}[ht]
\includegraphics[scale=0.4]{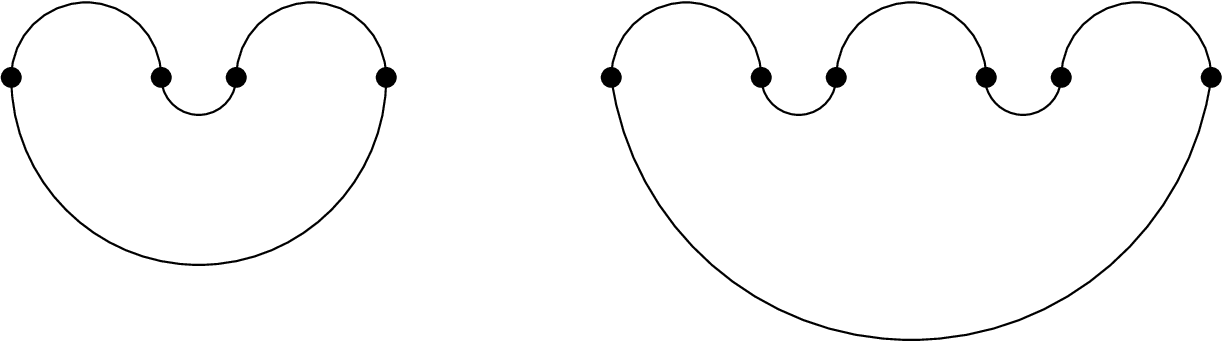}
\caption{After handle-slides, the unlink computing $HF^*(L_{\wp}, L_{\wp'})$ is made up of copies of the basic pieces as shown (for $k=2,3$).}
\label{Fig:NonNested}
\end{figure}
\end{center}

Next, we shall compute a non-trivial Floer product between the Lagrangians $L_{\wpb}$ and $ L_{\wp_{\circ}} $. To this end, it is illustrative to first consider the case $k = 3$. The only non-trivial graded components of $  HF^{*}(L_{\wpb}, L_{\wp_{\circ}}) $ are those of degree $2$ and $4$, so we consider the product
\begin{equation} \label{eq:product_plait_mixed_3}
  HF^{2}(L_{\wpb}, L_{\wp_{\circ}}) \otimes HF^{2}(L_{\wpb}, L_{\wpb}) \longrightarrow HF^{4}(L_{\wpb}, L_{\wp_{\circ}}). 
\end{equation}
We claim that this product is non-trivial; using the identification of $HF^{2}(L_{\wpb}, L_{\wpb}) $ with $H^2(S^2 \times S^2 \times S^2)$, we shall prove more specifically that the product with the class Poincar\'e dual to $ \{pt\} \times S^2 \times S^2$ is an isomorphism  (we would reach the same result by taking the codimension two cycle given by cyclically permuting the $S^2$ factors). The product counts holomorphic bigons with an additional boundary marked point whose image lies in $\{pt\} \times S^2 \times S^2 $. Projecting to the symmetric product, consider the diagram
\[
\begin{array}{ccc}
\Sigma & \rightarrow & A_{5} \\
\downarrow & &  \\
D^2 & \rightarrow & \Sym^k(A_{5})
\end{array}
\]
where $\Sigma \to D^2  $ is a branched triple cover, and $\Sigma$ is equipped with $3 \times 3 = 9$ marked points which are the inverse images of the $3$ marked points of $D^2$. Such a Riemann surface projects to a map to $\bC$ with boundary conditions the matchings shown in Figure \ref{Fig:DiskForProduct}; it must necessarily be a disc, as all higher genus curves map with degree greater than $1$ to $\bC$, and hence lift to curves which contribute to higher order operations in the Fukaya category of the Hilbert scheme.  Note furthermore that, the composite $\Sigma \to  A_5 \to \bC$ being degree $1$,  the map $\Sigma \to \bC$ defines a section over its image of the fibration $A_5 \to \bC$, hence any such map $D^2 \to \Sym^k(A_5)$ lifts to a map $D^2 \to \scrY_3 \subset \Hilb^{[3]}(A_5)$ as an application of Lemma \ref{Lem:Lift}.

\begin{center}
\begin{figure}[ht]
\includegraphics[scale=0.5]{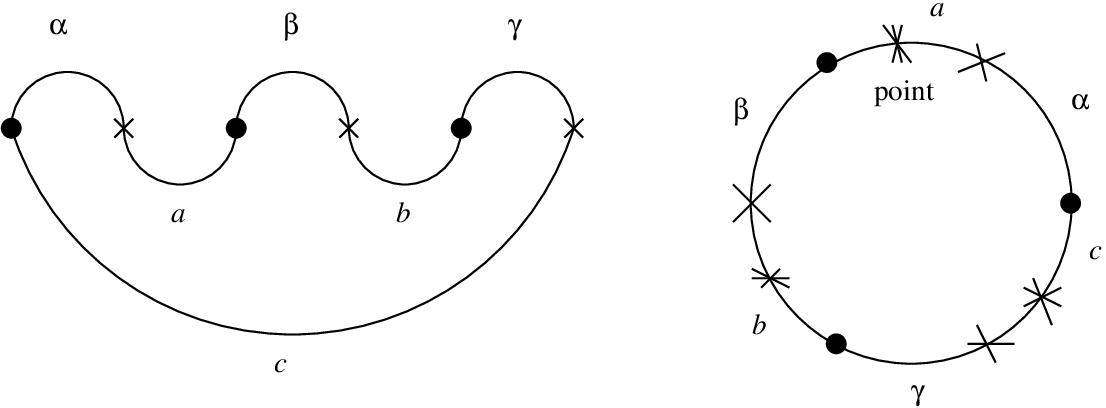}
\caption{A Holomorphic disc contributing to the module structure}
\label{Fig:DiskForProduct}
\end{figure}
\end{center}

Let $\scrB_{3} \subset \scrR^{9}$ denote the submanifold of the moduli space of discs with $9$ marked points which is obtained by the above procedure (i.e. comprising domains which are branched triple covers of a disc with $3$ marked points). In the setting at hand, $6$ marked points map to intersection points between the factors of $L_{\wpb} $ and $ L_{\wp_{\circ}} $, and one to a marked point $\{ pt\}$ in one of the factors of $ L_{\wpb}$. The remaining two marked points are unconstrained; forgetting them, we obtain a map
\begin{equation}\label{Eq:IsoOfModuliSpaces}
  \scrB_{3}  \to  \scrR^{7}.
\end{equation}
We shall presently see that this map has degree $1$, and is indeed an isomorphism. Assuming this, and that the curves arising from the moduli spaces on the two sides of \eqref{Eq:IsoOfModuliSpaces} contribute to the relevant $A_{\infty}$-products with the same signs (perhaps up to an overall global sign), we conclude that the count of holomorphic curves which contribute to the product in Equation \eqref{eq:product_plait_mixed_3} exactly corresponds to the count of curves which contribute to the higher $A_{\infty}$ operation $\mu^{6}$ among the constituent Lagrangians in the $A_{5} $ Milnor fibre.  The latter computation arose, for instance, in \cite[Lemma 4.13]{Smith:QuiverFukaya}, and the algebraic count is $\pm 1$ (this essentially comes from the $K$-theory relation satisfied by a closed cycle of matching spheres, compare to \emph{op. cit.} Equations 4.8 and 4.9).  

The previous discussion generalises  in a straightforward way to other values of $k$:
\begin{Lemma} \label{Lem:TopClassFor2-special}
If $\wpb$ and $ \wp_{\circ}$ are the plait and mixed matchings on $2k$ points respectively, the module maps
\begin{align} \label{eqn:module-for-two-special-matchings}
HF^{k-1}(L_{\wpb}, L_{\wp_{\circ}}) \otimes HF^{2}(L_{\wpb}, L_{\wpb}) & \longrightarrow HF^{k+1}(L_{\wpb}, L_{\wp_{\circ}}) \\
HF^{2}(L_{\wp_{\circ}}, L_{\wp_{\circ}}) \otimes HF^{k-1}(L_{\wpb}, L_{\wp_{\circ}}) & \longrightarrow HF^{k+1}(L_{\wpb}, L_{\wp_{\circ}}) 
\end{align}
are surjective.
\end{Lemma}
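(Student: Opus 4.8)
The plan is to reduce each of the two maps to a single non-vanishing holomorphic count, computed in the cylindrical model of \eqref{eqn:cylindrical-view}. First I would pin down the ranks. Since the plait and mixed matchings pair to a single unknot, $c(\wpb,\wpcirc)=1$, so Proposition~\ref{Prop:GradeIt} gives $HF^{*}(L_{\wpb},L_{\wpcirc})\cong H^{*}(S^{2})[k-1]$; thus $HF^{k-1}$ and $HF^{k+1}$ are each one-dimensional, spanned by a ``bottom'' class $\beta$ and a ``top'' class $t$, while $HF^{2}(L_{\wpb},L_{\wpb})\cong HF^{2}(L_{\wpcirc},L_{\wpcirc})\cong H^{2}((S^{2})^{k})$ has rank $k$. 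Consequently each module map in the statement has a one-dimensional target, and surjectivity is equivalent to the map being non-zero, i.e. to producing a single class whose image is $t$.

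Next I would set up the cylindrical description. Using the tautological correspondence \eqref{eqn:cylindrical-view} together with Lemmas~\ref{Lem:Lift} and~\ref{Lem:RemoveNesting}, the Floer complexes and their products are computed by branched covers $\Sigma\to A_{2k-1}$ of the polygon domains with boundary on the constituent matching spheres. The resulting algebraic structure is that of an open TQFT: each circle of the relevant planar unlink carries $V=H^{*}(S^{2})=\bk\langle 1, x\rangle$, with $1$ the unit (the lower grading) and $x$ the point class (in degree two higher), and the module product is realized by the cobordism merging the $k$ self-circles of $\wpb\cup\overline{\wpb}$ into the single circle of $\wpb\cup\overline{\wpcirc}$, each merge being the Frobenius multiplication $m\colon V\otimes V\to V$ with $m(1\otimes x)=x$ and $m(x\otimes x)=0$. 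Under this dictionary the bottom class $\beta$ is the state assigning $1$ to the single circle, $t$ is the state assigning $x$, and the degree-two class $e_{i}\in HF^{2}(L_{\wpb},L_{\wpb})$ assigns $x$ to the $i$-th self-circle and $1$ to the others.

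With this in hand the computation is immediate: merging circle $i$ into the bimodule circle sends $\beta\otimes e_{i}$ to $m(1\otimes x)=x=t$, so $\mu^{2}(\beta,e_{i})=t\neq 0$ and the first map is surjective. Concretely, it suffices to exhibit the single rigid triangle effecting this merge — a hemisphere-type disc over the arc of the $i$-th plait sphere, of the kind analysed before Lemma~\ref{lem:cancel} — and to check it contributes $\pm 1$. For the second map I would use the symmetric role of $\wpb$ and $\wpcirc$ in the circle $\wpb\cup\overline{\wpcirc}$: the identical TQFT count, now merging the self-circles of $\wpcirc\cup\overline{\wpcirc}$, shows that the left action of $HF^{2}(L_{\wpcirc},L_{\wpcirc})$ carries $\beta$ to $t$; alternatively one invokes Floer-theoretic Poincar\'e duality to pass to the opposite bimodule $HF^{*}(L_{\wpcirc},L_{\wpb})$ and reduce to the first computation.

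The main difficulty will be justifying that the cylindrical Floer theory genuinely realizes this open TQFT with the stated Frobenius structure, and in particular that the only rigid contribution to $m(1\otimes x)$ is the single hemisphere disc, surviving with non-zero sign over a characteristic zero field. This rests on monotonicity of $\bar A_{2k-1}$ (Lemma~\ref{Lem:monotoneAbar}) to control the Maslov index and energy of the branched covers and to exclude extra bubbling, and on the orientation analysis of Lemmas~\ref{lem:Solomon} and~\ref{lem:cancel} to verify regularity and rule out cancellation. The saving grace is that, the target being one-dimensional, I need only one surviving merge rather than the full consistency and associativity of the TQFT.
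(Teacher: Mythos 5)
Your reduction is sound as far as it goes: since $c(\wpb,\wp_{\circ})=1$, both targets are one-dimensional by Proposition \ref{Prop:GradeIt}, so surjectivity is equivalent to the non-vanishing of a single matrix coefficient, and the paper likewise treats the two statements symmetrically and works in the cylindrical model. But your identification of the holomorphic curve realizing this coefficient is wrong, and that identification is the actual content of the lemma. The two generators of $HF^*(L_{\wpb}, L_{\wp_{\circ}})$ are the tuple of odd critical points and the tuple of even critical points of the circle $\wpb\cup\overline{\wp_{\circ}}$; these tuples are disjoint in \emph{every} strand, so any curve contributing to $\langle \mu^2(\beta,e_i), t\rangle$ must be non-constant over every one of the $k$ strands. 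A ``hemisphere-type disc over the arc of the $i$-th plait sphere'' (the Maslov-two discs analysed before Lemma \ref{lem:cancel}) has boundary on a single matching sphere, i.e. on $L_{\wpb}$ alone; it is the kind of disc that contributes to $b^0$, not a bigon with boundary on both $L_{\wpb}$ and $L_{\wp_{\circ}}$ asymptotic to the two generators, and no such local merge can effect the required transition. The curve the paper actually counts is global: under the tautological correspondence, the rigid constrained bigon unwraps (via a $\bZ_k$-cyclic symmetry of the domain) into a single connected disc $\Sigma=D^2\to A_{2k-1}$ which is a \emph{section over the entire planar disc bound by the full $2k$-arc circle} $\wpb\cup\overline{\wp_{\circ}}$, with one of its $k$ boundary marked points hitting the point constraint; multiple covers of this planar disc are excluded because they correspond to operations with more inputs, not to bigons.

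The second gap is that your ``open TQFT with Frobenius multiplication'' is assumed rather than proved, and it is not available for free: that the Floer products realize Khovanov's TQFT is essentially the arc-algebra isomorphism, which is the business of the sequel, and Lemma \ref{Lem:TopClassFor2-special} is precisely the geometric computation that substitutes for it inside this paper. You acknowledge this, but your fallback (``exhibit one rigid curve and check its sign'') is insufficient even with the correct curve: surjectivity requires the \emph{total} signed count to be non-zero, so one must either classify all rigid contributions or identify the count with a quantity already known to be non-trivial. The paper does the latter: the count of sections over the disc bound by a closed cycle of matching spheres with one boundary point constraint is $\pm 1$ by the $K$-theory relation satisfied by such a cycle, quoting \cite[Lemma 4.13]{Smith:QuiverFukaya}, and the resulting bigons in the symmetric product lift to $\scrY_k$ by Lemma \ref{Lem:Lift}. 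Note also that the paper in fact proves the sharper statement that multiplication by the codimension-two cycle $S^2\times\cdots\times\{\pt\}\times\cdots\times S^2$ induces an isomorphism $HF^{k-1}(L_{\wpb}, L_{\wp_{\circ}})\to HF^{k+1}(L_{\wpb}, L_{\wp_{\circ}})$, which is what feeds into Corollary \ref{Lem:TopClassFor2}.
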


\begin{proof}
Let $\scrB_k  \subset \scrR^{3k}$ denote the submanifold of the moduli space of discs with $3k$ boundary marked points comprising domains obtained as a $k$-fold branched cover of a disc with $3$ marked points. It remains to show that, by forgetting all but one of the inverse images of one of the points in the disc downstairs, we obtain a map
\begin{equation}\label{Eq:IsoOfModuliSpaces2}
  \scrB_k \to \scrR^{2k+1}
\end{equation}
which  has degree $1$. We shall in fact prove that it is an isomorphism. Consider a collection of $2k-2$ points $1 < w_1 < z_1 <  \cdots < w_{k-1} < z_{k-1}$ on the boundary of the upper half-plane in $\bC$. The map 
\begin{equation}
  z \mapsto z \cdot \frac{ \prod_{i} z-z_i}{\prod_i z - w_i}
\end{equation}
defines a  branched cover of the upper half-plane (together with $\infty$) over itself, which takes the subset $\{0, z_1, \ldots, z_{k-1}\}$ to $0$ and $\{w_1, \ldots, w_{k-1}, \infty\}$ to $\infty$. Note that this map has degree $k$, and that there is a unique scaling of the above map which also maps $1$ to $1$. Moreover, the condition that $ w_i < z_i < w_{i+1} $ implies that $1$ has a unique inverse image in each interval $[z_i, w_{i+1}]$. 

Since every element of $\scrR^{2k+1} $  can be uniquely represented as the upper half-plane equipped with the marked points $(0, 1, w_1, z_1, \ldots, z_{k-1}, \infty)$, we conclude from the above that the map $ \scrB_k \to \scrR^{2k+1} $ is an isomorphism.  The final claim is that the corresponding $A_{\infty}$-products agree up to an overall sign, i.e. that the signs with which the curves in $\scrB_k$ respectively $\scrR^{2k+1}$ contribute differ by at most a global sign, not depending on the particular solution.  The reason is that signs in Floer theory depend on the map from the boundary of a given holomorphic disc to the Lagrangian Grassmannian (after stably trivialising the tangent bundle of the ambient manifold over the disc, a canonical choice of such stabilisation being given by the spin structure), which yields a trivialisation of the determinant line of the $\cdbar$-operator, together with choices of orientation on the moduli space of domains, and certain Koszul-type sign conventions.  The latter two choices do not depend on the particular holomorphic disc, so may introduce at most a global sign.  Using product spin structures and hence product orientation lines (i.e. stable trivialisations) for product Lagrangians shows that the  identification of moduli spaces induced by \eqref{Eq:IsoOfModuliSpaces2} respects signs in the sense that the relative signs for different solutions agree.  
This proves that the count of holomorphic triangles contributing to \eqref{eqn:module-for-two-special-matchings} is the same, up to global sign, as the count of holomorphic discs contributing to $\mu^{2k+1}$ in $ A_{2k-1}$, yielding the result.  
\end{proof}

To generalise the above result to arbitrary matchings, we note that the K\"unneth formula for Floer cohomology shows that
\begin{equation}
    HF^{k-c(\wp,\wp') }(L_{\wp}, L_{\wp'}) 
\end{equation}
is of rank $1$, and is the non-vanishing Floer group of minimal cohomological degree. Since the module structure of $ HF^{*}(L_{\wp}, L_{\wp'})  $ over the two self-Floer cohomology groups is compatible with the tensor product decomposition into factors arising from Lemma \ref{Lem:RemoveNesting}, we conclude that the map
\begin{equation} \label{eqn:module-for-two}
HF^{*}(L_{\wp'}, L_{\wp'}) \otimes HF^{k-c(\wp,\wp')  }(L_{\wp}, L_{\wp'}) \longrightarrow HF^*(L_{\wp}, L_{\wp'}).
\end{equation}
is surjective. Indeed, Lemma \ref{Prop:GradeIt} implies that every element of $HF^*(L_{\wp}, L_{\wp'}) $ can be expressed as a linear combination of tensor products of classes in the Floer cohomology of the plait and mixed Lagrangians corresponding to the components of $ \wp \cup \overline{\wp'}$ (or rather $\wp_{\flat} \cup \wp'_{\flat}$). By Lemma \ref{Lem:TopClassFor2-special} and the K\"unneth formula, any such class can be expressed as the product of a class in $ HF^{*}(L_{\wp'}, L_{\wp'}) $   with the minimal degree generator.

 We restate this more abstractly:
\begin{Corollary}  \label{Lem:TopClassFor2}
The Floer cohomology $HF^*(L_{\wp}, L_{\wp'})$ is a cyclic module over each of $HF^*(L_{\wp}, L_{\wp}) $ and $HF^*(L_{\wp'}, L_{\wp'}) $, generated by any non-zero minimal degree element. \qed
\end{Corollary}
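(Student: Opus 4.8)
The plan is to read the corollary off directly from the surjectivity of the module map \eqref{eqn:module-for-two}, which has just been established, interpreting ``simple'' in the cyclic (monogenic) sense appropriate to the context: I would show that $HF^*(L_\wp, L_{\wp'})$ is generated, as a module over either endomorphism algebra, by its unique class of minimal cohomological degree. No new geometric input is needed, since the analytic content already resides in Lemma \ref{Lem:TopClassFor2-special} and the K\"unneth reduction.

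First I would record the relevant structure from Proposition \ref{Prop:GradeIt}: as a graded vector space $HF^*(L_\wp, L_{\wp'}) \cong H^*(S^2)^{\otimes c(\wp,\wp')}$ supported in degrees $k - c(\wp,\wp') \leq * \leq k + c(\wp,\wp')$, whose bottom group $HF^{k-c(\wp,\wp')}(L_\wp, L_{\wp'})$ is one-dimensional --- it is the K\"unneth product, over the components of $\wp_{\flat} \cup \wp'_{\flat}$, of the minimal-degree classes of the plait/mixed factors supplied by Lemma \ref{Lem:RemoveNesting}. Fix a generator $\theta$ of this bottom group. The surjectivity of \eqref{eqn:module-for-two} then says exactly that every class in $HF^*(L_\wp, L_{\wp'})$ has the form $\mu^2(a, \theta)$ for some $a \in HF^*(L_{\wp'}, L_{\wp'})$, so $\theta$ generates the whole module over $HF^*(L_{\wp'}, L_{\wp'})$ and the module is cyclic. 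For the module structure over $HF^*(L_\wp, L_\wp)$ I would rerun the construction with the roles of $\wp$ and $\wp'$ interchanged: the nesting removal of Lemma \ref{Lem:RemoveNesting}, the \emph{two-sided} surjectivity recorded in Lemma \ref{Lem:TopClassFor2-special}, and the K\"unneth reduction are all symmetric under this interchange (note $c(\wp,\wp') = c(\wp',\wp)$), so the analogue of \eqref{eqn:module-for-two} with $HF^*(L_\wp, L_\wp)$ acting on the left is likewise surjective, and again $\theta$ generates.

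I do not expect a genuine obstacle, the heavy lifting being assumed. The two points deserving a word of care are, first, the convention: ``simple'' here must mean cyclic rather than irreducible, which is legitimate because $HF^*(L_\wp, L_\wp)$ is a connected graded local ring with residue field $\bk$ concentrated in degree zero, so a cyclic module generated in its minimal degree is forced to be a quotient of the regular module. Second, one should check that the same class $\theta$ serves as generator in both module pictures; this is immediate, since the grading symmetry of Proposition \ref{Prop:GradeIt} exhibits $\theta$ as the unique (up to scalar) class of minimal degree, which is the element appearing in both surjectivity statements.
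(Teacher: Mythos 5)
Your proposal is correct and follows essentially the paper's own route: the corollary carries a \qed in the paper precisely because it is an abstract restatement of the surjectivity of \eqref{eqn:module-for-two} (together with its mirror under interchanging $\wp$ and $\wp'$), which the paper deduces, just as you do, from Proposition \ref{Prop:GradeIt}, the nesting removal of Lemma \ref{Lem:RemoveNesting}, the K\"unneth decomposition, and the two-sided surjectivity of Lemma \ref{Lem:TopClassFor2-special}. Your reading of ``simple'' as cyclic --- generated over either self-Floer cohomology ring by the unique (up to scalar) class of minimal cohomological degree --- is exactly the sense in which the paper uses the term afterwards, e.g. in Lemma \ref{lem:choose_equivariant_1}.
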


We need a further computation of the Floer product involving three Lagrangians. Let $L, L', L''$ be closed exact Lagrangian submanifolds of an exact symplectic manifold which meet pairwise transversely.   Suppose $p \in L \cap L' \cap L''$ is an isolated point of the triple intersection, which defines a non-trivial Floer cocycle in each of the groups $HF(L,L')$, $HF(L', L'')$ and $HF(L, L'')$.  There is a constant holomorphic triangle at $p$, which is regular as a polygon with cyclically ordered boundary conditions $(L, L', L'')$ for one of the two cyclic orders, cf.  the right hand side of Figure \ref{Fig:ConstTriangle} (perturbing the three lines creates a non-trivial triangle in $\bC$, which is holomorphic or antiholomorphic depending on the cyclic order of the boundary Lagrangians).

\begin{figure}[ht]
\setlength{\unitlength}{3947sp}%
\begingroup\makeatletter\ifx\SetFigFont\undefined%
\gdef\SetFigFont#1#2#3#4#5{%
  \reset@font\fontsize{#1}{#2pt}%
  \fontfamily{#3}\fontseries{#4}\fontshape{#5}%
  \selectfont}%
\fi\endgroup%
\begin{picture}(3837,1861)(226,-1310)
\put(2520,-1261){\makebox(0,0)[lb]{\smash{{\SetFigFont{10}{12.0}{\rmdefault}{\mddefault}{\updefault}{Case 2: $\mathrm{index} \, D_u = 0$}%
}}}}
\thinlines
{\put(601,-811){\line( 2, 3){900}}
}%
{\put(301,-136){\line( 1, 0){1500}}
}%
{\put(2851,539){\line( 2,-3){900}}
}%
{\put(2851,-811){\line( 2, 3){900}}
}%
{\put(2551,-136){\line( 1, 0){1500}}
}%
\put(226,-306){\makebox(0,0)[lb]{\smash{{\SetFigFont{10}{12.0}{\rmdefault}{\mddefault}{\updefault}{$L$}%
}}}}
\put(451,-961){\makebox(0,0)[lb]{\smash{{\SetFigFont{10}{12.0}{\rmdefault}{\mddefault}{\updefault}{$L'$}%
}}}}
\put(1351,-961){\makebox(0,0)[lb]{\smash{{\SetFigFont{10}{12.0}{\rmdefault}{\mddefault}{\updefault}{$L''$}%
}}}}
\put(2476,-306){\makebox(0,0)[lb]{\smash{{\SetFigFont{10}{12.0}{\rmdefault}{\mddefault}{\updefault}{$L'$}%
}}}}
\put(2701,-961){\makebox(0,0)[lb]{\smash{{\SetFigFont{10}{12.0}{\rmdefault}{\mddefault}{\updefault}{$L$}%
}}}}
\put(3601,-961){\makebox(0,0)[lb]{\smash{{\SetFigFont{10}{12.0}{\rmdefault}{\mddefault}{\updefault}{$L''$}%
}}}}
\put(246,-1261){\makebox(0,0)[lb]{\smash{{\SetFigFont{10}{12.0}{\rmdefault}{\mddefault}{\updefault}{Case 1: $\mathrm{index}\,  D_u = -1$}%
}}}}
{\put(601,539){\line( 2,-3){900}}
}%
\end{picture}%
\caption{Indices of constant holomorphic triangles\label{Fig:ConstTriangle}}
\end{figure}

When regular, i.e. of index 0, the constant triangle contributes to  
the Floer product
\begin{equation} \label{Floerprod2}
HF^*(L',L'') \otimes HF^*(L,L') \longrightarrow HF^*(L,L'')
\end{equation}

\begin{Lemma} \label{Lem:onlyconstant}
The constant triangle is the only contribution to the coefficient $\langle \mu^2(p,p), p\rangle$ of the  Floer product \eqref{Floerprod2}.  
\end{Lemma}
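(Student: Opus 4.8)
The plan is to prove uniqueness by a symplectic-area (energy) argument that exploits exactness. Write $\omega = d\lambda$ for the exact symplectic form, and fix primitives $f_L, f_{L'}, f_{L''}$ of the restrictions of $\lambda$ to $L$, $L'$, $L''$ respectively; these exist because the three Lagrangians are exact. Let $u \colon T \to M$ be any $J$-holomorphic triangle with cyclically ordered boundary conditions $(L, L', L'')$ whose three corners are all asymptotic to $p$, so that $u$ contributes to $\langle \mu^2(p,p), p\rangle$. First I would record the standard energy identity: by Stokes' theorem,
\[
E(u) = \int_T u^*\omega = \int_{\partial T} u^*\lambda = \sum_{\text{edges } e} \int_e u^*\lambda,
\]
where the three edges map into $L$, $L'$, $L''$ respectively. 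On the edge lying in $L$ one has $u^*\lambda = d(f_L \circ u)$, so that edge contributes $f_L$ evaluated at its two corner endpoints, and similarly for the others.

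The key observation is that each edge of $T$ runs between two corners, and \emph{every} corner maps to the single point $p$. Hence for the edge in $L$ the two endpoint values agree, giving $f_L(p) - f_L(p) = 0$, and likewise for $L'$ and $L''$; summing, $E(u) = 0$. Since $E(u) \geq 0$ with equality if and only if $u$ is constant, every contributing triangle must be the constant map at $p$. I would then note that exactness of $M$ and of the Lagrangians precludes sphere and disc bubbling, so Gromov--Floer compactness loses no energy and the count is an honest finite signed count of genuine solutions. The unique constant solution is exactly the triangle singled out in the statement; it is regular of index $0$ for the correct cyclic order by hypothesis, while for the opposite cyclic order the constant triangle has index $-1$ (Case $1$ of Figure \ref{Fig:ConstTriangle}) and so does not enter the count. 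This establishes the claim.

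The one point that requires care, and the step I expect to be the main obstacle, is the interaction with the Hamiltonian perturbation data used to define $\mu^2$, since once perturbations are switched on $p$ is replaced by nearby transverse intersection points and the ``constant'' triangle deforms into a small triangle of area $O(\epsilon)$ as in the right-hand picture of Figure \ref{Fig:ConstTriangle}. To handle this I would either (i) choose the Floer data so that the inhomogeneous terms vanish on a neighbourhood of $p$ --- which is permissible precisely because the constant triangle is already regular and thus needs no perturbation for transversality --- and then run the energy identity verbatim; or (ii) retain a small perturbation, observe that the energy identity then acquires a correction term bounded by a constant times $\|K\|$, so any contribution has energy $O(\epsilon)$, and let $\epsilon \to 0$: compactness forces any sequence of contributions to converge to a constant triangle at $p$, so that for sufficiently small $\epsilon$ the only rigid solution is the small triangle deforming the constant one. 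Either route reduces the statement to the clean exact computation above.
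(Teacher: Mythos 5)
Your proof is correct and is essentially the paper's own argument: the paper likewise appeals to exactness to conclude that the energy of any contributing triangle is determined by the action values of $p$ as a generator of the three Floer groups, hence equals that of the constant triangle, namely zero, forcing constancy. Your version merely spells out this action computation via Stokes' theorem and adds the (reasonable) discussion of perturbation data, which the paper leaves implicit.
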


\begin{proof}
By hypothesis, the constant triangle does contribute to this product.  By exactness of the Lagrangians, the area of any contributing polygon is controlled by the action values of the intersection point $p$ viewed as a Floer generator for the three groups.  Therefore if a constant triangle contributes, only area zero and hence constant triangles can contribute.
\end{proof}

Lemma \ref{Lem:onlyconstant} applies to a triple of matching spheres with a common end-point in the $A_{2k-1}$ Milnor fibre; in that case, the matchings should be ordered clockwise, locally given by the thimbles for paths $(i\bR_{\geq 0}, \bR_{\geq 0}, i\bR_{\leq 0})$ when the critical point lies at $0\in\bC$.  By taking products and appealing to the K\"unneth theorem, the same situation is relevant for triples of crossingless matching Lagrangians in the Hilbert scheme. 

\begin{center}
\begin{figure}[t]
\includegraphics[scale=0.4]{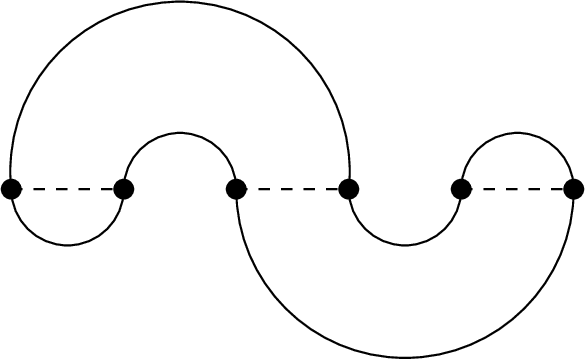}
\caption{A constant holomorphic triangle}
\label{Fig:ConstTriangleMatching}
\end{figure}
\end{center}

\begin{Corollary}\label{cor:hittopclass}
For any $\wp, \wp'$, the rank one subspace  of $HF^*(L_{\wp}, L_{\wp'})$ of largest cohomological degree lies in the image of the product 
\begin{equation} \label{eqn:keyproduct}
HF^*(L_{\wpb}, L_{\wp'}) \otimes HF^*(L_{\wp}, L_{\wpb}) \longrightarrow HF^*(L_\wp, L_{\wp'}).
\end{equation}
\end{Corollary}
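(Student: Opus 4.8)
The plan is to realise the top class as the output of a single \emph{constant} holomorphic triangle with middle vertex on $L_{\wpb}$, and then to promote the resulting class to the top of $HF^*(L_\wp,L_{\wp'})$ by means of the module structure. First I would fix the geometry: place $\wp$ in the upper half-plane, the reflected matching $\overline{\wp'}$ in the lower half-plane, and draw the plait $\wpb$ as the short horizontal arcs $[1,2],[3,4],\dots,[2k-1,2k]$ along the real axis, so that at each \emph{odd} marked point $2i-1$ the plait arc emanates rightwards. The combinatorial input is that every arc of a crossingless matching, and of the plait, joins a point of odd index to one of even index; hence the ``all odd'' tuple $P=\{1,3,\dots,2k-1\}$ is simultaneously a transversal of $\wp$, of $\wpb$ and of $\overline{\wp'}$, and thus determines a point lying on all three of $L_\wp$, $L_{\wpb}$, $L_{\wp'}$ (viewed, as usual, as products of matching spheres). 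At each of the chosen critical points the three arcs point up, right and down, giving precisely the clockwise configuration $(i\bR_{\geq 0},\bR_{\geq 0},i\bR_{\leq 0})$ of the discussion following Lemma \ref{Lem:onlyconstant}.

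Next I would invoke the constant-triangle computation. By Lemma \ref{Lem:onlyconstant}, applied factorwise and assembled through the K\"unneth theorem exactly as for triples of matching spheres sharing an endpoint, the only contribution to $\mu^2([P]_{\wpb,\wp'},[P]_{\wp,\wpb})$ is the constant triangle at $P$, which is regular of index $0$ and contributes $\pm1$. Therefore $[P]_{\wp,\wp'}=\pm\,\mu^2([P]_{\wpb,\wp'},[P]_{\wp,\wpb})$ lies in the image $V$ of \eqref{eqn:keyproduct}. I would then record that, by associativity of the triangle product, $V$ is a sub-bimodule of $HF^*(L_\wp,L_{\wp'})$ over the self-Floer algebras $HF^*(L_\wp,L_\wp)$ and $HF^*(L_{\wp'},L_{\wp'})$.

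Finally I would climb from $[P]$ to the top class inside $V$. Using Lemma \ref{Lem:RemoveNesting} and the K\"unneth decomposition, write $HF^*(L_\wp,L_{\wp'})\cong\bigotimes_C H^*(S^2)$ over the components $C$ of the de-nested unlink; since each component contributes a single $H^*(S^2)$, the class $[P]$ is, on each factor, either the bottom or the top generator. On each factor where $[P]$ is the bottom generator, multiplication by the degree-two self-Floer class furnished by Lemma \ref{Lem:TopClassFor2-special} raises it to the top generator, whereas on the remaining factors we act by the unit; because the self-Floer action is compatible with the tensor decomposition, these local choices combine into a single $\zeta\in HF^*(L_{\wp'},L_{\wp'})$ with $\zeta\cdot[P]$ equal to the top class $\bigotimes_C x$. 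Since $\zeta\cdot[P]\in HF^*(L_{\wp'},L_{\wp'})\cdot V\subseteq V$, the rank-one top-degree subspace lies in $V$, which is the assertion.

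The hard part will be this last step: one must check that the self-Floer action genuinely realises the prescribed factorwise multiplication, raising the bottom factors to the top \emph{without} annihilating the factors on which $[P]$ is already the top generator (the relation $x^2=0$ in $H^*(S^2)$ is what makes the choice of $\zeta$ nontrivial). This is exactly what the compatibility of the module structure with the tensor decomposition, together with the surjectivity of Lemma \ref{Lem:TopClassFor2-special} on each block, is designed to supply. The parity bookkeeping that guarantees the clockwise (hence holomorphic, index $0$) constant triangle rather than its orientation-reversed, non-contributing partner is a secondary point, settled once and for all by taking $P=\{1,3,\dots,2k-1\}$ rather than the even transversal.
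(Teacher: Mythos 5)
Your proposal is correct and follows essentially the same route as the paper: the constant holomorphic triangle at the all-odd transversal $\{1,3,\dots,2k-1\}$ (regular by the clockwise ordering of Lemma \ref{Lem:onlyconstant}) shows the product \eqref{eqn:keyproduct} is nonzero, and the module structure over the self-Floer algebras then promotes this to the top class. Your explicit factorwise construction of $\zeta$ via Lemma \ref{Lem:TopClassFor2-special} is just an unwinding of the paper's citation of the simplicity statement, Corollary \ref{Lem:TopClassFor2}, which rests on the same ingredients.
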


\begin{proof}
Consider Figure \ref{Fig:ConstTriangleMatching}, in which we have drawn $L_{\wpb}$ dotted along the real axis, $\wp$ is the upper half-plane matching and $\overline{\wp'}$ the lower half-plane matching.
If the critical points of the Milnor fibre lie at $\{1,2,\ldots, 2k-1, 2k\} \subset\bC$, then the tuple $\{1,3,\ldots, 2k-1\}$ defines a point of the triple intersection $L_{\wp} \cap L_{\wp'} \cap L_{\wpb}$, which is transverse as an intersection point of any pair of such a triple of Lagrangians, and has at each separate point $2j+1$ the required clockwise cyclic order for that point to be regular as a constant map to the Milnor fibre.  
The discussion of Lemma \ref{Lem:onlyconstant} implies that the constant holomorphic triangle at that intersection point yields a non-trivial contribution to the product \eqref{eqn:keyproduct}, in particular that product does not vanish identically (recall that the Floer differentials vanish identically). 

Recall from Corollary \ref{Lem:TopClassFor2} that  $HF^*(L_\wp, L_{\wp'})$ is a cyclic module over $H^*(L_{\wp'})$, generated by an element $\alpha$ of minimal degree.   Suppose the element $v\cdot \alpha$ lies in the image of \eqref{eqn:keyproduct}, for $v \in H^*(L_{\wp'})$.  (In general, for degree reasons, it will not be true that the minimal degree element $\alpha$ itself lies in the image of \eqref{eqn:keyproduct}, so it may be that necessarily $v$ has strictly positive cohomological degree.)  Thus
\[
\mu^2(p',p) = v\cdot\alpha \qquad \textrm{for} \ p=\{1,3,\ldots,2k-1\} = p'
\] 
the given intersection point viewed as a generator of the two tensor factors of the domain of \eqref{eqn:keyproduct}, so $p\in  HF^*(L_{\wp}, L_{\wpb})$ and $p' \in HF^*(L_{\wpb}, L_{\wp'}) $. Cyclicity implies there is an element $w \in H^*(L_{\wp'})$ with the property that $(w\cdot v)\cdot \alpha$ lies in the rank one subspace of largest cohomological degree in $HF^*(L_{\wp}, L_{\wp'})$. Compatibility of the Floer product and the module structure implies that $\mu^2(w \cdot p',p) = w \cdot \mu^2(p',p)$, which implies the result.
\end{proof}

Corollary \ref{cor:hittopclass} is the key result we will later use to control weights of \nc-vector fields. It is worth remarking that it does not hold for arbitrary triples of crossingless matchings. For instance, there are triples of matchings in $\scrY_5$  indexing triples of components which pairwise have Floer cohomology of rank 2, living in degrees $4,6$. In that and similar cases, the Floer product cannot hit the top generator just for grading reasons.

\section{Formality via Hilbert schemes of Milnor fibres} \label{Sec:HilbScheme}

This section constructs a \nc-vector field on the space $\scrY_k$, by counting discs in the partial compactification $\Hilb^{[k]}(\bar{A}_{2k-1})$ according to the general scheme of Section \ref{Sec:Generalities}, and infers formality of the symplectic arc algebra. The \nc-vector field on $\scrY_k$ is essentially inherited from the \nc-vector field on $A_{2k-1}$ constructed in Section \ref{Sec:Milnor}.  

\subsection{Dictionary}  As at the start of Section \ref{Sec:Milnor}, we give a brief dictionary relating objects appearing in the rest of this section with their forebears in Section \ref{Sec:Generalities}. Let $Z$ denote the projective surface $\bar{\bar{A}}_{2k-1}$ which played the role of $\Mdbar$ in Section \ref{Sec:Milnor}, i.e. the blow-up of $\bP^1\times \bP^1$ at $2k$ points.  Recall that $Z$ contains divisors defined by sections $s_0, s_{\infty}$ of its Lefschetz fibration structure over $\bP^1$, and a fibre $F_{\infty}$ at infinity.  Comparing to Hypotheses \ref{Hyp:Main}, \ref{Hyp:regular}, \ref{Hyp:GWvanishes} and \ref{hyp:no_Maslov_0_disc_Maslov_1}, one now has:

\begin{itemize}
\item In Hypothesis \ref{Hyp:Main}, the projective variety is now $\Mdbar = \Hilb^{[k]}(Z)$.
\item $D_0$ is the divisor of subschemes whose support meets $s_0 \cup s_{\infty}$ in $Z$.  $D_{\infty}$ is the divisor of subschemes whose support meets $F_{\infty}$.
Finally,  $D_r \subset \Mdbar$ is the relative Hilbert scheme of the projection $Z \rightarrow \bP^1$.  Lemma \ref{Lem:Manolescu} implies that $M=\scrY_k$ and $\Mbar = \Mdbar \backslash D_{\infty} = \Hilb^{[k]}(\bar{A}_{2k-1})$.
\item  The fact that $D = D_0 \cup D_{\infty} \cup D_r$ supports an ample divisor with positive coefficients follows from Lemma \ref{lem:Kahler}, recalling Lemma \ref{Lem:H2equality}.
\item The natural map $\Hilb^{[k]}(Z) \rightarrow \Sym^k(Z)$ is crepant, and Chern zero spheres lie in its fibres, which underlies Lemma \ref{lem:chern_0_positive_D_r}.
\item In Hypothesis \ref{Hyp:regular} we define $B_r$ to be the locus of all points in $\Mdbar$ which lie on some stable Chern one sphere which meets $D_r$.  That this is codimension two and then yields the conclusion of Hypothesis \ref{Hyp:regular} is proved in Lemma \ref{Lem:BrTechnical} below.
\item The two parts of Hypothesis \ref{Hyp:GWvanishes}, on vanishing of the Gromov-Witten invariant, are Lemma \ref{Lem:homologous_to_distant} and  Corollary \ref{Cor:GWZeroHilb}.  Hypothesis  \ref{hyp:no_Maslov_0_disc_Maslov_1} is discussed at the end of Section \ref{Sec:Hypotheses}; the key input is Lemma \ref{Lem:bound_none}.
\end{itemize}

\subsection{Establishing the hypotheses} \label{Sec:Hypotheses}

We begin by establishing the remaining parts of Hypothesis \ref{Hyp:Main}.

\begin{Lemma}
Every irreducible component of the divisor $D_0 \cap \Mbar$  moves in its linear system, and the base locus Bs$|D_0| = \emptyset$. 
\end{Lemma}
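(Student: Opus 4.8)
The plan is to identify the two irreducible components of $D_0 \cap \Mbar$ as the \emph{incidence divisors} of the sections $s_0$ and $s_\infty$, and then to reduce the moving and base-point-freeness statements on $\Hilb^{[k]}(\bar{A}_{2k-1})$ to the corresponding statements for the linear systems $|s_0|$ and $|s_\infty|$ on the open surface $\bar{A}_{2k-1}$ itself. Recall that for a curve $C$ in a surface $S$ the associated incidence divisor on $\Hilb^{[k]}(S)$ is the locus of subschemes $\xi$ with $\mathrm{Supp}(\xi) \cap C \neq \emptyset$, and that it is cut out by the tautological section of the determinant line bundle $\calO_S(C)^{[k]}$. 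Since this construction is multiplicative in the line bundle and compatible with linear equivalence, linearly equivalent curves on $S$ give linearly equivalent incidence divisors on $\Hilb^{[k]}(S)$. The elementary first step is to observe that if $|C|$ is a base-point-free linear system of positive dimension on $S$, then the induced system of incidence divisors is base-point-free on $\Hilb^{[k]}(S)$: given $\xi$ with finite support $\{p_1,\dots,p_m\}$, the members of $|C|$ passing through a fixed $p_j$ form a proper linear subsystem, so a generic member of $|C|$ avoids all the $p_j$ and hence $\xi$ lies off its incidence divisor. Thus the whole lemma follows once I show that $|s_0|$ and $|s_\infty|$ are base-point-free and positive-dimensional on $\bar{A}_{2k-1}$.

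For $s_0$ this is immediate. Writing $Z$ as the blow-up of $\bP^1\times\bP^1$ at the $2k$ marked points (which all lie on the section $s'$), $s_0$ is the proper transform of a ruling fibre disjoint from the blown-up points, so $|s_0|$ is the restriction of a base-point-free pencil, two general members being disjoint sections of $Z \to \bP^1$. Hence the associated incidence divisor moves in a base-point-free system on $\Mbar$.

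The component coming from $s_\infty$ is the crux, since $s_\infty$ has negative self-intersection $-2k$ and is therefore rigid on the \emph{compact} surface $Z$. The key point is that on the open surface $\bar{A}_{2k-1} = Z\setminus F_\infty$ the fibre class $[F_\infty]$ becomes trivial, so I may enlarge the linear system by multiples of $F_\infty$ for free: the global sections of $\calO_{\bar{A}_{2k-1}}(s_\infty)$ are the increasing union $\bigcup_N H^0(Z,\calO_Z(s_\infty + N F_\infty))$. I would show that already for $N = 2k$ the system $|s_\infty + 2k F_\infty|$ on $Z$ moves and is base-point-free. Its members are the proper transforms of the $(1,2k)$-sections of $\bP^1\times\bP^1$ passing through the $2k$ blown-up points; writing such a section as a graph $x = f(y)$ with $f$ of degree $2k$ and $f(y_i) = x_2$ at the common coordinate $x_2$ of the marked points, the constraint factors through $\prod_i(y-y_i)$ and leaves a $(2k+1)$-dimensional family, so the system is positive-dimensional. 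For base-point-freeness I would compute intersection numbers on $Z$: a member $C$ satisfies $C \cdot ([F_\infty] - E_i) = 1 - 1 = 0$ and $C \cdot E_i = 1$, so $C$ is disjoint from the proper transforms of the reducible fibres and meets each exceptional curve $E_i$ exactly once. As $f$ varies, the slope $f'(y_i)$ ranges over all of $\bP^1$, so the point $C \cap E_i$ sweeps out $E_i$, while away from the exceptional locus the graphs $x = f(y)$ plainly avoid any prescribed point. Hence $|s_\infty + 2k F_\infty|$ is base-point-free on $Z$, and its restriction to $\bar{A}_{2k-1}$ is a positive-dimensional base-point-free system in the class $s_\infty$.

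Combining the two cases, each irreducible component of $D_0 \cap \Mbar$ moves in a base-point-free system on $\Mbar = \Hilb^{[k]}(\bar{A}_{2k-1})$. Finally, since a sum of base-point-free systems is base-point-free — given $\xi$, choose a member of $|s_0|$ and a member of $|s_\infty|$ both avoiding $\mathrm{Supp}(\xi)$ and add the corresponding incidence divisors — I conclude $\mathrm{Bs}|D_0| = \emptyset$. The one genuinely non-formal input is the passage for $s_\infty$ from a rigid negative curve on $Z$ to a moving base-point-free family on the open surface, i.e. the computation that absorbing the (now trivial) fibre class produces a base-point-free system; this is where I expect the main work to lie.
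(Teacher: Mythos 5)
Your proposal is correct, and its overall skeleton is the one the paper uses: identify the components of $D_0 \cap \Mbar$ with the incidence divisors of $s_0$ and $s_\infty$, move those two curves on the open surface $\bar{A}_{2k-1}$, and kill the base locus by choosing, for any given subscheme $\xi$, members avoiding the finite set $\mathrm{Supp}(\xi)$. Where you genuinely diverge is in the treatment of the crux, the component $s_\infty$. The paper disposes of it in one sentence: $s_\infty \cap \bar{A}_{2k-1}$ is an affine line, so its normal bundle in $\bar{A}_{2k-1}$ is holomorphically trivial, and ``deformations with appropriately chosen support'' supply linearly equivalent divisors missing any prescribed finite set; the passage from such deformations to genuine linear equivalence is left implicit. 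You instead stay projective: you absorb the fibre class (trivial on the open surface) and exhibit the explicit system $|s_\infty + 2kF_\infty|$ on $Z$, whose members are proper transforms of graphs of degree-$2k$ maps through the $2k$ blown-up points, and you verify positive-dimensionality and base-point-freeness by a dimension count and intersection numbers ($C \cdot (F-E_i)=0$, $C \cdot E_i = 1$, with the point of $C \cap E_i$ sweeping $E_i$ as the slope varies). This buys an argument in which every linear equivalence --- on $Z$, then restricted to $\bar{A}_{2k-1}$, then transported to incidence divisors on $\Hilb^{[k]}(\bar{A}_{2k-1})$ --- is completely explicit and purely algebraic, at the price of the computation; the paper's version is shorter and applies whenever the components have trivial normal bundle on the open part, but is terser about why the deformations give linear rather than merely homological equivalence. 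One small inaccuracy you should fix: the incidence divisor of $C = Z(s)$ is cut out by the determinant of the multiplication map $\calO^{[k]} \to \calO_S(C)^{[k]}$, i.e.\ by a section of $\det\bigl(\calO_S(C)^{[k]}\bigr) \otimes \det\bigl(\calO^{[k]}\bigr)^{-1}$, whose class is the class induced by $[C]$ with no exceptional component; the bundle $\det\bigl(\calO_S(C)^{[k]}\bigr)$ by itself differs from this by $-[E]/2$. This does not affect your argument, since all you use is that linearly equivalent curves yield linearly equivalent incidence divisors, which the corrected statement still provides.
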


\begin{proof}
That $D_0$ moves follows from the fact that both components  $s_0, s_{\infty} \subset \bar{A}_{2k-1} \subset Z$ have holomorphically trivial normal bundle and themselves move on $\bar{A}_{2k-1}$.  Taking deformations of the $s_i$ with appropriately chosen support shows that there are divisors linearly equivalent to $D_0 \cap \Mbar$ disjoint from any given zero-dimensional subscheme of $\Mbar$, hence the base locus is trivial. \end{proof}

\begin{Lemma}
$D_{\infty}$ is nef.
\end{Lemma}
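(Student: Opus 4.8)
The plan is to exhibit $D_\infty$ as a positive multiple of the pullback of an ample divisor under a morphism, from which nefness is immediate. Recall that $Z=\bar{\bar{A}}_{2k-1}$ carries the Lefschetz fibration $\pi\colon Z\to\bP^1$ with $F_\infty=\pi^{-1}(\infty)$, and that the Hilbert--Chow morphism $\Psi\colon\Hilb^{[k]}(Z)\to\Sym^k(Z)$ is a genuine morphism of projective varieties. Since $\pi$ is a morphism it induces $\Sym^k(\pi)\colon\Sym^k(Z)\to\Sym^k(\bP^1)$, and I would compose to obtain
\[
\Pi \;=\; \Sym^k(\pi)\circ\Psi \colon \Hilb^{[k]}(Z)\longrightarrow \Sym^k(\bP^1)\iso\bP^k,
\]
where the classical identification $\Sym^k(\bP^1)\iso\bP^k=|\calO_{\bP^1}(k)|$ sends an unordered $k$-tuple to the degree-$k$ divisor it spans. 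Concretely $\Pi$ sends a length-$k$ subscheme $\xi\subset Z$ to the degree-$k$ divisor $\pi_*(\mathrm{supp}\,\xi)$ on $\bP^1$.

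Inside $\bP^k=|\calO_{\bP^1}(k)|$ the locus $H$ of divisors whose support contains the point $\infty\in\bP^1$ is $\bP(H^0(\calO_{\bP^1}(k-1)))$, a hyperplane, hence ample. By construction a subscheme meets $F_\infty$ if and only if its image divisor contains $\infty$, so set-theoretically $\Pi^{-1}(H)=D_\infty$; as Cartier divisors $\Pi^*H=m\,D_\infty$ for some integer $m\geq 1$ (in fact $m=1$, as $\Pi$ is submersive near a generic point of $D_\infty$, but this is irrelevant here). Thus $D_\infty=\tfrac1m\,\Pi^*H$ is a positive rational multiple of the pullback of an ample divisor. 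Since ample divisors are nef, and the pullback of a nef divisor under any morphism of projective varieties is nef (for a curve $C$ one has $\Pi^*H\cdot C=H\cdot\Pi_*C\geq 0$), I conclude that $D_\infty$ is nef.

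The only points requiring care are routine verifications: that $\Pi$ is a morphism, which follows from functoriality of $\Sym^k$ applied to $\pi$ together with the fact that Hilbert--Chow is a morphism of projective schemes; and the identification $\Pi^*H=m\,D_\infty$ with $m>0$. Neither constitutes a genuine obstacle, so I expect the whole argument to go through cleanly. As an alternative one can argue directly on the class $D_\infty=(F_\infty)_k$ of Lemma \ref{lem:H2}: here $F_\infty=\pi^*\calO_{\bP^1}(1)$ is nef on $Z$, so $\sum_i\mathrm{pr}_i^*F_\infty$ is nef on $Z^k$; nefness descends along the finite quotient $Z^k\to\Sym^k(Z)$ and then pulls back along $\Psi$, using that $\Psi^*(F_\infty)^{(k)}=(F_\infty)_k$ with no exceptional $E$-contribution, as one checks by intersecting with a $\Psi$-contracted rational curve, cf. Lemma \ref{Lem:H2equality}.
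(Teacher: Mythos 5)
Your proof is correct, but your main argument takes a genuinely different route from the paper's. The paper works with the class of $D_{\infty}$ directly: it observes that $F_{\infty}$ is nef on $Z$, that an ample line bundle on $Z$ induces an ample bundle on $\Sym^k(Z)$ and hence, by pullback under Hilbert--Chow, a nef bundle on $\Hilb^{[k]}(Z)$, and then deduces nefness of $\mathcal{O}(F_{\infty})^{[k]}$ by running this construction over a sequence of ample classes converging to $F_{\infty}$ and invoking closedness of the nef cone. You instead exploit the fibration $\pi \co Z \to \bP^1$ itself: composing Hilbert--Chow with $\Sym^k(\pi)$ gives a morphism $\Pi$ to $\Sym^k(\bP^1)\cong\bP^k$ under which $D_{\infty}$ is the preimage of a hyperplane, so $D_{\infty}$ is a positive rational multiple of the pullback of an ample divisor. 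This is more elementary (no approximation or limiting argument) and yields a strictly stronger conclusion: $m\,D_{\infty}=\Pi^*H$ is base-point free, so $D_{\infty}$ is semiample, not merely nef. The trade-off is generality: the paper's mechanism applies to any nef class on the surface, whereas yours needs the divisor to be pulled back from a curve. One point to nail down in your write-up: the identity $\Pi^*H = m\,D_{\infty}$ with a single integer $m$ presupposes that $D_{\infty}$ is irreducible (or that all its components occur with equal multiplicity); this does hold --- $D_{\infty}$ is the image of the irreducible incidence variety of pairs $(x,\xi)$ with $x \in F_{\infty}\cap \mathrm{supp}(\xi)$ --- and your submersivity observation then even pins down $m=1$. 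Finally, the alternative you sketch at the end is essentially the paper's argument, with the limiting step replaced by descending nefness along the finite quotient $Z^k \to \Sym^k(Z)$ and pulling back under $\Psi$; that version also works.
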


\begin{proof}
The divisor $F_{\infty} \subset Z$ is nef, hence lies on the boundary of the ample cone.  Any ample line bundle $L \rightarrow Z$ on a projective surface induces an ample line bundle on $\Sym^k(Z)$ (descended from the exterior direct product $L \boxtimes \cdots \boxtimes L \rightarrow Z^k$), and hence by pullback under the Hilbert-Chow morphism a nef line bundle $L^{[k]}$ on $\Hilb^{[k]}(Z)$. Running this construction for a sequence of ample bundles converging to $F_{\infty}$ on $Z$, one sees that the line bundle $\mathcal{O}(F_{\infty})^{[k]} \rightarrow \Hilb^{[k]}(Z)$ canonically associated to $F_{\infty} \subset Z$ is in the closure of the nef cone of the Hilbert scheme, hence is nef. \end{proof}

\begin{Lemma}
There is a holomorphic volume form on $\Hilb^{[k]}(Z)$ with poles contained in $D_0 \cup D_{\infty}$ and simple poles on $D_0$.
\end{Lemma}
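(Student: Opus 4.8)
The plan is to build the form by descending an external product of the volume form on the surface $Z$ to the symmetric product, and then pulling back through the Hilbert--Chow morphism, using crepancy to control the behaviour along the exceptional divisor.

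First I would recall from Section \ref{Sec:Milnor} that the surface $Z = \bar{\bar{A}}_{2k-1}$ carries a meromorphic volume form $\eta_Z$ whose divisor is $K_Z = -(s_0 + s_\infty) - 2F_\infty$: it is nowhere vanishing on $A_{2k-1}$, has simple poles along each of $s_0$ and $s_\infty$, and a double pole along $F_\infty$. This is just a restatement of the fact, recorded at the start of the section, that $-K_Z$ is represented by the effective divisor $s_0 + s_\infty + 2F_\infty$. Next I would form the external product $\eta_Z^{\boxtimes k} = \mathrm{pr}_1^*\eta_Z \wedge \cdots \wedge \mathrm{pr}_k^*\eta_Z$ on $Z^k$. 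Because $\eta_Z$ has even degree, transposing two factors introduces no sign, so $\eta_Z^{\boxtimes k}$ is $\Sym_k$-invariant. Over the complement of the big diagonal the quotient map $Z^k \to \Sym^k(Z)$ is an unramified $\Sym_k$-cover, so this invariant form descends to a meromorphic volume form $\eta_{\Sym}$, that is, a rational section of the dualizing sheaf $\omega_{\Sym^k(Z)}$, with divisor $-\Sigma_0 - 2\Sigma_\infty$, where $\Sigma_0$ and $\Sigma_\infty$ denote the reduced divisors of configurations meeting $s_0 \cup s_\infty$ and $F_\infty$ respectively.

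Then I would set $\eta_{\Hilb} = \Psi^*\eta_{\Sym}$ for the Hilbert--Chow morphism $\Psi \colon \Hilb^{[k]}(Z) \to \Sym^k(Z)$. On the open dense locus of $k$ distinct points $\Psi$ is an isomorphism, and there the divisor of $\eta_{\Hilb}$ is visibly $-D_0 - 2D_\infty$, by the identification in the dictionary of $D_0$ and $D_\infty$ with the preimages of $\Sigma_0$ and $\Sigma_\infty$; in particular the pole along each component of $D_0$ is simple and the pole along $D_\infty$ is double. Since $\eta_{\Hilb}$ is a rational section of the canonical bundle $K_{\Hilb^{[k]}(Z)}$, it automatically extends across the exceptional divisor $E$, and the only point requiring argument is that it has neither a zero nor a pole along $E$. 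This is exactly where crepancy of $\Psi$ enters: because $\Psi$ is a crepant resolution, $K_{\Hilb^{[k]}(Z)} = \Psi^* K_{\Sym^k(Z)}$ and the discrepancy along $E$ vanishes; as $\eta_{\Sym}$ is regular and non-vanishing at the generic point of the big diagonal (two generic points of $Z$, away from $s_0, s_\infty, F_\infty$, colliding), the order of $\Psi^*\eta_{\Sym}$ along $E$ equals this discrepancy, namely $0$. Hence $\mathrm{div}(\eta_{\Hilb}) = -D_0 - 2D_\infty$, so $\eta_{\Hilb}$ has poles contained in $D_0 \cup D_\infty$ and simple poles along $D_0$, as required; restricting to $\Mbar = \Hilb^{[k]}(Z)\setminus D_\infty$ then recovers Hypothesis \eqref{eq:hyp_main_3}.

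The main obstacle --- really the only subtle point --- is the extension across $E$ with the correct order. I would make this precise by a local model computation at the generic point of the big diagonal, reducing to the standard crepant resolution $\Hilb^{[2]}(\bC^2) \to \Sym^2(\bC^2)$, where an explicit chart shows that the descended product volume form extends with neither zero nor pole along the exceptional curve, rather than relying solely on the abstract discrepancy statement.
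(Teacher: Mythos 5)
Your proposal is correct and follows essentially the same route as the paper's own (sketched) proof: both take the product volume form on $Z^k$, descend it to $\Sym^k(Z)$, pull back under the Hilbert--Chow morphism, and use crepancy to conclude that no zeroes or poles are introduced along the exceptional divisor, which is the content of the paper's appeal to a variant of Fogarty's theorem. Your version simply makes explicit the divisor bookkeeping and the local-model verification that the paper leaves implicit.
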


\begin{proof}[Sketch] This is a variant on Fogarty's theorem \cite{Fogarty} that the Hilbert scheme of a holomorphic symplectic surface is holomorphic symplectic.  The holomorphic volume form $\eta$ on $Z$ induces a product form on $Z^k$ and hence an algebraic volume form on $\Sym^k(Z)$ which lifts to a smooth form on the Hilbert scheme.  The resolution is crepant, so the only zeroes and poles are those on the symmetric product. 
\end{proof}

At this point, we have verified all parts of Hypothesis \ref{Hyp:Main}.

En route to computing the Gromov-Witten invariant counting Chern one spheres, our next task is to consider Hypothesis \ref{Hyp:regular}. 
The Hilbert scheme contains stable Chern one spheres which contain multiply covered Chern zero components, at which one cannot achieve transversality without virtual perturbations.  As in Section \ref{Sec:Generalities}, we avoid these issues by seeing that the relevant curves evaluate into a sufficiently high codimension subset of the total space.

Recall that $\overline{\scrY}_k = \Hilb^{[k]}(\bar{A}_{2k-1})$ is the Hilbert scheme of the quasi-projective surface $\bar{A}_{2k-1} \rightarrow \bC$.  We define the locus $B_r \subset \overline{\scrY}_k$ to be the points of $\overline{\scrY}_k$ which lie on the image of a stable Chern one sphere $u$ for which image$(u)$ meets the relative Hilbert scheme $D_r$. There is a natural map 
\begin{equation} \label{Eqn:Collapse}
\Hilb^{[k]}(\bar{A}_{2k-1}) \rightarrow \Sym^k(\bar{A}_{2k-1}) \rightarrow \Sym^k(\bC) =  \bC^k
\end{equation}
where the final arrow is induced from the Lefschetz fibration $\pi: \bar{A}_{2k-1} \rightarrow \bC$. Chern zero curves in $\overline{\scrY}_k$ must project to constants under \eqref{Eqn:Collapse}. Indeed, since non-constant rational curves in $\Sym^k(\bar{A}_{2k-1})$ have strictly positive Chern number,  the only Chern zero curves in $\overline{\scrY}_k$ lie inside the exceptional locus of the Hilbert-Chow morphism.  Such curves necessarily meet the relative Hilbert scheme, so it is certainly true that the locus
\[
ev_1^{-1}(\overline{\scrY}_k \backslash B_r) \ \subset \ \overline{\scrM}_{1}(\overline{\scrY}_k | 1)
\]
of Chern one stable maps with one marked point which evaluate into the complement of $B_r$ can be made regular by generic perturbation away from $D_r$. 

\begin{Lemma} \label{Lem:BrTechnical}
$B_r \subset \overline{\scrY}_k$ has complex codimension two, and maps under \eqref{Eqn:Collapse} into the big diagonal in $\Sym^k(\bC)$.
\end{Lemma}

\begin{proof}
Any stable rational curve $u$ (which is in particular a connected tree of rational curves) maps under \eqref{Eqn:Collapse} to a point, by the maximum principle in the base.  The relative Hilbert scheme $D_r$ maps into the big diagonal of $\Sym^k(\bar{A}_{2k-1})$, so the locus $B_r$ lies in the preimage of the big diagonal.  Since the map from the Hilbert scheme to the symmetric product is crepant, the image of $u$ in $\Sym^k(\bar{A}_{2k-1})$ must have Chern one. Rational curves in the symmetric product arise from surfaces  in $\bar{A}_{2k-1}$ by the tautological correspondence of \eqref{eqn:cylindrical-view}.  The only Chern number one connected closed curves in $\bar{A}_{2k-1}$ are the spheres which form components of  critical fibres of the projection $\pi$.  It follows that $B_r$ is contained in the codimension two subset which is the preimage in the Hilbert scheme of the locus in the symmetric product of tuples which both lie over the big diagonal of $\bC^k$ and lie in the divisor defined by having support meeting the critical values of $\pi$.  
\end{proof}

Lemma \ref{Lem:BrTechnical} and the paragraph preceding its statement together verify Hypothesis \ref{Hyp:regular}.

\begin{Lemma} \label{Lem:bound_none}
In fibred position, the  submanifolds $L_{\wp}$ are disjoint from $B_r$.  Moreover, for an almost complex structure making \eqref{Eqn:Collapse} holomorphic and product-like away from the big diagonal, these Lagrangians bound no non-constant holomorphic discs of vanishing Maslov index.
\end{Lemma}

\begin{proof}
 By definition, in fibred position  $L_{\wp}$ maps down via the map $\pi^k: \Hilb^{[k]}(\bar{A}_{2k-1}) \rightarrow \bC^k$ of  \eqref{Eqn:Collapse} to a product of arcs in $\bC^k$, defining a totally real cube disjoint from the big diagonal.  The first statement then follows from Lemma \ref{Lem:BrTechnical}.
 
  Recall from Section \ref{Sec:Real_Spheres} that the divisor $s_0 \cup s_{\infty} \subset \bar{A}_{2k-1}$ has components $D_{\pm}$ which are sections of $\bar{A}_{2k-1} \rightarrow \bC$.   Since $L_{\wp}$ lies far from the diagonal in the Hilbert scheme, by Lemma \ref{lem:Kahler} we can assume that the symplectic form is product like near $L_{\wp}$. Working with complex structures which make the projection $\pi^k$ holomorphic, the maximum principle implies that any holomorphic disc with boundary in $L_{\wp}$ lies in a fibre of $\pi^k$.  Such a disc arises from a tuple of discs in $\bar{A}_{2k-1}$ which lie in fibres of the projection $\pi$. The only non-constant such discs have positive transverse intersections with the sections  $D_{\pm}\subset \bar{A}_{2k-1}$ at infinity, along which the holomorphic volume form has a simple pole. Hence no non-constant disc has Maslov index zero.
\end{proof}

\begin{Lemma} \label{Lem:maslov_two_bounded}
In fibred position, and for an almost complex structure as in Lemma \ref{Lem:bound_none}, the only Maslov index two discs with boundary in $L_{\wp}$ are products of Maslov index two discs in the Milnor fibre with constant discs.
\end{Lemma}

\begin{proof}
Take a crossingless matching $\wp$.  We want to understand Maslov 2 discs in $\bar{\scrY}_k$ with boundary on $L_{\wp} = (S^2)^k \subset \Hilb^{[k]}(\bar{A}_{2k-1})$.  As in the proof of Lemma \ref{Lem:bound_none}, working with an almost complex structure making $\pi^k$ holomorphic, we see that all discs lie in fibres of $\pi^k$.  Since $\pi^k(L_{\wp})$ is disjoint from the diagonal in $\bC^k$, and the diagonal in the symmetric product of $\bar{A}_{2k-1}$ maps via \eqref{Eqn:Collapse} to the diagonal in $\bC^k$, we conclude that all discs with boundary on $L_{\wp}$ are disjoint from the diagonal in the Hilbert scheme.  

Over each interior point of the image cube $\pi^k(L_{\wp})$, $L_{\wp}$ comprises a product of equators in the fibre $\bP^1\times\cdots\times\bP^1$ of $\pi^k$.  Assuming the almost complex structure is product-like away from the diagonal,  Maslov index  two discs are non-constant in at most one factor.  We accordingly get a toric picture of every regular fibre, with $2^k$ Maslov 2 discs with boundary through a given generic point of $L_{\wp}$. All discs therefore come from discs in the $k$-fold product $ \bar{A}_{2k-1}^k $.  
\end{proof}

The first statement in Lemma \ref{Lem:bound_none} is exactly the first part of Hypothesis \ref{hyp:no_Maslov_0_disc_Maslov_1}.  Since discs with vanishing intersection number with $D_0$ also have vanishing Maslov index, the second statement in Lemma \ref{Lem:bound_none} establishes the 3rd part of Hypothesis \ref{hyp:no_Maslov_0_disc_Maslov_1}. 

Recall that the second part of Hypothesis \ref{hyp:no_Maslov_0_disc_Maslov_1} asserts the transversality of certain fibre products  $  \Mod{(0,1)}{1}(L_{\wp})$,    $ \scrR_{1}^{1}(\overline{\scrY}_k; (1,0)| L_{\wp}  ) \times_{\overline{\scrY}_k} B_{0}$ and $\scrM_{1}(\overline{\scrY}_k | 1) \times_{\overline{\scrY}_k} L_{\wp}$ under evaluation of spaces of Maslov index two discs with boundary on $L_{\wp}$, in the first two cases, or of Chern one spheres in $\overline{\scrY}_k$, in the last case, with appropriate cycles in $\overline{\scrY}_k$. The explicit description of the Maslov index 2 discs with boundary on $L_{\wp}$ obtained in Lemma \ref{Lem:maslov_two_bounded} shows that $  \Mod{(0,1)}{1}(L_{\wp})$ is transverse.   By definition, discs in $ \scrR_{1}^{1}(\overline{\scrY}_k; (1,0)| L_{\wp}  ) $ also have Maslov index 2, and have vanishing intersection number with $D_r$ so can contain no Chern zero components. For a push-off $D_0'$ of $D_0$ coming from a push-off $s_0' \cup s_{\infty}'$ of $s_0 \cup s_{\infty} \subset \bar{A}_{2k-1}$, the fibre product $ \scrR_{1}^{1}(\overline{\scrY}_k; (1,0)| L_{\wp}  ) \times_{\overline{\scrY}_k} B_{0}$ is also transverse, simply because it is empty: all Maslov index two discs lie in a subset where the divisors $D_0$ and $D_0'$ are disjoint, and $B_0$ is empty. 

Finally, 
we have established Hypothesis \ref{Hyp:regular} in our setting, so $L_{\wp}$ is disjoint from $B_r$ and  $\scrM_{1}(\overline{\scrY}_k | 1)$ is regular and of the correct dimension along $ev_1^{-1}(L_{\wp})$. No stable Chern one spheres in this locus can contain (any, in particular any multiply covered) Chern zero components.  The irreducible Chern one spheres meeting $L_{\wp}$ are again obtained from curves visible in the partially compactified Milnor fibre $\bar{A}_{2k-1}$, which yields transversality of  $\scrM_{1}(\overline{\scrY}_k | 1) \times_{\overline{\scrY}_k} L_{\wp}$.   This completes the verification of    Hypothesis \ref{hyp:no_Maslov_0_disc_Maslov_1}.

\begin{Lemma} \label{Lem:homologous_to_distant}
The intersection $B_0 = D_0 \cap D_0'$ is homologous (in the Borel-Moore homology of $D_0 $) to a locally finite cycle supported on $D_0^{sing} \cup (D_0 \cap D_r)$.
\end{Lemma}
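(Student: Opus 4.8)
The plan is to reduce the statement to a normal bundle computation along the smooth locus of $D_0$, exploiting the fact that both sections $s_0,s_\infty\subset\bar{A}_{2k-1}$ have holomorphically trivial normal bundle. First I would decompose $D_0=D_{s_0}+D_{s_\infty}$ into its two irreducible components, where $D_{s_i}=\{\xi\in\Mbar:\mathrm{Supp}(\xi)\cap s_i\neq\emptyset\}$ is the incidence divisor of the section $s_i$. Set $A=D_0^{sing}\cup(D_0\cap D_r)$ and let $U=D_0\setminus A$ be the open complement. Since $B_0$ has complex codimension $2$, it defines a class $[B_0]\in H^{lf}_{4k-4}(D_0)$, and the long exact sequence of Borel--Moore homology for the closed--open pair $(A,U)$ shows that $[B_0]$ lies in the image of $H^{lf}_{4k-4}(A)$ precisely when its restriction to $U$ vanishes. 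Thus it suffices to prove $[B_0]|_U=0\in H^{lf}_{4k-4}(U)$.

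On $U$ the divisor $D_0$ is smooth, so Poincar\'e--Lefschetz duality identifies $H^{lf}_{4k-4}(U)\cong H^2(U)$. Because $D_0'$ is linearly equivalent to $D_0$, the section of $\calO(D_0')|_{D_0}\cong N_{D_0}$ cutting out $B_0$ exhibits $B_0|_U$ as the zero scheme of a section of the normal bundle $N_{D_0}|_U$; hence $[B_0]|_U$ is dual to $c_1(N_{D_0}|_U)$. The task then becomes showing $c_1(N_{D_0}|_U)=0$. On $U$ the two components of $D_0$ are disjoint, so $U=U_0\sqcup U_\infty$ with $U_i=D_{s_i}\cap U$, and I would treat each separately. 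The point of deleting $D_0^{sing}$ and $D_0\cap D_r$ is that every $\xi\in U_0$ meets $s_0$ in a single reduced point $p=\rho_0(\xi)$, transversally, with the remaining points of $\xi$ carrying distinct fibre images; consequently $\rho_0\colon U_0\to s_0$ is a well-defined holomorphic map, and an infinitesimal deformation of $\xi$ normal to $D_{s_0}$ is exactly a displacement of $p$ off $s_0$. This yields an isomorphism $N_{D_{s_0}}|_{U_0}\cong\rho_0^*N_{s_0}$, and since $N_{s_0}$ is holomorphically trivial we get $c_1(N_{D_{s_0}}|_{U_0})=0$. The identical argument on $U_\infty$ gives $c_1(N_{D_{s_\infty}}|_{U_\infty})=0$, whence $c_1(N_{D_0}|_U)=0$ and $[B_0]|_U=0$, as required.

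The main obstacle is the verification underlying the normal-bundle identification: one must pin down the smooth locus of each incidence divisor $D_{s_i}$ and confirm that all degenerate configurations fall inside $A$. Concretely, $\xi$ meeting a section in several points or along a horizontal tangency produces a self-crossing or non-transverse point of $D_{s_i}$, hence lies in $D_0^{sing}$; and $\xi$ acquiring a non-reduced (vertically tangent) structure at the incidence point forces the $\pi$-image of $\xi$ to drop length, hence $\xi\in D_r$. Checking that the complement of these loci is exactly where the evaluation map $\rho_i$ exists and genuinely identifies the conormal line of $D_{s_i}$ with the conormal line of $s_i$ in $\bar{A}_{2k-1}$ is the heart of the argument. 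Once this local structure is established, the homological vanishing follows immediately from the triviality of $N_{s_0}$ and $N_{s_\infty}$, and no global positivity input is needed.
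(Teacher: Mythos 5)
There is a genuine gap, and it sits exactly at the step you yourself identify as the heart of the argument: your description of the open set $U_0$ is incorrect. Keep your notation $D_{s_i}$ for the two components of $D_0$ and $U_i=D_{s_i}\cap U$. In local analytic coordinates $(z,w)$ on $\bar{A}_{2k-1}$ in which $s_0=\{w=0\}$ and $\pi$ is the $z$-projection, consider the length-two subscheme with ideal $\left((z-c)^2,\ w-\beta(z-c)\right)$, $\beta\neq 0$, together with $k-2$ further reduced points chosen generically away from $s_0\cup s_\infty$ and over distinct regular values. This $\xi$ is non-reduced at its unique support point $p=(c,0)\in s_0$, with tangent direction transverse to both $s_0$ and the fibre. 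Its scheme-theoretic intersection with $s_0$ has length one, and in the standard chart of the Hilbert scheme (ideals $(z^2-e_1z+e_2,\ w-\alpha-\beta z)$, in which $D_{s_0}=\{\alpha^2+\alpha\beta e_1+\beta^2e_2=0\}$) one checks directly that $D_{s_0}$ is smooth at $\xi$; moreover, since the tangent direction is not vertical, $\pi(\xi)$ still has length $k$, so $\xi\notin D_r$. Hence $\xi\in U_0$, contradicting your claim that every point of $U_0$ meets $s_0$ in a reduced point of $\xi$: your case analysis covers only non-reduced structure tangent to $s_0$ (which lands in $D_0^{sing}$) or tangent to the fibre (which lands in $D_r$), and misses all the intermediate directions. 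These subschemes form a non-empty divisor $V\subset U_0$, the trace on $U_0$ of the Hilbert--Chow exceptional divisor $E$, and along $V$ your normal-bundle identification breaks down: the natural map $\rho_0^*N_{s_0}\to N_{D_{s_0}}|_{U_0}$ given by displacing the intersection point vanishes along $V$ (for $\xi=p\sqcup q$, moving $p$ with unit normal speed changes the defining section of $D_{s_0}$ at rate $w(q)$, which tends to $0$ as $q\to p$). The correct relation is $N_{D_{s_0}}|_{U_0}\cong\rho_0^*N_{s_0}\otimes\calO_{U_0}(V)$, so your argument only yields $c_1(N_{D_0}|_{U_0})=[V]\in H^2(U_0)$; and since $V$ has codimension one, not two, in $U_0$, knowing that $c_1$ vanishes on $U_0\setminus V$ gives nothing.

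The statement itself survives, but the missing vanishing $[V]=0$ requires precisely the kind of global input your final sentence disclaims. In the chart above $E=\{e_1^2-4e_2=0\}$, and restricting this equation to $D_{s_0}$ gives the square $\left((2\alpha+\beta e_1)/\beta\right)^2$, so $[E]|_{U_0}=2[V]$; combining this with Lemma \ref{Lem:H2equality}, which gives $[D_r]=-[E]/2$ in $H^2(\Hilb^{[k]}(\bar{A}_{2k-1});\bQ)$, and with $[D_r]|_{U_0}=0$ (the canonical section of $\calO(D_r)$ is non-vanishing on $U_0$ because $D_r\cap U_0=\emptyset$), one obtains $[V]=-[D_r]|_{U_0}=0$ over $\bQ$. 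With that patch your localization-sequence strategy does go through, and it then constitutes a genuinely different and more explicit route than the paper's, which instead degenerates the push-off family $D_0'(t)\to D_0$ and identifies $[B_0]$ with the limit of the intersection cycles $D_0\cap D_0'(t)$. Note that the same divisor $V$ of non-reduced subschemes supported on the sections is also the delicate locus in that degeneration; it cannot be argued away by purely local considerations, which is why the triviality of $N_{s_0}$ and $N_{s_\infty}$ alone is not enough.
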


\begin{proof}
We have a divisor $s_0\cup s_{\infty} \subset \bar{A}_{2k-1}$, and its push-off $s_0' \cup s_{\infty}'$.  We consider a one-parameter family of divisors $s_0^t \cup s_{\infty}^t$ which at $t=0$ agree with $s_0 \cup s_{\infty}$ and at $t=1$ with $s_0' \cup s_{\infty}'$.  One obtains divisors $(D_0')^t$ of $\Hilb^{[k]}(Z)$ by considering subschemes which meet $(s_0)^t \cup (s_{\infty})^t \subset Z$, for $t\in (0,1]$.  Consider a sequence of points $\xi_n$ in $D_0 \cap (D_0')^{t_n}$, for a sequence of values $t_n \to 0$.  The subschemes $\xi_n$ limit to a subscheme which meets $s_0 \cup s_{\infty}$ with multiplicity two, and hence either lies in the singular locus of $D_0$, or which lies in the relative Hilbert scheme.  

We now consider the chain in $D_0$ defined by the intersection of $D_0$ with the closure of the set of subschemes in $Z$ which meet both  $D_0$ and $(D_0')^t$ for some $t\in (0,1]$. At $t=1$ this chain has boundary $B_0 = D_0 \cap D_0'$, whilst the previous paragraph implies that all other boundary components are contained in $D_0^{sing} \cup (D_0 \cap D_r)$. This completes the proof.
\end{proof}

 Lemma \ref{Lem:homologous_to_distant} establishes the second part of Hypothesis \ref{Hyp:GWvanishes}. The first part, which is the vanishing of the Gromov-Witten invariant $GW_1$, is established in the next section.

\subsection{Computing the Gromov-Witten invariant, II} \label{Sec:GWII}

We know that $\bar{A}_{2k-1}$ contains spheres of Chern number 1, which are components $C_j$ of the singular fibres of $\bar{A}_{2k-1} \rightarrow \bC$.  It follows that $\Hilb^{[k]}(\bar{A}_{2k-1})$ also contains Chern 1 spheres, by taking the image in the Hilbert scheme of 
\begin{equation} \label{Eqn:productChern1sphere}
C_j \times \{q_2\} \times \cdots \times \{q_{k-1}\} \ \subset \bar{A}_{2k-1}^{k},
\end{equation}
where the $\{q_j\} \subset \bar{A}_{2k-1} \backslash C_j$ are distinct points, meaning the cycle of \eqref{Eqn:productChern1sphere} lies away from the diagonals. We will refer to such Chern 1 spheres as being of product type.

\begin{Lemma} \label{Cor:GWZeroHilb}
$GW_{1} |_{\scrY_k} = 0 \in H^2(\scrY_k)$.
\end{Lemma}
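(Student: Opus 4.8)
The plan is to reduce the Hilbert-scheme computation to the surface computation already carried out in Lemma \ref{GWAkvanishes}, exploiting two facts: every Chern one sphere contributing to $GW_1$ is of product type \eqref{Eqn:productChern1sphere}, and the exceptional divisor $E$ restricts trivially to $\scrY_k$. First I would pin down the moduli space $\scrM_{1}(\Mbar|1)$, with $\Mbar = \Hilb^{[k]}(\bar{A}_{2k-1})$. The defining homology condition $\langle A, D_r\rangle = 0$, together with positivity of intersection against the effective divisor $D_r$, forces every such sphere to be disjoint from the relative Hilbert scheme. I would then run the tautological correspondence \eqref{eqn:cylindrical-view} exactly as in Lemma \ref{Lem:BrTechnical}: a Chern one sphere in the Hilbert scheme projects under the crepant Hilbert--Chow map to a Chern one rational curve in $\Sym^k(\bar{A}_{2k-1})$, which unwraps to a possibly branched curve in $\bar{A}_{2k-1}$ of total Chern number one. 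Since $\bar{A}_{2k-1}$ is monotone of minimal Chern number one with Chern one curves exactly the singular-fibre components $C_j$ (Lemma \ref{Lem:monotoneAbar}), and carries no non-constant Chern zero spheres, the only configurations disjoint from $D_r$ are the product type: one point traces a single $C_j$ while the remaining $k-1$ points stay fixed at distinct locations projecting to base values distinct from $\xi_j$.

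Next I would identify the evaluation cycle. The product-type description shows that $\ev_1 \scrM_{1}(\Mbar|1)$ is the incidence locus $Z_W$ of subschemes whose support meets $W := \bigcup_j C_j$. Using the decomposition of Lemma \ref{lem:H2}, I would record $[Z_W] = [W]_k + c\,[E]$ for a constant $c$, where $[W]_k$ is the class associated by the rule $a \mapsto a_k$ to $[W] = \sum_j [C_j] \in H^2(\bar{A}_{2k-1})$. The boundary strata of the Gromov--Floer compactification that involve Chern zero, hence diagonal and exceptional, components all lie in the complex codimension two set $B_r$ of Lemma \ref{Lem:BrTechnical}, so by Hypothesis \ref{Hyp:regular} they do not affect the locally finite evaluation class, which is therefore computed on the product-type stratum alone.

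Finally I would restrict to $\scrY_k$. Since $\scrY_k$ is disjoint from $D_r$ and $[D_r] = -[E]/2$ by Lemma \ref{Lem:H2equality}, the class $[E]$ restricts to zero on $\scrY_k$, killing the $c\,[E]$ term. For the remaining term I would invoke the naturality of $a \mapsto a_k$ under restriction to the open Hilbert schemes $\Hilb^{[k]}(A_{2k-1}) \subset \Hilb^{[k]}(\bar{A}_{2k-1})$, which sends $[W]|_{A_{2k-1}}$ to $[W]_k|_{\Hilb^{[k]}(A_{2k-1})}$; Lemma \ref{GWAkvanishes}, applied to the surface $\bar{A}_{2k-1}$, gives $[W]|_{A_{2k-1}} = 0$. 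Hence $[W]_k$ restricts to zero on $\scrY_k$, and therefore $GW_1|_{\scrY_k} = 0$.

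The main obstacle will be the second step, namely justifying that the evaluation image computes $[W]_k$ modulo $[E]$ and that the Deligne--Mumford boundary does not contribute, which amounts to controlling the multiply-covered Chern zero configurations that standard transversality cannot regularize; this is precisely why $B_r$ was arranged to be of complex codimension two in Lemma \ref{Lem:BrTechnical}, so that, as in the abstract set-up of Section \ref{Sec:Generalities}, the locally finite evaluation class is well defined. An alternative route that bypasses the explicit class computation is to exhibit $GW_1|_{\scrY_k}$ directly as a boundary: I would transport the bounding $3$-chain $\bigcup_p \pi^{-1}(\gamma_p)$ of Lemma \ref{GWAkvanishes} to the incidence chain of subschemes in $\scrY_k$ meeting $\bigcup_p \pi^{-1}(\gamma_p)$, whose locally finite boundary is exactly $Z_W \cap \scrY_k$, giving the vanishing in $H^2(\scrY_k)$ at once.
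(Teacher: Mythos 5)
Your argument is correct in substance, but it is a genuinely different proof from the one in the paper. The paper's proof is a two-step reduction: it invokes the injectivity of the restriction map $H^2(\scrY_k) \rightarrow \oplus_{\wp} H^2(L_{\wp})$, which is a nontrivial topological input about $(n,n)$ Springer varieties cited from \cite{Khovanov:Cohomology}, so that it suffices to prove $GW_1|_{L_{\wp}} = 0$ for each matching Lagrangian; since $L_{\wp}$ in fibred position is far from the diagonal and from $D_r$, it meets only product Chern one spheres, and the vanishing then follows from the Milnor fibre computation (Lemma \ref{GWAkvanishes} and the sign-cancellation statement preceding it). You instead identify the evaluation cycle globally on $\Mbar$ as the incidence class $[W]_k$ modulo $[E]$ (writing $W = \bigcup_j C_j$), and kill both terms on restriction to $\scrY_k$: the $[E]$ term via Lemma \ref{Lem:H2equality} and the disjointness of $\scrY_k$ from $D_r$, and the $[W]_k$ term via naturality of $a \mapsto a_k$ together with Lemma \ref{GWAkvanishes}. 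Both routes bottom out in the same surface-level vanishing; yours trades the external input from \cite{Khovanov:Cohomology} for a global analysis of the moduli space and of $H^2$ of the Hilbert scheme, and it yields somewhat more, namely an explicit identification of $GW_1$ on all of $\Mbar$ with the incidence class, and (via your alternative final paragraph) an explicit bounding cochain, the incidence chain of $\bigcup_p \pi^{-1}(\gamma_p)$, which is the natural Hilbert-scheme lift of the bounding chain used in the proof of Lemma \ref{GWAkvanishes}. The paper's argument, by contrast, never needs to control the moduli space away from the Lagrangians. (One minor point common to both arguments: the class computations are most naturally carried out with $\bQ$ or $\bR$ coefficients, and integral vanishing uses that $H^2(\scrY_k;\bZ)$ is torsion free.)

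Two imprecisions in your write-up are worth flagging, though neither breaks the proof. First, your opening claim that positivity of intersection forces every element of $\scrM_1(\Mbar|1)$ to be disjoint from $D_r$ is only true for irreducible spheres: reducible stable curves can have components inside $D_r$ with negative $D_r$-intersection (for instance the curve of subschemes $\{p,q\}$ with $p,q$ both on a vertical $C_j$) balanced by exceptional components meeting $D_r$ positively; your second paragraph correctly disposes of these via $B_r$ and Hypothesis \ref{Hyp:regular}, so the slip is self-correcting. Second, it is not quite true that the only Chern one spheres disjoint from $D_r$ are of product type in the strict sense of \eqref{Eqn:productChern1sphere}: there are also irreducible Chern one spheres of the form $\{p\} \cup Z$ with $p$ tracing $C_j$ and $Z$ a fixed length-$(k-1)$ subscheme that is non-reduced (with horizontal punctual parts) or has collided support; these avoid $D_r$, satisfy the intersection constraints, and meet $E$. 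They are harmless because they evaluate into $Z_W \cap \Psi^{-1}(\mathrm{diagonal})$, which has complex codimension two in $\Mbar$, so the locally finite evaluation cycle is still $[Z_W]$; but your classification should be phrased as a statement about the top-dimensional stratum of the evaluation image rather than about all elements of the moduli space.
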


\begin{proof}
Recall the description of the second cohomology of $\Hilb^{[k]}(A_{2k-1})$ given in Lemma \ref{lem:H2}. Pick a collection $W$ of $k-1$ distinct points in $A_{2k-1}$ disjoint from a neighbourhood $U$ of its compact core of real matching spheres, and lying in different fibres of the Lefschetz fibration $A_{2k-1} \rightarrow \bC$. One obtains an embedding $U \to \Hilb^{[k]}(A_{2k-1})$, taking a point $z$ to the subscheme $z\amalg W$, which has image inside $\scrY_k$ and for which inclusion $U \to \scrY_k$ induces an isomorphism on $H^2$.  This gives an identification $H^2(\scrY_k) \cong H^2(A_{2k-1})$, from which one sees that the natural restriction map
\[
H^2(\scrY_k) \longrightarrow \oplus_{\wp} H^2(L_{\wp})
\]
(where we sum over upper half-plane matchings) is injective.   Alternatively, this follows from the  description of $H^*(\scrY_k)$ obtained in \cite{Khovanov:Cohomology}. It therefore suffices to show that $GW_1 |_{L_{\wp}} = 0 \in H^2(L_{\wp})$ for a Lagrangian fibred over a matching $\wp$. Any such Lagrangian is far from the diagonal and the relative Hilbert scheme, and meets only product Chern one spheres.  The result now follows from the corresponding vanishing theorem on the Milnor fibre. \end{proof}

All of $\scrY_k$, $\bar{\scrY}_k$ and $\Hilb^{[k]}(\bar{A}_{2k-1})$ have trivial first cohomology. Therefore, choices of nullhomology of $GW_{1} = 0 \in H^2(\scrY_k)$ are essentially unique. 

Lemma \ref{Cor:GWZeroHilb} establishes the first part of Hypothesis \ref{Hyp:GWvanishes}. 
At this point, we have established all of the hypotheses from Section \ref{Sec:Generalities}.  By Proposition \ref{prop:ConstructDilation}, a choice of bounding cochain for $GW_{1}$ now defines an \nc-vector field on the category $\scrF(\scrY_k)$ whose objects are the Lagrangian submanifolds $L_{\wp}$, which we consider as lying in fibred position.

\subsection{Purity for product Lagrangians}

Fix a bounding cochain $gw_{1} \in C^1({\scrY}_k)$ with
\begin{equation}
\partial (gw_{1}) = [ev_1 (\scrM_1(\Mbar | 1))]
\end{equation}
which exists since the cohomology class $[ev_1(\scrM_1(\Mbar|1))] = GW_{1} =0 \in H^2(\scrY_k)$ from Section \ref{Sec:GWII}. Since $GW_{1} = 0 \in H^2(\scrY_k)$ and the Lagrangians $L_{\wp}$ are simply-connected, they admit equivariant structures.

\begin{Lemma} \label{lem:SinglePure}
$L_{\wp}$ is pure, so weight is proportional to grading on $HF^*(L_{\wp},L_{\wp})$.
\end{Lemma}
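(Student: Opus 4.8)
The plan is to reduce purity of $L_{\wp}$ to the purity of its constituent matching spheres in the Milnor fibre, established in Lemma \ref{lem:WeightForOneLag}, by exploiting the product structure together with the tautological correspondence \eqref{eqn:cylindrical-view}. In fibred position $L_{\wp}$ is the image in the Hilbert scheme of a product $L_1 \times \cdots \times L_k$ of matching spheres in $\bar{A}_{2k-1}$, disjoint from the big diagonal, so the K\"unneth formula for Floer cohomology gives an isomorphism of rings $HF^*(L_{\wp},L_{\wp}) \cong \bigotimes_{j=1}^k HF^*(L_j,L_j) \cong H^*(S^2)^{\otimes k}$. I would equip $L_{\wp}$ with the equivariant structure induced by the factorwise ones $c_{L_j}$, which one may take to vanish as in Remark \ref{Rem:Morseequivt}; this loses no generality, since by Lemma \ref{lem:WeightGradings}(3) the weights on self-Floer cohomology are independent of the chosen equivariant structure.

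The heart of the argument is a geometric factorisation of the moduli spaces defining the \nc-vector field. A disc $u\co \Sigma \to \bar{\scrY}_k$ with boundary on $L_{\wp}$, two interior marked points mapping to $D_0$ and $D_0'$, and intersection numbers $[u]\cdot[D_0]=1$, $[u]\cdot[D_r]=0$, projects under $\bar{\scrY}_k \to \Sym^k(\bC) = \bC^k$ to a point by the maximum principle (exactly as in Lemma \ref{Lem:BrTechnical}), and so the tautological correspondence \eqref{eqn:cylindrical-view} presents $u$ as a $k$-tuple of discs in $\bar{A}_{2k-1}$, each lying in a fibre of $\pi$ and bounded by one of the $L_j$. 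The condition $[u]\cdot[D_0]=1$ forces exactly one factor disc, say the $j$-th, to meet $D_{\pm} \subset \bar{A}_{2k-1}$ once; the remaining factors are then disjoint from $D_{\pm}$ and, being exact discs on the $L_i$, have vanishing Maslov index and hence are constant. Both interior marked points therefore lie on the $j$-th factor, which carries precisely the Seidel--Solomon operation defining the \nc-vector field of $\scrF(A_{2k-1})$ for $L_j$, while the constant factors contribute through the unit in the respective $HF^*(L_i,L_i)$ via Lemma \ref{Lem:Lift}. One checks, just as in the Milnor-fibre case, that $B_r$, $D_0^{sing}$ and $D_0 \cap D_r$ are all disjoint from $L_{\wp}$ in fibred position and that the corrections $\CO(gw_1)|_{L_{\wp}}$ and $\sCO(\beta_0)|_{L_{\wp}}$ contribute trivially (the swept chains meet $L_{\wp}$ only in the appropriate codimension), so these product-type configurations are the only contributions.

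Feeding this into the endomorphism $\Phi$ of Lemma \ref{lem:WeightGradings}, I expect to obtain under K\"unneth the derivation formula $\Phi(\phi_1 \otimes \cdots \otimes \phi_k) = \sum_{j} \phi_1 \otimes \cdots \otimes \Phi_j(\phi_j) \otimes \cdots \otimes \phi_k$, where $\Phi_j$ is the weight endomorphism on $HF^*(L_j,L_j)$. By Lemma \ref{lem:WeightForOneLag} each $\Phi_j$ is the Euler vector field on $H^*(S^2)$, acting by $0$ on $H^0$ and by $2$ on $H^2$, so $\Phi$ is the sum of the factorwise Euler fields, which is exactly the Euler field on $H^*(S^2)^{\otimes k}$: a class of pure degree $2m$ acquires weight $2m$. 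Equivalently, the ring $HF^*(L_{\wp},L_{\wp})$ is generated by the degree-two classes $x_j$ supported on the $j$-th factor, and the factorwise computation gives $\wt(x_j)=2=\deg(x_j)$, whence multiplicativity \eqref{Eqn:WeightFloerProduct} propagates the identity $\wt = \deg$ to the whole ring. In either form this shows weight coincides with, and in particular is proportional to, grading, so $L_{\wp}$ is pure. The main obstacle is making the factorisation of the Hilbert-scheme moduli spaces into products of Milnor-fibre moduli spaces fully rigorous: one must track the orientation and sign bookkeeping (cf.\ the robustness discussed in Remark \ref{Rem:Robust}), verify that the two interior marked-point constraints are compatible with the splitting, and confirm that the constant factor discs genuinely act by the identity, so that $\Phi$ is indeed the anticipated derivation.
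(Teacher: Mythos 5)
Your proposal is correct and takes essentially the same route as the paper: both exploit the product structure near $L_\wp$ to show that every disc counted by the \nc-vector field splits as a product which is non-constant in exactly one factor (that factor carrying both interior marked-point conditions), and both deduce that the weight on each degree-two generator equals $2$ because the two non-constant discs in any factor count with the same sign. The only cosmetic differences are that the paper justifies the splitting via the product-like K\"ahler form of Lemma \ref{lem:Kahler} and a product almost complex structure (the ``toric picture'' of $2^k$ Maslov-two discs), where you use the projection to $\bC^k$ and the tautological correspondence, and that the paper re-invokes the orientation analysis of Section \ref{Sec:Orientations} and Lemma \ref{lem:Solomon} directly rather than routing through Lemma \ref{lem:WeightForOneLag}, whose proof is that same sign computation.
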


\begin{proof}
Recall that weights on self-Floer-cohomology are independent of the choice of equivariant structure and  of almost complex structure.  For definiteness, if we work with a Morse model for $\scrF(\scrY_k)$ and equip the $L_{\wp}$ with perfect Morse functions, then $CF^1(L_{\wp}, L_\wp) = 0$ and hence the equivariant structure $c_{L_{\wp}}$ can be chosen to vanish identically for every $\wp$.

We work in the setting of Lemma \ref{Lem:maslov_two_bounded}, so all discs which are counted by the \nc-vector field are products, and are non-constant in at most one factor.   The signs of the non-constant discs are then governed by the results from Section \ref{Sec:Orientations} and Lemma \ref{lem:Solomon}. Since the construction is equivariant with respect to the symmetric group action, the choices of orientations on the divisor $s_0\cup s_{\infty}$ in each factor $\bar{A}_{2k-1}$ of $\bar{A}_{2k-1}^k$ which lead to positive signs for the product \nc-vector field, as in Lemma \ref{lem:WeightForOneLag}, descend to a choice of orientation on $D_0 \subset \Hilb^{[k]}(\bar{A}_{2k-1})$ so that the equivariant structure on $HF^2(L_{\wp},  L_{\wp})$ is given by multiplication by $+2$. The result follows.
\end{proof}

For each upper half-plane matching, we now choose the gradings as in Proposition \ref{Prop:GradeIt} so  $HF^*(L_{\wp}, L_{\wp'})$ is symmetrically graded in the sense that the groups $HF^*(L_{\wp}, L_{\wp'})$ and $HF^*(L_{\wp'}, L_{\wp})$ live in the same range of degrees  $k-c(\wp,\wp') \leq \ast \leq k+c(\wp,\wp')$.  Recall the plait matching $\wp_{\bullet}$ from  Figure \ref{Fig:PlaitMixed}.

\begin{Lemma} \label{lem:choose_equivariant_1}
Fix the equivariant structure $c_{L_{\wpb}} = 0$.  There is a unique choice of equivariant structure $c_{L_{\wp}}$ on $L_{\wp}$ for each other upper half-plane matching $\wp$ so that the endomorphism $b^1$ defined by the \nc-vector field agrees with the Euler vector field on the groups $HF^*(L_\wp, L_{\wpb})$ and $HF^*(L_{\wpb}, L_{\wp})$.
\end{Lemma}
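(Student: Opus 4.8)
The plan is to pin down $c_{L_\wp}$ as a single scalar. Working in the minimal, strictly unital Morse model of Lemma \ref{lem:SinglePure}, $b^0|_{L_\wp}$ vanishes on the nose (since $CF^1(L_\wp,L_\wp)=0$), so $H^0(\hom(L_\wp,L_\wp))$ has rank one spanned by the unit $e$ and any admissible equivariant structure is a scalar multiple $s\cdot e$. By Lemma \ref{lem:WeightGradings}(2), replacing $c_{L_\wp}=0$ by $s\cdot e$ shifts every weight on $HF^*(L_\wp,L_{\wpb})$ down by $s$ (there $L_\wp$ is the first slot) and shifts every weight on $HF^*(L_{\wpb},L_\wp)$ up by $s$ (there $L_\wp$ is the second slot), while by Lemma \ref{lem:WeightGradings}(3) leaving self-weights untouched. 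So it suffices to compute the weights on the two mixed groups for the reference value $c_{L_\wp}=0$ and then choose $s$.

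First I would show that, with $c_{L_\wp}=0$, the weight on $HF^*(L_\wp,L_{\wpb})$ equals the cohomological grading up to an overall additive constant $c$, and likewise on $HF^*(L_{\wpb},L_\wp)$ up to a constant $c'$. Indeed, by the surjectivity of \eqref{eqn:module-for-two} (equivalently Corollary \ref{Lem:TopClassFor2}), $HF^*(L_\wp,L_{\wpb})$ is generated over $HF^*(L_{\wpb},L_{\wpb})$ by its minimal-degree generator $\xi$, so every class is $\mu^2(\beta,\xi)$ with $\beta\in HF^*(L_{\wpb},L_{\wpb})$. Lemma \ref{lem:SinglePure} gives $\wt(\beta)=\deg(\beta)$, and \eqref{Eqn:WeightFloerProduct} gives $\wt(\mu^2(\beta,\xi))=\deg(\beta)+\wt(\xi)$, whereas $\deg(\mu^2(\beta,\xi))=\deg(\beta)+\deg(\xi)$; hence $\wt-\deg\equiv c:=\wt(\xi)-\deg(\xi)$ on all of $HF^*(L_\wp,L_{\wpb})$. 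The identical argument, generating $HF^*(L_{\wpb},L_\wp)$ over $HF^*(L_\wp,L_\wp)$ from its minimal generator $\eta$, yields $\wt-\deg\equiv c':=\wt(\eta)-\deg(\eta)$.

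The main obstacle is the compatibility $c+c'=0$, which is what makes the two scalar conditions (one per mixed group) simultaneously solvable. I would prove it using Poincaré duality. The Floer pairing $HF^a(L_\wp,L_{\wpb})\otimes HF^{2k-a}(L_{\wpb},L_\wp)\to HF^{2k}(L_\wp,L_\wp)\iso\bk$, given by $\mu^2$ into the point class with $2k=\dim_{\bR}L_\wp$, is perfect; taking $a=k$, which lies in the nonempty middle of the symmetric range of Proposition \ref{Prop:GradeIt}, there exist $\alpha,\beta$ with $\mu^2(\beta,\alpha)$ equal to the nonzero top class. Then \eqref{Eqn:WeightFloerProduct} and Lemma \ref{lem:SinglePure} (the degree-$2k$ class has weight $2k$) give $2k=\wt(\beta)+\wt(\alpha)=\big((2k-a)+c'\big)+\big(a+c\big)=2k+c+c'$, so $c+c'=0$.

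Finally I would set $c_{L_\wp}=c\cdot e$. By the shift computation, the weights on $HF^*(L_\wp,L_{\wpb})$ become $\deg+c-s$, equal to $\deg$ exactly when $s=c$, while those on $HF^*(L_{\wpb},L_\wp)$ become $\deg+c'+s$, equal to $\deg$ exactly when $s=-c'=c$; the two requirements agree by $c+c'=0$, so $b^1$ coincides with the Euler vector field on both groups. Since each requirement alone determines $s$, the choice is unique. The one point to verify carefully in the write-up is the sign bookkeeping of Lemma \ref{lem:WeightGradings}(2) across the two slots, which is a routine unwinding of \eqref{Eqn:Endomorphism} using strict unitality; the rest is formal.
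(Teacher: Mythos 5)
Your proposal follows essentially the same route as the paper's proof. Both arguments rest on the identical three ingredients: simplicity of the mixed Floer groups as modules over the self-Floer algebras (Corollary \ref{Lem:TopClassFor2}), combined with additivity of weights \eqref{Eqn:WeightFloerProduct} and purity of the self-Floer groups (Lemma \ref{lem:SinglePure}, which by Lemma \ref{lem:WeightGradings}(3) holds independently of the equivariant structure); a Poincar\'e duality pairing hitting the top class of $HF^{2k}(L_{\wp},L_{\wp})$, whose weight is pinned at $2k$, to tie the two mixed groups together; and the shift property \eqref{Eqn:ShiftWeights} to obtain existence and uniqueness. The only difference is organizational: the paper normalises $c_{L_{\wp}}$ at the outset so that the minimal-degree class of $HF^*(L_{\wp},L_{\wpb})$ has weight equal to its degree and then propagates, whereas you compute the offsets $c,c'$ for the reference structure $c_{L_{\wp}}=0$ and shift at the end. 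That is a cosmetic reorganisation, not a different proof.

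However, one step fails as written: you take $a=k$ and assert that it ``lies in the nonempty middle of the symmetric range.'' By Proposition \ref{Prop:GradeIt}, $HF^*(L_{\wp},L_{\wpb})$ is supported in degrees $k-c,\,k-c+2,\ldots,k+c$ with $c=c(\wp,\wpb)$, so degree $k$ is populated only when $c$ is even. For example, with $k=2$ and $\wp=\wp_{\circ}$ the mixed matching one has $c(\wp_{\circ},\wpb)=1$, and the group is supported in degrees $\{1,3\}$, so $HF^k$ vanishes and there are no classes $\alpha,\beta$ to pair in your chosen degrees. The repair is immediate and costs nothing: perfectness of the duality pairing lets you work in any populated degree, so pair the rank-one top piece $HF^{k+m_{\wp}}(L_{\wp},L_{\wpb})$ against the rank-one minimal piece $HF^{k-m_{\wp}}(L_{\wpb},L_{\wp})$ (this is exactly the pairing the paper uses); then $2k=\bigl((k+m_{\wp})+c\bigr)+\bigl((k-m_{\wp})+c'\bigr)$ gives $c+c'=0$ as before, and the rest of your argument goes through unchanged.
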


\begin{proof}
Let $k-m_{\wp}$ denote the minimal degree of an element of $ HF^*(L_\wp, L_{\wpb})$. There is a unique equivariant structure on $ c_{L_{\wp}}$ so that the weight on $ HF^{k-m_{\wp}}(L_\wp, L_{\wpb})$ is $k-m_{\wp}$. We shall show that this choice satisfies the remainder of the desired properties.

First, cyclicity of $ HF^*(L_\wp, L_{\wpb}) $ as a module over $ HF^*(L_{\wpb}, L_{\wpb}) $, Corollary  \ref{Lem:TopClassFor2}, together with additivity of weights (as well as cohomological degrees) under multiplication, Equation \eqref{Eqn:WeightFloerProduct}, implies that the \nc-vector field agrees with the Euler vector field on $HF^*(L_\wp, L_{\wpb})$. In particular, the class of largest cohomological degree $k+m_{\wp}$ for $HF^*(L_\wp, L_{\wpb})$ has weight $k+m_{\wp}$.

Next, Poincar\'e duality implies that the top class $\eta_{\wp} \in H^{2k}(L_{\wp})$ lies in the image of multiplication
\begin{equation}
HF^{k-m_{\wp} }(L_{\wpb}, L_{\wp}) \otimes HF^{k+m_{\wp} }(L_{\wp},L_{\wpb}) \rightarrow HF^{2k}(L_{\wp},L_{\wp}) = H^{2k}(L_{\wp}).
\end{equation} 
The weight of $\eta_{\wp}$ is fixed by Lemma \ref{lem:SinglePure}, so the weight on $HF^{k-m_{\wp} }(L_{\wpb},L_{\wp}) $ must be $k-m_{\wp}$. Again using the cyclicity of the module $HF^*(L_{\wpb}, L_{\wp})$  over $HF^*(L_{\wp},L_{\wp})$, we conclude the desired result.  
\end{proof}

\begin{Proposition}
For the equivariant structures given by Lemma \ref{lem:choose_equivariant_1},  the endomorphism $b^1$ defined by the \nc-vector field agrees with the Euler vector field on the group $HF^*(L_\wp, L_{\wp'})$ for any pair $\wp$ and $\wp'$ of crossingless matchings.
\end{Proposition}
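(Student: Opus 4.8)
The plan is to use the plait matching $\wpb$ as a hub: Lemma \ref{lem:choose_equivariant_1} already identifies $b^1$ with the Euler vector field on every Floer group one of whose two matchings is $\wpb$, and the task is to propagate this to an arbitrary pair $(\wp,\wp')$ through the multiplicative structure. Throughout I use that the endomorphism $\Phi$ of \eqref{Eqn:Endomorphism} induced by $b^1$ preserves the cohomological grading, so that its weight eigenspaces refine the degree decomposition, and that weights add under the Floer product by \eqref{Eqn:WeightFloerProduct}. Write $c = c(\wp,\wp')$, so that $HF^*(L_\wp,L_{\wp'})$ is concentrated in degrees $k-c \leq \ast \leq k+c$ with one-dimensional top and bottom pieces, by Proposition \ref{Prop:GradeIt}.

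First I would control the top-degree class. By Corollary \ref{cor:hittopclass} the rank-one subspace of $HF^{k+c}(L_\wp,L_{\wp'})$ lies in the image of the product
\[
HF^*(L_{\wpb},L_{\wp'})\otimes HF^*(L_\wp,L_{\wpb})\longrightarrow HF^*(L_\wp,L_{\wp'}).
\]
Both factors involve $\wpb$, so by Lemma \ref{lem:choose_equivariant_1} each decomposes into pure-weight classes whose weight equals their cohomological degree. Additivity of weight and of degree then forces the top class to have weight $k+c$; since $\Phi$ preserves the grading and the top piece is one-dimensional, this determines $\Phi$ there. Applying the same argument to the reversed pair $(\wp',\wp)$ shows that the top class of $HF^{k+c}(L_{\wp'},L_\wp)$ has weight $k+c$ as well.

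Next I would reach the bottom-degree class by Poincar\'e duality. The Floer pairing $HF^{k+c}(L_{\wp'},L_\wp)\otimes HF^{k-c}(L_\wp,L_{\wp'})\to HF^{2k}(L_\wp,L_\wp)$ is perfect, and its target is spanned by the fundamental class of $L_\wp$, which has weight $2k$ by Lemma \ref{lem:SinglePure}. Additivity of weights then pins the weight of the minimal-degree class of $HF^{k-c}(L_\wp,L_{\wp'})$ to $2k-(k+c)=k-c$, its cohomological degree. Finally I would spread purity over the whole group: by the surjectivity in \eqref{eqn:module-for-two}, equivalently simplicity of $HF^*(L_\wp,L_{\wp'})$ as a module over $HF^*(L_{\wp'},L_{\wp'})$ (Corollary \ref{Lem:TopClassFor2}), the minimal-degree class generates $HF^*(L_\wp,L_{\wp'})$ under left multiplication by $HF^*(L_{\wp'},L_{\wp'})$. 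Since weight equals grading on the self-Floer cohomology $HF^*(L_{\wp'},L_{\wp'})$ by Lemma \ref{lem:SinglePure}, and weights add, every pure-weight class of $HF^*(L_\wp,L_{\wp'})$ is a sum of products of classes whose weight equals their degree, so the weight grading coincides with the cohomological grading. Hence $\Phi$, and therefore $b^1$, agrees with the Euler vector field on $HF^*(L_\wp,L_{\wp'})$.

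I expect the main obstacle to be the downward propagation: Corollary \ref{cor:hittopclass} controls only the top-degree class, and because both module structures raise degree one cannot descend to the generating minimal class by multiplication alone. The Poincar\'e-duality bridge is thus the genuinely non-formal step, and one must verify that the relevant top and minimal classes pair nontrivially (guaranteed by perfectness of the Floer pairing) and that the equivariant structures fixed in Lemma \ref{lem:choose_equivariant_1} are exactly those making the pairing weight-compatible; everything else reduces to bookkeeping with additivity of weights and cohomological degrees.
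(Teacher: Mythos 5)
Your proof is correct, and its skeleton is the same as the paper's: use $\wp_{\bullet}$ as a hub via Lemma \ref{lem:choose_equivariant_1}, pin the top-degree class of $HF^*(L_\wp,L_{\wp'})$ using Corollary \ref{cor:hittopclass} together with additivity of weights \eqref{Eqn:WeightFloerProduct}, then spread purity over the whole group using simplicity of the module (Corollary \ref{Lem:TopClassFor2}) and purity of self-Floer cohomology (Lemma \ref{lem:SinglePure}). The one place you genuinely diverge is in fixing the weight of the minimal-degree class $m$ (writing $c = c(\wp,\wp')$): you pair $m$ against the top class of the reversed group $HF^{k+c}(L_{\wp'},L_\wp)$ under Poincar\'e duality, which costs a second application of Corollary \ref{cor:hittopclass}, whereas the paper stays inside the module structure: by the surjectivity \eqref{eqn:module-for-two} the top class equals $x\cdot m$ for some $x\in HF^{2c}(L_{\wp'},L_{\wp'})$, which has pure weight $2c$ by Lemma \ref{lem:SinglePure}, so \eqref{Eqn:WeightFloerProduct} gives $\wt(m)=(k+c)-2c=k-c$ by subtraction. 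In particular, your stated obstacle --- that ``one cannot descend to the generating minimal class by multiplication alone'' because the module maps raise degree --- is not a real one: additivity lets you solve for the unknown weight of a \emph{factor}, not only of a product, so no Poincar\'e-duality bridge is required. That said, your detour is sound: the pairing between the rank-one pieces in complementary degrees is perfect, the fundamental class has weight $2k$ by Lemma \ref{lem:SinglePure}, and the weight-compatibility you flag as needing verification is automatic, since changing equivariant structures shifts the weights on $HF^*(L_\wp,L_{\wp'})$ and $HF^*(L_{\wp'},L_\wp)$ oppositely by \eqref{Eqn:ShiftWeights} while the weights on $HF^*(L_\wp,L_\wp)$ are unaffected (Lemma \ref{lem:WeightGradings}); indeed your step is exactly the mechanism used inside the proof of Lemma \ref{lem:choose_equivariant_1} itself. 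The paper's version is marginally more economical; yours makes the duality symmetry between the two groups explicit.
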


\begin{proof}
Consider the product 
\begin{equation}
HF^*(L_{\wpb}, L_{\wp'}) \otimes HF^*(L_{\wp}, L_{\wpb}) \longrightarrow HF^*(L_{\wp}, L_{\wp'}).
\end{equation}  Weight is  equal to grading for both groups in the domain of the multiplication, and the subspace of largest cohomological degree is in the image, by Corollary \ref{cor:hittopclass}. The eigenvalue of $b^{1}$ on this subspace is equal to its grading by  Lemma \ref{lem:WeightGradings} and  Equation \eqref{Eqn:WeightFloerProduct}. 

Next, we again appeal to 
 Corollary \ref{Lem:TopClassFor2}, namely that $HF^*(L_{\wp}, L_{\wp'})$ is a cyclic module over $H^*(L_{\wp})$, generated by any minimal degree element $\alpha$.  We may therefore write an  element of largest cohomological degree in $HF^*(L_{\wp}, L_{\wp'})$ as $\alpha \cdot v$, for $v \in H^*(L_{\wp})$.  Purity of $H^*(L_{\wp})$ was established in Lemma \ref{lem:SinglePure}, and hence the weight grading of $v$ agrees with its cohomological degree.  The additivity of weights, Lemma \ref{lem:WeightGradings} and \eqref{Eqn:WeightFloerProduct}, now fixes the weight of the minimal degree generator $\alpha$ to agree with its grading, since the weights of $v$ and $\alpha\cdot v$ are known to have this property.  Finally, cyclicity of the module structure shows that weight co-incides with grading for the whole of $HF^*(L_{\wp},L_{\wp'})$. This  establishes the desired purity, and concludes the argument.
 \end{proof}

\begin{Corollary} \label{cor:Arc-Pure}
If the characteristic of $\bk$ is zero, the symplectic arc algebra admits a pure structure, hence is formal as an $A_{\infty}$-algebra.
\end{Corollary}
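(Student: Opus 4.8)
The plan is to observe that the corollary is now a formal consequence of the abstract criterion of Corollary \ref{cor:Pure2} together with the weight computations already carried out. Concretely, I would assemble the data needed to apply that criterion: the $\nc$-vector field $b = b_D$ on $\scrF(\scrY_k)$ furnished by Proposition \ref{prop:ConstructDilation} in the Hilbert-scheme incarnation of Section \ref{Sec:HilbScheme} (whose governing hypotheses were verified in Sections \ref{Sec:Hypotheses} and \ref{Sec:GWII}), and infinitesimally equivariant structures on the objects $L_\wp$. Such structures exist because $GW_{1} = 0 \in H^2(\scrY_k)$ by Lemma \ref{Cor:GWZeroHilb} and each $L_\wp$ is simply connected, so the obstruction class $[b^0|_{L_\wp}] \in H^1(\hom(L_\wp,L_\wp))$ vanishes; I would fix the specific structures produced in Lemma \ref{lem:choose_equivariant_1}, normalised against the plait matching by $c_{L_{\wpb}} = 0$.

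With these choices in hand, recall that $\scrF(\scrY_k)$ is \emph{pure} precisely when the corrected endomorphism $\Phi$ of Equation \eqref{Eqn:Endomorphism} coincides with the Euler vector field on $H^*(\hom(L_\wp, L_{\wp'}))$ for every ordered pair $(\wp, \wp')$. But this is exactly the content of the Proposition immediately preceding the present corollary, which upgrades the single-object purity of Lemma \ref{lem:SinglePure} and the $L_{\wpb}$-relative statement of Lemma \ref{lem:choose_equivariant_1} to all pairs, using surjectivity onto the top-degree class (Corollary \ref{cor:hittopclass}), simplicity of the Floer modules (Corollary \ref{Lem:TopClassFor2}), and additivity of weights under the Floer product, Equation \eqref{Eqn:WeightFloerProduct}. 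Hence $\scrF(\scrY_k)$ is pure.

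It then remains only to invoke Corollary \ref{cor:Pure2}: over a field $\bk$ of characteristic zero, any pure $A_{\infty}$-category is formal. Since the symplectic arc algebra $\oplus_{\wp,\wp'} HF^*(L_\wp, L_{\wp'})$ is quasi-equivalent to $\scrF(\scrY_k)$, and formality is invariant under quasi-equivalence, the symplectic arc algebra is formal as an $A_{\infty}$-algebra. One small point worth flagging is that the weight grading a priori takes values in $\bar{\bk}$, but the Proposition forces it to equal the integer-valued cohomological grading; since $\mathrm{char}(\bk) = 0$ these integers inject into $\bk$, so no extension of scalars is required.

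I do not expect a genuine obstacle here, since all the substantive work was done in the preceding lemmas. The only place requiring care is checking that the equivariant structures of Lemma \ref{lem:choose_equivariant_1}, which were pinned down only through the groups involving $L_{\wpb}$, really do yield the Euler vector field on \emph{every} morphism space; but this consistency is precisely what the final Proposition establishes, so the corollary follows with no additional input.
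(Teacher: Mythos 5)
Your proposal is correct and follows essentially the same route as the paper, which states this corollary without a separate proof precisely because it is the immediate combination of the preceding Proposition (purity of $\scrF(\scrY_k)$ with the equivariant structures of Lemma \ref{lem:choose_equivariant_1}) with the abstract criterion of Corollary \ref{cor:Pure2}. Your additional remark about the $\bar{\bk}$-valued weights collapsing to integer gradings is a harmless elaboration consistent with the paper's discussion.
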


 This completes the proof of Theorem \ref{thm:Formal} from the Introduction. The bigrading on the symplectic arc algebra arising from the weight decomposition is not  interesting, precisely because of purity.  However, for general elements $\beta \in \Br_{2k}$ of the braid group, the weight grading on $HF^*(L_{\wp}, \beta(L_{\wp}))$ afforded by the eigenspace decomposition for the linear part of the \nc-vector field $b^1$ does not \emph{a priori} reduce to information held by the cohomological grading.  This is obviously relevant to the relation between Khovanov and symplectic Khovanov cohomologies.

\bibliographystyle{alpha}

\end{document}